\newtheorem{theorem}{Theorem}[section]
\newtheorem{lemma}[theorem]{Lemma}
\newtheorem{proposition}[theorem]{Proposition}
\newtheorem{corollary}[theorem]{Corollary}
\newtheorem{mtheorem}{Main Theorem}
\theoremstyle{definition}
\newtheorem{definition}[theorem]{Definition}
\theoremstyle{remark}
\numberwithin{equation}{section}
\newcommand{\id}{\textup{id}}
\numberwithin{equation}{section}
\newcommand*\owedge{\mathpalette\@owedge\relax}
\newcommand*\@owedge[1]{%
  \mathbin{%
    \ooalign{%
      $#1\m@th\bigcirc$\cr
      \hidewidth$#1\m@th\wedge$\hidewidth\cr
    }%
  }%
}
\numberwithin{equation}{section}
\title[Ricci flow on manifolds with boundary]{Ricci flow on manifolds with boundary with arbitrary initial metric}
\author{Tsz-Kiu Aaron Chow}
\address{Department of Mathematics, Columbia University, New York, NY}
\email{achow@math.columbia.edu}
\begin{document}

\maketitle

\begin{abstract}
In this paper, we study the Ricci flow on manifolds with boundary. In the paper, we substantially improve Shen's result \cite{Sh} to manifolds with arbitrary initial metric. We prove short-time existence and uniqueness of the solution, in which the boundary becomes instantaneously totally geodesic for positive time. Moreover, we prove that the flow we constructed preserves natural boundary conditions. More specifically, if the initial metric has a convex boundary, then the flow preserves positive curvature operator and the PIC1, PIC2 conditions. Moreover, if the initial metric has a two-convex boundary, then the flow preserves the PIC condition.
\end{abstract}

\tableofcontents %% Just for papers exceeding 50 pages.

\section{Introduction}
\bigskip

In this article, we study the deformation of Riemannian metrics on compact smooth manifolds with boundary by Ricci flow. The study of Ricci flow dates back to the work of Hamilton \cite{Ham82}, who applied it to prove that every three dimensional closed compact Riemannian manifold with positive Ricci curvature admits a metric of constant positive sectional curvature. Since then, many significant results on the interaction between geometry and topology were established using Ricci flow. One important geometric property Ricci flow enjoys is that it preserves various conditions curvatures. A crucial ingredient in \cite{Ham82} is the preservation of positive Ricci curvature along Ricci flow on closed compact Riemannian manifolds in dimension three. In dimension four, Hamilton showed that the Ricci flow on closed compact Riemannian manifolds preserves positive curvature operator \cite{Ham86} and positive isotropic curvature \cite{Ham97}, which were used in subsequent convergence results of the Ricci flow. In higher dimensions, the natural curvature conditions PIC, PIC1, PIC2 are preserved by the Ricci flow on closed compact Riemannian manifolds, as observed by Brendle \cite{Bre08}, Brendle and Schoen \cite{BS}, Nguyen \cite{Ngu}. These curvature conditions play vital roles in general convergence of Ricci flow in higher dimensions, as in the results of Brendle \cite{Bre08, Bre19}.  

A natural question to ask is whether these results can be generalized to smooth manifolds with boundary. More specifically, we would like to study short-time existence problem for Ricci flow on manifold with boundary. Moreover, we would like the flow to preserve natural curvature conditions as its counterpart in the case of closed manifolds. The challenge to the short-time existence problem is the diffoemorphism invariance of the Ricci curvature. One would need to study carefully the geometry of the boundary in order to set up a well-posed boundary value problem. The challenge to the preservation of curvature conditions is the failure of usual parabolic maximum principle on the boundary, if one asks for an arbitrary initial metric. 

The first work in this direction was done by Shen \cite{Sh}, where he proved short-time existence to the Ricci flow on smooth compact manifold with umbilic boundary. One might ask for deformation of a more arbitrary metric. Later on, short-time existence results for Ricci flow on smooth compact manifold with boundary with an arbitrary initial metric were established in the works of Pulemotov \cite{Pul} and Gianniotis \cite{Gian}. In particular, Gianniotis \cite{Gian} has proven both short-time existence and uniqueness results, where he set up the evolution equations by prescribing the mean curvature and the conformal class of the boundary. However, there are no results so far on preserving curvature conditions in this setting. In dimension 2, more results were contributed by Brendle \cite{Bre02}, Cortissoz and Murcia \cite{Cort}.

Our work substantially improves the result of \cite{Sh}, where we prove short-time existence and uniqueness of solutions to Ricci flow on manifold with boundary in which the boundary become instantaneously umbilic for positive time. We remark that our result does not require the initial metric to have a umbilic boundary. We approach the problem via doubling of the manifold. Extending the initial metric to the doubled manifold by reflection, we obtain an extended metric which is merely H\"older continuous. We seek a solution to Ricci flow on the doubled manifold with a H\"older continuous initial metric that is smooth for positive time. To that end, we proved:

\bigskip
\begin{mtheorem}
	Let $M$ be a closed compact smooth manifold and $g_0\in C^{\alpha}(M)$ be a Rimannian metric for some $\alpha\in (0,1)$. Let $k\geq 2$, $\gamma\in (0,\alpha)$ and $\beta\in (\gamma,\alpha)$ be given. Then there exists a $C^{1,\beta}$ diffeomorphism $\psi$ and $T = T(M, \|g_0\|_{\alpha})$, $K = K(M, k,  \|g_0\|_{\alpha})$ such that the following holds:\\
	
	There is a solution $g(t)\in\mathcal{X}_{k,\gamma}^{(\beta)}(M\times[0,T])$ to the Ricci flow
	\begin{align*}
		\frac{\partial}{\partial t}g(t)=-2Ric(g(t))\quad&\text{on}\quad \tilde{M}\times(0,T]
	\end{align*}
	such that $g(0) = \psi^*g_0$ and
 	$$\|g\|_{\mathcal{X}_{k,\gamma}^{(\beta)}(M\times[0,T])}\leq K. $$	

\end{mtheorem}
\bigskip
Here the Banach spaces $\mathcal{X}_{k,\gamma}^{(\beta)}(M\times (0,T])$ will be defined in the beginning of section 4. We also show that the above theorem gives a canonical solution to the Ricci flow, in the sense of the following uniqueness theorem:

\bigskip

\begin{mtheorem}
 Let $\alpha\in (0,1)$ be given.	Let $M$ be a closed compact smooth manifold and $g_0\in C^{\alpha}(M)$ be a Riemannian metric on $M$. Suppose that the pairs $(g^1(t), \psi^1)$ and $(g^2(t), \psi^2)$ satisfy the conclusion of Main Theorem 1. Then there exists a $C^{k+1}$ diffeomorphism $\varphi:M\to M$ such that
 	 $$g^2(t) = \varphi^*(g^1(t)),$$
 	 where $\psi^2 = \psi^1\circ\varphi$. In particular, $((\psi^1)^{-1})^*g^1(t) = ((\psi^2)^{-1})^*g^2(t)$.
\end{mtheorem}

\bigskip

Via doubling the above results imply the existence and uniqueness results for Ricci flow on manifolds with boundary:

\bigskip

\begin{mtheorem}
	Let $(M,g_0)$ be a compact smooth Riemannian manifold with boundary. Let $k\geq 2$, $\beta\in (0,1)$ and $\varepsilon\in (0, 1-\beta)$ be given. Then there exists a $C^{1+\beta}$ diffeomorphism $\psi$ and $T = T(M, \hat{g}, \|g_0\|_{\beta+\varepsilon})$, $K = K(M, k, \hat{g},  \|g_0\|_{\beta+\varepsilon})$ such that the following holds:\\
	
	There is a solution $g(t)\in\mathcal{X}_{k,\gamma}^{(\beta)}(M\times[0,T])$ to the Ricci flow on manifold with boundary
\begin{align}
    \begin{cases}
    \frac{\partial}{\partial t}g(t)=-2Ric(g(t))\quad&\text{on}\quad M\times(0,T]\\
    A_{g(t)}=0\quad\quad\quad\quad\quad &\text{on}\quad \partial M\times (0, T]  
    \end{cases}
\end{align}
	such that $g(0) = \psi^*g_0$ and
	 $$\|g\|_{\mathcal{X}_{k,\gamma}^{(\beta)}(M\times[0,T])}\leq K.$$	
Here $A_{g(t)}$ stands for the second fundamental form of the boundary with respect to the metric $g(t)$. For each $t>0$, the metric $g(t)$ extends smoothly to the doubled manifold $\tilde{M}$ of $M$, and the doubled metric lies in $\mathcal{X}_{k,\gamma}^{(\beta)}(\tilde{M}\times[0,T])$. The diffeomorphism $\psi$ also extends to a $C^{1+\beta}$ diffeomorphism on the doubled manifold. 
\end{mtheorem}

\vskip 0.2cm

\begin{mtheorem}
	Let $(M,g_0)$ be a compact smooth Riemannian manifold with boundary. Suppose that the pairs $(g^1(t), \psi^1)$ and $(g^2(t), \psi^2)$ satisfy the conclusion of Main Theorem 3. Then there exists a $C^{k+1}$ diffeomorphism $\varphi:M\to M$ such that $\varphi$ extends to a $C^{k+1}$ diffeomorphism on the doubled manifold and
 	 $$g^2(t) = \varphi^*(g^1(t)),$$
 	 where $\psi^2 = \psi^1\circ\varphi$. In particular, $((\psi^1)^{-1})^*g^1(t) = ((\psi^2)^{-1})^*g^2(t)$.
\end{mtheorem}
\bigskip

Next, we show that the canonical solution constructed above preserves natural curvature conditions, provided that the geometry of the boundary is controlled. More precisely, we prove that the flow preserves positive curvature operator, PIC1 and PIC2 conditions, provided that the boundary is convex with respect to the initial metric. Moreover, if the initial metric has a two-convex boundary, we prove that the flow preserves the PIC conditions. We recall the definitions of various curvature conditions we are interested in:
\bigskip
\begin{definition}\
 \begin{itemize}
    \item[(i)]  We say that $(M,g)$
    has positive curvature operator if
    $R(\varphi,\varphi)>0$
    for all nonzero two-vectors $\varphi\in \bigwedge^2T_pM$ and all $p\in M$.\\
    \item [(ii)]   We say that $(M,g)$ is PIC if $R(z,w,\bar{z}, \bar{w}) > 0$
    for all linearly independent vectors $z,w\in T_pM\otimes \mathbb{C}$ such that $g(z,z)=g(w,w)=g(z,w)=0$.\\
    \item[(iii)]    We say that $(M,g)$ is PIC1 if $R(z,w,\bar{z},\bar{w})> 0$
    for all linearly independent vectors $z,w\in T_pM\otimes \mathbb{C}$ such that     $g(z,z)g(w,w)-g(z,w)^2=0$.\\
    \item[(iv)]    We say that $(M,g)$ is PIC2 if $R(z,w,\bar{z},\bar{w})> 0$
    for all linearly independent vectors $z,w\in T_pM\otimes \mathbb{C}$. That means $(M,g)$ has positive complex sectional curvature.
\end{itemize}	
\end{definition}

We now state the result for curvature preservation:

\begin{mtheorem}
Suppose that $g(t)$ is a canonical solution to the Ricci flow on manifold with boundary  on $M\times [0,T]$ given by the Main Theorem 3. Then the following holds:\\
 If $(M,g_0)$ has a convex boundary, then
\begin{itemize}
    \item[(i)] $(M,g_0)$ has positive curvature operator $\implies$ $(M,g(t))$ has positive curvature operator;
    \item[(ii)] $(M,g_0)$ is PIC1 $\implies$ $(M,g(t))$ is PIC1;
    \item[(iii)] $(M,g_0)$ is PIC2 $\implies$ $(M,g(t))$ is PIC2.
\end{itemize}
If $(M,g_0)$ has a two-convex boundary, then
\begin{itemize}
    \item[(iv)] $(M,g_0)$ is PIC $\implies$ $(M,g(t))$ is PIC.
\end{itemize}
Moreover, if $(M,g_0)$ has a mean-convex boundary, then
\begin{itemize}
    \item[(v)] $(M,g_0)$ has positive scalar curvature $\implies$ $(M,g(t))$ has positive scalar curvature.
\end{itemize}
\end{mtheorem}

It is worth noting that a similar result to the Main Theorem 5 was proved by Schlichting in his Ph.D. thesis \cite{Sch} which concerns with curvature preservation along the Ricci-DeTurck flow under convexity assumptions on the boundary. Nevertheless, our result allows the boundary to be two-convex for preserving positive isotropic curvature. The proof in Schlichting's thesis is different from ours, whereas in our proof of Main Theorem 5 additional properties of the approximated metrics constructed in \cite{Chow} are used.

Since the canonical solution we obtained preserves natural curvature conditions under suitable assumptions of the boundary data and can be extended smoothly to the doubled manifold, many results in Ricci flow on closed compact manifolds can be applied to our case. For examples, if the initial metric is PIC1 and has a convex boundary, our results and \cite{Bre08} then imply that the Ricci flow converges to a metric of constant curvature with a totally geodesic boundary after rescaling. From \cite{Bre19}, we can also obtain a topological classification of all compact manifolds with boundary of dimension $n\geq 12$ which admit metrics that are PIC and have two-convex boundary and do not contain non-trivial incompressible $(n-1)$-dimensional space forms.

The proof of the Main Theorems will occupy section 6. In section 3, we study a linear parabolic equation on vector bundles over $M$. This result will be applied to prove the existence and uniqueness of solutions to the Ricci-DeTurck flow and the harmonic map heat flow in section 4 and 5 using the Banach fixed point argument.

\vskip 0.5cm
\noindent\textbf{Acknowledgement:}
The author would like to express his gratitude to his advisor Professor Simon Brendle for his continuing support, his guidance and many inspiring discussions. The author would also like to thank the anonymous referee for his/her very careful reading and insightful comments, and for his/her valuable suggestions to improve the exposition of the previous version.

\bigskip
\section{Definition and Notation}
Throughout Sections 2, 3, 4, and 5, we assume, unless said otherwise, that $M$ is a closed compact manifold, $\pi:E\to M$ is a smooth vector bundle, and $\hat{g}$ a smooth metric on $M$. For sections $\eta, \zeta$ of $E$, the expression $ \eta*\zeta$ means a bilinear combination with respect to $\hat{g}$. We fix a time $T\leq 1$, which we will make small according to our needs. We fix a finite set of coordinate charts which simultaneously serve as trivializations of the vector bundle $\{U_s, \varphi_s, \tilde{\varphi}_s\}_{s=1,\dots,m}$ so that $\varphi_s: U_s\to \mathbb{R}^n$ is a diffeomorphism and $\tilde{\varphi}_s:\pi^{-1}(U_s)\to U_s\times\mathbb{R}^N$ is a trivialization. We may also assume the charts trivialize the vector bundle $\textup{Sym}^2(T^*M)$. Our trivializations give local frames $\bold{e}_r^s$ for $E$ and local frames $dx^idx^j$ for $\textup{Sym}^2(T^*M)$ on $U_s$ (note the coordinates $x_1,\dots, x_n$ depend on $s$). \\

We want to work on the parabolic H\"older spaces of tensors on $M$. Consider any tensor bundle of the form $E = T^{(p,q)}M$. If $S$ is a section of $E$, then in the open set $U_s$ it can be written as $S=\sum_{r=1}^NS^{r}_s\bold{e}_r^s$, where $\{\bold{e}_r^s\}$ is a local frame on $E|_{\pi^{-1}(U_s)}$. For any $\alpha\in (0,1)$, we define by $C^{ \alpha,\alpha/2}(M\times[t_1,t_2]; E)$ the space of maps $\eta: M\times [t_1, t_2]\to E$ such that $\eta(t)\in C^{ \alpha,\alpha/2}(M;E)$ is a $\alpha$-H\"older continuous section of $E$ for each $t\in [t_1, t_2]$. Given any map $\eta\in C^{ \alpha,\alpha/2}(M\times[t_1,t_2]; E)$,  we define the associated $C^0$ norm to be
\begin{align*}
	|\eta|_{0; M\times [t_1,t_2]}:=\sum_{s}\sum_{r=1}^N|\eta^{r}_s|_{0; \varphi_s(U_s)\times[t_1,t_2]},
\end{align*}
and we also define the associated parabolic H\"older semi-norm to be 
\begin{align*}
    [\eta]_{\alpha,\alpha/2; M\times [t_1,t_2]}:=\sum_{s}\sum_{r=1}^N[\eta^{r}_s]_{\alpha,\alpha/2; \varphi_s(U_s)\times[t_1,t_2]}.
\end{align*}
Subsequently, we define the parabolic H\"older norm on the space $C^{ \alpha,\alpha/2}(M\times[t_1,t_2]; E)$ to be
	\[ \|\eta\|_{\alpha,\alpha/2; M\times [t_1,t_2]} := |\eta|_{0; M\times [t_1,t_2]} +  [\eta]_{\alpha,\alpha/2; M\times [t_1,t_2]}.  \]
Next, for any nonnegative integer $k\geq 0$, the space of maps $C^{ k+\alpha,(k+\alpha)/2}(M\times[t_1,t_2]; E)$ can be defined similarly using the same charts $\{U_s\}$, where the derivatives are taken with respect to the fixed connection $\hat{\nabla}$. For instance, the space $C^{ k+\alpha,(k+\alpha)/2}(M\times[t_1,t_2]; E)$  consists of maps $\eta: M\times[t_1,t_2]\to E$ such that $\eta(t)\in C^k(M;E)$ is a $C^k$-differentiable section of $E$ for each $t$ with respect to $\hat{\nabla}$, and $\hat{\nabla}^k\eta(t)\in C^{ \alpha,\alpha/2}(M;(T^*M)^{\otimes k}\otimes E)$  is a $\alpha$-H\"older continuous section for each $t$. Then the associated norm can be defined similarly by 
	\[ \|\eta\|_{k+\alpha,\frac{k+\alpha}{2}; M\times [t_1,t_2]} := |\eta|_{0; M\times [t_1,t_2]} +\cdots + |\hat{\nabla}^k\eta|_{0; M\times [t_1,t_2]} +  [\hat{\nabla}^k\eta]_{\alpha,\alpha/2; M\times [t_1,t_2]}.  \]
 The elliptic H\"older space $C^{ k+\alpha}(M\times[t_1,t_2]; E)$ can also be defined similarly. Note that the H\"older norms defined this way are equivalent in different atlas. \\

We wish to work on weighted parabolic H\"older spaces in order to prove existence results. In the sequel, we define weighted norms on $M\times (0,T]$. Given a nonnegative integer $k\geq 0$, a H\"older exponent $\alpha\in (0,1)$ and a positive real number $\gamma$, we define:
\begin{align}
    \|\eta\|_{\mathcal{C}^{k,\alpha}_{\gamma}(M\times(0,T])}
    :=\sum_{i=0}^k\sup_{\sigma\in (0,T]}\sigma^{\gamma+\frac{i}{2}}|\hat{\nabla}^i\eta|_{0; M\times [\frac{\sigma}{2},\sigma]} + \sum_{i=0}^k\sup_{\sigma\in (0,T]}\sigma^{\gamma+\frac{\alpha}{2}+\frac{i}{2}}[\hat{\nabla}^i\eta]_{\alpha,\alpha/2; M\times [\frac{\sigma}{2},\sigma]},
\end{align}
Throughout the note, we will work on the Banach spaces that we define below. 

\begin{definition}
Let $M$ be a smooth closed compact manifold, $E$ a vector bundle on $M$, and $\hat{g}$ a smooth background metric on $M$. Given a nonnegative integer $k\geq 0$, a H\"older exponent $\alpha\in (0,1)$ and a positive real number $\gamma$, we define a weighted parabolic H\"older space on the space of  sections by
\begin{align}
    &\mathcal{C}^{k,\alpha}_{\gamma}(M\times(0,T];E)\\
    &\notag := \{\eta: M\times (0,T]\to E|\ \eta(t)\in C^k(M;E)\ \text{for each}\ t\in (0,T],\ \|\eta\|_{\mathcal{C}^{k,\alpha}_{\gamma}(M\times(0,T])}
<\infty\}.
\end{align}
\end{definition}

We now state some easy consequences from the definition of these spaces:

\begin{lemma}{\label{properties of weighted space}}\
\begin{itemize}
	\item[(1)] $\eta\in \mathcal{C}^{k,\alpha}_{\gamma}(M\times (0,T])$ implies $\hat{\nabla}^j\eta\in \mathcal{C}^{k-j,\alpha}_{\gamma+\frac{j}{2}}(M\times (0,T])$ for $j\leq k$;\\
	\item[(2)] For any $\delta > 0\ $, $\|\eta\|_{\mathcal{C}^{k,\alpha}_{\gamma+\delta}(M\times (0,T])}\leq T^{\delta}\|\eta\|_{\mathcal{C}^{k,\alpha}_{\gamma}(M\times (0,T])}$. In particular, this implies that
		\[\mathcal{C}^{k,\alpha}_{\gamma}(M\times (0,T])\subset \mathcal{C}^{k,\alpha}_{\gamma+\delta}(M\times (0,T]). \]
	\item [(3)] Suppose that $\eta\in \mathcal{C}^{k,\alpha}_{\gamma}(M\times (0,T])$, then $\eta\in \mathcal{C}^{k,\beta}_{\gamma}(M\times (0,T])$ for any $\beta<\alpha$;\\

	\item [(4)]Suppose that $\eta\in \mathcal{C}^{k,\alpha}_{\gamma}(M\times (0,T])$ and $\zeta\in \mathcal{C}^{k,\alpha}_{\delta}(M\times (0,T])$. Then
	$$ \eta*\zeta \in \mathcal{C}^{k,\alpha}_{\gamma+\delta}(M\times (0,T]), $$
	where $ \eta*\zeta$ means a bilinear combination with respect to $\hat{g}$.
	Moreover,
	$$ \|\eta*\zeta\|_{\mathcal{C}^{k,\alpha}_{\gamma + \delta}(M\times (0,T])}\leq K(\hat{g})\ \|\eta\|_{\mathcal{C}^{k,\alpha}_{\gamma}(M\times (0,T])}\|\zeta\|_{\mathcal{C}^{k,\alpha}_{\delta}(M\times (0,T])}. $$
	
\end{itemize}
\end{lemma}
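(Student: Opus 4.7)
The plan is to verify each of the four items by unwinding the weighted norm definition and applying a short pointwise estimate; none of them requires more than a standard Hölder-space inequality together with an algebraic identity on the weights $\sigma^{\gamma+i/2}$ and $\sigma^{\gamma+\alpha/2+i/2}$. I will treat the items in the order stated, since they are logically independent.

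For (1), I would rewrite each summand appearing in $\|\hat\nabla^j\eta\|_{\mathcal{C}^{k-j,\alpha}_{\gamma+j/2}}$: for $0\leq i\leq k-j$ the $C^0$ summand $\sigma^{(\gamma+j/2)+i/2}|\hat\nabla^i(\hat\nabla^j\eta)|_{0}$ coincides with $\sigma^{\gamma+(i+j)/2}|\hat\nabla^{i+j}\eta|_{0}$, which is already one of the summands in $\|\eta\|_{\mathcal{C}^{k,\alpha}_{\gamma}}$; the Hölder summands transform identically. Taking the supremum in $\sigma$ and summing yields the claim. For (2), I factor $\sigma^{\gamma+\delta+i/2}=\sigma^{\delta}\sigma^{\gamma+i/2}\leq T^{\delta}\sigma^{\gamma+i/2}$ out of every $C^0$ summand and analogously for the Hölder terms.

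The main step is (3), which I plan to handle via the interpolation inequality
\[ [f]_{\beta,\beta/2;\,M\times[\sigma/2,\sigma]}\;\leq\;2^{\,1-\beta/\alpha}\,[f]_{\alpha,\alpha/2;\,M\times[\sigma/2,\sigma]}^{\beta/\alpha}\,|f|_{0;\,M\times[\sigma/2,\sigma]}^{\,1-\beta/\alpha}, \]
which one proves by splitting pairs $(x,t),(y,s)$ according to whether the parabolic distance $|x-y|+|t-s|^{1/2}$ is smaller or larger than $([f]_{\alpha,\alpha/2}/|f|_{0})^{-1/\alpha}$ and using the trivial bounds $|f(x,t)-f(y,s)|\leq[f]_{\alpha,\alpha/2}(|x-y|+|t-s|^{1/2})^{\alpha}$ and $|f(x,t)-f(y,s)|\leq 2|f|_0$. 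Applying this with $f=\hat\nabla^i\eta$ and using the algebraic identity
\[ \sigma^{\gamma+\beta/2+i/2}=\bigl(\sigma^{\gamma+\alpha/2+i/2}\bigr)^{\beta/\alpha}\bigl(\sigma^{\gamma+i/2}\bigr)^{1-\beta/\alpha} \]
gives $\sup_{\sigma}\sigma^{\gamma+\beta/2+i/2}[\hat\nabla^i\eta]_{\beta,\beta/2}\leq 2^{\,1-\beta/\alpha}\|\eta\|_{\mathcal{C}^{k,\alpha}_{\gamma}}$; summing over $i$ completes (3).

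For (4), I plan to apply the Leibniz formula $\hat\nabla^i(\eta\ast\zeta)=\sum_{j=0}^{i}\binom{i}{j}\hat\nabla^j\eta\ast\hat\nabla^{i-j}\zeta$, where the precise $\ast$ is allowed to vary between terms and all $\hat g$-dependent coefficients are absorbed into the constant $K(\hat g)$, together with the pointwise bounds $|\xi\ast\tau|_{0}\leq K|\xi|_{0}|\tau|_{0}$ and $[\xi\ast\tau]_{\alpha,\alpha/2}\leq K\bigl(|\xi|_{0}[\tau]_{\alpha,\alpha/2}+[\xi]_{\alpha,\alpha/2}|\tau|_{0}\bigr)$. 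The crucial bookkeeping is the weight splitting $\sigma^{(\gamma+\delta)+i/2}=\bigl(\sigma^{\gamma+j/2}\bigr)\bigl(\sigma^{\delta+(i-j)/2}\bigr)$ for every $0\leq j\leq i$, with $\alpha/2$ added to exactly one factor in the Hölder case, so that each summand decouples into a factor controlled by $\|\eta\|_{\mathcal{C}^{k,\alpha}_{\gamma}}$ and a factor controlled by $\|\zeta\|_{\mathcal{C}^{k,\alpha}_{\delta}}$. Since every item reduces to an algebraic identity on the weights plus a standard pointwise inequality, the only non-mechanical ingredient is the interpolation used in (3); everything else is routine bookkeeping.
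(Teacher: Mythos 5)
Your proof is correct and follows the same approach as the paper: items (1), (2), and (4) are proved exactly as in the text by unwinding the weight and applying the Leibniz rule, and item (3) is handled via the standard interpolation inequality, which is presumably what the paper's one-line justification (``follows from the properties of parabolic H\"older spaces'') implicitly invokes. It is worth noting that your choice of interpolation for (3) is not merely a cosmetic detail: the naive embedding
\[
[\hat{\nabla}^i\eta]_{\beta,\beta/2;\,M\times[\sigma/2,\sigma]}\leq D^{\alpha-\beta}[\hat{\nabla}^i\eta]_{\alpha,\alpha/2;\,M\times[\sigma/2,\sigma]}
\]
(with $D$ the parabolic diameter) would produce the weight $\sigma^{(\beta-\alpha)/2}\cdot\sigma^{\gamma+\alpha/2+i/2}$, which blows up as $\sigma\to0$ and hence does \emph{not} yield a finite $\mathcal{C}^{k,\beta}_{\gamma}$ norm, whereas your weight identity $\sigma^{\gamma+\beta/2+i/2}=(\sigma^{\gamma+\alpha/2+i/2})^{\beta/\alpha}(\sigma^{\gamma+i/2})^{1-\beta/\alpha}$ pairs exactly with the interpolation exponents and gives a uniform bound. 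So you have correctly identified the only point in this lemma where the argument is more than mechanical.
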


\begin{proof} 
	Statements (1) and (2) follow from the definition. Statement (3) follows from the properties of parabolic H\"older spaces. Now we prove statement (4). 
	Fix $\sigma\in (0,T]$. For any $j\leq k$, we have
	$$ \hat{\nabla}^j(\eta*\zeta) = \sum_{j_1+j_2=j}\hat{\nabla}^{j_1}\eta*\hat{\nabla}^{j_2}\zeta.	$$
	Let us denote $M_{\sigma} = M\times [\frac{\sigma}{2},\sigma]$ for simplicity, we have
	\begin{align*}
		\sigma^{\gamma+\delta+\frac{j}{2}}|	\hat{\nabla}^j(\eta*\zeta) |_{0;M_{\sigma}} &\leq K(\hat{g})\sum_{j_1+j_2=j}\sigma^{\gamma+\frac{j_1}{2}}|	\hat{\nabla}^{j_1}\eta |_{0;M_{\sigma}}\sigma^{\delta+\frac{j_2}{2}}|	\hat{\nabla}^{j_2}\zeta |_{0;M_{\sigma}}\\
		&\leq K(\hat{g})\ \|\eta\|_{\mathcal{C}^{k,\alpha}_{\gamma}(M\times (0,T])}\|\zeta\|_{\mathcal{C}^{k,\alpha}_{\delta}(M\times (0,T])}
	\end{align*}
	and
	\begin{align*}
		&\sigma^{\gamma+\delta+\frac{\alpha}{2}+\frac{j}{2}}[\hat{\nabla}^j(\eta*\zeta) ]_{\alpha,\frac{\alpha}{2};M_{\sigma}} \\
		&\leq K(\hat{g})\sum_{j_1+j_2=j}\Big(\sigma^{\gamma+\frac{j_1}{2}}|	\hat{\nabla}^{j_1}\eta |_{0;M_{\sigma}}\sigma^{\delta+\frac{\alpha}{2}+\frac{j_2}{2}}[	\hat{\nabla}^{j_2}\zeta ]_{\alpha,\frac{\alpha}{2};M_{\sigma}} + \sigma^{\gamma+\frac{\alpha}{2}+\frac{j_1}{2}}[	\hat{\nabla}^{j_1}\eta ]_{\alpha,\frac{\alpha}{2};M_{\sigma}}\sigma^{\delta+\frac{j_2}{2}}|	\hat{\nabla}^{j_2}\zeta |_{0;M_{\sigma}}\Big)\\
		&\leq K(\hat{g})\ \|\eta\|_{\mathcal{C}^{k,\alpha}_{\gamma}(M\times (0,T])}\|\zeta\|_{\mathcal{C}^{k,\alpha}_{\delta}(M\times (0,T])}.
	\end{align*}
	From these, statement (4) follows.	
\end{proof}

\newpage

\section{Linear Parabolic Equation with $C^{\alpha}$ Initial Data}
Although our goal is to solve a boundary-value problem for PDE on a manifold with boundary, it is equivalent to work with PDE with rough initial data on a closed manifold. Fix a real number $I\in (0,1)$. Let $w(x,t), t\in [0,I]$ be a continuous family of Riemannian metrics on $M$. Given a section $\eta_0\in C^{\alpha}(M; E)$, we consider the following parabolic system on vector bundle:

\begin{align}
    \begin{cases}
    \frac{\partial}{\partial t}\eta(x,t) - tr_{w}\hat{\nabla}^2\eta(x,t) = F(x,t) \quad &\text{on}\quad M\times (0,T]\\
    \eta(x,0) = \eta_0(x) \quad &\text{on}\quad M,
    \end{cases}
\end{align}
where $T\leq I$ and $F\in \Gamma(M\times (0,T]; E),\ w\in \Gamma(M\times [0,T]; \textup{Sym}^2(T^*M))$. Here $\hat{\nabla}$ is the Levi-Civita connection with respect to the background metric $\hat{g}$. Our goal is to prove solvability of (3.1). To do that, we need the uniform parabolicity assumption on $w$: there is a $\lambda>0$ such that
\begin{align}
    \lambda|\xi|_{\hat{g}}^2(x)\geq w^{kl}(x,t)\xi_k(x)\xi_l(x)\geq\frac{1}{\lambda}|\xi|_{\hat{g}}^2(x)
\end{align}
for any $(x,t)\in M\times [0,I]$ and any $\xi\in \Gamma(TM)$.

We now state the main result of this section. It will be utilized to study the existence of Ricci flow on manifold with boundary in the next section.\\

\begin{theorem}{\label{theorem linear equation}}
Let $\alpha, \gamma\in (0,1)$ be given such that $\alpha >\gamma$.  Let $k\geq 0$ be an non-negative integer. Suppose that 
\begin{itemize}
	\item[(1)] $\eta_0\in C^{\alpha}(M;E)$;
	\item[(2)] $w$ satisfies the uniform parabolicity condition (3.2);
    \item[(3)] $\|w\|_{\gamma,\frac{\gamma}{2}; M\times [0,I]}  + \|\hat{\nabla}w\|_{\mathcal{C}^{k-1,\gamma}_{\frac{1}{2}}(M\times (0,I])} \leq A$ when $k\geq 1$; \\ or $\|w\|_{\gamma,\frac{\gamma}{2}; M\times [0,I]}   \leq A$ when $k = 0$.
\end{itemize}
Then there exits a positive constant $K = K(M, k,  \hat{g}, A)$ such that the following holds:

For each $T\leq I$, if $F\in\mathcal{C}^{k,\gamma}_{1-\frac{\alpha}{2}}(M\times (0,T];E)$, then there is an unique solution $\eta$ to the system (3.1) such that 
	$$\eta\in C^{\alpha,\frac{\alpha}{2}}(M\times [0,T];E),\ \hat{\nabla}\eta\in \mathcal{C}^{k+1,\gamma}_{\frac{1}{2}-\frac{\alpha}{2}}(M\times (0,T];E).$$
Moreover, $\eta$ satisfies the estimate
\begin{align*}
	&\|\eta\|_{\alpha,\frac{\alpha}{2}; M\times [0,T]}  + \|\hat{\nabla}\eta\|_{\mathcal{C}^{k+1,\gamma}_{\frac{1}{2}-\frac{\alpha}{2}}(M\times (0,T])} \leq K	(\|F\|_{\mathcal{C}^{k,\gamma}_{1-\frac{\alpha}{2}}(M\times (0,T])} + \|\eta_0\|_{\alpha; M}).
\end{align*}
\end{theorem}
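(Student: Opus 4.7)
The plan is to prove the estimate as an a priori bound for smooth solutions, use approximation of the data to obtain existence, and deduce uniqueness from a weighted energy argument. For uniqueness, if $\zeta = \eta^1 - \eta^2$ denotes the difference of two solutions, then $\zeta$ has vanishing initial trace, and $\hat{\nabla}^2 \zeta$ lies in $\mathcal{C}^{k,\gamma}_{1-\alpha/2}$ by Lemma 2.2(1); since the weight $1 - \alpha/2 < 1$, the quantity $\int_0^t \|\hat{\nabla}^2 \zeta(\cdot, s)\|_{\infty}\, ds$ is finite, so a standard sup-in-space/Gr\"onwall-in-time argument forces $\zeta \equiv 0$.

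For the a priori estimate, I would localize using the given atlas $\{U_s\}$, reducing the problem to scalar parabolic systems of the form $\partial_t u - w^{ij}\partial_i \partial_j u = \tilde F$ on $\R^n$. The backbone is the constant-coefficient heat equation $\partial_t u - a^{ij}\partial_i \partial_j u = h$ with $u(\cdot, 0) = u_0 \in C^\alpha(\R^n)$, solved by Duhamel against the Gaussian heat kernel $K$. The homogeneous piece $\int K(x-y, t) u_0(y)\, dy$ preserves $C^\alpha$ regularity and satisfies $|\partial^{i+1} u(x, t)| \lesssim t^{-(1/2 - \alpha/2 + i/2)}[u_0]_\alpha$ via the Gaussian bounds $|\partial^{i+1} K(x, t)| \lesssim t^{-(n+i+1)/2} e^{-c|x|^2/t}$. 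The inhomogeneous piece with $|h(\cdot, s)| \lesssim s^{-(1-\alpha/2)}$ produces, after one spatial derivative, the convergent integral $\int_0^t (t-s)^{-1/2} s^{-(1-\alpha/2)}\, ds \sim t^{-(1/2 - \alpha/2)}$, matching the claimed weight. I would upgrade to variable H\"older coefficients by the freezing trick at $(x_0, 0)$: on a parabolic cylinder of radius $r$, hypothesis (3) yields $|w^{ij}(x, t) - w^{ij}(x_0, 0)| \lesssim A\, r^\gamma$, so the perturbation $[w^{ij}(x, t) - w^{ij}(x_0, 0)]\, \partial_i \partial_j \eta$ is bounded by $r^\gamma \sup |\hat{\nabla}^2 \eta|$ and can be absorbed after shrinking $r$. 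A standard finite-covering argument then yields the global estimate for $k = 0$.

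For $k \geq 1$, I would bootstrap using the smoothing effect. Fix $\sigma > 0$. On the cylinder $M \times [\sigma/2, \sigma]$, hypothesis (3) gives $w \in C^{k, \gamma}$ with norm blowing up like $\sigma^{-k/2}$, while the $k = 0$ estimate already controls $\hat{\nabla} \eta$ with blowup $\sigma^{-(1/2 - \alpha/2)}$. Standard interior parabolic Schauder estimates, iterated $k$ times, then yield $C^{k+1, \gamma}$ bounds on $\eta$ with exactly the blowup rate $\sigma^{-(1/2 - \alpha/2 + k/2)}$ encoded by the target weight. The commutator and product terms (curvature of $\hat{\nabla}$, derivatives of $w$ paired with derivatives of $\eta$, derivatives of $F$) combine via Lemma 2.2(4) so that their weights add to precisely the value required by the next Schauder step. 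Existence then follows by mollifying $(\eta_0, w, F)$, applying classical parabolic theory on the closed manifold to produce smooth solutions $\eta^{(j)}$, and extracting a limit using the uniform a priori bound together with Arzel\`a--Ascoli on compact subsets of $(0, T]$.

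The main obstacle is the exponent bookkeeping between the coefficient regularity $\gamma$, the initial data regularity $\alpha$, and the weights $1 - \alpha/2$ and $1/2 - \alpha/2$ appearing on $F$ and $\hat{\nabla} \eta$. The hypothesis $\alpha > \gamma$ together with these specific weights is precisely what simultaneously makes the heat-kernel Duhamel integrals convergent at $t = 0$, makes the Schauder coefficient-oscillation term absorbable on small parabolic cylinders, and makes the product rule of Lemma 2.2(4) close under bootstrapping. Verifying this balance at each stage, and carrying the weighted norms invariantly across the charts $\{U_s\}$, is where I expect the real technical work.
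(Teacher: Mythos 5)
Your proposal is sound but follows a genuinely different global architecture from the paper's. The paper proves the theorem by constructing an explicit approximate inverse $R:\mathcal{W}_k\to\mathcal{X}_{k+2}$ — chart-by-chart solutions from the variable-coefficient fundamental solution, glued by a partition of unity — and then shows that the error operators $S=HR-Id$ and $G=RH-Id$ have norm $O(T^{1/2})$, so $H$ has a bounded two-sided inverse by a Neumann-series argument; existence and uniqueness both drop out of the invertibility of $H$ at once. You propose instead the classical route: prove the full weighted estimate as an a priori bound for smooth solutions (constant-coefficient heat kernel plus freezing for $k=0$, interior Schauder bootstrap for $k\geq 1$), then obtain existence by mollifying $(\eta_0,w,F)$ and passing to the limit, with uniqueness handled separately. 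The core analysis — Gaussian kernel bounds, absorbing the $\gamma$-oscillation of $w$ on small scaled parabolic cylinders, interior Schauder iteration, the exponent bookkeeping that makes the weights $1-\tfrac\alpha2$ and $\tfrac12-\tfrac\alpha2$ close — coincides with the paper's Lemmas 3.2 and 3.3. The trade is that you pay with an approximation/compactness step for existence and a separate uniqueness argument, while the paper pays with the partition-of-unity bookkeeping for $S$ and $G$. Note also that the variable-coefficient fundamental solution the paper cites from Friedman is itself built by exactly the freezing/parametrix device you describe, so your ``upgrade to variable coefficients'' is re-deriving rather than invoking that theory.

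One step of your sketch does not close as written: uniqueness. You observe that $\hat{\nabla}^2\zeta$ has integrable-in-time sup bounds near $t=0$ (true, since the weight $1-\tfrac\alpha2<1$) and then appeal to a ``sup-in-space/Gr\"onwall-in-time argument.'' But the identity $\zeta(x,t)=\int_0^t tr_w\hat{\nabla}^2\zeta(x,s)\,ds$ does not produce a Gr\"onwall inequality for $\|\zeta(\cdot,t)\|_\infty$, since $\|\hat{\nabla}^2\zeta(\cdot,s)\|_\infty$ is not controlled by $\|\zeta(\cdot,s)\|_\infty$. The correct tool is the parabolic maximum principle on $M\times[\varepsilon,t]$, where $\zeta$ is classical, giving $\max_M|\zeta(\cdot,t)|\leq\max_M|\zeta(\cdot,\varepsilon)|$, followed by $\varepsilon\to0$ using the $C^{\alpha,\frac{\alpha}{2}}$ continuity up to $t=0$ and $\zeta(\cdot,0)=0$ — which is in fact what the paper alludes to inside Lemma 3.2. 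Alternatively an $L^2$ energy inequality does close under Gr\"onwall after integration by parts on the closed manifold. Either fix makes your uniqueness step rigorous, and the remainder of your plan then goes through.
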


We will prove this theorem in the remainder of the section.

\bigskip

\subsection{Formulation of the Proof}\

\

For every $T\in (0,I]$, we define the Banach space 
	$$\mathcal{W}_k(M\times (0,T];E):=\mathcal{C}^{k,\gamma}_{1-\frac{\alpha}{2}}(M\times (0,T];E)\times C^{\alpha}(M;E) $$ 
whose elements are the pairs of sections $h=(F,\eta_0)$, where $F\in\mathcal{C}^{k,\gamma}_{1-\frac{\alpha}{2}}(M\times (0,T];E)$ and $\eta_0\in C^{\alpha}(M;E)$. We equip $\mathcal{W}_k$ with the norm
$$||h||_{\mathcal{W}_k}:= ||F||_{\mathcal{C}^{k,\gamma}_{1-\frac{\alpha}{2}}(M\times(0,T])}+||\eta_0||_{\alpha;M}.$$
Moreover, we define the norm $\|\cdot\|_{\mathcal{X}_k(M\times[0,T];E)}$ by
$$||\eta||_{\mathcal{X}_k}:= ||\eta||_{\alpha,\frac{\alpha}{2};M\times [0,T]}  + ||\hat{\nabla}\eta||_{\mathcal{C}^{k-1,\gamma}_{\frac{1}{2}-\frac{\alpha}{2}}(M\times (0,T])}.$$
We define the associated Banach space $\mathcal{X}_k(M\times [0,T];E)$ by
\begin{align*}
    \mathcal{X}_k(M\times [0,T];E) := \{\eta: M\times [0,T]\to E |\ \|\eta\|_{\mathcal{X}_k} <\infty \}.
\end{align*}
Subsequently $\mathcal{X}_{k+2}$ will serve as the solution space. We basically adapt the method in Chapter IV of \cite{Lad} to the case of vector bundles. The idea of the proof is as follows: \\

Let $H:\mathcal{X}_{k+2}\to\mathcal{W}_k$ to be the linear operator that associates any $\eta\in\mathcal{X}_{k+2}$ to 
$$H\eta = (L\eta, \eta(\cdot,0)),$$
where $L\eta = \frac{\partial}{\partial t}\eta - tr_{w}\hat{\nabla}^2\eta$. Then Theorem \ref{theorem linear equation} can be interpreted to the solvability of 
$$H\eta=h$$
for any $h\in\mathcal{W}_k(M\times (0,T];E)$. It is equivalent to prove the existence of a bounded inverse operator $H^{-1}$. The key is to construct an operator $R:\mathcal{W}_k\to\mathcal{X}_{k+2}$ which satisfies
\begin{align*}
    \begin{cases}
    &HRh=h+Sh\\
    &RH\eta=\eta+G\eta
    \end{cases}
\end{align*}
for some bounded operators $S:\mathcal{W}_k\to\mathcal{W}_k$ and $G:\mathcal{X}_{k+2}\to\mathcal{X}_{k+2}$. If their norms can be controlled such that $||S||, ||G||<1$, then it follows from an elementary argument that $H^{-1}$ exists. 

\bigskip

\subsection{Construction of an approximated solution}\
\newline

Let $h=(F,\eta_0)\in\mathcal{W}_k$ be given. To construct the operator $R$, we consider a system of PDE on each chart $U_s$. As in the beginning of section 2, we let $\tilde{\varphi_s}:\pi^{-1}(U_s)\to U_s\times\mathbb{R}^N$ be the local trivialization of $E$ on the open set $U_s$, and let $\{\bold{e}_r^s\}_{r=1,..,N}$ be the canonical local frame of $\pi^{-1}(U_s)$ with respect to the trivialization. Then $F$ can be written as $F=\sum_{r=1}^NF^r_s\bold{e}_r^s$, similarly for $\eta$, where we abbreviate $F^r_s(x,t)=F^r_s\circ(\varphi_s^{-1}(x),t)$ for $x\in\varphi_s(U_s)\cong\mathbb{R}^n$.  On each chart $U_s$, we consider the following parabolic system: 

\begin{align}{\label{scalar parabolic system}}
    \begin{cases}
    \frac{\partial}{\partial t}\eta^{r}_s(x,t)-w^{kl}(x,t)\frac{\partial^2}{\partial x_k\partial x_l}\eta^{r}_s(x,t)= F^r_s(x,t) &\quad\text{on}\quad \mathbb{R}^n\times(0,T], \quad r=1,..,N\\
    \eta^{r}_s(x,0)= (\eta_0)^r_s(x) &\quad\text{on}\quad\mathbb{R}^n,  \quad r=1,..,N.
    \end{cases}
\end{align}

We see that the system is equivalent to $N$ uncoupled scalar equations, one for each $\eta^{r}_s$. We will prove the existences and the uniqueness for the uncoupled problems in this subsection. We begin with an auxiliary lemma for linear parabolic PDEs.

\bigskip
\begin{lemma}{\label{lemma second order u}}
Let  $\alpha, \gamma\in (0,1)$ be given such that $\alpha >\gamma$. Suppose that 
\begin{itemize}
    \item[(1)]$a_{ij}(x,t)\in C^{\gamma,\frac{\gamma}{2}}(\mathbb{R}^n\times [0,T])$ and satisfies the uniform parabolicity condition. i.e. there is $\lambda>0$ such that $\frac{1}{\lambda}\delta_{ij}<a_{ij}(x,t)<\lambda\delta_{ij}$;
    \item[(2)]$\|f\|_{\mathcal{C}^{0,\gamma}_{1-\frac{\alpha}{2}}(\mathbb{R}^n\times (0,T])}<\infty$ and $\|u_0\|_{\alpha;\mathbb{R}^n}<\infty$.
\end{itemize} Then the initial-value problem
\begin{align}{\label{model pde}}
    \begin{cases}
    \frac{\partial}{\partial t}u(x,t)-a_{kl}(x,t)\frac{\partial^2}{\partial x_k\partial x_l}u(x,t)=f(x,t) &\quad\text{on}\quad \mathbb{R}^n\times(0,T]\\
    u(x,0)= u_0(x) &\quad\text{on}\quad\mathbb{R}^n
    \end{cases}
\end{align}
has a unique solution $u$, where $u\in C^{\alpha,\frac{\alpha}{2}}(\mathbb{R}^n\times [0,T])$ and $Du\in \mathcal{C}^{1,\gamma}_{\frac{1}{2}-\frac{\alpha}{2}}(\mathbb{R}^n\times (0,T])$. Moreover, $u$ satisfies the estimate
\begin{align}
    \|u\|_{\alpha,\frac{\alpha}{2};\mathbb{R}^n\times [0,T]} + \|D_xu\|_{\mathcal{C}^{1,\gamma}_{\frac{1}{2}-\frac{\alpha}{2}}(\mathbb{R}^n\times (0,T])}
    &\leq K(\|f\|_{\mathcal{C}^{0,\gamma}_{1-\frac{\alpha}{2}}(\mathbb{R}^n\times (0,T])} +\|u_0\|_{\alpha;\mathbb{R}^n}).
\end{align}
Here $K$ is a constant depending only on $\lambda, \|a_{ij}\|_{\gamma,\frac{\gamma}{2}}$.

\end{lemma}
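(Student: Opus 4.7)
The strategy is to adapt the classical potential-theoretic proof of Chapter IV in \cite{Lad} to the weighted parabolic H\"older spaces just introduced. I would decompose $u = v + w$, where $v$ solves the homogeneous Cauchy problem with initial data $u_0$ and zero source, and $w$ solves the inhomogeneous problem with zero initial data and source $f$; treat each piece separately via an explicit representation; and then pass from constant to variable coefficients by a fixed-point argument.

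For the constant-coefficient model $\partial_t - \Delta$ on $\mathbb{R}^n$ (to which we may first reduce by a linear change of coordinates absorbing a frozen $a_{ij}$), the solutions have the explicit form
\[
v(x,t) = \int_{\mathbb{R}^n} G(x-y,t)\,u_0(y)\,dy, \qquad w(x,t) = \int_0^t\!\!\int_{\mathbb{R}^n} G(x-y,t-s)\,f(y,s)\,dy\,ds,
\]
where $G$ is the Gaussian heat kernel. Standard Gaussian bounds give $\|v\|_{\alpha,\alpha/2;\mathbb{R}^n\times[0,T]}\lesssim\|u_0\|_{\alpha;\mathbb{R}^n}$ together with $|D^j v(\cdot,t)|\lesssim t^{(\alpha-j)/2}\|u_0\|_{\alpha}$ for $j=1,2$, matching exactly the weights $\tfrac{1}{2}-\tfrac{\alpha}{2}$ on $Dv$ and $1-\tfrac{\alpha}{2}$ on $D^2v$. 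For $w$, the substitution $s = ut$ in the Duhamel integral produces Beta-type integrals $\int_0^1 (1-u)^{-j/2} u^{-(1-\alpha/2)}\,du$ which converge precisely because $\alpha>0$ and $j\le 2$, delivering the powers $t^{(\alpha-j)/2}$ that match the desired weights on $Dw$ and $D^2w$. The spatial and temporal H\"older seminorms are extracted from the same representation by splitting the integral into a near-diagonal and a far-diagonal piece and using the spatial H\"older continuity of $f$ to tame the non-integrable singularity of $D^2G$ near $s = t$.

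To treat variable coefficients, I would set up a Banach fixed-point argument: given $u^{*}$, define $\mathcal{T}u^{*}$ to be the constant-coefficient solution of $\partial_t u - \Delta u = f + [a_{ij}(x,t) - \delta_{ij}]\,D_iD_j u^{*}$ produced in the previous step. Part~(4) of Lemma~\ref{properties of weighted space} bounds the weighted norm of the perturbation source by the $C^{\gamma,\gamma/2}$ norm of $a_{ij} - \delta_{ij}$ times the weighted norm of $D^2 u^{*}$, and part~(2) supplies a factor $T^{\delta}$ for some $\delta > 0$, making $\mathcal{T}$ a contraction on a small ball for sufficiently small $T$; longer time intervals are handled by iterating on successive slabs $[t_k, t_{k+1}]$, at each stage using that the initial data at $t_k > 0$ is already smooth and the problem is governed by the classical parabolic Schauder estimates of \cite{Lad}. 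Uniqueness follows by applying the estimate to the difference of two solutions, which lies in $C^{\alpha,\alpha/2}$, vanishes at $t = 0$, and is classical for $t > 0$; the maximum principle on $\mathbb{R}^n$, justified by the global $L^\infty$ bound, forces it to be identically zero.

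The main technical obstacle is promoting the pointwise bounds on $D^2 w$ to \emph{weighted H\"older} bounds: one must control $\sigma^{1 - \alpha/2 + \gamma/2}[D^2 w]_{\gamma,\gamma/2;\mathbb{R}^n\times[\sigma/2,\sigma]}$ by splitting the Duhamel integral into near- and far-diagonal parts and carefully exploiting the spatial H\"older continuity of $f$ to absorb the singularity of $D^2 G$, while tracking the interplay between the $t^{-(1-\alpha/2)}$ blow-up allowed for $f$ and the weights on the derivatives of $u$ through that splitting. Once that bookkeeping is settled, combining the two representation-formula estimates with the contraction mapping delivers the full claim.
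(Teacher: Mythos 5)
Your decomposition $u = v + w$ matches the paper's $u_2 + u_1$, and the Gaussian-kernel estimates for the constant-coefficient model are sound, but the passage to variable coefficients via the fixed-point map $\mathcal{T}u^{*} := (\partial_t - \Delta)^{-1}\bigl(f + [a_{ij} - \delta_{ij}]\,D_iD_j u^{*}\bigr)$ has a genuine gap: there is no small parameter. Part~(4) of Lemma~\ref{properties of weighted space} gives
$\|[a_{ij}-\delta_{ij}]D^2u^{*}\|_{\mathcal{C}^{0,\gamma}_{1-\alpha/2}} \lesssim \|a_{ij}-\delta_{ij}\|_{\gamma,\gamma/2}\,\|D^2u^{*}\|_{\mathcal{C}^{0,\gamma}_{1-\alpha/2}}$,
which has the \emph{same} weight exponent $1-\alpha/2$ as the source space, so part~(2) (which gains a factor $T^{\delta}$ only when one \emph{raises} the weight index) does not apply. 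Since $a_{ij}(x,t)-\delta_{ij}$ (or $a_{ij}(x,t)-a_{ij}(x_0,t_0)$ after your frozen-coefficient change of variables) can be of order one at points far from the freezing point, $\mathcal{T}$ is not a contraction on any time interval, however short. You would need either a spatial localization in which the small parameter is the oscillation $[a_{ij}]_{\gamma,\gamma/2}\rho^{\gamma}$ over a ball of radius $\rho$, or, as the paper does, to work with the full variable-coefficient fundamental solution $\Gamma(x,t;\xi,\tau)$ built by Levi's parametrix method (Friedman), inserting it directly into the representation formula (\ref{formula of u}); that route requires no perturbative iteration at the level of this lemma.

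A secondary, non-fatal divergence: for the weighted second-derivative bound you estimate $D^2w$ directly from the Duhamel integral by near/far splitting, whereas the paper rescales to a unit parabolic cylinder by $v(y,s) = u(z+\sigma^{1/2}y,\sigma s)$, multiplies by a time cutoff, applies the standard interior parabolic Schauder estimate, and feeds the already-established $C^{\alpha,\alpha/2}$ bound on $u$ into the right-hand side. Both routes can work once the variable-coefficient theory is in place, but the rescaling-plus-Schauder approach sidesteps second-order kernel estimates entirely and collapses the weight bookkeeping into a single scaling, which is what makes Step~3 of the paper's proof short.
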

\bigskip

\begin{proof}
In the sequel of the proof, $K$ will denote a constant depending only on $\lambda, \|a_{ij}\|_{\gamma,\frac{\gamma}{2}}, \alpha, \gamma$ unless otherwise specified. 
\begin{itemize}
\item[\underline{Step1:}]

We use the single layer potential method to construct a unique solution $u$ to (\ref{model pde}). Given $(\xi,\tau)\in\mathbb{R}^n\times [0,T]$ ,let $\Gamma(x,t;\xi,\tau)$ to be the fundamental solution to 
\begin{align*}
\frac{\partial}{\partial t}u-a_{kl}(x,t)\frac{\partial^2}{\partial x_k\partial x_l}u=0	
\end{align*}
on $\mathbb{R}^n\times (\tau,T]$ such that $\Gamma(x,t;\xi,\tau)\to \delta(x-\xi)$ as $t\to\tau$ in the sense of distribution. We claim that the formula 
 \begin{equation}{\label{formula of u}}
    u(x,t)=- \int_0^t\int_{\mathbb{R}^n}\Gamma (x,t;\xi,\tau)f (\xi,\tau)d\xi d\tau + \int_{\mathbb{R}^n}\Gamma (x,t;\xi,0)u_0(\xi)d\xi 
\end{equation}
gives a solution to the system (\ref{model pde}), and uniqueness would follow from the maximum principle. It is well known from \cite{Fried} that the formula (\ref{formula of u}) gives a unique solution to (\ref{model pde}) provided that $f$ in a H\"older-continuous function on $\mathbb{R}^n\times [0,T]$. In our case, the assumption $\|f\|_{\mathcal{C}^{0,\gamma}_{1-\frac{\alpha}{2}}(\mathbb{R}^n\times(0,T])}<\infty$ implies that $f$ is H\"older-continuous on $\mathbb{R}^n\times [\sigma/2,\sigma]$ for any $\sigma\in (0,T]$, and that
	\[ \tau^{1-\frac{\alpha}{2}}|f|_{0;\mathbb{R}^n\times [\tau/2, \tau]} + \tau^{1-\frac{\alpha}{2}+\frac{\gamma}{2}}[f]_{\gamma,\frac{\gamma}{2}; \mathbb{R}^n\times [\tau/2,\tau]} \leq K\]
	 for each $\tau\in (0, T]$. Hence 
	 \begin{align*}
	 	\|f(\cdot, \tau)\|_{\gamma;\mathbb{R}^n} &= |f(\cdot, \tau)|_{0;\mathbb{R}^n} + [f(\cdot, \tau)]_{\gamma;\mathbb{R}^n}\\
	 	&\leq |f|_{0;\mathbb{R}^n\times [\tau/2, \tau]} + [f]_{\gamma,\frac{\gamma}{2}; \mathbb{R}^n\times [\tau/2,\tau]}\\
	 	&\leq K\tau^{\frac{\alpha}{2}-1} + K\tau^{\frac{\alpha}{2}-1-\frac{\gamma}{2}}\\
	 	&\leq K\tau^{\frac{\alpha}{2}-1-\frac{\gamma}{2}}
	 \end{align*}
	for each $\tau\in (0,T]$. In particular, the elliptic H\"older bound for $f(\cdot, \tau)$ is integrable over $\tau\in (0,t)$ for any $t\in (0,T]$. This implies that the proof of Theorem 9 in Chapter 1 of \cite{Fried} still works so that (\ref{formula of u}) satisfies the evolution equation in (\ref{model pde}) for every $t>0$. To show that (\ref{formula of u}) gives the correct initial condition, it suffices to show that the first integral in RHS of (\ref{formula of u}) tends to zero as $t\to 0$. 

Let us write (\ref{formula of u}) as $u:=u_1+u_2$, where $u_1$ stands for the first term in RHS of (\ref{formula of u}), and $u_2$ stands for the second term in RHS of (\ref{formula of u}). We recall the estimates for the fundamental solution 
\begin{align}{\label{fundamental solution estimate}}
    \Big|D_t^rD_x^s\Gamma(x,t;\xi,\tau)\Big|\leq K(t-\tau)^{-\frac{n+2r+s}{2}}\exp\left(-\frac{|x-\xi|^2}{K(t-\tau)}\right)\quad ,2r+s\leq 2
\end{align}
given in page 376 of \cite{Lad}. For any $(x,t)\in \mathbb{R}^n\times [0,T]$, we estimate
\begin{align}{\label{u_1 C^0 estimate}}
&|u_1(x,t)|\\
&\notag\leq \int_0^t\int_{\mathbb{R}^n}|\Gamma(x,t;\xi,\tau)f(\xi,\tau)|d\xi d\tau	 \\
&\notag\leq K\int_0^t\int_{\mathbb{R}^n}(t-\tau)^{-\frac{n}{2}}\exp\left(-\frac{|x-\xi|^2}{K(t-\tau)}\right)|f(\xi,\tau)|\ d\xi d\tau\\
&\notag \leq K\int_{0}^{t}\int_{\mathbb{R}^n}(t-\tau)^{-\frac{n}{2}}\exp\left(-\frac{|x-\xi|^2}{K(t-\tau)}\right)\tau^{-1+\frac{\alpha}{2}}\|f\|_{\mathcal{C}^{0,\gamma}_{1-\frac{\alpha}{2}}(M\times (0,T])}\ d\xi d\tau\\
&\notag \leq K\int_{0}^{t}\int_0^{\infty}\rho^{n-1}\exp(-\frac{1}{K}\rho^2)\tau^{-1+\frac{\alpha}{2}}\|f\|_{\mathcal{C}^{0,\gamma}_{1-\frac{\alpha}{2}}(M\times (0,T])}\ d\rho d\tau\\
&\notag  \leq Kt^{\frac{\alpha}{2}}\|f\|_{\mathcal{C}^{0,\gamma}_{1-\frac{\alpha}{2}}(M\times (0,T])}.
\end{align}
Since $u_2(x,t)$ converges to $u_0(x)$ as $t\to 0$, this shows that $u(x,t)$ converges to $u_0(x)$ as $t\to 0$, and that $u$ is continuous on $\mathbb{R}^n\times [0,T]$. For the integral $u_2$, it is easy to see that
\begin{align}
	|u_2(x,t)|\leq K\sup_{\mathbb{R}^n}|u_0|	
\end{align}
for any $(x,t)\in\mathbb{R}^n\times [0,T]$. Thus we have obtained a $C^0$ estimate for $u(x,t)$:
\begin{align}{\label{C^0 estimate}}
	\|u\|_{0;\mathbb{R}^n\times [0,T]} \leq K( T^{\frac{\alpha}{2}}\|f\|_{\mathcal{C}^{0,\gamma}_{1-\frac{\alpha}{2}}(M\times (0,T])}	+ \|u_0\|_{0;\mathbb{R}^n})
\end{align}

\bigskip
\item[\underline{Step2}:]

In this step, we are going to derive
\begin{align}{\label{C^a estimate}}
    \|u\|_{\alpha,\frac{\alpha}{2};\mathbb{R}^n\times [0,T]}
    &\leq K( \|f\|_{\mathcal{C}^{0,\gamma}_{1-\frac{\alpha}{2}}(M\times (0,T])} + \|u_0\|_{\alpha;\mathbb{R}^n}).
\end{align}
We first bound the H\"older semi-norm for $u_1$. That is, we seek the following inequalities for any $x,y\in\mathbb{R}^n$ and $s,t\in [0,T]$:
\begin{align}{\label{u_1 C^a estimate}}
    \begin{cases}
    &   |u_1(x,t)-u_1(y,t)|\leq K|x-y|^{\alpha}\|f\|_{\mathcal{C}^{0,\gamma}_{1-\frac{\alpha}{2}}(M\times (0,T])}\\
    &|u_1(x,t)-u_1(x,s)|\leq K|t-s|^{\frac{\alpha}{2}}\|f\|_{\mathcal{C}^{0,\gamma}_{1-\frac{\alpha}{2}}(M\times (0,T])}
    \end{cases}
\end{align}
To derive the first inequality in (\ref{u_1 C^a estimate}), we divide $\mathbb{R}^n$ into $A_1=\{\xi\in \mathbb{R}^n: |x-\xi|>2|x-y|\}$ and $A_2=\mathbb{R}^n-A_1$. Using the estimates for fundamental solutions (\ref{fundamental solution estimate}), we obtain
\begin{align}{\label{u_1 C^a estimate 1}}
    &|u_1(x,t)-u_1(y,t)| \\
    &\leq \int_0^t\int_{A_1}\sup_{z\in\overline{xy}}|D_x\Gamma(z,t;\xi,\tau)||x-y||f(\xi,\tau)|d\xi d\tau + \int_0^t\int_{A_2}(|\Gamma(x,t;\xi,\tau)|+|\Gamma(y,t; \xi,\tau)|)|f(\xi,\tau)|d\xi d\tau\notag \\
    &\leq K|x-y|\int_0^t\int_{\{|x-\xi|>2|x-y|\}}(t-\tau)^{-\frac{n+1}{2}}\sup_{z\in\overline{xy}}\exp\left(-\frac{|z-\xi|^2}{K(t-\tau)}\right)|f(\xi,\tau)|d\xi d\tau  \notag\\
    &\quad + K\int_0^t\int_{\{|x-\xi|<2|x-y|\}}(t-\tau)^{-\frac{n}{2}}\left(\exp\left(-\frac{|x-\xi|^2}{K(t-\tau)}\right)+\exp\left(-\frac{|y-\xi|^2}{K(t-\tau)}\right)\right)|f(\xi,\tau)|d\xi d\tau\notag. 
\end{align}
 Note that if $\xi\in A_1$ and $z$ is a point on the segment $\overline{xy}$, then $|z-\xi|>\frac{1}{2}|x-\xi|$. Thus
 $$\sup_{z\in\overline{xy}}\exp\left(-\frac{|z-\xi|^2}{K(t-\tau)}\right) \leq \exp\left(-\frac{|x-\xi|^2}{4K(t-\tau)}\right).$$
  We observe that for any $m\geq \alpha$, we have
 \begin{align}{\label{fundamental solution estimate 2}}
     (t-\tau)^{-\frac{m}{2}}\exp\left(-\frac{|x-\xi|^2}{K(t-\tau)}\right) &=  \frac{1}{|t-\tau|^{\frac{\alpha}{2}}|x-\xi|^{m-\alpha}}\frac{|x-\xi|^{m-\alpha}}{|t-\tau|^{\frac{m-\alpha}{2}}}\exp\left(-\frac{|x-\xi|^2}{K(t-\tau)}\right) \\
     &\leq \frac{K}{|t-\tau|^{\frac{\alpha}{2}}|x-\xi|^{m-\alpha}}.\notag
 \end{align}
 
Using the above estimate with $m=n+1$ for the first term and $m=n$ for the second term respectively in the last inequality in (\ref{u_1 C^a estimate 1}), we proceed as in (\ref{u_1 C^0 estimate}) to obtain that
\begin{align}
    &|u_1(x,t)-u_1(y,t)|\\
    &\leq K|x-y|\int_0^t\int_{\{|x-\xi|>2|x-y|\}}\frac{1}{|t-\tau|^{\frac{\alpha}{2}}|x-\xi|^{n+1-\alpha}}|f(\xi,\tau)|d\xi d\tau  \notag\\
    &\quad\notag + K\int_0^t\Big(\int_{\{|x-\xi|<2|x-y|\}}\frac{1}{|t-\tau|^{\frac{\alpha}{2}}|x-\xi|^{n-\alpha}}|f(\xi,\tau)|d\xi\\
    &\notag\quad\quad\quad\quad\quad\quad + \int_{\{|y-\xi|<3|x-y|\}}\frac{1}{|t-\tau|^{\frac{\alpha}{2}}|y-\xi|^{n-\alpha}}|f(\xi,\tau)|d\xi\Big) d\tau \notag \\
    &\notag\leq K|x-y|^{\alpha}\|f\|_{\mathcal{C}^{0,\gamma}_{1-\frac{\alpha}{2}}(M\times (0,T])},
\end{align}
where we have used the inequality
	\[\int_0^t\frac{\tau^{\frac{\alpha}{2}-1}}{(t-\tau)^{\frac{\alpha}{2}}}d\tau \leq K(\alpha)\]
	in deriving the last line. 
Next, to derive the second inequality in (\ref{u_1 C^a estimate}), we assume without loss of generality that $s<t$. and we divide $\mathbb{R}^n$ into $A_3=\{\xi\in \mathbb{R}^n: |x-\xi|>\sqrt{t-s}\}$ and $A_4=\mathbb{R}^n-A_3$. Then
\begin{align}{\label{u_1 C^a estimate 2}}
    &|u_1(x,t)-u_1(x,s)| \\
    &\leq \int_0^s\int_{A_3}\sup_{\bar{t}\in [s,t]}|D_t\Gamma(x,\bar{t};\xi,\tau)||t-s||f(\xi,\tau)|d\xi d\tau + \int_s^t\int_{A_3}|\Gamma(x,t;\xi,\tau)||f(\xi,\tau)|d\xi d\tau\notag \\
    &\quad +\int_0^t\int_{A_4}|\Gamma(x,t;\xi,\tau)||f(\xi,\tau)|d\xi d\tau +\int_0^s\int_{A_4}|\Gamma(x,s;\xi,\tau)||f(\xi,\tau)|d\xi d\tau \notag
\end{align}
For instance, using (\ref{fundamental solution estimate}) and (\ref{fundamental solution estimate 2}) with $m = n+1$, we can estimate the second term in the RHS of the above inequality
\begin{align*}
		&\int_s^t\int_{A_3}|\Gamma(x,t;\xi,\tau)||f(\xi,\tau)|d\xi d\tau \\
		& \leq K\int_s^t(t-\tau)^{\frac{1}{2}}\int_{A_3}(t-\tau)^{-\frac{n+1}{2}}\exp\left(-\frac{|x-\xi|^2}{K(t-\tau)}\right)|f(\xi, \tau)| d\xi d\tau\\
		&\leq  K\int_s^t(t-s)^{\frac{1}{2}}\int_{|x-\xi|>\sqrt{t-s}}\frac{1}{|t-\tau|^{\frac{\alpha}{2}}|x-\xi|^{n+1-\alpha}}|f(\xi, \tau)| d\xi d\tau\\
		&\leq K|t-s|^{\frac{\alpha}{2}}\|f\|_{\mathcal{C}^{0,\gamma}_{1-\frac{\alpha}{2}}(M\times (0,T])}\int_0^t\frac{\tau^{\frac{\alpha}{2}-1}}{|t-\tau|^{\frac{\alpha}{2}}} d\tau.
\end{align*}
We can estimate the other terms in the RHS of (\ref{u_1 C^a estimate 2}) similarly. 
Hence we obtain
\begin{align*}
	 &|u_1(x,t)-u_1(x,s)| \\
	 &\leq K|t-s|^{\frac{\alpha}{2}}\|f\|_{\mathcal{C}^{0,\gamma}_{1-\frac{\alpha}{2}}(M\times (0,T])}\left(\int_0^t\frac{\tau^{\frac{\alpha}{2}-1}}{|t-\tau|^{\frac{\alpha}{2}}} d\tau + \int_0^{s}\frac{\tau^{\frac{\alpha}{2}-1}}{|s-\tau|^{\frac{\alpha}{2}}} d\tau \right)\notag\\
    &\leq K|t-s|^{\frac{\alpha}{2}}\|f\|_{\mathcal{C}^{0,\gamma}_{1-\frac{\alpha}{2}}(M\times (0,T])}.
\end{align*}
From which (\ref{u_1 C^a estimate}) follows.

\bigskip

In the remainder of this step, we derive the following estimate for the H\"older semi-norm of $u_2$:
\begin{align*}
	[u_2]_{\alpha,\frac{\alpha}{2};\mathbb{R}^n\times [0,T]}\leq K\ [u_0]_{\alpha;\mathbb{R}^n}.
\end{align*}
That is, for any $x,y\in\mathbb{R}^n$ and $s,t\in [0,T]$, we claim that:
\begin{align}{\label{u_2 C^a estimate}}
    \begin{cases}
    &   |u_2(x,t)-u_2(y,t)|\leq K|x-y|^{\alpha}[u_0]_{\alpha;\mathbb{R}^n}\\
    &|u_2(x,t)-u_2(x,s)|\leq K|t-s|^{\frac{\alpha}{2}}[u_0]_{\alpha;\mathbb{R}^n}
    \end{cases}.
\end{align}
To derive the first inequality, we break $u_2(x,t)-u_2(y,t)$ into two integrals as follows:
\begin{align*}
 u_2(x,t) - u_2(y,t) &= \int_{\mathbb{R}^n}(\Gamma (x,t;\xi,0) - \Gamma(y,t;\xi,0)) (u_0(\xi) - u_0(x))d\xi \\
 &\quad  + u_0(x)\int_{\mathbb{R}^n}(\Gamma (x,t;\xi,0) - \Gamma(y,t;\xi,0)) d\xi	\\
 &:= J_1 + J_2.
\end{align*}
For the integral $J_1$, we use (\ref{fundamental solution estimate}) to estimate
\begin{align}{\label{J_1}}
	|J_1| &\leq \int_{A_1}\sup_{z\in\overline{xy}}|D_x\Gamma(z,t;\xi,0)||x-y||u_0(x)-u_0(\xi)|d\xi \\
	&\quad + \int_{A_2}(|\Gamma(x,t;\xi,0)|+|\Gamma(y,t;\xi,0)|)|u_0(x)-u_0(\xi)|d\xi \notag \\
    &\leq K|x-y|\int_{\{|x-\xi|>2|x-y|\}}(t-\tau)^{-\frac{n+1}{2}}\exp\left(-\frac{|x-\xi|^2}{4K(t-\tau)}\right)|x-\xi|^{\alpha}[u_0]_{\alpha;\Omega}\ d\xi   \notag\\
    &\quad + K\int_{\{|x-\xi|<2|x-y|\}}(t-\tau)^{-\frac{n}{2}}\left(\exp\left(-\frac{|x-\xi|^2}{K(t-\tau)}\right)+\exp\left(-\frac{|y-\xi|^2}{K(t-\tau)}\right)\right)|x-\xi|^{\alpha}[u_0]_{\alpha;\Omega}\ d\xi. \notag 
\end{align}
Similar to the derivation in (3.13) to (3.15), we apply (\ref{fundamental solution estimate 2}) with $\alpha = 0$ to above, and we subsequently obtain
	\[|J_1|\leq K|x-y|^{\alpha}[u_0]_{\alpha;\mathbb{R}^n}.\]
On the other hand, we see that the second integral $J_2$ vanishes since the fundamental solution satisfies $\int_{\Omega}\Gamma(x,t;\xi,0)d\xi = \int_{\Omega}\Gamma(y,t;\xi,0)d\xi = 1$.\\

 Next, we preform the same procedure on the term $u_2(x,t) - u_2(x,s)$. We break it into two integrals:
\begin{align*}
 u_2(x,t) - u_2(x,s) &= \int_{\mathbb{R}^n}(\Gamma (x,t;\xi,0) - \Gamma(x,s;\xi,0)) (u_0(\xi) - u_0(x))d\xi \\
 &\quad  + u_0(x)\int_{\mathbb{R}^n}(\Gamma (x,t;\xi,0) - \Gamma(x,s;\xi,0)) d\xi	\\
 &:= J_3 + J_4.
\end{align*}
Again, the integral $J_4$ vanishes. Using a similar argument as in (\ref{u_1 C^a estimate 2}) and (\ref{J_1}), we have
\begin{align}
	|J_3| &\leq \int_{A_3}\sup_{\bar{t}\in [s,t]}|D_t\Gamma(x,\bar{t};\xi,0)||t-s||u_0(x)-u_0(\xi)|d\xi \\
	&\quad + \int_{A_4}(|\Gamma(x,t;\xi,0)|+|\Gamma(x,s;\xi,0)|)|u_0(x)-u_0(\xi)|d\xi \notag \\
    &\leq K|t-s|^{\frac{\alpha}{2}}\ [u_0]_{\alpha;\Omega}. \notag 
\end{align}
From this, (\ref{u_2 C^a estimate}) follows.

Therefore we summarize that
\begin{align}{\label{u C^a estimate}}
    [u]_{\alpha,\frac{\alpha}{2};\mathbb{R}^n\times [0,T]}
    &\leq K( \|f\|_{\mathcal{C}^{0,\gamma}_{1-\frac{\alpha}{2}}(\mathbb{R}^n\times (0,T])}+ [u_0]_{\alpha;\mathbb{R}^n}).
\end{align}
From (\ref{C^0 estimate}) and (\ref{u C^a estimate}), the estimate (\ref{C^a estimate}) follows.

\bigskip
\item[\underline{Step3}:]
In this step we claim that the solution $u(x,t)$ satisfies the estimates
\begin{align}{\label{C^2 estimate}}
    \|D_xu\|_{\mathcal{C}^{1,\gamma}_{\frac{1}{2}-\frac{\alpha}{2}}(\mathbb{R}^n\times (0,T])}
    &\leq K( \|f\|_{\mathcal{C}^{0,\gamma}_{1-\frac{\alpha}{2}}(\mathbb{R}^n\times (0,T])}+ [u_0]_{\alpha;\mathbb{R}^n}).
\end{align}
Fix a point $z\in\mathbb{R}^n$ and $\sigma\in (0,T]$, we consider the parabolic cylinder
\begin{align}
	P_{\sigma}(z) := B_{\sqrt{\sigma}}(z)\times [0,\sigma].	
\end{align}
We define a function $v$ on the parabolic cylinder $P_1(0) = B_1(0)\times [0,1]$ by scaling:
    \begin{equation}
    v(y,s) := u(z + \sigma^{\frac{1}{2}}y, \sigma s)	.
    \end{equation}
  	Let $\chi(s)$ be a three times continuously differentiable cutoff function  on $[0,1]$ such that
    \begin{align}
    \chi(s)=
        \begin{cases}
        &1,\quad\text{if}\quad s\in[\frac{1}{2},1]\\
        &0, \quad\text{if}\quad s\in[0,\frac{1}{4}]
        \end{cases}
    \end{align}
    and
    \begin{align}
        |D_s^j\chi(s)|\leq C\quad, j=0,1,2.
    \end{align}
    We now define a function $\tilde{v}$ on $P_1(0)$ by 
    	$$\tilde{v}(y,s):=\chi(s)\left(v(y,s) - v(0,1)\right).$$ 
    Then $\tilde{v}$ satisfies
    \begin{align}
        \begin{cases}
        \frac{\partial\tilde{v}}{\partial s}-a_{ij}(z+\sigma^{\frac{1}{2}}y, \sigma s)\frac{\partial^2\tilde{v}}{\partial y_i\partial y_j}=\tilde{f}(y,s)\quad&\text{on}\quad B_1(0)\times(0,1]\\
        \tilde{v}(y,0)=0\quad&\text{on}\quad B_1(0),
        \end{cases}
    \end{align}
    where $\tilde{f}(y,s)=\chi'(s)(v(y,s) - v(0,1))+\sigma\chi(s)f(z+\sigma^{\frac{1}{2}}y, \sigma s)$.

     Note that $\tilde{f}\in C^{\gamma,\frac{\gamma}{2}}(B_1(0)\times[0,1])$ after change of variables since by the previous step we have $u\in C^{\alpha,\frac{\alpha}{2}}(B_{\sqrt{\sigma}}(z)\times [0,\sigma])$ and $\alpha\geq\gamma$. Hence by standard parabolic Schauder interior estimate $\tilde{v}\in C^{2+\gamma,\frac{2+\gamma}{2}}(B_{\frac{1}{2}}(0)\times[\frac{1}{2},1])$ and we have the following estimate: 
    \begin{align}
        |\tilde{v}|_{2+\gamma,\frac{2+\gamma}{2};B_{\frac{1}{2}}(0)\times [\frac{1}{2},1]}&\leq K(\|a_{ij}\|_{\gamma,\frac{\gamma}{2}})(|\chi'(v - v(0,1))|_{\gamma,\frac{\gamma}{2};B_1(0)\times [0,1]}+\sigma|\chi f|_{\gamma,\frac{\gamma}{2};B_1(0)\times [0,1]})\\
        &\notag\leq K(\|a_{ij}\|_{\gamma,\frac{\gamma}{2}}, C)(|v - v(0,1)|_{\gamma, \frac{\gamma}{2};B_1(0)\times [\frac{1}{4},1]}+\sigma|f|_{\gamma, \frac{\gamma}{2};B_1(0)\times [\frac{1}{4},1]} ).
    \end{align}
    Then it follows from the rescaling $u(x,t) = v(\sigma^{-\frac{1}{2}}(x-z),\sigma^{-1} t)$ that
    \begin{align}{\label{weight estimate 1}}
    &\sum_{i=1}^2\sigma^{\frac{i}{2}}|D^i_xu|_{0;B_{\sqrt{\sigma}/2}(z)\times [\frac{\sigma}{2},\sigma]}	+ \sum_{i=1}^2\sigma^{\frac{i}{2}+\frac{\gamma}{2}}[D^i_xu]_{\gamma,\frac{\gamma}{2};B_{\sqrt{\sigma}/2}(z)\times [\frac{\sigma}{2},\sigma]}\\
    &\notag\leq K(\|a_{ij}\|_{\gamma,\frac{\gamma}{2}}, C)\Big(|u - u(z,\sigma)|_{0;B_{\sqrt{\sigma}}(z)\times [\frac{\sigma}{4},\sigma]} + \sigma^{\frac{\gamma}{2}}[u - u(z,\sigma)]_{\gamma,\frac{\gamma}{2};B_{\sqrt{\sigma}}(z)\times [\frac{\sigma}{4},\sigma]}\\
    &\quad\quad\quad\quad\quad\quad\quad + \sigma|f|_{0;B_{\sqrt{\sigma}}(z)\times [\frac{\sigma}{4},\sigma]} + \sigma^{1+\frac{\gamma}{2}}[f]_{\gamma,\frac{\gamma}{2};B_{\sqrt{\sigma}}(z)\times [\frac{\sigma}{4},\sigma]}\Big)\notag\\
    &\notag\leq K(\|a_{ij}\|_{\gamma,\frac{\gamma}{2}}, C)\Big(|u - u(z,\sigma)|_{0;B_{\sqrt{\sigma}}(z)\times [\frac{\sigma}{4},\sigma]} + \sigma^{\frac{\gamma}{2}}[u - u(z,\sigma)]_{\gamma,\frac{\gamma}{2};B_{\sqrt{\sigma}}(z)\times [\frac{\sigma}{4},\sigma]}\\
    &\quad\quad\quad\quad\quad\quad\quad + \sigma^{\frac{\alpha}{2}}\|f\|_{\mathcal{C}^{0,\gamma}_{1-\frac{\alpha}{2}}(\mathbb{R}^n\times (0,T])}\Big).\notag   
    \end{align}
   Observe that $[f]_{\gamma,\frac{\gamma}{2};B_{\sqrt{\sigma}}(z)\times [\frac{\sigma}{4},\sigma]}$ can be controlled by norms of $f$ on $[\frac{\sigma}{4},\frac{\sigma}{2}]$ and $[\frac{\sigma}{2},\sigma]$. Indeed, for any $x\in B_{\sqrt{\sigma}}(z)$ and $\tau, t\in [\frac{\sigma}{4}, \sigma]$, we have
   \begin{align*}
   \frac{|f(x,t)-f(x,\tau)|}{|t-\tau|^{\frac{\gamma}{2}}}\leq  \frac{|f(x,t)-f(x,\frac{\sigma}{2})|}{|t-\tau|^{\frac{\gamma}{2}}}	+ \frac{|f(x,\tau)-f(x,\frac{\sigma}{2})|}{|t-\tau|^{\frac{\gamma}{2}}}.
   \end{align*}
	It suffices to consider the case where $\tau\leq\frac{\sigma}{2}\leq t$, then
   \begin{align*}
   \frac{|f(x,t)-f(x,\tau)|}{|t-\tau|^{\frac{\gamma}{2}}}&\leq  \frac{|f(x,t)-f(x,\frac{\sigma}{2})|}{|t-\frac{\sigma}{2}|^{\frac{\gamma}{2}}}	+ \frac{|f(x,\tau)-f(x,\frac{\sigma}{2})|}{|\tau-\frac{\sigma}{2}|^{\frac{\gamma}{2}}}\\
   &\leq [f]_{\gamma,\frac{\gamma}{2};B_{\sqrt{\sigma}}(z)\times [\frac{\sigma}{4},\frac{\sigma}{2}]} + [f]_{\gamma,\frac{\gamma}{2};B_{\sqrt{\sigma}}(z)\times [\frac{\sigma}{2},\sigma]}.
   \end{align*}
   This implies 
   \[ \sigma^{1+\frac{\gamma}{2}}[f]_{\gamma,\frac{\gamma}{2};B_{\sqrt{\sigma}}(z)\times [\frac{\sigma}{4},\sigma]} \leq \sigma^{\frac{\alpha}{2}}\|f\|_{\mathcal{C}^{0,\gamma}_{1-\frac{\alpha}{2}}(\mathbb{R}^n\times (0,T])}.\]
   Putting this inequality into (\ref{weight estimate 1}), we obtain
   
     \begin{align}{\label{weight estimate 2}}
    &\sum_{i=1}^2\sigma^{\frac{i}{2}}|D^i_xu|_{0;B_{\sqrt{\sigma}/2}(z)\times [\frac{\sigma}{2},\sigma]}	+ \sum_{i=1}^2\sigma^{\frac{i}{2}+\frac{\gamma}{2}}[D^i_xu]_{\gamma,\frac{\gamma}{2};B_{\sqrt{\sigma}/2}(z)\times [\frac{\sigma}{2},\sigma]}\\
     &\notag\leq K(\|a_{ij}\|_{\gamma,\frac{\gamma}{2}}, C)\Big(|u - u(z,\sigma)|_{0;B_{\sqrt{\sigma}}(z)\times [\frac{\sigma}{4},\sigma]} + \sigma^{\frac{\gamma}{2}}[u - u(z,\sigma)]_{\gamma,\frac{\gamma}{2};B_{\sqrt{\sigma}}(z)\times [\frac{\sigma}{4},\sigma]}\\
    &\quad\quad\quad\quad\quad\quad\quad + \sigma^{\frac{\alpha}{2}}\|f\|_{\mathcal{C}^{0,\gamma}_{1-\frac{\alpha}{2}}(\mathbb{R}^n\times (0,T])}\Big).\notag   
    \end{align}

    Now, note that the H\"older estimate for $[u]_{\alpha,\frac{\alpha}{2}}$ in (\ref{u C^a estimate}) implies that
    \begin{align*}
    |u - u(z,\sigma)|_{0;B_{\sqrt{\sigma}}(z)\times [\frac{\sigma}{4},\sigma]} &\leq K \sigma^{\frac{\alpha}{2}}\ [u]_{\alpha,\frac{\alpha}{2};B_{\sqrt{\sigma}}(z)\times [\frac{\sigma}{4},\sigma]}	\\
    &\leq K \sigma^{\frac{\alpha}{2}}( \|f\|_{\mathcal{C}^{0,\gamma}_{1-\frac{\alpha}{2}}(\mathbb{R}^n\times (0,T])}+ [u_0]_{\alpha;\mathbb{R}^n})
    \end{align*}
	and 
    \begin{align*}
    [u - u(z,\sigma)]_{\gamma,\frac{\gamma}{2};B_{\sqrt{\sigma}}(z)\times [\frac{\sigma}{4},\sigma]} &\leq K \sigma^{\frac{\alpha}{2}-\frac{\gamma}{2}}\ [u]_{\alpha,\frac{\alpha}{2};B_{\sqrt{\sigma}}(z)\times [\frac{\sigma}{4},\sigma]} \\
    &\leq K \sigma^{\frac{\alpha}{2}-\frac{\gamma}{2}} ( \|f\|_{\mathcal{C}^{0,\alpha}_{1-\frac{\alpha}{2}}(\mathbb{R}^n\times (0,T])}+ [u_0]_{\alpha;\mathbb{R}^n}).
    \end{align*}
Putting these facts into (\ref{weight estimate 2}), we obtain
 	\begin{align}
    &\sum_{i=1}^2\sigma^{\frac{i}{2}-\frac{\alpha}{2}}|D^i_xu|_{0;B_{\sqrt{\sigma}/2}(z)\times [\frac{\sigma}{2},\sigma]}	+ \sum_{i=1}^2\sigma^{\frac{i}{2}-\frac{\alpha}{2}+\frac{\gamma}{2}}[D^i_xu]_{\gamma,\frac{\gamma}{2};B_{\sqrt{\sigma}/2}(z)\times [\frac{\sigma}{2},\sigma]}\\
    &\notag\leq K(\|a_{ij}\|_{\gamma,\frac{\gamma}{2}}, C)\Big(\ \|f\|_{\mathcal{C}^{0,\gamma}_{1-\frac{\alpha}{2}}(M\times (0,T])} + \|u_0\|_{\alpha;\mathbb{R}^n} \Big).
    \end{align}
 	Since $z\in\mathbb{R}^n$ and $\sigma\in (0,T]$ are arbitrary, the desired estimate (\ref{C^2 estimate}) follows. Putting (\ref{C^a estimate}) and (\ref{C^2 estimate}) together, the lemma is thus proved.
	\end{itemize}
 	\end{proof}

\bigskip

Using a standard bootstrap argument, we can improve the regularity of $u$. We have the following auxiliary lemmas for higher order regularity:

\bigskip

\begin{lemma}{\label{lemma higher order u}}
Let $\alpha,\gamma\in (0,1)$ be given such that $\alpha>\gamma$. Suppose that 
\begin{itemize}
    \item[(1)]$a_{ij}(x,t)\in C^{\gamma,\frac{\gamma}{2}}(\mathbb{R}^n\times [0,T])$ and satisfies the uniform parabolicity condition. i.e. there is $\lambda>0$ such that $\frac{1}{\lambda}\delta_{ij}<a_{ij}(x,t)<\lambda\delta_{ij}$;
    \item[(2)] $\|D_xa_{ij}\|_{\mathcal{C}^{k-1,\gamma}_{\frac{1}{2}}(\mathbb{R}^n\times (0,T])}<\infty\ $ if $\ k\geq 1$;
    \item[(3)]$\|f\|_{\mathcal{C}^{k,\gamma}_{1-\frac{\alpha}{2}}(\mathbb{R}^n\times (0,T])}<\infty$ and $\|u_0\|_{\alpha;\mathbb{R}^n}<\infty$.
\end{itemize} Then the initial-value problem
\begin{align}
    \begin{cases}
    \frac{\partial}{\partial t}u(x,t)-a_{kl}(x,t)\frac{\partial^2}{\partial x_k\partial x_l}u(x,t)=f(x,t) &\quad\text{on}\quad \mathbb{R}^n\times(0,T]\\
    u(x,0)= u_0(x) &\quad\text{on}\quad\mathbb{R}^n
    \end{cases}
\end{align}
has a unique solution $u$, where $u\in  C^{\alpha,\frac{\alpha}{2}}(\mathbb{R}^n\times [0,T])$ and $Du \in\mathcal{C}^{k+1,\gamma}_{\frac{1}{2}-\frac{\alpha}{2}}(\mathbb{R}^n\times(0,T])$, such that
\begin{align}{\label{C^k estimate}}
    \|u\|_{\alpha,\frac{\alpha}{2};\mathbb{R}^n\times [0,T]} + \|D_xu\|_{\mathcal{C}^{k+1,\gamma}_{\frac{1}{2}-\frac{\alpha}{2}}(\mathbb{R}^n\times (0,T])}
    &\leq K(\|f\|_{\mathcal{C}^{k, \gamma}_{1-\frac{\alpha}{2}}(\mathbb{R}^n\times (0,T])} +\|u_0\|_{\alpha;\mathbb{R}^n}).
\end{align}
Here $K$ is a constant depending only on $k, \mathbb{R}^n, \|a_{ij}\|_{\alpha,\frac{\alpha}{2}}$ and $\|a_{ij}\|_{\mathcal{C}^{k,\alpha}_{0}}$.

\end{lemma}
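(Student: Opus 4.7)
The plan is as follows. Existence and uniqueness of $u$, together with the bound on $\|u\|_{\alpha,\alpha/2;\mathbb{R}^n\times[0,T]}$ and on $\|D_xu\|_{\mathcal{C}^{1,\gamma}_{1/2-\alpha/2}(\mathbb{R}^n\times(0,T])}$, follow at once from Lemma 3.2, whose hypotheses are satisfied by assumptions (1) and (3) of the present lemma alone; hypothesis (2) on $D_xa_{ij}$ is not used there. The remaining content of the statement is therefore the upgrade from $\mathcal{C}^{1,\gamma}_{1/2-\alpha/2}$ to $\mathcal{C}^{k+1,\gamma}_{1/2-\alpha/2}$ for $D_xu$, which is a purely interior higher-regularity question.

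The strategy I would follow is to repeat the rescaling scheme of Step 3 in the proof of Lemma 3.2, but now invoke the standard interior parabolic Schauder estimate of order $k+2+\gamma$ on the unit cylinder. For each $(z,\sigma)\in \mathbb{R}^n\times(0,T]$, I set
\[
v(y,s) = u(z+\sqrt\sigma\,y,\sigma s),\quad \tilde a_{ij}(y,s) = a_{ij}(z+\sqrt\sigma\,y,\sigma s),\quad \tilde f(y,s) = \sigma f(z+\sqrt\sigma\,y,\sigma s),
\]
so that $v$ solves $\partial_s v - \tilde a_{ij}\,\partial^2_{y_iy_j}v = \tilde f$ on $B_1(0)\times(0,1]$. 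The weighted hypothesis $\|D_xa_{ij}\|_{\mathcal{C}^{k-1,\gamma}_{1/2}}\le A$ is designed precisely so that each factor of $\sqrt\sigma$ picked up by a $D_y$ derivative on $\tilde a_{ij}$ cancels against the weight, producing a uniform bound on $\|\tilde a_{ij}\|_{k+\gamma,(k+\gamma)/2;B_1(0)\times[1/4,1]}$ depending only on $\lambda$ and $A$. The analogous computation for $f$ yields $|\tilde f|_{k+\gamma,(k+\gamma)/2;B_1(0)\times[1/4,1]} \le K\sigma^{\alpha/2}\|f\|_{\mathcal{C}^{k,\gamma}_{1-\alpha/2}}$.

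Introducing a time cutoff $\chi(s)$ as in Step 3 of Lemma 3.2 and applying the higher-order interior parabolic Schauder estimate to $\tilde v := \chi(s)(v-v(0,1))$ on $B_{1/2}(0)\times[1/2,1]$ yields
\[
|\tilde v|_{k+2+\gamma,(k+2+\gamma)/2;B_{1/2}(0)\times[1/2,1]} \le K\bigl(|v-v(0,1)|_{\gamma,\gamma/2;B_1(0)\times[1/4,1]} + |\tilde f|_{k+\gamma,(k+\gamma)/2;B_1(0)\times[1/4,1]}\bigr).
\]
The first term on the right is bounded by the already-established $C^{\alpha,\alpha/2}$ bound on $u$ (after scaling down). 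Unwinding the change of variables, taking the supremum over $(z,\sigma)\in\mathbb{R}^n\times(0,T]$, and splitting time-H\"older seminorms on $[\sigma/4,\sigma]$ into the two dyadic pieces $[\sigma/4,\sigma/2]$ and $[\sigma/2,\sigma]$ exactly as done in Step 3 of the proof of Lemma 3.2 delivers the desired weighted estimate. The main obstacle is the bookkeeping of scalings: at every order $i\le k+2$ one must verify that the powers of $\sqrt\sigma$ from spatial derivatives and of $\sigma$ from time derivatives combine exactly with the weights $\gamma+i/2$ and $\tfrac{1}{2}-\tfrac{\alpha}{2}+i/2$ built into the $\mathcal{C}^{k,\gamma}_\gamma$-norms. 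This compatibility is the very reason for the particular weights chosen in the definition of these spaces, and the verification is a direct extension of the $k=0$ case already carried out in Lemma 3.2.
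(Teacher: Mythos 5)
Your reduction to a rescaled interior Schauder estimate is in the right spirit, but the key displayed inequality does not follow from the route you describe, and the gap is exactly where the higher derivatives have to come from. If you introduce the time cutoff $\chi(s)$ and work with $\tilde v = \chi(s)(v - v(0,1))$, then $\tilde v$ solves the parabolic equation with zero initial data and right-hand side $\tilde{\tilde f} := \chi'(s)(v - v(0,1)) + \chi(s)\tilde f$. The Schauder estimate for that Cauchy problem gives $|\tilde v|_{k+2+\gamma,(k+2+\gamma)/2} \le K\,|\tilde{\tilde f}|_{k+\gamma,(k+\gamma)/2}$, and the term $\chi'(v - v(0,1))$ contributes $|v - v(0,1)|_{k+\gamma,(k+\gamma)/2;B_1(0)\times[1/4,1/2]}$ to the right-hand side — not $|v - v(0,1)|_{\gamma,\gamma/2}$. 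For $k\ge 1$ this is circular: you do not yet have $C^{k+\gamma}$ control of $v$ on the support of $\chi'$ from the $k=0$ base case. Your inequality is therefore not obtained by the cutoff argument, even though it would happen to be true as a genuinely interior-in-time Schauder estimate (in which case the right-hand side could even be relaxed to $|v|_0$, but you would not be cutting off and invoking the Cauchy problem estimate of Theorem 5.1 in \cite{Lad} at all; you would need a different reference).

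The paper avoids this circularity by a different mechanism: it inducts on $k$ and, at each step, applies the equation to $\bar u := \chi(t) D_x^{k+1}u$ rather than to a cutoff of $u$ itself. Then $\bar u$ solves the parabolic equation with right-hand side $\bar f = \chi' D_x^{k+1}u + \chi D_x^{k+1}f + \chi\sum_r \binom{k+1}{r}D^{k+1-r}a_{ij}\,D^r D^2_{ij}u$, and crucially only a second-order Schauder estimate ($C^{2+\gamma}$ control of $\bar u$) is needed, which only requires $\bar f\in C^{\gamma,\gamma/2}$. Because $\bar f$ involves at most $D^{k+2}u$, $D^{k+1}a_{ij}$ and $D^{k+1}f$, all of which the inductive hypothesis and the hypotheses on $a_{ij}$ and $f$ place in $C^{\gamma,\gamma/2}$ with the right weights, the estimate closes. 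This is a genuinely different route from the one you propose. If you wish to salvage the one-shot rescaled Schauder strategy, you must either (a) drop the cutoff entirely and invoke an interior-in-time higher-order Schauder estimate with only $|v|_0$ on the right, being careful to cite an actual reference for this (it is not Theorem 5.1 in \cite{Lad}), or (b) replace the single $(k+2)$-th order application by a sequence of $k+1$ nested second-order applications, which is tantamount to the paper's induction. The rest of your outline — the scaling bookkeeping, the uniform $C^{k+\gamma}$ bound on $\tilde a_{ij}$ coming from hypothesis (2), the dyadic splitting of time H\"older seminorms — is correct.
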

\bigskip

\begin{proof}
We intend to prove the lemma by induction on $k$. The case $k=0$ follows from Lemma \ref{lemma second order u}. We suppose that the conditions (2) and (3) in the lemma holds with $k$ replaced by $k+1$. That is,
	\begin{itemize}
	\item $\|D_xa_{ij}\|_{\mathcal{C}^{k,\gamma}_{\frac{1}{2}}(\mathbb{R}^n\times (0,T])}<\infty\ $;
    \item $\|f\|_{\mathcal{C}^{k+1,\gamma}_{1-\frac{\alpha}{2}}(\mathbb{R}^n\times (0,T])}<\infty$ and $\|u_0\|_{\alpha;\mathbb{R}^n}<\infty$.	
	\end{itemize}
	Moreover, the induction hypothesis implies that the estimate
	\[  \|u\|_{\alpha,\frac{\alpha}{2};\mathbb{R}^n\times [0,T]} + \|D_xu\|_{\mathcal{C}^{k+1,\gamma}_{\frac{1}{2}-\frac{\alpha}{2}}(\mathbb{R}^n\times (0,T])} \leq K(\|f\|_{\mathcal{C}^{k, \gamma}_{1-\frac{\alpha}{2}}(\mathbb{R}^n\times (0,T])} +\|u_0\|_{\alpha;\mathbb{R}^n})\]
	  holds.  Let us fix $\sigma\in (0,T]$. Let $\chi(t)$ be a three times continuously differentiable cutoff function on $[0,\sigma]$ such that
    \begin{align}
    \chi(t)=
        \begin{cases}
        &1,\quad\text{if}\quad s\in[\frac{\sigma}{2},\sigma]\\
        &0, \quad\text{if}\quad s\in[0,\frac{\sigma}{4}]
        \end{cases}
    \end{align}
    and
    \begin{align}
        |D_t^h\chi(t)|\leq C\sigma^{-h}\quad, h=0,1,2.
    \end{align}
    It follows that the $\frac{\gamma}{2}$-H\"older norm for $\chi$ and $\chi'$ have estimates
    \[ [\chi]_{\frac{\gamma}{2}; [0,\sigma]}\leq C\sigma^{-\frac{\gamma}{2}}\quad\text{and}\quad [\chi']_{\frac{\gamma}{2}; [0,\sigma]}\leq C\sigma^{-1 -\frac{\gamma}{2}}. \]
 On $\mathbb{R}^n\times (0,\sigma]$, we define $\bar{u}(x,t):=\chi(t)D_x^{k+1}u(x,t)$, then $\bar{u}$ is the solution to the system 
\begin{align*}
    \begin{cases}
    \frac{\partial}{\partial t}\bar{u}(x,t)-a_{ij}\frac{\partial^2}{\partial x_i\partial x_j}\bar{u}(x,t)=\bar{f}(x,t) \quad &\text{on}\quad \mathbb{R}^n\times(0,\sigma]\\
    \bar{u}(x,0)=0 \quad &\text{on}\quad\mathbb{R}^n,
    \end{cases}
\end{align*}
where $\bar{f}$ is given by 
\begin{align}
    \bar{f}= \chi'D^{k+1}_xu + \chi D^{k+1}f + \chi\sum_{r=0}^k\binom{k+1}{r}D^{k+1-r}a_{ij}D^rD^2_{ij}u.
\end{align}
Note that by the induction hypothesis we have $\bar{f}\in C^{\gamma,\frac{\gamma}{2}}(\mathbb{R}^n\times [0,\sigma])$. Then we apply Theorem 5.1 in \cite{Lad} to obtain $D^{k+1}_xu\in C^{2+\gamma,\frac{2+\gamma}{2}}(\mathbb{R}^n\times [\frac{\sigma}{2},\sigma])$ and the estimate
\begin{align*}
	\sigma |D^{k+3}u|_{0;\mathbb{R}^n\times [\frac{\sigma}{2},\sigma]} + \sigma^{1+\frac{\gamma}{2}} [D^{k+3}u]_{\gamma,\frac{\gamma}{2};\mathbb{R}^n\times [\frac{\sigma}{2},\sigma]}	\leq K\Big(\ \sigma |\bar{f}|_{0;\mathbb{R}^n\times [0,\sigma]} + \sigma^{1+\frac{\gamma}{2}}[\bar{f}]_{\gamma,\frac{\gamma}{2};\mathbb{R}^n\times [0,\sigma]}\Big).
\end{align*}
Hence,
\begin{align}{\label{weight C^k estimate}}
	&\sigma^{\frac{1}{2}-\frac{\alpha}{2}+\frac{k+2}{2}} |D^{k+3}u|_{0;\mathbb{R}^n\times [\frac{\sigma}{2},\sigma]} + \sigma^{\frac{1}{2}-\frac{\alpha}{2}+ \frac{k+2}{2} +\frac{\gamma}{2} } [D^{k+3}u]_{\gamma,\frac{\gamma}{2};\mathbb{R}^n\times [\frac{\sigma}{2},\sigma]}\\
	&\notag\leq K\Big(\ \sigma^{\frac{1}{2}-\frac{\alpha}{2}+\frac{k+2}{2}} |\bar{f}|_{0;\mathbb{R}^n\times [\frac{\sigma}{4},\sigma]} + \sigma^{\frac{1}{2}-\frac{\alpha}{2}+\frac{k+2}{2} +\frac{\gamma}{2} }[\bar{f}]_{\gamma,\frac{\gamma}{2};\mathbb{R}^n\times [\frac{\sigma}{4},\sigma]}\Big).
\end{align}

We can check that (\ref{weight C^k estimate}) implies the estimate (\ref{C^k estimate}). For instance, we can check for the H\"older-semi norm term of $\bar{f}$ in the last line of (\ref{weight C^k estimate}). We have
\begin{align*}
	 \sigma^{\frac{1}{2}-\frac{\alpha}{2} +\frac{k+2}{2} +\frac{\gamma}{2}}[\chi'D^{k+1}_xu]_{\gamma,\frac{\gamma}{2};\mathbb{R}^n\times [\frac{\sigma}{4},\sigma]} &\leq K\Big(  \sigma^{\frac{1}{2}-\frac{\alpha}{2}+\frac{k}{2}}|D^{k+1}_xu|_{0;\mathbb{R}^n\times [\frac{\sigma}{4},\sigma]} +  \sigma^{\frac{1}{2}-\frac{\alpha}{2} +\frac{k}{2} +\frac{\gamma}{2}}[D^{k+1}_xu]_{\gamma,\frac{\gamma}{2};\mathbb{R}^n\times [\frac{\sigma}{4},\sigma]}\Big)\\
	 &\leq K(\|u\|_{\alpha,\frac{\alpha}{2};\mathbb{R}^n\times [0,T]} + \|D_xu\|_{\mathcal{C}^{k+1,\gamma}_{\frac{1}{2}-\frac{\alpha}{2}}(\mathbb{R}^n\times (0,T])}) \\
	&\leq K (\|f\|_{\mathcal{C}^{k-1,\gamma}_{1-\frac{\alpha}{2}}(\mathbb{R}^n\times (0,T])} + \|u_0\|_{\alpha;\mathbb{R}^n} )
\end{align*}
and

\begin{align*}
	\sigma^{\frac{1}{2}-\frac{\alpha}{2} +\frac{k+2}{2} +\frac{\gamma}{2}}[\chi D^{k+1}f]_{\gamma,\frac{\gamma}{2};\mathbb{R}^n\times [\frac{\sigma}{4},\sigma]} &\leq K\Big(\sigma^{\frac{1}{2}-\frac{\alpha}{2}+\frac{k+2}{2}} |D^{k+1}f|_{0;\mathbb{R}^n\times [\frac{\sigma}{4},\sigma]} + \sigma^{\frac{1}{2}-\frac{\alpha}{2} +\frac{k+2}{2} +\frac{\gamma}{2}}[ D^{k+1}f]_{\gamma,\frac{\gamma}{2};\mathbb{R}^n\times [\frac{\sigma}{4},\sigma]} \Big) \\
	&\leq K \|f\|_{\mathcal{C}^{k+1,\gamma}_{1-\frac{\alpha}{2}}(\mathbb{R}^n\times (0,T])}.
\end{align*}
Also, for each integer $r\in [0,k]$ we have

\begin{align*}
	&\sigma^{\frac{1}{2}-\frac{\alpha}{2} +\frac{k+2}{2} +\frac{\gamma}{2}}[\chi D^{k+1-r}a_{ij}D^ru_{ij}]_{\gamma,\frac{\gamma}{2};\mathbb{R}^n\times [\frac{\sigma}{4},\sigma]}	\\
	&\leq K\Big( \sigma^{\frac{1}{2}-\frac{\alpha}{2}+\frac{k+2}{2}}|D^{k+1-r}a_{ij}D^{r+2}u|_{0;\mathbb{R}^n\times [\frac{\sigma}{4},\sigma]} + \sigma^{\frac{1}{2}-\frac{\alpha}{2} +\frac{k+2}{2} +\frac{\gamma}{2}}|D^{k+1-r}a_{ij}|_{0;\mathbb{R}^n\times [\frac{\sigma}{4},\sigma]}[D^{r+2}u]_{\gamma,\frac{\gamma}{2};\mathbb{R}^n\times [\frac{\sigma}{4},\sigma]} \\
	&\quad\quad\quad + \sigma^{\frac{1}{2}-\frac{\alpha}{2} +\frac{k+2}{2} +\frac{\gamma}{2}}[D^{k+1-r}a_{ij}]_{\gamma,\frac{\gamma}{2};\mathbb{R}^n\times [\frac{\sigma}{4},\sigma]}|D^{r+2}u|_{0;\mathbb{R}^n\times [\frac{\sigma}{4},\sigma]}\Big)\\
	&\leq K\|D_xa_{ij}\|_{\mathcal{C}^{k,\gamma}_{\frac{1}{2}}(\mathbb{R}^n\times (0,T])}\ \|D_xu\|_{\mathcal{C}^{k+1,\gamma}_{\frac{1}{2}-\frac{\alpha}{2}}(\mathbb{R}^n\times (0,\sigma])}\\
	&\leq K\|D_xa_{ij}\|_{\mathcal{C}^{k,\gamma}_{\frac{1}{2}}(\mathbb{R}^n\times (0,T])}\ \big( \|f\|_{\mathcal{C}^{k, \gamma}_{1-\frac{\alpha}{2}}(\mathbb{R}^n\times (0,T])} +\|u_0\|_{\alpha;\mathbb{R}^n} \big).
\end{align*}
From these, we have
\begin{align}
	\sigma^{\frac{1}{2}-\frac{\alpha}{2} +\frac{k+2}{2} +\frac{\gamma}{2}}[\bar{f}]_{\gamma,\frac{\gamma}{2};\mathbb{R}^n\times [\frac{\sigma}{4},\sigma]} \leq 	K(a_{ij})\ \big( \|f\|_{\mathcal{C}^{k+1, \gamma}_{1-\frac{\alpha}{2}}(\mathbb{R}^n\times (0,T])} +\|u_0\|_{\alpha;\mathbb{R}^n} \big).
\end{align}
And similarly we get 
\begin{align}
	\sigma^{\frac{1}{2}-\frac{\alpha}{2} +\frac{k+2}{2}} |\bar{f}|_{0;\mathbb{R}^n\times [\frac{\sigma}{4},\sigma]} + \sigma^{\frac{1}{2}-\frac{\alpha}{2} +\frac{k+2}{2} +\frac{\gamma}{2}}[\bar{f}]_{\gamma,\frac{\gamma}{2};\mathbb{R}^n\times [\frac{\sigma}{4},\sigma]} \leq 	K(a_{ij})\ \big( \|f\|_{\mathcal{C}^{k+1, \gamma}_{1-\frac{\alpha}{2}}(\mathbb{R}^n\times (0,T])} +\|u_0\|_{\alpha;\mathbb{R}^n} \big).
\end{align}
Therefore we have completed the induction.
\end{proof}
\bigskip

\bigskip
\bigskip
\subsection{Completion of the proof}\

Now we are on the ground of constructing the operator $R:\mathcal{W}_k\to\mathcal{X}_{k+2}$. To begin with, let $\{\rho_s\}$ be the partition of unity subordinate to the charts $\{U_s\}$. On the basis of Lemma \ref{lemma second order u} and Lemma \ref{lemma higher order u}, there is a unique solution set $\{\eta^{r}_s\}$ to the system (\ref{scalar parabolic system}) such that for each $(r,s)$, 
	$$\eta^{r}_s\in C^{\alpha,\frac{\alpha}{2}}(\mathbb{R}^n\times [0,T]),\quad D_x\eta^{r}_s\in \mathcal{C}^{k+1,\gamma}_{\frac{1}{2}-\frac{\alpha}{2}}(\mathbb{R}^n\times (0,T])$$
 and 
\begin{align}{\label{component estimate}}
    \|\eta^{r}_s\|_{\alpha,\frac{\alpha}{2};\varphi_s(U_s)\times [0,T]} + \|D_x\eta^{r}_s\|_{\mathcal{C}^{k+1,\gamma}_{\frac{1}{2}-\frac{\alpha}{2}}(\varphi_s(U_s)\times (0,T])} 
    &\leq  K(||F||_{\mathcal{C}^{k,\gamma}_{1-\frac{\alpha}{2}}(M\times (0,T])} + \|\eta_0\|_{\alpha;M} ).
\end{align}
\newline
We then define $R_s:\mathcal{W}_k\to \Gamma(U_s\times[0,T], \pi^{-1}(U_s))$ by 
\begin{align}
    (R_sh)(p,t) = \rho_s(p)\sum_r \eta^{r}_s(\varphi_s(p),t)\bold{e}_r^s(p)
\end{align}
for any $(p,t)\in U_s\times[0,T]$.  Moreover, by setting $R_sh$ to be the zero section outside the support of $\rho_s$, we can extend $R_sh$ to a section of $ \Gamma(M\times[0,T], E)$. Now, we define a map $R:\mathcal{W}_k\to\Gamma(M\times [0,T], E)$ by 
\begin{align}
    Rh = \sum_s R_sh.
\end{align}
From the construction of $R$ it is clear that $Rh\in\mathcal{X}_{k+2}$ for any $h\in\mathcal{W}_k$. By (\ref{component estimate}), $Rh$ satisfies the following estimates: 
\begin{align}{\label{R estimate}}
    \|Rh\|_{\mathcal{X}_{k+2}(M\times [0,T])}
    &\leq  K \|h\|_{\mathcal{W}_k(M\times (0,T])}.
\end{align}
Here $K$ is a constant depending only on $k, M, A$. Recall that $A$ is the positive constant such that $\|w\|_{\gamma,\frac{\gamma}{2};M\times [0,T]} + \|\hat{\nabla}w\|_{\mathcal{C}^{k-1,\gamma}_{\frac{1}{2}}(M\times (0,T])}\leq A$ provided in the assumption of the theorem.

Next, we define $S:\mathcal{W}_k\to\mathcal{W}_k$ and $G:\mathcal{X}_{k+2}\to\mathcal{X}_{k+2}$ by
\begin{align}{\label{two operators}}
    \begin{cases}
    &Sh=HRh-h\\
    &G\eta=RH\eta-\eta
    \end{cases}.
\end{align}
\bigskip

\begin{lemma}{\label{good operators}}
The operators defined above in (\ref{two operators}) satisfies
\begin{align*}
    &||Sh||_{\mathcal{W}_k}\leq K T^{\frac{1}{2}} ||h||_{\mathcal{W}_k}\\
    \text{and}\quad &||G\eta||_{\mathcal{X}_{k+2}}\leq K T^{\frac{1}{2}} ||\eta||_{\mathcal{X}_{k+2}}.
\end{align*}
Here $K$ is a constant depending only on $k, M, \hat{g}, A$.
\end{lemma}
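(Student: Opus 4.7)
The plan is to reduce both estimates to local computations on each chart $U_s$, then invoke the scalar existence results (Lemmas \ref{lemma second order u} and \ref{lemma higher order u}) together with the weighted-H\"older arithmetic of Lemma \ref{properties of weighted space}. The shared observation is that on each chart $U_s$, if $\zeta=\sum_r\zeta^r\mathbf{e}_r^s$ is a local section, then expanding $L\zeta=\partial_t\zeta-\operatorname{tr}_w\hat\nabla^2\zeta$ in coordinates and frame yields
\begin{equation*}
L\zeta \;=\; \bigl(\partial_t\zeta^r - w^{ij}\partial_i\partial_j\zeta^r\bigr)\mathbf{e}_r^s \;+\; \ell_s(\zeta),
\end{equation*}
where $\ell_s$ is a first-order linear operator whose coefficients combine the Christoffel symbols of $\hat g$, the connection 1-form of the frame $\{\mathbf{e}_r^s\}$, and $w$ itself. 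Schematically $\ell_s(\zeta)=w*\hat\nabla\zeta+w*\zeta$ with smooth intervening factors, so $\ell_s$ is precisely the "error" between the invariant operator $L$ and the scalar coordinate surrogate used in (\ref{scalar parabolic system}).

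To bound $S$: I would first observe that the initial-value component of $Sh=HRh-h$ vanishes, since each local solution $\zeta_s$ attains $(\eta_0)_s^r$ at $t=0$ and $\sum_s\rho_s\equiv 1$ forces $(Rh)(\cdot,0)=\eta_0$. Expanding $L(Rh)-F$ via the Leibniz rule on $Rh=\sum_s\rho_s\zeta_s$ produces a sum of terms of the shape $\rho_s\ell_s(\zeta_s)$, $w(\hat\nabla\rho_s,\hat\nabla\zeta_s)$, and $\zeta_s\operatorname{tr}_w\hat\nabla^2\rho_s$, all of which are lower-order in $\zeta_s$. The local estimate (\ref{component estimate}) controls $\|\zeta_s\|_{\alpha,\alpha/2}+\|\hat\nabla\zeta_s\|_{\mathcal{C}^{k+1,\gamma}_{1/2-\alpha/2}}$ by $K\|h\|_{\mathcal{W}_k}$; combining this with the hypothesis on $w$ and Lemma \ref{properties of weighted space}(4) places the first-order part of the error in $\mathcal{C}^{k,\gamma}_{1/2-\alpha/2}$, and the zeroth-order part in $\mathcal{C}^{k,\gamma}_{-\alpha/2}$ (using the unweighted H\"older bound on $\zeta_s$). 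A single application of Lemma \ref{properties of weighted space}(2), with $\delta=1/2$ on the first-order piece and $\delta=1-\alpha/2$ on the zeroth-order piece, lifts each to the target weight $1-\alpha/2$ and extracts a factor of at least $T^{1/2}$.

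To bound $G$: using $\sum_s\rho_s\equiv 1$, write $\eta=\sum_s\rho_s\eta$, set $(F,\eta_0):=H\eta$, and let $\zeta_s=\sum_r\eta_s^r\mathbf{e}_r^s$ be the local solution produced by (\ref{scalar parabolic system}) with this input on $U_s$. Expanding $\eta|_{U_s}=\sum_r\bar\eta_s^r\mathbf{e}_r^s$ in the local frame, the definition of $L\eta$ implies that $\bar\eta_s^r$ satisfies the same scalar PDE as $\eta_s^r$ up to a correction from $\ell_s(\eta)$; consequently the section $\psi_s:=\zeta_s-\eta|_{U_s}$ solves a scalar parabolic system on $U_s$ with vanishing initial data and source of the form $\ell_s(\eta)$. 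Lemma \ref{lemma higher order u} then gives
\begin{equation*}
\|\psi_s\|_{\mathcal{X}_{k+2}(U_s\times[0,T])} \;\le\; K\,\|\ell_s(\eta)\|_{\mathcal{C}^{k,\gamma}_{1-\alpha/2}(U_s\times(0,T])}.
\end{equation*}
Applying the same product/weight analysis as in the bound for $S$, but now with $\|\eta\|_{\mathcal{X}_{k+2}}$ in place of $\|h\|_{\mathcal{W}_k}$, bounds the right-hand side by $K\,T^{1/2}\|\eta\|_{\mathcal{X}_{k+2}}$. Since $G\eta=\sum_s\rho_s\psi_s$ and multiplication by the smooth cutoffs $\rho_s$ is continuous on $\mathcal{X}_{k+2}$, the bound passes to $G\eta$.

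The main obstacle will be the weighted-H\"older bookkeeping: one must verify that every summand of $\ell_s$ sits in a weighted space whose weight is exactly $1/2$ (or better) below the target $1-\alpha/2$, so that Lemma \ref{properties of weighted space}(2) yields precisely the factor $T^{1/2}$. The low regularity of $w$ near $t=0$ (only $C^{\gamma,\gamma/2}$) and the fact that $\zeta_s$ itself lies only in the unweighted space $C^{\alpha,\alpha/2}$ force one to split $\ell_s$ into its first-order and zeroth-order pieces and to estimate them separately, the latter absorbing into the better factor $T^{1-\alpha/2}$.
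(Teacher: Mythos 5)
Your proposal is substantively correct and follows essentially the same strategy as the paper: localize via cutoffs, isolate the lower-order error between the coordinate operator and the covariant $L$, feed those errors back through the scalar linear estimates (Lemmas \ref{lemma second order u} and \ref{lemma higher order u}) with vanishing initial data, and extract the $T^{1/2}$ factor via Lemma \ref{properties of weighted space}(2) from the weight gap between $\frac{1}{2}-\frac{\alpha}{2}$ and the target $1-\frac{\alpha}{2}$.

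Two small remarks. First, a bookkeeping slip: the zeroth-order piece (the $\zeta_s\,\mathrm{tr}_w\hat\nabla^2\rho_s$-type terms) lies in $\mathcal{C}^{k,\gamma}_0$, not $\mathcal{C}^{k,\gamma}_{-\alpha/2}$ --- a negative weight would require $\zeta_s\to 0$ as $t\to 0$, which fails since $\zeta_s(\cdot,0)=\eta_0$. This doesn't affect the conclusion (weight $0$ still lifts to $1-\frac{\alpha}{2}$ with a factor $T^{1-\alpha/2}>T^{1/2}$), but the justification you gave is not quite right. Second, your $G$-bound is organized a bit differently from the paper's: you apply Lemma \ref{lemma higher order u} to each local difference $\psi_s=\zeta_s-\eta$ (whose source involves only $\eta$) and then patch with cutoffs, whereas the paper writes down the scalar PDE satisfied directly by the components $(\zeta^i_\mu-\eta^i_\mu)$ of $G\eta$, whose source combines both the $\hat\Gamma$-terms in $\eta$ and the partition-of-unity correction $\tilde F^i_\mu - F^i_\mu$ in the local solutions, and then applies Lemma \ref{lemma higher order u} once per chart. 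Both orderings work; yours is arguably cleaner since the source is purely in terms of $\eta$, while the paper's makes the role of the transition maps $\Phi_{\mu\nu}$ explicit --- a bookkeeping point your $\ell_s$ notation glosses over but that you would need to track when passing between local frames to legitimize the sum $\sum_s\rho_s\psi_s$ as a section.
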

\bigskip
\begin{proof}
Firstly, note that on the intersection $U_{\mu}\cap U_{\nu}$, the transition map for the vector bundle $E$ is given by
	\[\tilde{\varphi}_{\mu \nu} := \tilde{\varphi}_{\mu}\circ(\tilde{\varphi}_{\nu})^{-1} : (U_{\mu}\cap U_{\nu})\times\mathbb{R}^N \to (U_{\mu}\cap U_{\nu})\times\mathbb{R}^N. \]
	We denote by $\Phi_{\mu \nu}: (U_{\mu}\cap U_{\nu})\to \textup{GL}_N\mathbb{R}$ the induced isomorphism which is given by
	\[ \tilde{\varphi}_{\mu \nu} (p, V) := (p, \Phi_{\mu \nu}(x)V),\quad\forall p\in U_{\mu}\cap U_{\nu},\ V\in\mathbb{R}^N. \]
	Then with respect to the canonical basis $\{\bold{e}_i^{\mu}\}$ and $\{\bold{e}_j^{\nu}\}$ of $\pi^{-1}(U_{\mu})$ and $\pi^{-1}(U_{\nu})$, the matrix components for the map $\Phi_{\mu \nu}$ are given by
	\[ \bold{e}_i^{\mu} = (\Phi_{\mu \nu})_i^j(\bold{e}_j^{\nu}).\]
	With this notation, we can write the components of $R_{\mu}h$ on  $U_{\mu}\cap U_{\nu}$ with respect to the basis  $\{\bold{e}_j^{\nu}\}$ as
	\[ (R_{\mu}h)^j_{\nu} = \sum_i\rho_{\mu}\eta_{\mu}^i(\Phi_{\mu \nu})_i^j.\]
	Thus we can write the component $(Rh)^i_{\mu}$ on $U_{\mu}$ as
	\begin{align}{\label{Rh component}}
		 (Rh)^i_{\mu} = \sum_{\nu}(R_{\nu}h)^i_{\mu} = \sum_{j,\nu}\rho_{\nu}(\Phi_{\nu \mu})_j^i\eta_{\nu}^j.
	\end{align}

Now, let us prove the first inequality in the lemma. For any $h = (F, \eta_0)\in\mathcal{W}_k$, we have $$Sh=(LRh-F, 0).$$
Let us write $v=Rh$. Then on each chart $U_{\mu}$, we have $v=\sum_iv^i_{\mu}\bold{e}_i^{\mu}$ satisfying
\[ v^i_{\mu} = \sum_{j,\nu}\rho_{\nu}(\Phi_{\nu \mu})_j^i\eta_{\nu}^j,\]
where $\{\eta_{\nu}^j\}$ denotes the unique solution of the system (\ref{scalar parabolic system}). By (\ref{Rh component}), we have
	\[\frac{\partial}{\partial t}v^i_{\mu} = \sum_{j,\nu}\rho_{\nu}(\Phi_{\nu \mu})_j^i(\frac{\partial}{\partial t}\eta_{\nu}^j),\]
	and
	\[ w^{kl}D^2_{kl}v^i_{\mu} = \sum_{j,\nu}\left\{\rho_{\nu}(\Phi_{\nu \mu})_j^i(w^{kl}D^2_{kl}\eta_{\nu}^j) + 2w^{kl}D_k(\rho_{\nu}(\Phi_{\nu \mu})_j^i)\cdot D_l(\eta_{\nu}^j) + w^{kl}D^2_{kl}(\rho_{\nu}(\Phi_{\nu \mu})_j^i)\cdot\eta_{\nu}^j \right\}.\]
This implies that $\{v^i_{\mu}\}$ satisfies the system
\begin{align*}
    \begin{cases}
    \frac{\partial}{\partial t}v^i_{\mu}(x,t)-w^{kl}D^2_{kl}v^i_{\mu}(x,t)= \tilde{F}^i_{\mu}(x,t) &\quad\text{on}\quad \varphi_{\mu}(U_{\mu})\times(0,T], \quad i=1,..,N\\
    v^i_{\mu}(x,0)=0 &\quad\text{on}\quad \varphi_{\mu}(U_{\mu}),\quad i=1,..,N
    \end{cases},
\end{align*}
where 
\begin{align}{\label{tilde F}}
	\tilde{F}^i_{\mu} = \sum_{j,\nu}\left\{\rho_{\nu}(\Phi_{\nu \mu})_j^iF_{\nu}^j + 2w^{kl}D_k(\rho_{\nu}(\Phi_{\nu \mu})_j^i)\cdot D_l(\eta_{\nu}^j) + w^{kl}D^2_{kl}(\rho_{\nu}(\Phi_{\nu \mu})_j^i)\cdot\eta_{\nu}^j\right\}.
\end{align}
Note that $v$ satisfies the estimate 
\[\|v\|_{\alpha,\frac{\alpha}{2};M\times [0,T]} + \|\hat{\nabla}v\|_{\mathcal{C}^{k+1,\gamma}_{\frac{1}{2}-\frac{\alpha}{2}}(M\times (0,T])}\leq K\|h\|_{\mathcal{W}_k(M\times (0,T])}\]
by (\ref{R estimate}). And by $F_{\mu}^i = \sum_{\nu}\rho_{\nu}F_{\mu}^i = \sum_{j,\nu}\rho_{\nu}(\Phi_{\nu \mu})_j^iF_{\nu}^j$, we have the estimate
\begin{align}{\label{F estimate}}
	&\|\tilde{F}^i_{\mu} - F^i_{\mu}\|_{\mathcal{C}^{k,\gamma}_{1-\frac{\alpha}{2}}(\mathbb{R}^N\times (0,T])}\\
	&\notag= \left\|\sum_{j,\nu}\left\{  2w^{kl}D_k(\rho_{\nu}(\Phi_{\nu \mu})_j^i)\cdot D_l(\eta_{\nu}^j) + w^{kl}D^2_{kl}(\rho_{\nu}(\Phi_{\nu \mu})_j^i)\cdot\eta_{\nu}^j\right\}\right\|_{\mathcal{C}^{k,\gamma}_{1-\frac{\alpha}{2}}(\mathbb{R}^N\times (0,T])}\\
	&\notag\leq K\sum_{j,\nu}\left\{\|D_x\eta_{\nu}^j\|_{\mathcal{C}^{k,\gamma}_{1-\frac{\alpha}{2}}(\mathbb{R}^N\times (0,T])} + \|\eta_{\nu}^j\|_{\mathcal{C}^{k,\gamma}_{1-\frac{\alpha}{2}}(\mathbb{R}^N\times (0,T])}\right\}\\
	&\notag\leq K(\|v\|_{\mathcal{C}^{k,\gamma}_{1-\frac{\alpha}{2}}(M\times (0,T])} + \|\hat{\nabla}v\|_{\mathcal{C}^{k,\gamma}_{1-\frac{\alpha}{2}}(M\times (0,T])})\\
	 &\leq KT^{\frac{1}{2}} (\|v\|_{\alpha,\frac{\alpha}{2};M\times [0,T]} + \|\hat{\nabla}v\|_{\mathcal{C}^{k+1,\gamma}_{\frac{1}{2}-\frac{\alpha}{2}}(M\times (0,T])}). \notag
\end{align}

  Hence, we have
\begin{align*}
    LRh-F &= \sum_r\left(\frac{\partial}{\partial t}v^i_{\mu}(x,t)-w^{kl}\hat{\nabla}_k\hat{\nabla}_lv^i_{\mu}(x,t)-F^{\mu}_i(x,t)\right)\bold{e}_{\mu}^i\\
    &= \sum_r\left((\partial\hat{\Gamma}+\hat{\Gamma}*_w\hat{\Gamma})*_wv+\hat{\Gamma}*_w\hat{\nabla}v + \tilde{F}_{\mu}^i -F_{\mu}^i \right)\bold{e}_{\mu}^i,\notag
\end{align*}
where $\hat{\Gamma}$ are the connection terms with respect to $\hat{g}$. Thus, using Lemma \ref{properties of weighted space} and (\ref{F estimate}) we obtain the estimate
\begin{align}
    \|Sh\|_{\mathcal{W}_k(M\times (0,T])} 
    &= ||LRh-F||_{\mathcal{C}^{k,\gamma}_{1-\frac{\alpha}{2}}(M\times (0,T])}\\
    &\notag\leq K \left(\|v\|_{\mathcal{C}^{k,\gamma}_{1-\frac{\alpha}{2}}(M\times (0,T])} + \|\hat{\nabla}v\|_{\mathcal{C}^{k,\gamma}_{1-\frac{\alpha}{2}}(M\times (0,T])} +  \sum_{i,\mu}\|\tilde{F}^i_{\mu} - F^i_{\mu}\|_{\mathcal{C}^{k,\gamma}_{1-\frac{\alpha}{2}}(\mathbb{R}^N\times (0,T])}\right) \\
    &\leq KT^{\frac{1}{2}} (\|v\|_{\alpha,\frac{\alpha}{2};M\times [0,T]} + \|\hat{\nabla}v\|_{\mathcal{C}^{k+1,\gamma}_{\frac{1}{2}-\frac{\alpha}{2}}(M\times (0,T])}) \notag\\
    &\leq KT^{\frac{1}{2}} \|h\|_{\mathcal{W}_k(M\times (0,T])},\notag 
\end{align}
where $K = K(k, M, \hat{g}, A)$. 

\bigskip

\bigskip

In the next step, we derive the second inequality in the lemma. For any $\eta\in\mathcal{X}_{k+2}(M\times [0,T];E)$, we denote by $(F,\eta_0):= (L\eta, \eta(\cdot,0)) = H\eta$ and $\zeta :=RH\eta$. Then on each chart $U_{\mu}$, we see that
     $ (F,\eta_0) = (\sum_{r=1}^NF_{\mu}^i\bold{e_i^{\mu}},\ \eta(\cdot,0))$ satisfies 
    \begin{align*}
    F_{\mu}^i(x,t) = \frac{\partial}{\partial t}\eta_{\mu}^i(x,t)-w^{kl}\hat{\nabla}_k\hat{\nabla}_l\eta_{\mu}^i(x,t)\quad\text{on}\quad\varphi_{\mu}(U_{\mu})\times(0,T],\quad r=1,..,N.
    \end{align*}
	Now, we denote by $\{\tilde{\eta}_{\nu}^j\}$ the unique solution of the system
	\begin{align*}
    \begin{cases}
    \frac{\partial}{\partial t}\tilde{\eta}_{\nu}^j(x,t)-w^{kl}(x,t)D^2_{kl}\tilde{\eta}_{\nu}^j(x,t)= F_{\nu}^j(x,t) &\quad\text{on}\quad \varphi_{\nu}(U_{\nu})\times(0,T], \quad r=1,..,N\\
   \tilde{\eta}_{\nu}^j(x,0)= (\eta_0)_{\nu}^j(x) &\quad\text{on}\quad\varphi_{\nu}(U_{\nu}),  \quad r=1,..,N.
    \end{cases}
	\end{align*}
	Then by (\ref{Rh component}),  $RH\eta = \zeta = \sum_{r=1}^N\zeta_{\mu}^i\bold{e_i^{\mu}}$ satisfies 
	\[\zeta^i_{\mu} = \sum_{j,\nu}\rho_{\nu}(\Phi_{\nu \mu})_j^i\tilde{\eta}_{\nu}^j,\]
	and in particular
	\[ \zeta^i_{\mu}(x,0) = \sum_{j,\nu}\rho_{\nu}(\Phi_{\nu \mu})_j^i(\eta_0)_{\nu}^j(x,0) =  (\eta_0)_{\mu}^i(x,0). \]
	Moreover, the components $\{\zeta^i_{\mu}\}$ satisfy the system
	\begin{align*}
    \begin{cases}
    \frac{\partial}{\partial t}\zeta^i_{\mu}(x,t)-w^{kl}D^2_{kl}\zeta^i_{\mu}(x,t)= \tilde{F}^i_{\mu}(x,t) &\quad\text{on}\quad \varphi_{\mu}(U_{\mu})\times(0,T], \quad i=1,..,N\\
    \zeta^i_{\mu}(x,0)=  (\eta_0)_{\mu}^i(x,0) &\quad\text{on}\quad \varphi_{\mu}(U_{\mu}),\quad i=1,..,N,
    \end{cases}
	\end{align*}
	where $\tilde{F}_{\mu}^i$ is again defined by (\ref{tilde F}) with $\eta$ replaced by $\tilde{\eta}$. Similar to (\ref{F estimate}), we have the estimate
	\begin{align}{\label{F estimate 2}}
	&\|\tilde{F}^i_{\mu} - F^i_{\mu}\|_{\mathcal{C}^{k,\gamma}_{1-\frac{\alpha}{2}}(\mathbb{R}^N\times (0,T])}\\
	&\notag= \left\|\sum_{j,\nu}\left\{  2w^{kl}D_k(\rho_{\nu}(\Phi_{\nu \mu})_j^i)\cdot D_l(\tilde{\eta}_{\nu}^j) + w^{kl}D^2_{kl}(\rho_{\nu}(\Phi_{\nu \mu})_j^i)\cdot\tilde{\eta}_{\nu}^j\right\}\right\|_{\mathcal{C}^{k,\gamma}_{1-\frac{\alpha}{2}}(\mathbb{R}^N\times (0,T])}\\
	&\notag\leq K\sum_{j,\nu}\left\{\|D_x\tilde{\eta}_{\nu}^j\|_{\mathcal{C}^{k,\gamma}_{1-\frac{\alpha}{2}}(\mathbb{R}^N\times (0,T])} + \|\tilde{\eta}_{\nu}^j\|_{\mathcal{C}^{k,\gamma}_{1-\frac{\alpha}{2}}(\mathbb{R}^N\times (0,T])}\right\}\\
	&\notag\leq KT^{\frac{1}{2}} \sum_{j,\nu}\left\{\|D_x\tilde{\eta}_{\nu}^j\|_{\mathcal{C}^{k+1,\gamma}_{\frac{1}{2}-\frac{\alpha}{2}}(\mathbb{R}^N\times (0,T])} + \|\tilde{\eta}_{\nu}^j\|_{\alpha,\frac{\alpha}{2};\mathbb{R}^N\times [0,T]}\right\}\\
	&\leq KT^{\frac{1}{2}}  (||F||_{\mathcal{C}^{k,\gamma}_{1-\frac{\alpha}{2}}(M\times (0,T])} + \|\eta_0\|_{\alpha;M} ) \notag\\
	&\leq  K T^{\frac{1}{2}} \|\eta\|_{\mathcal{X}_{k+2}(M\times [0,T])},\notag
	\end{align}
where the second last inequality follows from Lemma \ref{lemma higher order u}.\\
This implies that $G\eta = RH\eta-\eta = \sum_{r=1}^N(\zeta^i_{\mu}-\eta^i_{\mu})\bold{e_i^{\mu}}$ satisfies 
\begin{align*}
    \begin{cases}
    \frac{\partial}{\partial t}(\zeta^i_{\mu}-\eta^i_{\mu})-w^{kl}D^2_{kl}(\zeta^i_{\mu}-\eta^i_{\mu}) = (\partial\hat{\Gamma}+\hat{\Gamma}*_w\hat{\Gamma})*_w\eta+\hat{\Gamma}*_w\hat{\nabla}\eta + \tilde{F}^i_{\mu} - F^i_{\mu} &\quad\text{on}\quad\varphi_s(U_s)\times(0,T]\\
    (\zeta^i_{\mu}-\eta^i_{\mu})(x,0)=0  &\quad\text{on}\quad\varphi_s(U_s).
    \end{cases}
\end{align*}
Then (\ref{F estimate 2}) and Lemma \ref{lemma higher order u} imply that 
\begin{align}
    &||G\eta||_{\mathcal{X}_{k+2}(M\times [0,T])}\\
    &\notag\leq K \left( ||(\partial\hat{\Gamma}+\hat{\Gamma}*_w\hat{\Gamma})*_w\eta||_{\mathcal{C}^{k,\gamma}_{1-\frac{\alpha}{2}}(M\times (0,T])} + ||\hat{\Gamma}*_w\hat{\nabla}\eta||_{\mathcal{C}^{k,\gamma}_{1-\frac{\alpha}{2}}( M\times (0,T])} + \sum_{i,\mu}\|\tilde{F}^i_{\mu} - F^i_{\mu}\|_{\mathcal{C}^{k,\gamma}_{1-\frac{\alpha}{2}}(\mathbb{R}^N\times (0,T])}\right)\\
    &\leq KT^{\frac{1}{2}} \left(\|\eta\|_{\alpha,\frac{\alpha}{2};M\times [0,T]} + \|\hat{\nabla}\eta\|_{\mathcal{C}^{k+1,\gamma}_{\frac{1}{2}-\frac{\alpha}{2}}(M\times (0,T])} + \|\eta\|_{\mathcal{X}_{k+2}(M\times [0,T])}\right) \notag\\
    &\leq K T^{\frac{1}{2}} ||\eta||_{\mathcal{X}_{k+2}(M\times [0,T])}.  \notag
\end{align}
We thereby proved the Lemma. 
\end{proof}
\bigskip
To complete the proof of Theorem \ref{theorem linear equation}, we choose $T^*$ sufficiently small so that the constant given in Lemma \ref{good operators} satisfies $KT^* \leq\frac{1}{2}$. Then for each $T\in (0,T^*]$, Lemma \ref{good operators} implies 
$$||Sh||_{\mathcal{W}_k}\leq \frac{1}{2}||h||_{\mathcal{W}_k},\quad ||G\eta||_{\mathcal{X}_{k+2}}\leq \frac{1}{2}||\eta||_{\mathcal{X}_{k+2}}.$$
Then on the basis of contraction mapping principle and the Fredholm alternative the operators $Id_{\mathcal{W}}+S$ and $Id_{\mathcal{X}}+G$ have bounded inverses. From this and (\ref{two operators}) we  conclude that the operator $H$ has bounded inverse such that
\begin{align}
    R(Id_{\mathcal{W}}+S)^{-1} = (Id_{\mathcal{X}}+G)^{-1}R = H^{-1}.
\end{align}
Consequently we have
\begin{align}
   &\|\eta\|_{\alpha,\frac{\alpha}{2};M\times [0,T]} + \|\hat{\nabla}\eta\|_{\mathcal{C}^{k+1,\gamma}_{\frac{1}{2}-\frac{\alpha}{2}}(M\times (0,T])} \\
    &\notag = ||\eta||_{\mathcal{X}_{k+2}(M\times [0,T])}\\
    &\notag = ||H^{-1}h||_{\mathcal{X}_{k+2}(M\times [0,T])}\\
    &\notag \leq K||h||_{\mathcal{W}_k}\\
    &\notag \leq K\big(||F||_{\mathcal{C}^{k,\gamma}_{1-\frac{\alpha}{2}}(M\times (0,T])} + \|\eta_0\|_{\alpha; M} \big)
\end{align}
for each $T\in (0,T^*]$. Here $K$ is a constant depending only on $k, M, \hat{g}, A$. Having established the theorem on a small time interval $[0,\min\{T^*,I\}]$, we can prove the theorem on an arbitrary interval $[0,T]$ for each $T\leq I$ by standard parabolic theory. Since for $t>0$,  we have $\eta(\cdot,t)\in C^{k+2+\gamma}(M)$ and $F, w\in C^{k+\gamma,\frac{k+\gamma}{2}}(M\times [t,T])$. Therefore we have established Theorem \ref{theorem linear equation}. 
\bigskip

\newpage
\section{Ricci Flow with H\"older continuous initial metrics}

Let $M$ be a smooth, compact Riemannian manifold with boundary $\partial M$. Let $g_0$ be a smooth Riemannian metric on $M$. Recall that the goal of this paper to prove short time existence to Ricci flow on manifold with boundary in the following sense:
\begin{align}{\label{Ricci flow system}}
    \begin{cases}
    \frac{\partial}{\partial t}g(t)=-2Ric(g(t))\quad&\text{on}\quad M\times(0,T]\\
    A_{g(t)}=0\quad\quad\quad\quad\quad &\text{on}\quad \partial M\times (0,T],
    \end{cases}
\end{align}
where $A_{g(t)}$ is the second fundamental form of $\partial M$ in $(M,g(t))$. In \cite{Sh}, Shen proved short time existence to the above system provided that the initial metric $g_0$ is totally geodesic. We remark here that we do not impose any condition on the boundary second fundamental form $A_{g_0}$ for the initial metric $g_0$.

\bigskip

Equivalently, we can prove short time existence to the above boundary value problem via doubling the manifold and solving the corresponding Ricci flow on the doubled manifold but with rough initial data. Let $\tilde{M}$ be the double of $M$. More precisely, we define $\tilde{M} = {}^{M_1\cup M_2}/_{\sim}$, where $M_1$ and $M_2$ are identical copies of $M$ and $p_1\sim p_2$ if $p\in\partial M$. Fix a smooth background metric $\hat{g}$ on $M$ such that in a small collar neighborhood of $\partial M$ the metric $\hat{g}$ is isometric to a product $\partial M\times [0,\varepsilon)$. Note that $\hat{g}$ extends to a smooth metric on the doubled manifold $\tilde{M}$ via reflection about $\partial M$, which we would still denote it as $\hat{g}$. Next, we extend $g_0$ to a metric $\tilde{g}_0$ on the doubled manifold $\tilde{M}$ via reflection about $\partial M$. Then $\tilde{g}_0$ is a Lipschitz metric on $\tilde{M}$. In particular, we have $\tilde{g}_0\in C^{\alpha}(M; \textup{Sym}^2(T^*M))$ for all $\alpha\in(0,1)$. We consider the Ricci flow on $\tilde{M}$:
\begin{align}{\label{Ricci flow on double}}
    \frac{\partial}{\partial t}\tilde{g}(t)=-2Ric(\tilde{g}(t))\quad&\text{on}\quad \tilde{M}\times(0,T].
\end{align}
Note that solving (\ref{Ricci flow system}) is equivalent to solving (\ref{Ricci flow on double}) with initial metric $\tilde{g}_0$. Namely if $\tilde{g}(t)$ is a smooth solution to (\ref{Ricci flow on double}) on $\tilde{M}\times (0,I]$, then $\tilde{g}(t)$ preserves the $\mathbb{Z}_2$ symmetry of $\tilde{g}_0$ and therefore $\tilde{g}(t)|_{M_i}$ is a smooth solution to (\ref{Ricci flow system}) on $M_i\times (0,I]$ such that $(M_i, \tilde{g}(t)|_{M_i})$ has totally geodesic boundary. 

\bigskip
We use the DeTurck's trick to relate the system (\ref{Ricci flow on double}) to a modified system which is strictly parabolic. In the sequel of this section, we will work on the doubled manifold $(\tilde{M},\tilde{g}_0)$. For the sake of notation simplicity we will still denote the doubled Riemannian manifold $(\tilde{M}, \tilde{g}_0)$ as $(M, g_0)$ if no confusions would be made. $\hat{\nabla}$ will denote the covariant derivative with respect to the background metric $\hat{g}$. We consider the following Ricci-DeTurck system on the doubled manifold $(M,g_0)$:

\begin{align}{\label{Ricci DeT}}
    \begin{cases}
    \frac{\partial}{\partial t}g  = -2Ric(g)+\mathcal{L}_Wg\quad &\text{on}\quad M\times (0,T]\\
     g(x,0) = g_0 \quad &\text{on}\quad M.
    \end{cases}
\end{align}
Here the vector field $W$ is defined as  $W^k=g^{ij}((\Gamma_{g})_{ij}^k-(\hat{\Gamma}_{\hat{g}})_{ij}^k)$. Moreover, we have $-2Ric(g)+\mathcal{L}_Wg = g^{kl}\hat{\nabla}_k\hat{\nabla}_lg + \mathcal{Q}(g,\hat{\nabla}g) $, where $\mathcal{Q}(g,\hat{\nabla}g)$ is defined by
\begin{align*}
     \mathcal{Q}(g,\hat{\nabla}g)_{ij} =&-g^{kl}g_{ip}\hat{g}^{pq}\hat{R}_{jkql} - g^{kl}g_{jp}\hat{g}^{pq}\hat{R}_{ikql}\\
    &+\frac{1}{2}g^{kl}g^{pq}(\hat{\nabla}_ig_{pk}\hat{\nabla}_jg_{ql}+2\hat{\nabla}_kg_{ip}\hat{\nabla}_qg_{jl}-2\hat{\nabla}_kg_{ip}\hat{\nabla}_lg_{jq}\\
    &-4\hat{\nabla}_ig_{pk}\hat{\nabla}_lg_{jq}).
\end{align*}
 We remark that the initial metric $g_0$ is merely H\"older-continuous. We will use the Banach fixed point theorem to prove existence of a short time solution to (\ref{Ricci DeT}). \\
 
 To begin with, we define a suitable Banach space for the solutions. \\
  Let $\alpha,\gamma\in (0,1)$ be given such that $\alpha > \gamma$ and let $k\geq 1$, for $\eta\in\Gamma(M\times[0,T];E)$ we define a norm $\|\eta\|_{\mathcal{X}_{k,\gamma}^{(\alpha)}(M\times[0,T])}$ by
 \begin{align}
	&\|\eta\|_{\mathcal{X}_{k,\gamma}^{(\alpha)}(M\times[0,T])}:= ||\eta||_{\alpha,\frac{\alpha}{2};M\times [0,T]}  + ||\hat{\nabla}\eta||_{\mathcal{C}^{k-1,\gamma}_{\frac{1}{2}-\frac{\alpha}{2}}(M\times (0,T])}.
\end{align}
Moreover we define the Banach space 
	\begin{align}
		\mathcal{X}_{k,\gamma}^{(\alpha)}(M\times[0,T];E) := \{\eta: M\times [0,T]\to E|\ \|\eta\|_{\mathcal{X}_{k,\gamma}^{(\alpha)}(M\times[0,T])}<\infty.\}.
	\end{align}

\bigskip

\subsection{Formulation of the existence result to the Ricci-DeTurck Flow}\

\

Let $\alpha, \gamma\in (0,1)$ be given such that $\alpha>\gamma$ and $g_0\in C^{\alpha}(M)$. Denote by $E=\textup{Sym}^2(T^*M)$ the tensor bundle of symmetric (0,2)-tensors on the closed manifold $M$. The system (\ref{Ricci DeT}) is equivalent to 
\begin{align}{\label{Ricci DeT 2}}
    \begin{cases}
    \frac{\partial}{\partial t}g(x,t) - tr_{g}\hat{\nabla}^2g(x,t) =  \mathcal{Q}(g, \hat{\nabla}g)(x,t) \quad &\text{on}\quad M\times (0,T]\\
    g(x,0) = g_0 \quad\quad\quad\quad\quad\quad\quad\quad\quad\quad\quad\quad\quad &\text{on}\quad M.
    \end{cases}
\end{align}
The result in section 3 will help us to apply the Banach fixed point theorem.
\bigskip

 Let $w\in \mathcal{X}_{2,\gamma}^{(\alpha)}(M\times[0,T];E)$ be given such that $w(\cdot,t)$ is a family of Riemannian metrics on $M\times [0,T]$, we consider the following linear system:
\begin{align}{\label{linear Ricci DeT}}
    \begin{cases}
    \frac{\partial}{\partial t}\eta(x,t) - tr_{g_0}\hat{\nabla}^2\eta(x,t) = tr_w\hat{\nabla}^2w(x,t) - tr_{g_0}\hat{\nabla}^2w(x,t) +  \mathcal{Q}(w, \hat{\nabla}w)(x,t) \quad &\text{on}\quad M\times (0,T]\\
    \eta(x,0) = g_0(x)\quad\quad\quad\quad\quad\quad\quad\quad\quad\quad\quad\quad\quad &\text{on}\quad M
    \end{cases}
\end{align}

Note that if a solution $\eta$ to (\ref{linear Ricci DeT}) satisfies $\eta = w$, then $\eta$ solves the nonlinear system (\ref{Ricci DeT 2}).

\bigskip

\begin{proposition}{\label{prop linear Ricci DeT}}
Consider the linear system (\ref{linear Ricci DeT}). Suppose that
\begin{itemize}
	\item[(1)]  $w(x,0) = g_0(x)$;
	\item[(2)]  $w(\cdot,t)$ is a family of Riemannian metrics on $M\times [0,T]$ such that
		\[\Lambda g_0(x)\geq w(x,t)\geq\frac{1}{\Lambda}g_0(x)\]
		for any $(x,t)\in M\times[0,T]$. 
	\item[(3)]  $\|w\|_{\mathcal{X}_{2,\gamma}^{(\alpha)}(M\times[0,T])} \leq A$.
\end{itemize}
Then there is a unique solution $\eta\in\mathcal{X}_{2,\gamma}^{(\alpha)}(M\times [0,T])$ to the system (\ref{linear Ricci DeT}) and positive constants $K_1 = K_1(M,  \hat{g}, \|g_0\|_{\alpha}), K_2 = K_2(M, \Lambda, \hat{g}, \|g_0\|_{\alpha}, A)$ such that
    \begin{align*}
       \|\eta\|_{\mathcal{X}_{2,\gamma}^{(\alpha)}(M\times [0,T])} \leq K_1(K_2\  T^{\frac{\gamma}{2}} + \|g_0\|_{\alpha;M}).  
    \end{align*}
Moreover, there exists $T^* = T^*(M, \Lambda, \hat{g}, \|g_0\|_{\alpha}, A)$ such that $\eta(\cdot, t)$ is a family of Riemannian metrics on $M\times [0, \min\{T, T^*\}]$ satisfying
			\[\Lambda g_0(x)\geq \eta(x,t) \geq\frac{1}{\Lambda}g_0(x)\]
	for any $(x,t)\in M\times[0,T]$.
\end{proposition}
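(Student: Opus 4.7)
My plan is to apply Theorem~\ref{theorem linear equation} with $k=0$ to the linear system (\ref{linear Ricci DeT}), taking the coefficient metric in the principal part to be $g_0$ itself, viewed as time-independent. Since $g_0\in C^\alpha(M)$ is a genuine Riemannian metric on a compact manifold, it satisfies the uniform parabolicity hypothesis (3.2) with a constant $\lambda$ depending on $g_0$, and $\|g_0\|_{\gamma,\gamma/2;M\times[0,I]}\leq K(\|g_0\|_\alpha)$ since $\alpha>\gamma$, so the hypotheses of Theorem~\ref{theorem linear equation} are met (with the role of ``$A$'' there playing a function of $\|g_0\|_\alpha$). The forcing term
\[F := (w^{ij}-g_0^{ij})\,\hat{\nabla}_i\hat{\nabla}_j w + \mathcal{Q}(w,\hat{\nabla}w)\]
must then be estimated in $\mathcal{C}^{0,\gamma}_{1-\alpha/2}(M\times(0,T];E)$, after which Theorem~\ref{theorem linear equation} will produce a unique $\eta\in\mathcal{X}_{2,\gamma}^{(\alpha)}$ satisfying $\|\eta\|_{\mathcal{X}_{2,\gamma}^{(\alpha)}}\leq K_1\bigl(\|F\|_{\mathcal{C}^{0,\gamma}_{1-\alpha/2}}+\|g_0\|_{\alpha;M}\bigr)$.

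The core of the argument is therefore the bound on $F$. Since $\hat{\nabla}w\in\mathcal{C}^{1,\gamma}_{1/2-\alpha/2}(M\times(0,T])$ with norm controlled by $A$, Lemma~\ref{properties of weighted space}(1) gives $\hat{\nabla}^2w\in\mathcal{C}^{0,\gamma}_{1-\alpha/2}$, while Lemma~\ref{properties of weighted space}(4) yields $\hat{\nabla}w*\hat{\nabla}w\in\mathcal{C}^{0,\gamma}_{1-\alpha}$; the lower-order terms in $\mathcal{Q}$ are bilinear combinations of $w$ with the background curvature and are therefore bounded directly from $\|w\|_{\alpha,\alpha/2}\leq A$. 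The initial condition $w(\cdot,0)=g_0$ combined with $\|w\|_{\alpha,\alpha/2}\leq A$ gives $|w-g_0|_{0;M_\sigma}\leq A\sigma^{\alpha/2}$, a gain of $\sigma^{\alpha/2}$ over the crude $C^0$ bound, and one has a parallel gain for the parabolic H\"older seminorm of $w-g_0$. Multiplying $w-g_0$ against $\hat{\nabla}^2 w$ and using Lemma~\ref{properties of weighted space}(2) to trade any surplus weight for a factor of $T^{\gamma/2}$ (which is legitimate because $\alpha>\gamma$, so every exponent of $T$ that appears is at least $\gamma/2$), I expect
\[\|F\|_{\mathcal{C}^{0,\gamma}_{1-\alpha/2}(M\times(0,T])}\leq K_2\,T^{\gamma/2},\]
with $K_2=K_2(M,\Lambda,\hat{g},\|g_0\|_\alpha,A)$. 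Plugging this into the estimate from Theorem~\ref{theorem linear equation} gives exactly the required bound on $\|\eta\|_{\mathcal{X}_{2,\gamma}^{(\alpha)}}$.

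The final assertion that $\eta(\cdot,t)$ remains a Riemannian metric with $\Lambda^{-1}g_0\leq\eta(\cdot,t)\leq\Lambda g_0$ on a short time interval follows from $\eta\in C^{\alpha,\alpha/2}$, $\eta(\cdot,0)=g_0$, and the estimate above: pointwise $|\eta(x,t)-g_0(x)|_{\hat{g}}\leq \|\eta\|_{\alpha,\alpha/2;M\times[0,T]}\,t^{\alpha/2}$, and the right side can be made arbitrarily small by shrinking $T$, with the smallness depending only on $M,\Lambda,\hat{g},\|g_0\|_\alpha,A$; this pins down $T^*$. The main obstacle I foresee is the bookkeeping in the weighted-norm estimate on $F$, particularly for the term $(w-g_0)\hat{\nabla}^2 w$: one must balance the vanishing of $w-g_0$ at $t=0$ against the $\sigma^{\alpha/2-1}$ blow-up of $\hat{\nabla}^2 w$ permitted by its weighted norm, and then verify, via repeated application of Lemma~\ref{properties of weighted space}, that the product indeed lands in $\mathcal{C}^{0,\gamma}_{1-\alpha/2}$ with a strictly positive power of $T$ remaining --- this surplus power is precisely what will drive the Banach fixed-point step used to solve the full nonlinear Ricci--DeTurck system in the next subsection.
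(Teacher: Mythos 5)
Your proposal follows essentially the same route as the paper: apply Theorem~\ref{theorem linear equation} with $k=0$ and coefficient metric $g_0$ (time-independent, which satisfies uniform parabolicity and has $C^{\gamma,\gamma/2}$ norm controlled by $\|g_0\|_\alpha$ since $\alpha>\gamma$), then estimate the forcing term $F=(w^{ij}-g_0^{ij})\hat{\nabla}_i\hat{\nabla}_j w + \mathcal{Q}(w,\hat{\nabla}w)$ in $\mathcal{C}^{0,\gamma}_{1-\alpha/2}$ by exploiting $w(\cdot,0)=g_0$ (so $|w-g_0|_{0;M_\sigma}\lesssim\sigma^{\alpha/2}$) and the weight-trading inequality of Lemma~\ref{properties of weighted space}(2), extracting a surplus factor $T^{\gamma/2}$, and finally derive the metric pinching on a short interval from the $C^{\alpha,\alpha/2}$ modulus of continuity. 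This is precisely the paper's argument, including the same decomposition of $F$ and the same source of the $T^{\gamma/2}$ gain.
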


\begin{proof}
In the sequel, $K_1$ will denote a constant depending only on $M, \hat{g},  \|g_0\|_{\alpha;M}$ and  $K_2$ will denote a constant depending only on $M, \Lambda, \hat{g},  \|g_0\|_{\alpha;M}, A$. 
we first consider the term $$\mathcal{Q}=w^{-1}*w*\hat{R}+w^{-1}*w^{-1}*\hat{\nabla}w*\hat{\nabla}w.$$
By assumption (2) and the matrix identity $A^{-1}-B^{-1}=B^{-1}(B-A)A^{-1}$, we have
\[|w^{-1}|_{0;M\times[0,T]}\leq K_2(\Lambda, \hat{g}) \quad\text{and}\quad [w^{-1}]_{\alpha,\frac{\alpha}{2};M\times[0,T]}\leq K(|w^{-1}|_{0;M\times[0,T]}, [w]_{\alpha,\frac{\alpha}{2};M\times[0,T]}) \leq K_2(\Lambda, \hat{g}, A).\]
This implies
    \begin{align}{\label{Q estimate}}
        &||\mathcal{Q}(w,\hat{\nabla}w)||_{\mathcal{C}_{1-\frac{\alpha}{2}}^{0,\gamma}(M\times (0,T])}\\
        &\leq K_2\ \Big(\|w^{-1}*w\|_{\mathcal{C}_{1-\frac{\alpha}{2}}^{0,\gamma}(M\times (0,T])} + \|w^{-1}*w^{-1}*\hat{\nabla}w*\hat{\nabla}w\|_{\mathcal{C}_{1-\frac{\alpha}{2}}^{0,\gamma}(M\times (0,T])}\Big)\notag\\
        &\leq K_2\ \Big(T^{1-\frac{\alpha}{2}}\ \|w^{-1}\|_{\alpha,\frac{\alpha}{2};M\times[0,T]} \|w\|_{\alpha,\frac{\alpha}{2};M\times[0,T]} + T^{\frac{\alpha}{2}}\ \|w^{-1}\|^2_{\alpha,\frac{\alpha}{2};M\times [0,T]} \|\hat{\nabla}w\|_{\mathcal{C}_{\frac{1}{2}-\frac{\alpha}{2}}^{0,\gamma}(M\times (0,T])}^2\Big)\notag\\
        &\leq K_2\ T^{\frac{\alpha}{2}}.\notag
    \end{align}
Similarly, using the facts $\alpha > \gamma$ and  $|(w-g_0)(x,t)|\leq t^{\frac{\alpha}{2}}\ [w - g_0]_	{\alpha,\frac{\alpha}{2};M\times [0,T]}$, and the matrix identity $A^{-1}-B^{-1}=B^{-1}(B-A)A^{-1}$, we have
    \begin{align}{\label{Hessian estimate}}
        &\|tr_w\hat{\nabla}^2w - tr_{g_0}\hat{\nabla}^2w\|_{\mathcal{C}_{1-\frac{\alpha}{2}}^{0,\gamma}(M\times (0,T])}\\
        &\notag = \sup_{\sigma\in(0,T]}\sigma^{1-\frac{\alpha}{2}}|(w^{-1}-g_0^{-1})*\hat{\nabla}^2w|_{0; M\times [\frac{\sigma}{2},\sigma]} + \sup_{\sigma\in (0,T]}\sigma^{1-\frac{\alpha}{2}+\frac{\gamma}{2}}[(w^{-1}-g_0^{-1})*\hat{\nabla}^2w]_{\gamma,\frac{\gamma}{2}; M\times [\frac{\sigma}{2},\sigma]}\\
        &\notag \leq K_2\Big(\sup_{\sigma\in(0,T]}\sigma^{1-\frac{\alpha}{2}}|w-g_0|_{0; M\times [\frac{\sigma}{2},\sigma]}|\hat{\nabla}^2w|_{0; M\times [\frac{\sigma}{2},\sigma]}  + \sup_{\sigma\in(0,T]}\sigma^{1-\frac{\alpha}{2}+\frac{\gamma}{2}}|w-g_0|_{0; M\times [\frac{\sigma}{2},\sigma]}[\hat{\nabla}^2w]_{\gamma,\frac{\gamma}{2}; M\times [\frac{\sigma}{2},\sigma]}\\
        &\notag\quad\quad\quad + \sup_{\sigma\in(0,T]}\sigma^{1-\frac{\alpha}{2}+\frac{\gamma}{2}}[w-g_0]_{\gamma,\frac{\gamma}{2}; M\times [\frac{\sigma}{2},\sigma]}|\hat{\nabla}^2w|_{0; M\times [\frac{\sigma}{2},\sigma]}\Big) \\
        &\notag \leq K_2\Big(\sup_{\sigma\in(0,T]}\sigma|\hat{\nabla}^2w|_{0; M\times [\frac{\sigma}{2},\sigma]} 
       		+ \sup_{\sigma\in(0,T]}\sigma^{1+\frac{\gamma}{2}}[\hat{\nabla}^2w]_{\gamma,\frac{\gamma}{2}; M\times [\frac{\sigma}{2},\sigma]} + \sup_{\sigma\in(0,T]}\sigma^{1-\frac{\alpha}{2}+\frac{\gamma}{2}}|\hat{\nabla}^2w|_{0; M\times [\frac{\sigma}{2},\sigma]}\Big) \\
        &\notag \leq K_2  \Big( T^{\frac{\alpha}{2}}\sup_{\sigma\in(0,T]}\sigma^{1-\frac{\alpha}{2}}|\hat{\nabla}^2w|_{0; M\times [\frac{\sigma}{2},\sigma]} \notag\quad + T^{\frac{\alpha}{2}}\sup_{\sigma\in(0,T]}\sigma^{1-\frac{\alpha}{2}+\frac{\gamma}{2}}[\hat{\nabla}^2w]_{\gamma,\frac{\gamma}{2}; M\times [\frac{\sigma}{2},\sigma]} \\
        &\notag\quad\quad\quad+ T^{\frac{\gamma}{2}}\sup_{\sigma\in(0,T]}\sigma^{1-\frac{\alpha}{2}}|\hat{\nabla}^2w|_{0; M\times [\frac{\sigma}{2},\sigma]}\Big) \\
        &\notag\leq K_2\ T^{\frac{\gamma}{2}}.
    \end{align}

 From (\ref{Q estimate}), (\ref{Hessian estimate}) and Theorem \ref{theorem linear equation}, we conclude that there is a unique solution $\eta\in\mathcal{X}_{2,\gamma}^{(\alpha)}(M\times[0,T])$ to the linear system (\ref{linear Ricci DeT}) and a positive constant $K_1 = K_1(M,  \hat{g}, \|g_0\|_{\alpha})$ such that
\begin{align*}
		 &\|\eta\|_{\mathcal{X}_{2,\gamma}^{(\alpha)}(M\times[0,T])} \\
		 \notag &\leq K_1\left( ||tr_w\hat{\nabla}^2w - tr_{g_0}\hat{\nabla}^2w + \mathcal{Q}(w,\hat{\nabla}w)||_{\mathcal{C}_{1-\frac{\alpha}{2}}^{0,\gamma}(M\times (0,T])} +  \|g_0\|_{\alpha;M} \right) \\
		 & \leq K_1( K_2\ T^{\frac{\gamma}{2}} + \|g_0\|_{\alpha;M}). \notag
\end{align*}
Next, note that the bound for the semi-H\"older norm $[\eta]_{\alpha,\frac{\alpha}{2};M\times[0,T]}$ implies that
\begin{align*}
	\|\eta(x,t) - g_0(x)\|_{0;M\times [0,T]} \leq K(M, \Lambda, \hat{g}, \|g_0\|_{\alpha}, A)\ T^{\frac{\alpha}{2}}.
\end{align*}
Thus if $T^* = T^*(M, \Lambda, \hat{g}, \|g_0\|_{\alpha}, A)$ is sufficiently small, then the second conclusion of the proposition also holds. 
\end{proof}
\bigskip

\subsection{Short time existence and uniqueness to Ricci-DeTurck flow}\

\bigskip

We now prove the short time existence for the Ricci-DeTurck flow (\ref{Ricci DeT}) by employing the Banach fixed point theorem. Let $\Lambda > 2$ and $A> 10\Lambda K_1\|g_0\|_{\alpha;M}$ be large constants, where $K_1$ is the constant given in Proposition \ref{prop linear Ricci DeT}. We define a closed subset in $\mathcal{X}_{2,\gamma}^{(\alpha)}(M\times [0,T])$ by
	$$\mathcal{B} := \{w \in\mathcal{X}_{2,\gamma}^{(\alpha)} |\ w|_{t=0} = g_0,\ \Lambda g_0(\cdot)\geq w(\cdot,t)\geq\frac{1}{\Lambda}g_0(\cdot),\ \|w\|_{\mathcal{X}_{2,\gamma}^{(\alpha)}}\leq A\}.$$
The subset $\mathcal{B}$ is non-empty by our choice of $A$ provided that $T = T(M, \Lambda, \hat{g}, \|g_0\|_{\alpha}, A)$ is chosen sufficiently small. We next define an operator $\mathcal{R}:\mathcal{B}\to\mathcal{X}_{2,\gamma}^{(\alpha)}$ by
\begin{align}
	\eta := \mathcal{R}(w),	
\end{align}
where $\eta$ is the unique solution to the system (\ref{linear Ricci DeT}) in $\mathcal{X}_{2,\gamma}^{(\alpha)}$. The operator $\mathcal{R}$ is well defined by Proposition \ref{prop linear Ricci DeT}. Moreover, we can further set $T = T(M, \Lambda, \hat{g}, \|g_0\|_{\alpha}, A)$ sufficiently small such that $\|\eta\|_{\mathcal{X}_{2,\gamma}^{(\alpha)}}\leq A$ and $\Lambda g_0(\cdot)\geq \eta(\cdot,t)\geq\frac{1}{\Lambda}g_0(\cdot)$ by Proposition \ref{prop linear Ricci DeT}. Consequently we have $\mathcal{R}(\mathcal{B})\subset\mathcal{B}$ by our choice of $T$.
\bigskip

\begin{proposition}{\label{prop contraction}}
If $T = T(M, \Lambda, \hat{g}, \|g_0\|_{\alpha}, A)$ is chosen sufficiently small, the operator $\mathcal{R}$ is a contraction mapping.
\end{proposition}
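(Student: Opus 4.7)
The plan is to fix $w_1, w_2\in\mathcal{B}$ and estimate the difference of their images $\eta_i := \mathcal{R}(w_i)$. Because both $\eta_1$ and $\eta_2$ start from the same initial datum $g_0$, the difference satisfies the linear system
\begin{equation*}
\frac{\partial}{\partial t}(\eta_1-\eta_2) - tr_{g_0}\hat{\nabla}^2(\eta_1-\eta_2) = F_1 - F_2,\qquad (\eta_1-\eta_2)(\cdot,0) = 0,
\end{equation*}
where $F_i = tr_{w_i}\hat{\nabla}^2 w_i - tr_{g_0}\hat{\nabla}^2 w_i + \mathcal{Q}(w_i,\hat{\nabla}w_i)$. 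Applying Theorem \ref{theorem linear equation} with vanishing initial data therefore gives
\begin{equation*}
\|\eta_1-\eta_2\|_{\mathcal{X}_{2,\gamma}^{(\alpha)}(M\times [0,T])} \leq K_1 \|F_1 - F_2\|_{\mathcal{C}^{0,\gamma}_{1-\alpha/2}(M\times (0,T])},
\end{equation*}
with $K_1 = K_1(M,\hat{g},\|g_0\|_{\alpha})$, so the problem reduces to showing $\|F_1-F_2\|_{\mathcal{C}^{0,\gamma}_{1-\alpha/2}} \leq K_2 T^{\delta}\|w_1-w_2\|_{\mathcal{X}_{2,\gamma}^{(\alpha)}}$ for some $\delta>0$.

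Next, I would expand $F_1 - F_2$ as a sum of telescoping bilinear and trilinear expressions in the basic building blocks $w_i^{\pm 1}$, $\hat{\nabla}w_i$, $\hat{\nabla}^2 w_i$, $\hat{R}$ together with one factor of either $w_1-w_2$, $w_1^{-1}-w_2^{-1}$, $\hat{\nabla}(w_1-w_2)$, or $\hat{\nabla}^2(w_1-w_2)$. The matrix identity $w_1^{-1}-w_2^{-1} = w_2^{-1}(w_2-w_1)w_1^{-1}$ converts inverse differences into differences of the metrics themselves at the cost of factors controlled by $\Lambda$. For instance, the second-order part decomposes as
\begin{equation*}
(tr_{w_1} - tr_{w_2})\hat{\nabla}^2 w_1 \;-\; tr_{w_2}\hat{\nabla}^2(w_2 - w_1) \;+\; (tr_{g_0} - tr_{w_2})\hat{\nabla}^2(w_1-w_2),
\end{equation*}
and analogous (but longer) expansions apply to $\mathcal{Q}(w_1,\hat{\nabla}w_1) - \mathcal{Q}(w_2,\hat{\nabla}w_2)$.

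The crucial observation driving the contraction is that $w_1 - w_2$ (and hence $w_1^{-1} - w_2^{-1}$) vanishes at $t=0$, which yields
\begin{equation*}
|w_1 - w_2|_{0; M\times[0,T]} \leq T^{\alpha/2}[w_1-w_2]_{\alpha,\alpha/2; M\times [0,T]} \leq T^{\alpha/2}\|w_1-w_2\|_{\mathcal{X}_{2,\gamma}^{(\alpha)}}.
\end{equation*}
Substituting this smallness into each term and bounding the remaining factors by the $\mathcal{B}$-bounds on $w_i$, I would run the same weighted H\"older bookkeeping that produced estimates (4.9) and (4.10), applying Lemma \ref{properties of weighted space}(4) systematically. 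Every term will gain at least a factor of $T^{\min(\alpha/2,\gamma/2)}$, yielding
\begin{equation*}
\|F_1 - F_2\|_{\mathcal{C}^{0,\gamma}_{1-\alpha/2}(M\times (0,T])} \leq K_2\, T^{\gamma/2}\|w_1-w_2\|_{\mathcal{X}_{2,\gamma}^{(\alpha)}(M\times [0,T])},
\end{equation*}
with $K_2 = K_2(M,\Lambda,\hat{g},\|g_0\|_{\alpha},A)$. Choosing $T$ small enough so that $K_1 K_2 T^{\gamma/2} \leq 1/2$ produces the contraction.

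The principal technical obstacle is the bookkeeping for the terms in the $\mathcal{Q}$-difference that involve $\hat{\nabla}(w_1-w_2)$ rather than $w_1-w_2$ directly: this factor need not vanish at $t=0$. However, in those terms the companion factor is $\hat{\nabla}w_i$, which sits in $\mathcal{C}^{1,\gamma}_{1/2-\alpha/2}$; pairing the two via Lemma \ref{properties of weighted space}(4) produces a weight $1-\alpha$, and the gap from the target weight $1-\alpha/2$ delivers the required $T^{\alpha/2}$ from statement (2) of the same lemma. So the $T$-smallness survives in every term, and once all contributions are tallied the contraction constant can be made arbitrarily small, which closes the argument.
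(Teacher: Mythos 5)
Your proposal is correct and follows essentially the same route as the paper: apply Theorem \ref{theorem linear equation} with zero initial data to $\eta_1-\eta_2$, telescope $F_1-F_2$ using $A^{-1}-B^{-1}=B^{-1}(B-A)A^{-1}$, exploit $|w_1-w_2|_0\le t^{\alpha/2}[w_1-w_2]_{\alpha,\alpha/2}$ (since both agree at $t=0$), and for the $\hat\nabla(w_1-w_2)$-terms gain $T^{\alpha/2}$ from the weight gap $1-\alpha$ versus $1-\alpha/2$ via Lemma \ref{properties of weighted space}(2),(4). One small slip: your illustrative three-term expansion of the Hessian difference does not actually equal $tr_{w_1}\hat\nabla^2 w_1 - tr_{w_2}\hat\nabla^2 w_2 - tr_{g_0}\hat\nabla^2(w_1-w_2)$ (it produces a stray $tr_{g_0}\hat\nabla^2(w_1-w_2)$ with the wrong sign, which would carry no $T$-smallness); the correct two-term telescope is $(tr_{w_1}-tr_{w_2})\hat\nabla^2 w_1 + (tr_{w_2}-tr_{g_0})\hat\nabla^2(w_1-w_2)$, after which your estimates go through as intended.
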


\begin{proof}
In the sequel, $K$ will denote a constant depending only on $M,\Lambda, A, \hat{g},  \|g_0\|_{\alpha;M}$. Let $w_1, w_2\in\mathcal{B}$ and write $\eta_i :=\mathcal{R}(w_i)$ for $i=1,2$. Then $\eta := \eta_1 - \eta_2$ solves the following system:

\begin{align}
    \begin{cases}
    \frac{\partial}{\partial t}\eta - tr_{g_0}\hat{\nabla}^2\eta = \tilde{\mathcal{Q}}   &\text{on}\quad M\times (0,T]\\
    \eta(x,0) = 0 \quad &\text{on}\quad M,
    \end{cases}
\end{align}
where
$$\tilde{\mathcal{Q}} := tr_{w_1}\hat{\nabla}^2w_1 - tr_{w_2}\hat{\nabla}^2w_2 - tr_{g_0}\hat{\nabla}^2(w_1-w_2) +  \mathcal{Q}(w_1, \hat{\nabla}w_1)-\mathcal{Q}(w_2, \hat{\nabla}w_2).$$
Then Theorem \ref{theorem linear equation} with $\eta_0 = 0$ asserts that
\begin{align}{\label{contraction estimate}}
    \|\eta\|_{\mathcal{X}_{2,\gamma}^{(\alpha)}}\leq  K||\tilde{\mathcal{Q}}||_{\mathcal{C}_{1-\frac{\alpha}{2}}^{0,\gamma}(M\times (0,T])}.
\end{align}

We first derive the estimate for the term $\mathcal{Q}(w_1, \hat{\nabla}w_1)-\mathcal{Q}(w_2, \hat{\nabla}w_2)$. Recall that $$\mathcal{Q}(w,\hat{\nabla}w)=w^{-1}*w*\hat{R}+w^{-1}*w^{-1}*\hat{\nabla}w*\hat{\nabla}w,$$ thus we can write the difference as
\begin{align*}
	&\mathcal{Q}(w_1, \hat{\nabla}w_1)-\mathcal{Q}(w_2, \hat{\nabla}w_2)\\
	&= (w_1^{-1} - w_2^{-1})* w_1 * \hat{R}\ + w_2^{-1} * (w_1 - w_2) * \hat{R} \\
	&\quad\quad + (w_1^{-1} - w_2^{-1})*w_1^{-1}*\hat{\nabla}w_1*\hat{\nabla}w_1\ +\ w_2^{-1} * (w_1^{-1} - w_2^{-1})*\hat{\nabla}w_1*\hat{\nabla}w_1\\ 
	&\quad\quad + w_2^{-1}*w_2^{-1}* (\hat{\nabla}w_1 - \hat{\nabla}w_2) *\hat{\nabla}w_1\ +\ w_2^{-1}*w_2^{-1}*\hat{\nabla}w_2 * (\hat{\nabla}w_1 - \hat{\nabla}w_2).  
\end{align*}

Using the matrix identity $A^{-1}-B^{-1}=B^{-1}(B-A)A^{-1}$ and the fact that $\|w_i\|_{\mathcal{X}_{2,\gamma}^{(\alpha)}}\leq A$, we have for each $\sigma\in (0,T]$,
\begin{align*}
	&|\mathcal{Q}(w_1, \hat{\nabla}w_1) - \mathcal{Q}(w_2, \hat{\nabla}w_2)|_{0;M\times[\frac{\sigma}{2},\sigma]}	\\
	&\notag \leq K\Big(|w_1-w_2|_{0;M\times[\frac{\sigma}{2},\sigma]} + \sigma^{\alpha-1}|w_1^{-1}-w_2^{-1}|_{0;M\times[\frac{\sigma}{2},\sigma]} + \sigma^{\frac{\alpha}{2}-\frac{1}{2}}|\hat{\nabla}w_1-\hat{\nabla}w_2|_{0;M\times[\frac{\sigma}{2},\sigma]}\Big)\\
	&\notag \leq K\Big(\sigma^{\alpha-1}|w_1-w_2|_{0;M\times[\frac{\sigma}{2},\sigma]} + \sigma^{\frac{\alpha}{2}-\frac{1}{2}}|\hat{\nabla}w_1-\hat{\nabla}w_2|_{0;M\times[\frac{\sigma}{2},\sigma]}\Big).
\end{align*}
This implies
\begin{align}{\label{contraction Q}}
	&\sigma^{1-\frac{\alpha}{2}}|\mathcal{Q}(w_1, w_1^{-1}, \hat{\nabla}w_1) - \mathcal{Q}(w_2, w_2^{-1}, \hat{\nabla}w_2)|_{0;M\times[\frac{\sigma}{2},\sigma]}\\
	&\notag\leq K\sigma^{\frac{\alpha}{2}}\sup_{\sigma\in(0,T]}|w_1-w_2|_{0;M\times[\frac{\sigma}{2},\sigma]} + \sigma^{\frac{\alpha}{2}}\sup_{\sigma\in(0,T]}\sigma^{\frac{1}{2}-\frac{\alpha}{2}}|\hat{\nabla}w_1-\hat{\nabla}w_2|_{0;M\times[\frac{\sigma}{2},\sigma]}\\
	&\notag\leq  KT^{\frac{\alpha}{2}}\|w_1-w_2\|_{\mathcal{X}_{2,\gamma}^{(\alpha)}}.
\end{align}
On the other hand,
\begin{align*}
	&[\mathcal{Q}(w_1, \hat{\nabla}w_1) - \mathcal{Q}(w_2, \hat{\nabla}w_2)]_{\gamma,\frac{\gamma}{2};M\times[\frac{\sigma}{2},\sigma]}	\\
	&\notag \leq K \Big([w_1-w_2]_{\gamma,\frac{\gamma}{2};M\times[\frac{\sigma}{2},\sigma]} + |w_1-w_2|_{0;M\times[\frac{\sigma}{2},\sigma]} + \sigma^{\alpha-1}[w_1^{-1}-w_2^{-1}]_{\gamma,\frac{\gamma}{2};M\times[\frac{\sigma}{2},\sigma]}\\
	&\notag\quad+\sigma^{-\frac{\gamma}{2} + \alpha -1}|w_1^{-1}-w_2^{-1}|_{0;M\times[\frac{\sigma}{2},\sigma]} + \sigma^{\frac{\alpha}{2}-\frac{1}{2}}[\hat{\nabla}w_1-\hat{\nabla}w_2]_{\gamma,\frac{\gamma}{2};M\times[\frac{\sigma}{2},\sigma]} + \sigma^{-\frac{\gamma}{2}+\frac{\alpha}{2}-\frac{1}{2}}|\hat{\nabla}w_1-\hat{\nabla}w_2|_{0;M\times[\frac{\sigma}{2},\sigma]}\Big).
\end{align*}
Since $\alpha>\gamma$, the H\"older semi-norm $[w_1-w_2]_{\gamma,\frac{\gamma}{2};M\times[\frac{\sigma}{2},\sigma]}$ is controlled by $[w_1-w_2]_{\alpha,\frac{\alpha}{2};M\times[\frac{\sigma}{2},\sigma]}$, thus
\begin{align}{\label{contraction Q 2}}
		&\sigma^{1-\frac{\alpha}{2}+\frac{\gamma}{2}}[\mathcal{Q}(w_1, \hat{\nabla}w_1) - \mathcal{Q}(w_2, \hat{\nabla}w_2)]_{\gamma,\frac{\gamma}{2};M\times[\frac{\sigma}{2},\sigma]}\leq  KT^{\frac{\alpha}{2}}\|w_1-w_2\|_{\mathcal{X}_{2,\gamma}^{(\alpha)}}
\end{align}
for each $\sigma\in (0,T]$. Putting (\ref{contraction Q}) and (\ref{contraction Q 2}) together we obtain
\begin{align}{\label{contraction Q 3}}
	\|\mathcal{Q}(w_1,  \hat{\nabla}w_1) - \mathcal{Q}(w_2,  \hat{\nabla}w_2)\|_{\mathcal{C}_{1-\frac{\alpha}{2}}^{0,\gamma}(M\times (0,T])}\leq KT^{\frac{\alpha}{2}}\|w_1-w_2\|_{\mathcal{X}_{2,\gamma}^{(\alpha)}}.
\end{align}

\bigskip
Next, we derive the estimate for the term $\ tr_{w_1}\hat{\nabla}^2w_1 - tr_{w_2}\hat{\nabla}^2w_2- tr_{g_0}\hat{\nabla}^2(w_1-w_2)$. We write it as
\begin{align}{\label{contraction w}}
   \notag &tr_{w_1}\hat{\nabla}^2w_1 - tr_{w_2}\hat{\nabla}^2w_2- tr_{g_0}\hat{\nabla}^2(w_1-w_2) \\
   &= (w_1^{-1}-w_2^{-1})*\hat{\nabla}^2w_1 + (w_2^{-1}-g_0^{-1})*\hat{\nabla}^2(w_1-w_2).
\end{align}
For the first term in (\ref{contraction w}), we use the matrix identity $A^{-1}-B^{-1}=B^{-1}(B-A)A^{-1}$ to estimate
\begin{align*}
	|(w_1^{-1}-w_2^{-1})*\hat{\nabla}^2w_1|_	{0;M\times[\frac{\sigma}{2},\sigma]} &\leq K|w_1-w_2|_	{0;M\times[\frac{\sigma}{2},\sigma]}|\hat{\nabla}^2w_1|_	{0;M\times[\frac{\sigma}{2},\sigma]}\\
	&\leq K\sigma^{\frac{\alpha}{2}-1}|w_1-w_2|_	{0;M\times[\frac{\sigma}{2},\sigma]}.
\end{align*}
Since $w_1(\cdot, 0) = w_2(\cdot, 0)$, we have
	 \[|(w_1-w_2)(x,t)|\leq t^{\frac{\alpha}{2}}\ [w_1-w_2]_	{\alpha,\frac{\alpha}{2};M\times [0,\sigma]}\]
for any $(x,t)\in M\times [0,\sigma]$. Consequently,
\begin{align}{\label{contraction w 2}}
	\sigma^{1-\frac{\alpha}{2}}|(w_1^{-1}-w_2^{-1})*\hat{\nabla}^2w_1|_	{0;M\times[\frac{\sigma}{2},\sigma]} &\leq K\sigma^{\frac{\alpha}{2}}\sigma^{-\frac{\alpha}{2}}|w_1-w_2|_{0;M\times[\frac{\sigma}{2},\sigma]}\\
	&\notag\leq K\sigma^{\frac{\alpha}{2}}\sup_{\tau\in (0,T]}\ \tau^{-\frac{\alpha}{2}}|w_1-w_2|_{0;M\times[\frac{\tau}{2},\tau]}\\
	&\notag\leq K\sigma^{\frac{\alpha}{2}}\ \|w_1-w_2\|_{\mathcal{X}_{2,\gamma}^{(\alpha)}}
\end{align}
for any $\sigma\in (0,T]$. On the other hand,
\begin{align*}
	&[(w_1^{-1}-w_2^{-1})*\hat{\nabla}^2w_1]_	{\gamma,\frac{\gamma}{2};M\times[\frac{\sigma}{2},\sigma]} \\
	&\leq K\Big(|w_1-w_2|_{0;M\times[\frac{\sigma}{2},\sigma]}[\hat{\nabla}^2w_1]_{\gamma,\frac{\gamma}{2};M\times[\frac{\sigma}{2},\sigma]} + [w_1-w_2]_{\gamma,\frac{\gamma}{2};M\times[\frac{\sigma}{2},\sigma]}|\hat{\nabla}^2w_1|_{0;M\times[\frac{\sigma}{2},\sigma]}\Big)\\
	&\leq K\Big(\sigma^{-\frac{\gamma}{2}+\frac{\alpha}{2}-1}|w_1-w_2|_{0;M\times[\frac{\sigma}{2},\sigma]} + \sigma^{\frac{\alpha}{2}-1}[w_1-w_2]_{\gamma,\frac{\gamma}{2};M\times[\frac{\sigma}{2},\sigma]}\Big).
\end{align*}
for any $\sigma\in (0,T]$. By an argument similar to (\ref{contraction w 2}) and the fact $\alpha>\gamma$, we have
\begin{align}{\label{contraction w 3}}
	\sigma^{1-\frac{\alpha}{2}+\frac{\gamma}{2}}[(w_1^{-1}-w_2^{-1})*\hat{\nabla}^2w_1]_	{\gamma,\frac{\gamma}{2};M\times[\frac{\sigma}{2},\sigma]} &\leq K\Big(\sigma^{\frac{\gamma}{2}}\sigma^{-\frac{\gamma}{2}}|w_1-w_2|_{0;M\times[\frac{\sigma}{2},\sigma]} + \sigma^{\frac{\alpha}{2}}[w_1-w_2]_{\gamma,\frac{\gamma}{2};M\times[\frac{\sigma}{2},\sigma]}\Big)\\
	&\leq K\sigma^{\frac{\gamma}{2}}\ \|w_1-w_2\|_{\mathcal{X}_{2,\gamma}^{(\alpha)}}\notag
\end{align}
for any $\sigma\in (0,T]$.
For the second term in (\ref{contraction w}), we have
\begin{align}{\label{contraction w 4}}
	\sigma^{1-\frac{\alpha}{2}}|(w_2^{-1}-g_0^{-1})*\hat{\nabla}^2(w_1-w_2)|_	{0;M\times[\frac{\sigma}{2},\sigma]} &\leq K\sigma^{1-\frac{\alpha}{2}}|w_2-g_0|_{0;M\times[\frac{\sigma}{2},\sigma]}|\hat{\nabla}^2(w_1-w_2)|_{0;M\times[\frac{\sigma}{2},\sigma]}\\
	\notag &\leq K\sigma\ |\hat{\nabla}^2(w_1-w_2)|_{0;M\times[\frac{\sigma}{2},\sigma]}\\
	\notag &\leq K\sigma^{\frac{\alpha}{2}}\sup_{\tau\in (0,T]}\tau^{1-\frac{\alpha}{2}} |\hat{\nabla}^2(w_1-w_2)|_{0;M\times[\frac{\tau}{2},\tau]}\\
	\notag &\leq K\sigma^{\frac{\alpha}{2}} \|w_1-w_2\|_{\mathcal{X}_{2,\gamma}^{(\alpha)}},
\end{align}
for any $\sigma\in (0,T]$, since $|w_2-g_0|_{0;M\times[\frac{\sigma}{2},\sigma]}\leq \sigma^{\frac{\alpha}{2}} [w_2]_{\alpha,\frac{\alpha}{2};M\times[0,\sigma]} \leq A\sigma^{\frac{\alpha}{2}}$.  Moreover,
\begin{align}{\label{contraction w 5}}
	&\sigma^{1-\frac{\alpha}{2}+\frac{\gamma}{2}}[(w_2^{-1}-g_0^{-1})*\hat{\nabla}^2(w_1-w_2)]_	{\gamma,\frac{\gamma}{2};M\times[\frac{\sigma}{2},\sigma]} \\
	\notag &\leq K\Big(\sigma^{1-\frac{\alpha}{2}+\frac{\gamma}{2}}|w_1-g_0|_{0;M\times[\frac{\sigma}{2},\sigma]}[\hat{\nabla}^2(w_1-w_2)]_{\gamma,\frac{\gamma}{2};M\times[\frac{\sigma}{2},\sigma]}\\
	\notag &\quad\quad\quad + \sigma^{1-\frac{\alpha}{2}+\frac{\gamma}{2}}[w_1-g_0]_{\gamma,\frac{\gamma}{2};M\times[\frac{\sigma}{2},\sigma]}|\hat{\nabla}^2(w_1-w_2)|_{0;M\times[\frac{\sigma}{2},\sigma]}\Big)\\
	\notag &\leq K\Big(\sigma^{1-\frac{\alpha}{2}+\frac{\gamma}{2}}|\hat{\nabla}^2(w_1-w_2)|_{0;M\times[\frac{\sigma}{2},\sigma]} + \sigma^{1+\frac{\gamma}{2}}[\hat{\nabla}^2(w_1-w_2)]_{\gamma,\frac{\gamma}{2};M\times[\frac{\sigma}{2},\sigma]} \Big)\\
	\notag &\leq K \Big(\sigma^{\frac{\gamma}{2}} \sup_{\tau\in (0,T]}\tau^{1-\frac{\alpha}{2}}|\hat{\nabla}^2(w_1-w_2)|_{0;M\times[\frac{\tau}{2},\tau]} + \sigma^{\frac{\alpha}{2}} \sup_{\tau\in (0,T]}\tau^{1-\frac{\alpha}{2}+\frac{\gamma}{2}}\ [\hat{\nabla}^2(w_1-w_2)]_{\gamma,\frac{\gamma}{2};M\times[\frac{\tau}{2},\tau]} \Big)\\
	\notag &\leq K\sigma^{\frac{\gamma}{2}} \|w_1-w_2\|_{\mathcal{X}_{2,\gamma}^{(\alpha)}}.
\end{align}
Putting (\ref{contraction w}) to (\ref{contraction w 4}) together, we obtain that
\begin{align}{\label{contraction w 6}}
	\|tr_{w_1}\hat{\nabla}^2w_1 - tr_{w_2}\hat{\nabla}^2w_2- tr_{g_0}\hat{\nabla}^2(w_1-w_2)\|_{\mathcal{C}_{1-\frac{\alpha}{2}}^{0,\gamma}(M\times (0,T])}\leq KT^{\frac{\gamma}{2}}\|w_1-w_2\|_{\mathcal{X}_{2,\gamma}^{(\alpha)}}
\end{align}
Hence, putting (\ref{contraction estimate}), (\ref{contraction Q 3}) and (\ref{contraction w 5}) together, we obtain
\begin{align}
    ||\eta_1 - \eta_2||_{\mathcal{X}_{2,\gamma}^{(\alpha)}}\leq  KT^{\frac{\gamma}{2}}\|w_1-w_2\|_{\mathcal{X}_{2,\gamma}^{(\alpha)}}.
\end{align}
If $T = T(M, \Lambda, \hat{g}, \|g_0\|_{\alpha}, A)$ is chosen sufficiently small, we then have
\begin{align*}
	\|\mathcal{R}(w_1) - \mathcal{R}(w_2)\|_{\mathcal{X}_{2,\gamma}^{(\alpha)}} \leq \frac{1}{2}\|w_1-w_2\|_{\mathcal{X}_{2,\gamma}^{(\alpha)}}.		
\end{align*}
This proves the proposition.
\end{proof}

\bigskip
Therefore, by the Banach fixed point theorem we have proved

\begin{theorem}{\label{thm existence Ricci DeT}}
There exist $K = K(M, \hat{g},  \|g_0\|_{\alpha;M})$ and $T = T(M, \hat{g},  \|g_0\|_{\alpha;M})$ such that the following holds:\\

There is a unique solution $g\in\mathcal{X}_{2,\gamma}^{(\alpha)}(M\times[0,T])$ to the Ricci-DeTurck flow (\ref{Ricci DeT}) such that
\begin{itemize}
\item $g(\cdot,t)$ is a family of Riemannian metrics;
\item $\|g\|_{\mathcal{X}_{2,\gamma}^{(\alpha)}(M\times[0,T])}\leq K$.
\end{itemize}
\end{theorem}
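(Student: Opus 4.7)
The plan is to apply the Banach fixed point theorem directly to the operator $\mathcal{R}$ on the closed subset $\mathcal{B} \subset \mathcal{X}_{2,\gamma}^{(\alpha)}(M\times[0,T])$ already set up. First I would fix the parameters of $\mathcal{B}$ in a way that eliminates $\Lambda$ and $A$ from the final statement: choose $\Lambda = 3$ (say) and then $A := 20\Lambda K_1 \|g_0\|_{\alpha;M}$, where $K_1 = K_1(M,\hat g,\|g_0\|_\alpha)$ is the constant from Proposition 4.1. Both $\Lambda$ and $A$ now depend only on $M$, $\hat g$, and $\|g_0\|_{\alpha;M}$. For non-emptiness of $\mathcal{B}$, one can take a smoothing of $g_0$ (extended constantly in $t$) to produce an element when $T$ is small; closedness is immediate since $\mathcal{B}$ is cut out by the initial-value constraint, a pointwise comparison with $g_0$, and a norm bound, all of which pass to limits in $\mathcal{X}_{2,\gamma}^{(\alpha)}$.

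Next, invoking Proposition 4.1 and Proposition 4.2 and shrinking $T = T(M,\hat g,\|g_0\|_{\alpha;M})$ if necessary, I ensure simultaneously that $\mathcal{R}(\mathcal{B}) \subset \mathcal{B}$ and that $\mathcal{R}$ is a $\tfrac{1}{2}$-contraction on $\mathcal{B}$. The Banach fixed point theorem then produces a unique $g \in \mathcal{B}$ with $\mathcal{R}(g) = g$. Unwrapping the definition of $\mathcal{R}$ via the linear system (4.3), the identity $w = \eta = g$ causes the inhomogeneous terms $\operatorname{tr}_w \hat\nabla^2 w - \operatorname{tr}_{g_0}\hat\nabla^2 w$ to collapse so that $g$ satisfies
\[
\tfrac{\partial}{\partial t} g - \operatorname{tr}_g \hat\nabla^2 g = \mathcal{Q}(g,\hat\nabla g),\qquad g(\cdot,0) = g_0,
\]
which is exactly the Ricci-DeTurck system (4.2). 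The uniform positivity of $g(\cdot,t)$ as a metric and the bound $\|g\|_{\mathcal{X}_{2,\gamma}^{(\alpha)}} \leq A =: K$ are inherited from membership in $\mathcal{B}$.

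For uniqueness within $\mathcal{X}_{2,\gamma}^{(\alpha)}(M\times[0,T])$ under the conclusions of the theorem, I would argue that any two such solutions $g_1, g_2$ are both fixed points of $\mathcal{R}$ on $\mathcal{B}$ (after, if necessary, enlarging $A$ and $\Lambda$ to accommodate both of them and further shrinking $T$), so the contraction estimate in Proposition 4.2 yields
\[
\|g_1 - g_2\|_{\mathcal{X}_{2,\gamma}^{(\alpha)}} = \|\mathcal{R}(g_1) - \mathcal{R}(g_2)\|_{\mathcal{X}_{2,\gamma}^{(\alpha)}} \leq \tfrac{1}{2}\,\|g_1 - g_2\|_{\mathcal{X}_{2,\gamma}^{(\alpha)}},
\]
forcing $g_1 = g_2$.

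The main obstacle I anticipate is the circular-looking dependence of constants: $T$ in Propositions 4.1--4.2 depends on $\Lambda$ and $A$, yet the theorem asserts that both $T$ and $K$ depend only on $(M,\hat g,\|g_0\|_{\alpha;M})$. The resolution, and the step requiring the most care, is ordering the choices correctly: first fix $\Lambda$ and $A$ as explicit functions of $\|g_0\|_{\alpha;M}$ and the universal constant $K_1$, and only then invoke the propositions to shrink $T$. Once this ordering is made rigorous, the final $T$ and $K$ are functions of the data alone, and the remainder of the argument is mechanical.
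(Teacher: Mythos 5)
Your proposal follows the same route as the paper: Theorem 4.4 is obtained by running the Banach fixed point theorem for the operator $\mathcal{R}$ on the set $\mathcal{B}$, with Propositions 4.1 and 4.2 supplying the self-map property and the contraction estimate, and your ordering of the choices (fix $\Lambda$ and $A$ in terms of $K_1$ and $\|g_0\|_{\alpha;M}$ first, then shrink $T$) is exactly the resolution the paper implicitly relies on. The only thing to tighten is the uniqueness step: rather than ``further shrinking $T$'' (which would only give agreement on a shorter interval), observe that any competitor $g$ with $g(\cdot,0)=g_0$ and $\|g\|_{\mathcal{X}_{2,\gamma}^{(\alpha)}}\le K=A$ automatically satisfies $\Lambda g_0\ge g\ge \Lambda^{-1}g_0$ on the \emph{original} $[0,T]$ once $T$ is chosen small enough relative to $K$ and $g_0$ (using $|g(x,t)-g_0(x)|\le K\,t^{\alpha/2}$), so it already lies in $\mathcal{B}$ and the contraction applies directly on $[0,T]$.
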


\bigskip
Next, we can improve the regularity of $g$ by bootstrapping.

\begin{corollary}{\label{thm existence high Ricci DeT}}
Let $k\geq 2$ be given. There exist $K = K(M, k, \hat{g},  \|g_0\|_{\alpha;M})$ and $T = T(M, \hat{g},  \|g_0\|_{\alpha;M})$ such that the following holds:\\

There is a unique solution $g\in\mathcal{X}_{k,\gamma}^{(\alpha)}(M\times[0,T])$ to the Ricci-DeTurck flow (\ref{Ricci DeT}) such that
\begin{itemize}
\item $g(\cdot,t)$ is a family of Riemannian metrics;
\item $\|g\|_{\mathcal{X}_{k,\gamma}^{(\alpha)}(M\times[0,T])}\leq K$.
\end{itemize}
\end{corollary}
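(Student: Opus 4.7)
The plan is to bootstrap the regularity of the solution $g$ from Theorem \ref{thm existence Ricci DeT} by induction on $k\geq 2$, iterating Theorem \ref{theorem linear equation}. The base case $k=2$ is exactly Theorem \ref{thm existence Ricci DeT}. For the inductive step, suppose $g\in \mathcal{X}_{k,\gamma}^{(\alpha)}(M\times [0,T])$ with an estimate depending on $k$. Since $g$ solves the Ricci--DeTurck equation, it satisfies the linear system
\[
\partial_t g - \mathrm{tr}_g \hat{\nabla}^2 g = \mathcal{Q}(g, \hat{\nabla}g),\qquad g(\cdot,0)=g_0,
\]
which I view as a linear equation in the unknown $g$ with coefficient $w=g$ and forcing $F=\mathcal{Q}(g,\hat{\nabla}g)$. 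Applying Theorem \ref{theorem linear equation} with parameter $k-1$ will yield $\hat{\nabla}g\in \mathcal{C}^{k,\gamma}_{1/2-\alpha/2}(M\times (0,T])$, i.e.\ $g\in \mathcal{X}_{k+1,\gamma}^{(\alpha)}(M\times[0,T])$, with an estimate depending on $k+1$. Uniqueness in Theorem \ref{theorem linear equation} identifies this new solution with $g$, closing the induction.

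To invoke Theorem \ref{theorem linear equation} with parameter $k-1$ I must verify three conditions. First, uniform parabolicity of $w=g$ follows from Theorem \ref{thm existence Ricci DeT}, together with the fact that the solution was obtained as a fixed point in the set $\mathcal{B}$ where $\Lambda^{-1} g_0\leq w\leq \Lambda g_0$. Second, the coefficient bound
\[
\|g\|_{\gamma,\gamma/2;\,M\times[0,T]} + \|\hat{\nabla}g\|_{\mathcal{C}^{k-2,\gamma}_{1/2}(M\times(0,T])} \leq A
\]
follows from the induction hypothesis: Lemma \ref{properties of weighted space}(2) applied with $\delta=\alpha/2$ converts weight $\tfrac{1}{2}-\tfrac{\alpha}{2}$ to weight $\tfrac{1}{2}$ for $\hat{\nabla}g$, and $\alpha>\gamma$ controls $\|g\|_{\gamma,\gamma/2}$ by $\|g\|_{\alpha,\alpha/2}$. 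Third, and most delicate, one must bound $F=\mathcal{Q}(g,\hat{\nabla}g)$ in $\mathcal{C}^{k-1,\gamma}_{1-\alpha/2}(M\times(0,T])$. Writing
\[
\mathcal{Q}(g,\hat{\nabla}g) = g^{-1}\ast g\ast \hat{R} + g^{-1}\ast g^{-1}\ast \hat{\nabla}g\ast \hat{\nabla}g,
\]
the gradient-quadratic piece lies in $\mathcal{C}^{k-1,\gamma}_{1-\alpha}\subset \mathcal{C}^{k-1,\gamma}_{1-\alpha/2}$ by Lemma \ref{properties of weighted space}(4) applied to the two factors $\hat{\nabla}g\in \mathcal{C}^{k-1,\gamma}_{1/2-\alpha/2}$, combined with Lemma \ref{properties of weighted space}(2). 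For the curvature piece, the smoothness of $\hat{R}$ and the derivative bounds $|\hat{\nabla}^i g|_{0;M\times[\sigma/2,\sigma]}\leq K\sigma^{(\alpha-i)/2}$ (with the corresponding bounds for $g^{-1}$, obtained from $\hat{\nabla}g^{-1} = -g^{-1}\ast\hat{\nabla}g\ast g^{-1}$ and induction on the order) combine with the positive weight $1-\alpha/2$ to yield the desired estimate, since $\sigma^{1-\alpha/2 + i/2}\cdot \sigma^{(\alpha-i)/2}=\sigma\leq T$ for $i\geq 1$, and the $i=0$ case follows from $\sigma^{1-\alpha/2}\leq T^{1-\alpha/2}$.

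The main point of care is this last step: $g$ and $g^{-1}$ themselves are only H\"older continuous globally and do not lie in $\mathcal{C}^{k-1,\gamma}_{0}$ directly, but the positive weight $1-\alpha/2$ in the target space absorbs this lack of decay, and an analogous bookkeeping controls the parabolic H\"older semi-norm. Uniqueness in $\mathcal{X}_{k,\gamma}^{(\alpha)}$ is immediate from uniqueness in $\mathcal{X}_{2,\gamma}^{(\alpha)}$ of Theorem \ref{thm existence Ricci DeT}, since $\mathcal{X}_{k,\gamma}^{(\alpha)}\subset \mathcal{X}_{2,\gamma}^{(\alpha)}$ for $k\geq 2$. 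Finally, the existence time $T$ does not shrink along the induction: the linear bootstrap operates on the interval already produced by Theorem \ref{thm existence Ricci DeT}, and only the constant $K$ absorbs additional factors depending on $k$.
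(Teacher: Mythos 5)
Your proposal is correct and follows essentially the same bootstrap argument as the paper: inducting on $k$, viewing the Ricci--DeTurck equation as a linear system in $g$ with coefficient $w=g$ and source $\mathcal{Q}(g,\hat{\nabla}g)$, checking the hypotheses of Theorem \ref{theorem linear equation} (with parameter $k-1$), and using Lemma \ref{properties of weighted space}(2),(4) to place $\mathcal{Q}(g,\hat{\nabla}g)$ in $\mathcal{C}^{k-1,\gamma}_{1-\alpha/2}$. One imprecision worth tidying: $g$ and $g^{-1}$ in fact \emph{do} belong to $\mathcal{C}^{k-1,\gamma}_{0}$ --- the paper checks this via $\|g\|_{\mathcal{C}^{k-1,\gamma}_{0}}=\|g\|_{\mathcal{C}^{0,\gamma}_{0}}+\|\hat{\nabla}g\|_{\mathcal{C}^{k-2,\gamma}_{1/2}}\le K$, using $\alpha>\gamma$ for the first term and Lemma \ref{properties of weighted space}(2) for the second --- and it is precisely this membership, fed into the bilinear estimate of Lemma \ref{properties of weighted space}(4) together with the factors $T^{\alpha/2}$ and $T^{1-\alpha/2}$ from the weight shift, that closes the bound, rather than the target weight ``absorbing a lack of decay.''
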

\bigskip

\begin{proof}
 We will prove the corollary by induction on $k$. It is clear from Theorem \ref{thm existence Ricci DeT} that the assertion holds when $k=2$. Let us assume that $g\in\mathcal{X}_{k,\gamma}^{(\alpha)}(M\times [0,T])$ for some $k\geq 2$. In the sequel, $K$ will denote a constant depending only on $M, k, \hat{g},  \|g_0\|_{\alpha;M}, \|g\|_{\mathcal{X}_{k,\gamma}^{(\alpha)}}$. Firstly, note that by the matrix identity $A^{-1}-B^{-1}=B^{-1}(B-A)A^{-1}$, the term $\|g^{-1}\|_{\mathcal{C}^{k-1,\gamma}_{0}(M\times (0,T])}$ is controlled by $\|g\|_{\mathcal{C}^{k-1,\gamma}_{0}(M\times (0,T])}$. Moreover, we observe that
	\[ \|g\|_{\mathcal{C}^{k-1,\gamma}_{0}(M\times (0,T])} = \|g\|_{\mathcal{C}^{0,\gamma}_{0}(M\times (0,T])} + \|\hat{\nabla}g\|_{\mathcal{C}^{k-2,\gamma}_{\frac{1}{2}}(M\times (0,T])}.\]
	Since $\alpha>\gamma$,
	\[ \|g\|_{\mathcal{C}^{0,\gamma}_{0}(M\times (0,T])} \leq K\|g\|_{\alpha,\frac{\alpha}{2};M\times [0,T]},\]
	and it follows from Lemma \ref{properties of weighted space} that
	\[ \|\hat{\nabla}g\|_{\mathcal{C}^{k-2,\gamma}_{\frac{1}{2}}(M\times (0,T])} \leq KT^{\frac{\alpha}{2}}\|\hat{\nabla}g\|_{\mathcal{C}^{k-1,\gamma}_{\frac{1}{2}-\frac{\alpha}{2}}(M\times (0,T])},\]
	we have
	\[ \|g\|_{\mathcal{C}^{k-1,\gamma}_{0}(M\times (0,T])} \leq K.\]
  Since $g$ solves the Ricci-DeTurck system (\ref{Ricci DeT}) on $M\times (0,T]$, by Lemma \ref{properties of weighted space},   the induction hypothesis, and the above estimates, we have
	\begin{align*}
	\|g^{-1}*g^{-1}*\hat{\nabla}g*\hat{\nabla}g\|_{\mathcal{C}^{k-1,\gamma}_{1-\frac{\alpha}{2}}(M\times (0,T])} &\leq K T^{\frac{\alpha}{2}}\ 	\|g^{-1}\|^2_{\mathcal{C}^{k-1,\gamma}_{0}(M\times (0,T])}\|\hat{\nabla}g\|^2_{\mathcal{C}^{k-1,\gamma}_{\frac{1}{2}-\frac{\alpha}{2}}(M\times (0,T])}\\
	&\leq K
	\end{align*}
and
\begin{align*}
	\|g^{-1}*g*\hat{R}\|_{\mathcal{C}^{k-1,\gamma}_{1-\frac{\alpha}{2}}(M\times (0,T])} &\leq K T^{1-\frac{\alpha}{2}}\ \|g^{-1}\|_{\mathcal{C}^{k-1,\gamma}_{0}(M\times (0,T])}\|g\|_{\mathcal{C}^{k-1,\gamma}_{0}(M\times (0,T])}	\\
	&\leq K.
\end{align*}
Theorem \ref{theorem linear equation} then implies that 
 \begin{align*}
 	\|g\|_{\mathcal{X}_{k+1,\gamma}^{(\alpha)}(M\times[0,T])}&\leq K\big(||\mathcal{Q}(g,\hat{\nabla}g)||_{\mathcal{C}^{k-1,\gamma}_{1-\frac{\alpha}{2}}(M\times (0,T])} + \|g_0\|_{\alpha} \big)\\
 	&\leq K.
 \end{align*}
From these, the assertion follows.

    \end{proof}

\bigskip

\bigskip

\subsection{Short time existence to the Ricci flow}\

\bigskip

\

We will prove the short time existence of solution to the Ricci flow with H\"older-continuous initial metrics in this subsection. More specifically, we show that any solution of the Ricci-DeTurck flow with H\"older-continuous initial data gives rise to a solution of the Ricci flow with H\"older-continuous initial data.\\

  For any $\alpha\in (0,1)$ and $k\geq 0$, we define by $C^{k, \alpha}(M;M)$ the space of $C^{k, \alpha}$ maps $f: M\to M$. Recall in section 2 that $\{U_{\mu}, \varphi_{\mu}\}_{{\mu}=1,..,m}$ is a fixed set of coordinate charts on $M$. On the coordinate chart $U_{\mu}$, a map $f:M\to M$ has components $(f_{\mu}^1,\dots, f_{\mu}^n)$. The H\"older norm for maps $f\in C^{k, \alpha}(M;M)$ is measured with respect to the fixed charts  $\{U_{\mu}, \varphi_{\mu}\}$. More precisely,  given  $f\in C^{k, \alpha}(M;M)$,  we define the associated $C^{k, \alpha}$ norm to be
  \[ \|f\|_{C^{k, \alpha}(M;M)} = \sum_{\mu}\sum_{r=1}^n |f_{\mu}^r|_{C^{k, \alpha}(\varphi_{\mu}(U_{\mu}))}.\]

In the remainder of this subsection, we will consider the ODE
\begin{align}{\label{DeT ODE}}
	\begin{cases}
    &\frac{\partial}{\partial t}\psi_t = -W(\psi_t,\ t)\\
    &\psi_T = \id
    \end{cases},
\end{align}
where  $W$ is the DeTurck vector field defined by $W^k=g^{ij}((\Gamma_{g})_{ij}^k-(\hat{\Gamma}_{\hat{g}})_{ij}^k)$, and $g\in\mathcal{X}_{k,\gamma}^{(\alpha)}$ is the solution to the Ricci-DeTurck flow on $M\times (0,T]$.
Note that the vector field $W$ is undefined at $t=0$. Nevertheless we can show that the one-parameter family of maps $\{\psi_t\}$ generated by (\ref{DeT ODE}) can be extended to a map $\psi_0$ at $t=0$.
In the sequel of the subsection, $K$ will denote a constant  depending only on $M, \alpha, \hat{g},  \|g_0\|_{\alpha;M}$, $\|g\|_{\mathcal{X}_{k,\gamma}^{(\alpha)}}$.

\begin{lemma}{\label{lem DeT}}
Let  $g\in\mathcal{X}_{k,\gamma}^{(\alpha)}$ be the solution to the Ricci-DeTurck flow on $M\times (0,T]$, and $W$ be the DeTurck vector field defined by $W^k=g^{ij}((\Gamma_{g})_{ij}^k-(\hat{\Gamma}_{\hat{g}})_{ij}^k)$. Then the one-parameter family of maps $\{\psi_t\}$ generated by (\ref{DeT ODE}) can be extended to a map $\psi_0$ at $t=0$ in $C^{1, \beta}(M;M)$ for any $\beta<\alpha$ and satisfies 
$$\|\psi_t\|_{C^{1, \beta}(M;M)} \leq K$$
for each $t\in [0,T]$. Moreover, $\psi_t$ remains a diffeomorphism for $t\in [0,T]$ if $T$ is sufficiently small
\end{lemma}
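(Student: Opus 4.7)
The plan is to treat (\ref{DeT ODE}) as a backward ODE starting at $t=T$ and, using weighted bounds on $W$ derived from $g\in\mathcal{X}_{k,\gamma}^{(\alpha)}$, extend the flow continuously to $t=0$. Schematically $W = g^{-1} * g^{-1} * \hat{\nabla}g$, so from $\hat{\nabla}g\in\mathcal{C}^{k-1,\gamma}_{1/2-\alpha/2}$ I would extract the pointwise bounds
\begin{align*}
|W(\cdot,t)|_0\leq K t^{(\alpha-1)/2},\quad |\hat{\nabla}W(\cdot,t)|_0\leq K t^{\alpha/2-1},\quad [\hat{\nabla}W(\cdot,t)]_\gamma\leq K t^{(\alpha-\gamma)/2-1}.
\end{align*}
All three are integrable on $[0,T]$: the first two because $\alpha>0$, the third because $\alpha>\gamma$.

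Working in local coordinates, integrability of $|W|_0$ forces $\psi_t(x) = x + \int_t^T W(\psi_\tau(x),\tau)\,d\tau$ to be Cauchy as $t\to 0^+$, which extends $\psi_t$ to a continuous $\psi_0\in C^0(M;M)$. Differentiating the ODE gives $\partial_t D\psi_t = -DW(\psi_t) D\psi_t$ with $D\psi_T = I$; Gronwall together with the integrability of $|DW|_0$ then produces $|D\psi_t|_0\leq K$ and $|D\psi_t - I|_0\leq K T^{\alpha/2}$ uniformly in $t$, with $D\psi_t\to D\psi_0$ uniformly, so $\psi_0\in C^1(M;M)$. For the Hölder bound on $D\psi_0$ I would use
\begin{align*}
D\psi_0(x) - D\psi_0(y) = \int_0^T\bigl[DW(\psi_\tau(x))(D\psi_\tau(x) - D\psi_\tau(y)) + (DW(\psi_\tau(x)) - DW(\psi_\tau(y)))D\psi_\tau(y)\bigr] d\tau,
\end{align*}
absorb the first summand by a Gronwall-type argument (relying on smallness of $\int_0^T |DW|_0\, d\tau$), and estimate the second by interpolating between the $C^0$ bound $|DW|_0\leq K\tau^{\alpha/2-1}$ and the Hölder bound $[DW]_\gamma\leq K\tau^{(\alpha-\gamma)/2-1}$ at the threshold $\tau\asymp|x-y|^2$; the two regimes produce contributions $\lesssim|x-y|^\alpha$ and $\lesssim T^{(\alpha-\gamma)/2}|x-y|^\gamma$ respectively. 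The diffeomorphism property follows from $|D\psi_t - I|_0 < 1/2$ for small $T$, which makes $D\psi_t$ pointwise invertible; combined with $\psi_T = \id$ a standard covering/degree argument gives that each $\psi_t$ is a global diffeomorphism.

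The main obstacle is reaching Hölder exponent $\beta$ arbitrarily close to $\alpha$: the scheme above yields only $C^{1,\gamma}$. The remedy is to exploit the flexibility in Corollary \ref{thm existence high Ricci DeT}, which produces a Ricci--DeTurck solution in $\mathcal{X}_{k,\gamma'}^{(\alpha)}$ for every $\gamma'\in(0,\alpha)$; by uniqueness $g$ coincides with this higher-regularity solution, so one may rerun the argument with an auxiliary $\gamma'\in(\beta,\alpha)$ in place of $\gamma$, yielding $[D\psi_t]_\beta\leq K$ uniformly in $t\in[0,T]$ for any prescribed $\beta<\alpha$.
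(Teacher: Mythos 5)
Your proposal is correct in substance, but it takes a genuinely different route from the paper in two places.

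For the H\"older regularity of $D\psi_0$, you first obtain $\|\psi_t\|_{C^{1,\gamma}}\leq K$ directly, by splitting the integral for $D\psi_0(x)-D\psi_0(y)$ at the threshold $\tau\asymp|x-y|^2$, and then remedy the mismatch between $\gamma$ and the desired $\beta<\alpha$ by invoking Corollary \ref{thm existence high Ricci DeT} with an auxiliary exponent $\gamma'\in(\beta,\alpha)$ together with uniqueness of the Ricci--DeTurck solution. The paper instead never leaves the fixed $\gamma$: it introduces the intermediate exponent $\beta'=\frac{\beta+\alpha}{2}$, derives (at each time slice) the bound $[\hat\nabla W(\cdot,\tau)]_{\beta'}\leq K\tau^{-1+\frac{\alpha-\beta'}{2}}$ by appealing to the weighted $C^0$ bounds on $\hat\nabla W$ and $\hat\nabla^2W$, runs Gr\"onwall in $C^{1,\beta'}$, and finally uses Arzel\`a--Ascoli precompactness of bounded sets in $C^{1,\beta'}$ inside $C^{1,\beta}$ to extract the limit $\psi_0$. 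Your route is cleaner in that it avoids the compactness step and does not need the intermediate exponent, but it ties the lemma more tightly to the Ricci--DeTurck context (it relies on the solution being the one produced by the existence/uniqueness theorem for every $\gamma'$), whereas the paper's argument is self-contained at the level of the stated hypotheses $g\in\mathcal{X}_{k,\gamma}^{(\alpha)}$. For the diffeomorphism claim at $t=0$, you invoke a degree/covering-space argument from the pointwise invertibility of $D\psi_0$ and homotopy to $\psi_T=\id$; the paper instead establishes a quantitative bilipschitz estimate $\frac{1}{K}\|x-y\|_{\hat g}\leq\|\psi_t(x)-\psi_t(y)\|_{\hat g}\leq K\|x-y\|_{\hat g}$ uniform in $t$, using a length functional $h_c(\tau)$ and Gr\"onwall, and then deduces injectivity and surjectivity by hand. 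Both are valid; the bilipschitz route gives a uniform bound that is occasionally useful downstream, while yours is shorter. One small bookkeeping point you gloss over and the paper takes care of explicitly: for $t$ small the integral curves $\tau\mapsto\psi_\tau(x)$ can cross chart boundaries, so integrating the component ODE for $\partial_i\psi_t^k$ requires patching across charts via the transition functions.
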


\begin{proof}
Since the $W=g^{-1}*g^{-1}*\hat{\nabla}g$, we have the estimate $\|W\|_{\mathcal{C}^{k-1,\gamma}_{\frac{1}{2}-\frac{\alpha}{2}}(M\times(0,T])}\leq K$. Note that the ODE (\ref{DeT ODE}) is uniquely solvable on $M\times (0,T]$, we would like to show that the solution $\psi_t$ can be extended to $M\times [0,T]$ in $C^{1, \beta}(M;M)$ for any $\beta<\alpha$.\\

We now work in local coordinates. Suppose that $\psi_t$ has components $\psi_t^k$ in a chart $U$,  we see that $\partial_i\psi_t^k$ satisfies the ODE
\begin{align}{\label{dpsi ODE}}
\frac{\partial}{\partial t}\partial_i\psi_t^k = -\partial_l W^k(\psi_t, t)\ \partial_i\psi_t^l,\quad i=1,..,n
\end{align}
for each $t\in (0,T)$.
Let $\beta\in (0,\alpha)$ be given and define $\beta' := \frac{\beta+\alpha}{2}$. We claim that $\|\psi_t\|_{C^{1, \beta'}(M;M)}$ has a uniform bound for every $t\in (0,T)$.  Define
	 \[f(t) := \|\psi_t\|_{C^{1,0}(M;M)},\] 
	then from (\ref{dpsi ODE}) we have
	\begin{align}{\label{dpsi ODE 1}}
		 \frac{\partial}{\partial t}\partial_i\psi_t^k  \leq K\ t^{\frac{\alpha}{2}-1} f(t).
	\end{align}
	To integrate the above inequality, it is possible that the integral curves pass into different charts. Fix $x\in M$ and $t\in (0,T]$. Suppose that $\psi_T(x)\in U$ and $\psi_t(x)\in V$. Since there are only finitely many coordinate charts, we may assume without loss of generality that $U\cap V\neq\emptyset$ and there is $s\in (t,T)$ such that $\psi_{\tau}(x)\in U$ for all $\tau\in [s,T]$ and $\psi_{\tau}(x)\in V$ for all $\tau\in [t,s]$.
	 By integrating the inequality (\ref{dpsi ODE 1}) from $s$ to $T$, and by noting that $\partial_i\psi_T^k = \delta_i^k$, we have
	 \begin{align}{\label{uniform estimate psi 1}}
	 	|\partial_i\psi_s^k(x)-\delta_i^k|\leq  \int_s^T K\ \tau^{\frac{\alpha}{2}-1} f(\tau)\ d\tau.
	 \end{align}
	 Moreover, on the intersection $U\cap V$ we may write $\partial_i\psi_{\tau}^k = \partial_i\tilde{\psi}_{\tau}^l\frac{\partial y_k}{\partial z_l}$, where $(y_k)$ and $(z_l)$ stand for the local coordinates on the charts $U$ and $V$ respectively, and $\tilde{\psi}_{\tau}^l$ stand for the components of $\psi_{\tau}$ on $V$. Since $\partial_i\tilde{\psi}_{\tau}^l$ satisfies (\ref{dpsi ODE 1}), it also satisfies a similar inequality as above by integrating (\ref{dpsi ODE 1}) from $t$ to $s$. This implies that
	 \begin{align}{\label{uniform estimate psi 2}}
	 	|\partial_i\tilde{\psi}_t^l(x) - \partial_i\tilde{\psi}_s^l(x)|\leq \int_t^s K\ \tau^{\frac{\alpha}{2}-1} f(\tau)\ d\tau.
	 \end{align}
	 From (\ref{uniform estimate psi 1}) and (\ref{uniform estimate psi 2}) we obtain
	 \begin{align}{\label{uniform estimate psi}}
	 	|\partial_i\tilde{\psi}_t^l(x)| &\leq K + \int_t^T K\ \tau^{\frac{\alpha}{2}-1} f(\tau)\ d\tau.
	 \end{align}
	 We can similarly obtain bounds for $|\tilde{\psi}_t^l(x)|$ by integrating the ODE (\ref{DeT ODE}). Since $x\in M$ is arbitrary, we obtain
\begin{align}{\label{integral C^1 bound psi}}
	f(t)\leq K + \int_t^T K\ \tau^{\frac{\alpha}{2}-1} f(\tau)\ d\tau.
\end{align}
Gr\"onwall's lemma then implies that
\begin{align}{\label{C^1 bound psi}}
	\|\psi_t\|_{C^{1,0}(M;M)} \leq K\exp(KT^{\frac{\alpha}{2}})	
\end{align}
for any $t\in (0,T]$.

Next, let $x,y\in M$ and fix $t\in (0,T)$. To bound the $C^{1,\beta'}$ norm for $\psi_t$, we may assume without loss of generality that $\psi_{\tau}(x)$ stays in the same chart with $\psi_{\tau}(y)$ for $\tau\in [t,T)$ using the uniform $C^1$ bound (\ref{C^1 bound psi}). Note that it is still possible that the integral curves $\psi_{\tau}(x)$ and $\psi_{\tau}(y)$ may pass into different charts. By (\ref{dpsi ODE}) we have
\begin{align}
	&\frac{\partial}{\partial \tau}\Big(\frac{\partial_i\psi_{\tau}^k(x) - \partial_i\psi_{\tau}^k(y)}{|x-y|^{\beta'}}\Big)\\
	 =\ \notag 	&- \frac{\partial_lW^k(\psi_{\tau}(x), \tau) - \partial_lW^k(\psi_{\tau}(y),\tau)}{|x-y|^{\beta'}}\partial_i\psi_{\tau}^l(x) - \partial_lW^k(\psi_{\tau}(y), \tau)\frac{\partial_i\psi_{\tau}^l(x) - \partial_i\psi_{\tau}^l(y)}{|x-y|^{\beta'}}
\end{align}
for each $\tau\in (t,T)$.
 By Lemma \ref{properties of weighted space}, we have $\|W\|_{\mathcal{C}^{k-1,\beta'}_{\frac{1}{2}-\frac{\alpha}{2}}(M\times(0,T])}\leq K$. Thus the bound for $\hat{\nabla}W$ and (\ref{C^1 bound psi}) imply
  \begin{align*}
  	|\partial_lW^k(\psi_{\tau}(x),\tau) - \partial_lW^k(\psi_{\tau}(y),\tau)| 
  	&\leq K\ \tau^{-1+\frac{\alpha-\beta'}{2}}\ |\psi_{\tau}(x) - \psi_{\tau}(y)|^{\beta'} \\
  	&\leq K\ \tau^{-1+\frac{\alpha-\beta'}{2}}\ |x-y|^{\beta'}
  \end{align*} 
  and
  \[ |\partial_lW^k(\psi_{\tau}(y), \tau)| \leq K\tau^{-1+\frac{\alpha}{2}}.\]
  Hence
  \begin{align*}
  \frac{\partial}{\partial \tau}\Big(\frac{\partial_i\psi_{\tau}^k(x) - \partial_i\psi_{\tau}^k(y)}{|x-y|^{\beta'}}\Big) &\leq K\tau^{-1+\frac{\alpha-\beta'}{2}} |\partial_i\psi_{\tau}^l(x)| + K\tau^{-1+\frac{\alpha}{2}} \frac{|\partial_i\psi_{\tau}^l(x) - \partial_i\psi_{\tau}^l(y)|}{|x-y|^{\beta'}}\\
  &\leq K\tau^{-1+\frac{\alpha-\beta'}{2}} + K\tau^{-1+\frac{\alpha}{2}}\|\psi_{\tau}\|_{C^{1,\beta'}(M;M)}
  \end{align*}
for each $\tau\in (t,T)$. Subsequently similar to the argument in deriving the $C^1$ bound (\ref{C^1 bound psi}), we can integrate the above inequality to obtain 
\begin{align*}
	 \frac{|\partial_i\psi_{t}^k(x) - \partial_i\psi_{t}^k(y)|}{|x-y|^{\beta'}} &\leq \int_t^T( K\tau^{-1+\frac{\alpha-\beta'}{2}} + K\tau^{-1+\frac{\alpha}{2}}\|\psi_{\tau}\|_{C^{1,\beta'}(M;M)}) d\tau\\
	 &\leq KT^{\frac{\alpha-\beta'}{2}} + \int_t^TK\tau^{-1+\frac{\alpha}{2}}\|\psi_{\tau}\|_{C^{1,\beta'}(M;M)} d\tau.
\end{align*}
Since $x,y\in M$ are arbitrary, we can combine the above inequality and the inequality (\ref{integral C^1 bound psi}) to obtain
\begin{align*}
	\|\psi_{t}\|_{C^{1,\beta'}(M;M)} \leq K + KT^{\frac{\alpha-\beta'}{2}} + \int_t^TK\tau^{-1+\frac{\alpha}{2}}\|\psi_{\tau}\|_{C^{1,\beta'}(M;M)} d\tau.
\end{align*}
Then Gr\"onwall's lemma implies
\begin{align}
		\|\psi_{t}\|_{C^{1,\beta'}(M;M)} \leq K'(T)
\end{align}
for any $t\in (0,T]$, which is uniform for each $t\in (0,T]$. Since bounded subsets in $C^{1,\beta'}(M;M)$ are precompact in $C^{1,\beta}(M;M)$ as $\beta' >\beta$, upon passing to a subsequence we have $\psi_t\to\psi_0$ in $C^{1,\beta}(M;M)$ as $t\to 0$. 

Lastly,  $\psi_t$ remains a diffeomorphism on $(0,T]$ provided $T$ is sufficiently small by the inverse function theorem. It remains to show that $\psi_0$ is also a diffeomorphism. We need a uniform lower bound for the differential $d\psi_t$. Fix $x, y\in M$ and $t\in (0,T]$. Let $c:[0,L]\to M$ be a curve such that $c(0) = x$ and $c(L) = y$. Define
	\[ h_c(\tau) = \int_0^L(\psi_{\tau}^*\hat{g})(c'(s),c'(s)) ds. \]
	Then $\|\psi_{\tau}(x) - \psi_{\tau}(y)\|_{\hat{g}} = \inf_{c}h_c(\tau)$. Moreover, from the fact that $\sigma^{1-\frac{\alpha}{2}}\|W\|_{0; M\times [\frac{\sigma}{2}, \sigma]}\leq K$, we have 
	\begin{align*}
	h_c'(\tau) &= \int_0^L(\psi_{\tau}^*(\mathcal{L}_{-W}\hat{g}))(c'(s),c'(s)) ds\\
	 &\leq K\|\hat{\nabla}W(\tau)\|_{0;M} h_c(\tau)\\
	 &\leq K\tau^{\frac{\alpha}{2}-1}h_c(\tau)
	\end{align*}
	for each $\tau\in (t,T]$. This implies
	\begin{align*}
	h_c(T) \leq h_c(t)\exp\left(\int_t^TK\tau^{\frac{\alpha}{2}-1} d\tau\right) \leq Kh_c(t).
	\end{align*}
	Similarly,
	\[h_c(T) \geq \frac{1}{K}h_c(t).\]
In particular, this implies the uniform estimate
\begin{align}{\label{Lip estimate}}
	\frac{1}{K}\|x-y\|_{\hat{g}}\leq \|\psi_t(x) - \psi_t(y)\|_{\hat{g}} \leq K\|x-y\|_{\hat{g}}
\end{align}
for any $t\in (0,T]$ and any $x,y\in M$. Consequently both $\psi_0$ and $D\psi_0$ are injective. Thus $\psi_0$ is a diffeomorphism to its image. Now we show that $\psi_0$ is surjective. Take any $y\in M$ and choose a sequence $s_j\to 0$. Define a sequence of points by $x_j:= \psi_{s_j}^{-1}(y)$. Since $M$ is compact, after passing to a subsequence we have $x_j\to x_{\infty}\in M$. The uniform estimate (\ref{Lip estimate}) then implies that $|\psi_{s_j}(x_j) - \psi_{s_j}(x_{\infty})| \to 0$. On the other hand, we have $|\psi_{s_j}(x_{\infty}) - \psi_{0}(x_{\infty})| \to 0$. Hence $|\psi_{s_j}(x_j) - \psi_{0}(x_{\infty})| \to 0$ and we get $\psi_0(x_{\infty}) = y$.

\end{proof}

\bigskip

For the family of diffeomorphisms $\{\psi_t\}_{t\in [0,T]}$, it is possible that the curve $t\mapsto \psi_t(x)$ does not stay in the same chart for all $t\in [0,T]$. Hence if $\tilde{\psi}: M\times [0,T]\to M$ is a map defined by $\tilde{\psi}(x,t) = \psi_t(x)$, we cannot measure its parabolic H\"older norm the way we did for the elliptic H\"older norm without any modification. However, our final goal is to show that the pullback metric $\psi_t^*g_t$ satisfies the Ricci flow equation and lies in the weighted space $\mathcal{X}_{*,\gamma}^{(*)}$ provided that $g\in\mathcal{X}_{k,\gamma}^{(\alpha)}$ is a solution to the Ricci-DeTurck flow.

Now, consider the section $d\psi_t: M\to T^*M\otimes (\psi_t)^*(TM)$. We introduce the multi-index notations
 \[ \mu = (\mu_1, \mu_2).\]
 If $(x_{i}^{\mu_1})_{i=1,..,n}$ and $(y_{j}^{\mu_2})_{j=1,..,n}$ are local coordinates on the charts $U_{\mu_1}$ and $U_{\mu_2}$ respectively, then
	\[ \bold{e}_{ij}^{\mu}(t) := dx_{i}^{\mu_1}\otimes\frac{\partial}{\partial (y_{j}^{\mu_2}\circ\psi_t)}, \quad i_1, i_2 = 1,..,n\]
	form a local frame on $T^*M\otimes (\psi_t)^*(TM)|_{U_{\mu_1}\cap \psi_t^{-1}(U_{\mu_2})}$. 
 Moreover, if $x\in U_{\mu}^t:= U_{\mu_1}\cap \psi_t^{-1}(U_{\mu_2})$, then locally around $x$ the section $d\psi_t$ can be expressed as  
	\[d\psi_t = (d\psi_t)_{\mu}^{ij}\bold{e}_{ij}^{\mu}(t),\quad\text{where}\quad  (d\psi_t)_{\mu}^{ij} = \partial_{i}(\psi_t)_{\mu}^{j}\]
	
\begin{definition}{\label{map Psi}}
	With the above notations, let $\{\rho_{\mu_2}\}$ be the partition of unity subordinate to the chart $U_{\mu_2}$, we define functions $\Psi_{\mu}^{ij}: M\times [0,T]\to \mathbb{R}$ by
	\begin{align*}
	\Psi_{\mu}^{ij}(x, t) = 
	\begin{cases}
		\rho_{\mu_2}(\psi_t(x))(d\psi_t)_{\mu}^{ij}(x)\quad &\text{if}\quad \psi_t(x)\in\text{supp}(\rho_{\mu_2})\\
		0\quad &\text{if}\quad \psi_t(x)\notin\text{supp}(\rho_{\mu_2})
	\end{cases}.
	\end{align*}
\end{definition}
Note that in this way, although the component maps $t\mapsto (d\psi_t)_{\mu}^{ij}(x)$ may not be defined for all $t\in [0,T]$, the maps $\Psi_{\mu}^{ij}(x, \cdot)$ are well defined for all $t\in [0,T]$.

\bigskip

\begin{lemma}{\label{lem Psi estimate}}
Let  $g\in\mathcal{X}_{k+2,\gamma}^{(\alpha)}$ be the solution to the Ricci-DeTurck flow on $M\times (0,T]$, and $\{\psi_t\}_{t\in [0,T]}$ be the one-parameter family of diffeomorphisms generated by (\ref{DeT ODE}).  Then given any $\beta<\alpha$, the functions $\Psi_{\mu}^{ij}$ which are defined by Definition \ref{map Psi} satisfy
\begin{itemize}
\item[(1)] $\Psi_{\mu}^{ij}\in C^{\beta,\frac{\beta}{2}}(M\times [0,T];\mathbb{R})\ $ and\quad  $\hat{\nabla}\Psi_{\mu}^{ij} \in \mathcal{C}^{k-2,\gamma}_{\frac{1}{2}-\frac{\beta}{2}}(M\times (0,T];\mathbb{R})$;	
\item[(2)] 
	$\|\Psi_{\mu}^{ij}\|_{\beta,\frac{\beta}{2}; M\times [0,T]} + \|\hat{\nabla}\Psi_{\mu}^{ij}\|_{\mathcal{C}^{k-2,\gamma}_{\frac{1}{2}-\frac{\beta}{2}}( M\times (0,T])} \leq K (M, k, \hat{g},  \|g_0\|_{\alpha}, \|g\|_{\mathcal{X}_{k+2,\gamma}^{(\alpha)}}).$
\end{itemize}
\end{lemma}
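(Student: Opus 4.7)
The plan is to integrate the ODE satisfied by $d\psi_t$ and its higher spatial derivatives, using the uniform regularity of $\psi_t$ from Lemma \ref{lem DeT} and the weighted parabolic estimates on $W\sim g^{-1}*g^{-1}*\hat\nabla g$ coming from $g\in\mathcal{X}_{k+2,\gamma}^{(\alpha)}$. By Lemma \ref{properties of weighted space}, the latter yields $|\hat\nabla^p W|_0 \leq K\tau^{\alpha/2-(p+1)/2}$ and corresponding weighted $\gamma$-H\"older bounds on $M\times[\tau/2,\tau]$ for each $p=0,1,\ldots,k+1$.

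For (1) and the $\beta/2$-H\"older-in-time piece of (2), first apply Lemma \ref{lem DeT} with exponent $\beta' := (\beta+\alpha)/2 \in (\beta,\alpha)$ to get $\|\psi_t\|_{C^{1,\beta'}(M;M)}\leq K$ uniformly for $t\in [0,T]$, which controls the spatial $C^\beta$ part of $\Psi_\mu^{ij}$ after multiplication by the smooth cutoff $\rho_{\mu_2}\circ\psi_t$. Differentiating the ODE gives
\[ \partial_t(d\psi_t)_\mu^{ij} = -(\partial_l W^j)(\psi_t,t)\,(d\psi_t)_\mu^{il}, \]
and since $|\hat\nabla W|_0\leq K\tau^{\alpha/2-1}$, integrating from $s$ to $t$ together with the concavity estimate $|t^{\alpha/2}-s^{\alpha/2}|\leq |t-s|^{\alpha/2}$ and $\alpha>\beta$ yields $|d\psi_t-d\psi_s|_0\leq K|t-s|^{\beta/2}$. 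The corresponding time bound for $\rho_{\mu_2}\circ\psi_t$ follows from $|W|_0\leq K\tau^{(\alpha-1)/2}$ applied to the ODE $\partial_t\psi_t = -W(\psi_t,t)$, and a product estimate gives the full $C^{\beta,\beta/2}$ bound on $\Psi_\mu^{ij}$.

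For the weighted higher-order estimate $\|\hat\nabla\Psi_\mu^{ij}\|_{\mathcal{C}^{k-2,\gamma}_{1/2-\beta/2}(M\times(0,T])}\leq K$, bootstrap by induction on $l$, the order of spatial differentiation. Differentiating $\partial_t\psi_t=-W(\psi_t,t)$ in space $l$ times via Fa\`a di Bruno yields schematically
\[ \partial_t\hat\nabla^l\psi_t = -\sum_{p\geq 1,\ j_1+\cdots+j_p=l}c_{j_1,\ldots,j_p}\,(\hat\nabla^p W)(\psi_t,t)*\hat\nabla^{j_1}\psi_t*\cdots*\hat\nabla^{j_p}\psi_t \]
plus lower-order connection terms. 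Isolating the term $p=1,j_1=l$ as a Gr\"onwall coefficient (whose weight $\tau^{\alpha/2-1}$ is integrable on $[0,T]$), one checks that every other cross term has integrand bounded by $K\tau^{\alpha/2-(l+1)/2}$ via the induction hypothesis $|\hat\nabla^j\psi_t|_0\leq K\tau^{\alpha/2-(j-1)/2}$ for $j<l$. Integrating from $t$ to $T$ using $\psi_T=\id$ and applying Gr\"onwall yields $|\hat\nabla^l\psi_t|_0\leq K\,t^{\alpha/2-(l-1)/2}$ for $l=2,\ldots,k$. Since $\alpha>\beta$, this gives the required weighted $C^0$ bounds on $\hat\nabla^{l-1}\Psi_\mu^{ij}$, and the $\gamma$-H\"older-in-space and in-time pieces follow by the same scheme applied to spatial and temporal differences, using the weighted $\gamma$-H\"older information on $\hat\nabla^p W$ together with the Lipschitz control on $\psi_t$ to propagate H\"older regularity through the compositions $(\hat\nabla^p W)(\psi_t,t)$.

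The main obstacle will be the combinatorial bookkeeping in the induction step: each term generated by Fa\`a di Bruno must be individually verified against the prescribed weight $1/2-\beta/2+(l-2)/2$, and for $l\geq 3$ one must also track the $\gamma$-H\"older estimate, which involves differences of compositions and produces further cross terms. A secondary subtlety is chart transitions, since $\tau\mapsto\psi_\tau(x)$ may exit a chart during $[t,T]$; this is handled exactly as in the proof of Lemma \ref{lem DeT}, by splitting the integral at transition times and converting components via the smooth chart transition maps, which are uniformly bounded. The partition-of-unity factor $\rho_{\mu_2}(\psi_t(x))$ built into Definition \ref{map Psi} absorbs the discontinuities that would otherwise arise when $\psi_t(x)$ leaves the support of $\rho_{\mu_2}$, so that $\Psi_\mu^{ij}$ is globally defined on $M\times [0,T]$ and inherits its regularity entirely from within the support region.
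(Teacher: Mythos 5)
Your proposal is correct and follows essentially the same route as the paper: uniform $C^{1,\beta'}$ control from Lemma \ref{lem DeT}, time-H\"older continuity by integrating the ODE for $d\psi_t$, and the weighted higher-order $C^0$ bounds by Fa\`a di Bruno plus Gr\"onwall, isolating the top-order term $\partial W * \partial^{l}\psi_t$ as the Gr\"onwall coefficient. The only notable deviation is in the weighted $\gamma$-H\"older seminorms: where you suggest propagating H\"older regularity directly through the compositions $(\hat\nabla^p W)(\psi_t,t)$, the paper instead interpolates, bounding $[\hat\nabla^{m-1}\Psi]_{\gamma,\gamma/2}$ by $|\hat\nabla^m\Psi|_0$ (for the spatial part) and by $|\partial_t\hat\nabla^{m-1}\Psi|_0$ (for the temporal part); this avoids ever invoking the H\"older modulus of $\hat\nabla^pW$ and is what forces the range $2\le m\le k$ rather than $k+1$ — correspondingly, your range $l=2,\dots,k$ for the $C^0$ bounds on $\hat\nabla^l\psi_t$ should extend to $l=k+1$ to feed that interpolation.
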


\begin{proof}
Firstly, Lemma \ref{lem DeT} has already shown that $\Psi_{\mu}^{ij}(\cdot, t)\in C^{0,\beta}(M;\mathbb{R})$ for each $t\in [0,T]$ with an uniform upper bound for the elliptic H\"older norm. Thus to show that $\Psi_{\mu}^{ij}\in C^{\beta,\frac{\beta}{2}}(M\times [0,T];\mathbb{R})$, it remains to show that $\Psi_{\mu}^{ij}(x, \cdot)$ is also H\"older-continuous for $t\in [0,T]$ for each $x\in M$. We rewrite the equation (\ref{dpsi ODE}) in the new notations as
\begin{align*}
\frac{\partial}{\partial t}(d\psi_t)_{\mu}^{ij} = -\partial_l W^j(\psi_t, t)\ (d\psi_t)_{\mu}^{il}.
\end{align*}
This gives
\[ \frac{\partial}{\partial t}\Psi_{\mu}^{ij} = -\langle D\rho_{\mu_2}(\psi_t), W\rangle (d\psi_t)_{\mu}^{ij} - \rho_{\mu_2}(\psi_t)\partial_l W^j(\psi_t, t)\ (d\psi_t)_{\mu}^{il}.\]
Using the $C^1$ bound (\ref{C^1 bound psi}) and the fact that $\|W\|_{\mathcal{C}^{k+1,\gamma}_{\frac{1}{2}-\frac{\alpha}{2}}(M\times(0,T])}\leq K$, we have
\[ \frac{\partial}{\partial t}\Psi_{\mu}^{ij} \leq Kt^{-1+\frac{\alpha}{2}}\]
for each $t\in (0,T)$. Hence
\begin{align*}
	|\Psi_{\mu}^{ij}(x, t) - \Psi_{\mu}^{ij}(x, s)| &\leq \int_s^t K\tau^{-1+\frac{\alpha}{2}}\ d\tau\\
	&\leq K( t^{\frac{\alpha}{2}} - s^{\frac{\alpha}{2}})\\
	&\leq K\ |t-s|^{\frac{\alpha}{2}}
\end{align*}
for $t,s\in [0,T]$. This gives $\sup_{t\neq s}\frac{|\Psi_{\mu}^{ij}(x, t) - \Psi_{\mu}^{ij}(x, s)|}{|t-s|^{\frac{\alpha}{2}}} \leq K$ for any $x\in M$. Hence, we conclude that 
	\[\|\Psi_{\mu}^{ij}\|_{\beta,\frac{\beta}{2}; M\times [0,T]} \leq K.\]

\bigskip

In the next step, we prove the remaining assertions. Heuristically as $g\in\mathcal{X}_{k+2,\gamma}^{(\alpha)}$, we have $W\in\mathcal{C}^{k+1,\gamma}_{\frac{1}{2}-\frac{\alpha}{2}}$, and so the highest regularity we can achieve for $\hat{\nabla}\Psi_{\mu}^{ij}$ would be $\mathcal{C}^{k-1,\gamma}_{\frac{1}{2}-\frac{\beta}{2}}$. On the other hand, we can easily control the H\"older semi-norm $[\hat{\nabla}^{m-1}\Psi]_{\gamma,\frac{\gamma}{2};M\times[\frac{\sigma}{2},\sigma]}$ by the higher order $C^0$ norm $|\hat{\nabla}^{m}\Psi|_{0;M\times[\frac{\sigma}{2},\sigma]}$. And since we have the freedom of choosing $k$ to be any large integer, it doesn't hurt if we omit controlling the highest order H\"older semi-norm $[\hat{\nabla}^{k}\Psi]_{\gamma,\frac{\gamma}{2};M\times[\frac{\sigma}{2},\sigma]}$. Nevertheless, we will first control the $C^0$ norm of $|\hat{\nabla}^{m}\Psi|_{0;M\times[\frac{\sigma}{2},\sigma]}$ up to the highest order. More precisely, we first prove the estimate
	\begin{align}{\label{high order Psi estimate}}
		\sigma^{\frac{1}{2}-\frac{\beta}{2}+\frac{m-2}{2}}|\hat{\nabla}^{m-1}\Psi_{\mu}^{ij}|_{0;M\times [\frac{\sigma}{2}, T]} \leq K
	\end{align}
	 by induction for $2\leq m\leq k+1$. For notation simplicity, we abbreviate the components $(d\psi_t)_{\mu}^{ij}$ of the differential $d\psi_t$ simply by $\partial\psi_t$, the partial derivatives $\partial_x^{m-1}(d\psi_t)_{\mu}^{ij}$ and $\partial_x^m W^i$ by $\partial^m\psi_t$ and $\partial^mW$ respectively, and the functions $\Psi_{\mu}^{ij}$ by $\Psi$. Moreover, with this abbreviation, for each $t\in (0,T]$ the $C^0$ norm for $\partial^m\psi_t$ is given by
	 \[\|\partial^m\psi_t\|_{0;M} = \sum_{\mu_2}\sum_{i,j}|\partial_x^{m-1}(d\psi_t)_{\mu}^{ij}|_{C^0(\psi_t^{-1}(U_{\mu_2});\mathbb{R})}.\]
	    Now suppose that  the estimate (\ref{high order Psi estimate}) holds for all $2\leq j\leq m$. Thus the induction hypothesis implies that
	    \[ \|\partial^{j}\psi_t\|_{0;M} \leq Kt^{\frac{\beta}{2}-\frac{1}{2}-\frac{j-2}{2}}\]
	    for all $2\leq j\leq m$ and $t\in [\frac{\sigma}{2}, T]$.
	     Using the Francesco Fa\`a di Bruno formula for higher-order chain rule, the following ODE is satisfied by $\partial^{m+1}\psi_t$:
		\begin{align}{\label{high order psi equation}}
			&\frac{\partial}{\partial t}(\partial^{m+1}\psi_t)\\
			&\notag = -\ \sum_{j_1 + 2j_2 +\cdots + (m+1)j_{m+1} = m+1} (\partial^{j_1+\cdots+j_{m+1}}W)	(\psi_t, t) * (\partial\psi_t)^{*j_1} *\cdots * (\partial^{m+1}\psi_t)^{*j_{m+1}}\\
			&\notag = -\ \sum_{j_1 + 2j_2 +\cdots + mj_{m} = m+1}(\partial^{j_1+\cdots+j_{m}}W) * (\partial\psi_t)^{*j_1}  *\cdots * (\partial^{m}\psi_t)^{*j_{m}}\ -\ \partial W * \partial^{m+1}\psi_t .
		\end{align}
 	Here $(\partial^i\psi)^{*j}$ stands for $\underbrace{\partial^i\psi *\cdots*\partial^i\psi}_{j\rm\ times}$ and the summation sums over all non-negative integer solutions of $j_1 + 2j_2 +\cdots + mj_{m} = m+1$. \\
 	Now observe that for any $t\in [\frac{\sigma}{2}, T]$, the $C^0$ bound for $\partial^jW$, $\partial\psi_t$, and the induction hypothesis imply
 	\begin{align}{\label{high order W estimate}}
 		&\left|(\partial^{j_1+\cdots+j_{m}}W)(\psi_t,t) * (\partial\psi_t)^{*j_1}  *\cdots * (\partial^{m}\psi_t)^{*j_{m}}\right|\\
 		&\notag \leq K\ t^{\frac{\alpha}{2}-\frac{1}{2}-\frac{j_1+\cdots+j_m}{2}}\ \prod_{r=2}^m \left\| \partial^r\psi_t \right\|^{j_r}_{0;M}\\
 		&\notag  \leq K\ t^{\frac{\alpha}{2}-\frac{1}{2}-\frac{j_1+\cdots+j_m}{2}}\ \prod_{r=2}^m\ t^{(\frac{\beta}{2}+\frac{1}{2}-\frac{r}{2})j_r}\\
 		&\notag = K\ t^{\frac{\alpha}{2}-\frac{m+2}{2}+\frac{\beta}{2}\sum_{r=2}^m j_r} \\
 		&\notag \leq K\ t^{\frac{\alpha}{2}-\frac{m+2}{2}}
 	\end{align}
	for each $\{j_1,..,j_m\}$ satisfying $j_1 + 2j_2 +\cdots + mj_m = m+1$.\\
	Let us define $f(t):=\|\partial^{m+1}\psi_t\|_{0;M}$. Since $(d\psi_T)_{\mu}^{ij}(x) = \delta^i_j$ for all $x\in\psi_t^{-1}(U_{\mu_2})$, we have $f(T) = 0$. By (\ref{high order Psi estimate}), (\ref{high order psi equation}), (\ref{high order W estimate}) and the fundamental theorem of calculus, we have
	\begin{align*}
		f(t) &\leq K\int_t^T\tau^{\frac{\alpha}{2}-\frac{m+2}{2}} d\tau \ + K\int_t^T\tau^{\frac{\alpha}{2}-1} f(\tau) d\tau\\
		&\leq K t^{\frac{\alpha}{2}-\frac{m}{2}} + K\int_t^T\tau^{\frac{\alpha}{2}-1} f(\tau) d\tau
	\end{align*}
	for each $t\in [\frac{\sigma}{2}, T]$. Hence Gr\"onwall's lemma implies
	\begin{align}{\label{partial psi estimate}}
		\|\partial^{m+1}\psi_t\|_{0;M} \leq K t^{\frac{\alpha}{2}-\frac{m}{2}} \exp (KT^{\frac{\alpha}{2}})
	\end{align}
	for each $t\in [\frac{\sigma}{2}, T]$. Now, since $\Psi = (\rho\circ\psi_t)*(\partial\psi_t)$, where we have abbreviated the function $\rho_{\mu_2}$ as $\rho$. Using the Francesco Fa\`a di Bruno formula again, we obtain
	\begin{align*}
	\partial^l(\rho\circ\psi_t) = \sum_{i_1 + 2i_2 +\cdots+li_l = l}\rho^{(i_1+\cdots+i_l)}(\psi_t)* (\partial\psi_t)^{*i_1} *\cdots * (\partial^{l}\psi_t)^{*i_{l}}
	\end{align*}
	for $1\leq l\leq m+1$. By the induction hypothesis and (\ref{partial psi estimate}), we have the estimate
	\begin{align}{\label{rho estimate}}
	|\partial^l(\rho\circ\psi_t)| &\leq K\sum_{i_1 + 2i_2 +\cdots+li_l = l}\left(\prod_{r=2}^l\left\| \partial^r\psi_t \right\|^{i_r}_{0;M}\right)\\
	&\notag\leq K\sum_{i_1 + 2i_2 +\cdots+li_l = l}\left(\prod_{r=2}^l t^{(\frac{\beta}{2}+\frac{1}{2}-\frac{r}{2})i_r}\right)\\
	&\notag\leq K\sum_{i_1 + 2i_2 +\cdots+li_l = l}t^{-\frac{1}{2}\sum_{r=2}^lri_r}\\
	&\notag\leq K\sum_{i_1 + 2i_2 +\cdots+li_l = l}t^{-\frac{l-i_1}{2}}\\
	&\leq Kt^{-\frac{l}{2}}\notag.
	\end{align}
	for any $1\leq l\leq m+1$ and $t\in [\frac{\sigma}{2}, T]$. Then we observe that $\partial^{m+1}\Psi$ satisfies
	\begin{align*}
	\partial^{m}\Psi = \sum_{j_1+j_2 = m} \partial^{j_1}(\rho\circ\psi_t)*\partial^{j_2+1}\psi_t.
	\end{align*}
	Using the estimate (\ref{rho estimate}), the induction hypothesis and (\ref{partial psi estimate}), we subsequently obtain
	\begin{align*}
	|\partial^{m}\Psi(x,t) | &\leq K \sum_{j_1+j_2 = m}t^{-\frac{j_1}{2}}t^{\frac{\beta}{2}-\frac{j_2}{2}} \leq Kt^{\frac{\beta}{2}-\frac{m}{2}}.
	\end{align*}
	 for each $t\in [\frac{\sigma}{2}, T]$. Therefore we conclude that
	\begin{align*}
		\sigma^{\frac{1}{2}-\frac{\beta}{2}+\frac{m-1}{2}}|\hat{\nabla}^{m}\Psi_{\mu}^{ij}|_{0;M\times [\frac{\sigma}{2}, T]} \leq K.
	\end{align*}
	Note that in the RHS of (\ref{partial psi estimate}) the exponent of $\sigma$ is negative for any $m\geq 1$. This is why we cannot hope for higher powers of $\hat{\nabla}^{m}\Psi_{\mu}^{ij}$ to converge for any $m\geq 1$ as $t\to 0$. Lastly, 
	to verify that (\ref{high order Psi estimate}) holds for $m=2$, it suffices to consider the following ODE satisfied by $\partial^2\psi_t$:
	\begin{align*}
		\frac{\partial}{\partial t}(\partial^2\psi_t) = -\ \partial^2W(\psi_t,t)*\partial\psi_t	 * \partial\psi_t -\ \partial W(\psi_t,t)*\partial^2\psi_t	.
	\end{align*}
	For each $t\in [\frac{\sigma}{2},T]$, the $C^0$ bound for $\partial^2W$ and $\partial\psi_t$ imply
	\begin{align*}
		 |\partial^2W(\psi_t,t)*\partial\psi_t * \partial\psi_t| \leq K\ t^{\frac{\alpha}{2}-\frac{3}{2}}.
	\end{align*}
	Now we can proceed as before to obtain (\ref{high order Psi estimate}) for the case $m=2$. This proves (\ref{high order Psi estimate}) by induction. In particular, this implies that
	\begin{align}{\label{high order Psi estimate 1}}
		\sigma^{\frac{1}{2}-\frac{\beta}{2}+\frac{m-2}{2}}|	\hat{\nabla}^{m-1}\Psi_{\mu}^{ij}|_{0;M\times [\frac{\sigma}{2}, \sigma]} \leq K
	\end{align}
	for $2\leq m\leq k+1$.
	
	Next, we show that
	\begin{align}{\label{C^a estimate Psi}}
		\sigma^{\frac{1}{2}-\frac{\beta}{2}+\frac{m-2}{2}+\frac{\gamma}{2}} [	\hat{\nabla}^{m-1}\Psi_{\mu}^{ij}]_{\gamma,\frac{\gamma}{2};M\times [\frac{\sigma}{2},\sigma]} \leq K
	\end{align}
	for any $2\leq m\leq k$. We first observe that the following equation is satisfied by $\partial^{m-1}\Psi$:
	\begin{align*}
	&\frac{\partial}{\partial t}	(\partial^{m-1}\Psi)\\
	&= \sum_{j_1 + j_2 = m-1}\left\{\partial^{j_1+1}\psi_t * \sum_{l_1 + l_2 = j_2}(\partial^{l_1}(\rho\circ\psi_t)*\frac{\partial}{\partial t}(\partial^{l_2}\psi_t)) +  \frac{\partial}{\partial t}(\partial^{j_1+1}\psi_t) * \partial^{j_2}(\rho\circ\psi_t) \right\}.
	\end{align*}
Using (\ref{high order psi equation}) to (\ref{partial psi estimate}) and the $C^0$ norm for $\hat{\nabla}W$, we obtain
\begin{align*}
	\left|\frac{\partial}{\partial t}(\partial^{l}\psi_t)\right| &\leq Kt^{\frac{\alpha}{2}-\frac{l+1}{2}} + Kt^{\frac{\alpha}{2}-1}\cdot t^{\frac{\alpha}{2}-\frac{l-1}{2}}\\
	&\leq Kt^{\frac{\alpha}{2}-\frac{l+1}{2}} + K t^{\alpha-\frac{l+1}{2}}\\
	&\leq  Kt^{\frac{\alpha}{2}-\frac{l+1}{2}}
\end{align*}
for all $0\leq l\leq k+1$.
Combining the above estimate, (\ref{partial psi estimate}) and (\ref{rho estimate}), we obtain
\begin{align*}
& \left|\frac{\partial}{\partial t}	(\partial^{m-1}\Psi)(x,t)\right|\\
&\leq K\sum_{j_1 + j_2 = m-1}\left\{t^{\frac{\alpha}{2}-\frac{j_1}{2}}  \sum_{l_1 + l_2 = j_2}t^{-\frac{l_1}{2}} \cdot t^{\frac{\alpha}{2}-\frac{l_2}{2}} +  t^{\frac{\alpha}{2}-\frac{j_1 + 2}{2}}\cdot t^{-\frac{j_2}{2}} \right\}\\
&\leq K\sum_{j_1 + j_2 = m-1}\left\{t^{\alpha-\frac{j_1+j_2}{2}} +  t^{\frac{\alpha}{2}-\frac{j_1 + j_2 + 2}{2}} \right\}\\
&\leq K(t^{\alpha-\frac{m-1}{2}} +  t^{\frac{\alpha}{2}-\frac{m+1}{2}})\\
&\leq Kt^{\frac{\alpha}{2}-\frac{m+1}{2}}.
\end{align*}

Now, we fix a point $z\in M$, and let $(x,t), (y,s)\in B_{\sqrt{\sigma}}(z)\times [\frac{\sigma}{2},\sigma]$. We may assume without loss of generality that $\psi_t(x), \psi_s(y)\in\text{supp}(\rho_{\mu_2})$, and $t>s$ are close enough such that $\psi_{\tau}(y)\in\text{supp}(\rho_{\mu_2})$ for $\tau\in [s, t]$. Using (\ref{high order psi equation}), (\ref{high order W estimate}) and (\ref{high order Psi estimate 1}) we have
	\begin{align*}
		&\frac{|\partial_x^{m-1}\Psi_{\mu}^{ij}(x,t) - \partial_x^{m-1}\Psi_{\mu}^{ij}(y,s)|}{|x-y|^{\gamma} + |t-s|^{\frac{\gamma}{2}}}\\
		&\leq \frac{|\partial_x^{m-1}\Psi_{\mu}^{ij}(x,t) - \partial_x^{m-1}\Psi_{\mu}^{ij}(y,t)|}{|x-y|^{\gamma} } + \frac{|\partial_x^{m-1}\Psi_{\mu}^{ij}(y,t) - \partial_x^{m-1}\Psi_{\mu}^{ij}(y,s)|}{|t-s|^{\frac{\gamma}{2}}}  \\
		&\leq K\ |	\partial_x^{m}\Psi_{\mu}^{ij}|_{0;M\times [\frac{\sigma}{2},\sigma]}\ |x-y|^{1-\gamma} + K|t-s|^{-\frac{\gamma}{2}}\int_s^t\left| \frac{\partial}{\partial \tau}	(\partial_x^{m-1}\Psi_{\mu}^{ij})(y,\tau)\right| d\tau\\	
		&\leq K\ \sigma^{\frac{\beta}{2}-\frac{m}{2}}\sigma^{\frac{1}{2}-\frac{\gamma}{2}}\ + K|t-s|^{-\frac{\gamma}{2}}\int_{s}^{t} \tau^{\frac{\alpha}{2}-\frac{m+1}{2}}   d\tau \\
		&\leq K\ \sigma^{\frac{\beta}{2}-\frac{\gamma}{2}-\frac{m-1}{2}}  +  K|t-s|^{-\frac{\gamma}{2}}\int_{s}^{t} \tau^{\frac{\alpha}{2}-\frac{m+1}{2}} d\tau\\
		&\leq K\ \sigma^{\frac{\beta}{2}-\frac{\gamma}{2}-\frac{m-1}{2}}  +  K|t-s|^{-\frac{\gamma}{2}}(t^{\frac{\alpha}{2}-\frac{m-1}{2}} - s^{\frac{\alpha}{2}-\frac{m-1}{2}}).
	\end{align*}
	Now, we can write $\frac{\alpha}{2}-\frac{m-1}{2} = (\frac{\beta}{2}-\frac{\gamma}{2} - \frac{m-1}{2}) + (\frac{\alpha-\beta}{2} + \frac{\gamma}{2})$. Using the fact that $\alpha > \beta$ and $t, s\in [\frac{\sigma}{2}, \sigma]$, we obtain
	\begin{align*}
		&|t-s|^{-\frac{\gamma}{2}}(t^{\frac{\alpha}{2}-\frac{m-1}{2}} - s^{\frac{\alpha}{2}-\frac{m-1}{2}})\\
		&\leq K\sigma^{\frac{\beta}{2}-\frac{\gamma}{2}-\frac{m-1}{2}}|t-s|^{-\frac{\gamma}{2}}(t^{\frac{\gamma}{2}} - s^{\frac{\gamma}{2}})\\
		&\leq K'\sigma^{-\frac{1}{2} + \frac{\beta}{2}-\frac{\gamma}{2}-\frac{m-2}{2}}.
	\end{align*}
	Hence
	\[ \frac{|\partial_x^{m-1}\Psi_{\mu}^{ij}(x,t) - \partial_x^{m-1}\Psi_{\mu}^{ij}(y,s)|}{|x-y|^{\gamma} + |t-s|^{\frac{\gamma}{2}}} \leq K\sigma^{-\frac{1}{2} + \frac{\beta}{2}-\frac{\gamma}{2}-\frac{m-2}{2}}\]
	for any $(x,t), (y,s)\in B_{\sqrt{\sigma}}(z)\times [\frac{\sigma}{2},\sigma]$.
	Since $z\in M$ is arbitrary, we have proved (\ref{C^a estimate Psi}).

\end{proof}

\begin{proposition}{\label{prop pullback g}}
Let $k\geq 2$ and $\beta, \gamma\in (0,\alpha)$ be given. There exist a $C^{1,\beta}$ diffeomorphism $\psi:M\to M$ and $K = K(M, k, \hat{g},  \|g_0\|_{\alpha;M})$, $\ T = T(M, \hat{g},  \|g_0\|_{\alpha;M})$ such that the following holds: 
There is a solution $\tilde{g}\in\mathcal{X}_{k,\gamma}^{(\beta)}(M\times[0,T])$ to the Ricci flow such that
$$\tilde{g}(\cdot,0) = \psi^*g_0 \quad\text{and}\quad \|\tilde{g}\|_{\mathcal{X}_{k,\gamma}^{(\beta)}(M\times[0,T])}\leq K. $$
\end{proposition}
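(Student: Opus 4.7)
The plan is to define $\tilde g(t):=\psi_t^*\,g(t)$, where $g$ is the Ricci--DeTurck solution supplied by Corollary \ref{thm existence high Ricci DeT} applied at a sufficiently high regularity index, $\psi_t$ is the one-parameter family of diffeomorphisms generated by the DeTurck vector field via the ODE (\ref{DeT ODE}), and $\psi:=\psi_0$. Concretely, I would first invoke Corollary \ref{thm existence high Ricci DeT} with regularity index $k+3$ in place of $k$ to obtain $g\in\mathcal{X}_{k+3,\gamma}^{(\alpha)}(M\times[0,T])$ solving the Ricci--DeTurck flow (\ref{Ricci DeT}) with norm bounded by $K=K(M,k,\hat g,\|g_0\|_\alpha)$; this choice of index is what makes Lemma \ref{lem Psi estimate} yield $\hat\nabla\Psi_\mu^{ij}\in\mathcal{C}^{k-1,\gamma}_{1/2-\beta/2}$, matching the regularity needed for $\hat\nabla\tilde g$. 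Lemma \ref{lem DeT} then furnishes the family $\{\psi_t\}_{t\in[0,T]}$ of diffeomorphisms extending continuously to $\psi_0\in C^{1,\beta}(M;M)$, and I set $\psi:=\psi_0$.

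With $\tilde g(t):=\psi_t^*g(t)$ defined on $[0,T]$, on the open interval $(0,T]$ every object is smooth and the standard DeTurck computation
\begin{align*}
\frac{\partial}{\partial t}\tilde g(t)=\psi_t^*\Big(\frac{\partial}{\partial t}g(t)-\mathcal{L}_W g(t)\Big)=-2\,Ric(\tilde g(t))
\end{align*}
shows that $\tilde g$ solves the Ricci flow: the $-\mathcal{L}_W g$ term comes from differentiating the pullback along the flow of $-W$ and exactly cancels the extra term in the Ricci--DeTurck equation (\ref{Ricci DeT}), and the last equality uses diffeomorphism invariance of the Ricci tensor. The initial condition $\tilde g(\cdot,0)=\psi^*g_0$ holds by the $C^{1,\beta}$ convergence $\psi_t\to\psi_0$ supplied by Lemma \ref{lem DeT}.

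The central step, and the principal technical obstacle, is verifying the weighted parabolic H\"older bound $\|\tilde g\|_{\mathcal{X}_{k,\gamma}^{(\beta)}(M\times[0,T])}\leq K$. In a chart $U_{\mu_1}$, inserting the partition of unity $\{\rho_{\mu_2}\}$ subordinate to the charts $\{U_{\mu_2}\}$, one writes
\[ (\tilde g_t)_{ij}(x)=\sum_{\mu_2}\rho_{\mu_2}(\psi_t(x))\,g_{kl}(\psi_t(x),t)\,(d\psi_t)_\mu^{ik}(x)\,(d\psi_t)_\mu^{jl}(x); \]
the factor $\rho_{\mu_2}(\psi_t)(d\psi_t)_\mu^{ik}$ is precisely $\Psi_\mu^{ik}$ of Definition \ref{map Psi}, which by Lemma \ref{lem Psi estimate} (applied with $k+1$ in place of the lemma's $k$) lies in $C^{\beta,\beta/2}(M\times[0,T])$ with $\hat\nabla\Psi_\mu^{ik}$ controlled in $\mathcal{C}^{k-1,\gamma}_{1/2-\beta/2}(M\times(0,T])$. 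The factor $g_{kl}\circ\psi_t$ inherits its weighted parabolic estimates from the $\mathcal{X}_{k+3,\gamma}^{(\alpha)}$-bound on $g$ together with the uniform $C^{1,\beta}$-bound on $\psi_t$ from Lemma \ref{lem DeT}; reducing the ambient spatial H\"older exponent from $\alpha$ to $\beta<\alpha$ costs nothing by Lemma \ref{properties of weighted space}(3). Multiplying these weighted norms via the product rule of Lemma \ref{properties of weighted space}(4) and summing the finite partition of unity then delivers the desired bound.

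The main difficulty to anticipate is the bookkeeping in this pullback estimate: as $t$ varies, the image $\psi_t(x)$ may cross between different trivializing charts, so the raw components $(d\psi_t)_\mu^{ij}$ are not globally defined in $t$ on any single chart, and the individual factors $g_{kl}\circ\psi_t$ and $\rho_{\mu_2}\circ\psi_t$ suffer the same defect. This is precisely why the cutoff-modified functions $\Psi_\mu^{ij}$ of Definition \ref{map Psi}, which are globally defined on $M\times[0,T]$ and already absorb the partition of unity, are the correct objects on which to measure parabolic norms. Once everything is phrased through these $\Psi_\mu^{ij}$, the weighted norm estimate reduces to applications of Lemma \ref{properties of weighted space}, Lemma \ref{lem Psi estimate}, and the $\mathcal{X}_{k+3,\gamma}^{(\alpha)}$-bound on $g$.
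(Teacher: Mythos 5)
Your proposal is correct and takes essentially the same approach as the paper: invoke Corollary \ref{thm existence high Ricci DeT} at index $k+3$, define $\psi:=\psi_0$ from Lemma \ref{lem DeT}, pull back by $\psi_t$, and estimate the weighted norm of $\tilde g$ through the cutoff-modified functions $\Psi_\mu^{ij}$ via Lemma \ref{lem Psi estimate} (applied with $k+1$) and the product rule of Lemma \ref{properties of weighted space}. The only slight imprecision is in your displayed local formula for $(\tilde g_t)_{ij}$: the paper's expansion (\ref{pullback metric relation}) inserts a separate partition-of-unity sum and transition map for each of the three chart-dependent factors---$g_t\circ\psi_t$ and the two $d\psi_t$ factors---whereas your formula carries only one $\sum_{\mu_2}$ with a single $\rho_{\mu_2}(\psi_t)$; you do, however, acknowledge that $g_{kl}\circ\psi_t$ and $\rho\circ\psi_t$ have the same defect as $d\psi_t$, so the fix is the one you already have in mind.
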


\bigskip
\begin{proof}\

By Corollary \ref{thm existence high Ricci DeT}, we can find $T = T(M, \hat{g},  \|g_0\|_{\alpha})>0$ sufficiently small such that $g(t)\in\mathcal{X}_{k+3,\gamma}^{(\alpha)}(M\times[0,T])$ is the unique solution to the Ricci-DeTurck system such that
	$$ \|g\|_{\mathcal{X}_{k+3,\gamma}^{(\alpha)}}\leq K.$$
By Lemma \ref{lem DeT} and Lemma \ref{lem Psi estimate}, we can find a one-parameter family of diffeomorphisms $\{\psi_t\}_{t\in [0,T]}$ which is generated by 

\begin{align*}
    \begin{cases}
    &\frac{\partial}{\partial t}\psi_t = -W(\psi_t,t)\\
    &\psi_T = \id
    \end{cases},
\end{align*}
such that if the functions $\Psi_{\mu}^{ij}$ are defined by Definition \ref{map Psi}, then they satisfy
\begin{itemize}
\item[(1)] $\Psi_{\mu}^{ij}\in C^{\beta,\frac{\beta}{2}}(M\times [0,T];\mathbb{R})\ $ and\quad  $\hat{\nabla}\Psi_{\mu}^{ij} \in \mathcal{C}^{k-1,\gamma}_{\frac{1}{2}-\frac{\beta}{2}}(M\times (0,T];\mathbb{R})$;	
\item[(2)] 
	$\|\Psi_{\mu}^{ij}\|_{\beta,\frac{\beta}{2}; M\times [0,T]} + \|\hat{\nabla}\Psi_{\mu}^{ij}\|_{\mathcal{C}^{k-1,\gamma}_{\frac{1}{2}-\frac{\beta}{2}}( M\times (0,T])} \leq K (M, k, \hat{g},  \|g_0\|_{\alpha}, \|g\|_{\mathcal{X}_{k+3,\gamma}^{(\alpha)}})$.
\end{itemize}
Equivalently, we have $\|\hat{\nabla}\Psi_{\mu}^{ij}\|_{\mathcal{C}^{k-1,\gamma}_{\frac{1}{2}-\frac{\beta}{2}}( M\times (0,T])} \leq K$.

\bigskip

 We then define $\tilde{g}(t):=(\psi_t)^*g(t)$. Thus $\tilde{g}(0) = \psi_0^*g_0$ where $\psi_0$ is a $C^{1,\beta}$ diffeomorphism. We have
\begin{align*}
    \frac{\partial}{\partial t}\tilde{g} = (\psi_t)^*(-2Ric(g(t))+L_Wg(t))-L_W\tilde{g}(t)=-2Ric(\tilde{g}(t)).
\end{align*}
Hence $\tilde{g}$ is a solution to the Ricci flow on a $M\times (0,T]$. In the sequel, let us denote $g(t)$ by $g_t$. Next, we fix a chart $U$. Let $x\in U$ and $\psi_t(x)\in V$ for some chart $V$. Let $\{x_i\}$ and $\{\tilde{x}_i\}$ be the coordinates on the charts $U$ and $V$ respectively. Moreover, we denote by $h_{\mu}: U_{\mu}\cap V \to \textup{GL}_n(\mathbb{R})$ the induced transition maps between trivializations of the tangent bundle $TM$; and by $H_s: U_{s}\cap V \to \textup{GL}_N(\mathbb{R})$ the induced transition maps between trivializations of the bundle $\text{Sym}^2(T^*M)$. So on the intersection $U_{\mu}\cap V$, we have
	\[ (d\psi_t)^{ij} = (h_{\mu})^j_k(d\psi_t)_{\mu}^{ik}\quad\text{and}\quad g_{kl} = (H_{\mu})_{kl}^{ab}\cdot g_{ab}^{\mu}\]
	where $(d\psi_t)^{ij} = \partial_i\psi_t^j$ are the components of $d\psi_t$ on $U\cap\psi_t^{-1}(V)$, and $ g_{ab}^{\mu}$ are the components of $g$ on $U_{\mu}$. 
 Then
\begin{align}{\label{pullback metric relation}}
\tilde{g}(x,t)_{ij} &=
 \psi_t^*g_t(x)_{ij}\\
 &\notag= g_t(\psi_t(x))( (d\psi_t)(\frac{\partial}{\partial x_i}),\ (d\psi_t)(\frac{\partial}{\partial x_j})) \\
 &\notag= g_t(\psi_t(x))( (d\psi_t)^{ik}\frac{\partial}{\partial \tilde{x}_k}, (d\psi_t)^{jl}\frac{\partial}{\partial \tilde{x}_l})\\
 &\notag= (g_t\circ\psi_t)_{kl}(d\psi_t)^{ik}(d\psi_t)^{jl}\\
 &\notag= \sum_s\rho_s(\psi_t)(g_t\circ\psi_t)_{kl}\cdot\sum_{\mu}\rho_{\mu}(\psi_t)(d\psi_t)^{ik}\cdot\sum_{\nu}\rho_{\nu}(\psi_t)(d\psi_t)^{jl}\\
 &\notag= \sum_s\rho_s(\psi_t)(g_t\circ\psi_t)_{ab}^s (H_s)_{kl}^{ab}\cdot\sum_{\mu}\rho_{\mu}(\psi_t)(h_{\mu})_p^k(d\psi_t)_{\mu}^{ip}\cdot\sum_{\nu}\rho_{\nu}(\psi_t)(h_{\nu})_q^l(d\psi_t)_{\nu}^{jq}\\
 &\notag= \sum_{\mu,\nu, s}\rho_s(\psi_t)(g_t\circ\psi_t)_{ab}^s\cdot\Psi^{ip}_{\mu}\Psi^{jq}_{\nu}\cdot (h_{\mu})_p^k(h_{\nu})_q^l(H_s)_{kl}^{ab}.
\end{align}
Now, since $g\in C^{\alpha,\frac{\alpha}{2}}(M\times [0,T])$, $\Psi \in C^{\beta,\frac{\beta}{2}}(M\times [0,T])$ and $\alpha > \beta$, we have $\tilde{g}\in C^{\beta,\frac{\beta}{2}}(M\times [0,T])$ from (\ref{pullback metric relation}). Next, for $1\leq m\leq k$, we see from (\ref{pullback metric relation}) that
\begin{align*}
	\hat{\nabla}\tilde{g}	= \sum_{j_1+j_2+j_3+j_4 = 1}\hat{\nabla}^{j_1}(\rho\circ\psi_t) * \hat{\nabla}^{j_2}(g_t\circ\psi_t) * \hat{\nabla}^{j_3}\Psi * \hat{\nabla}^{j_4}\Psi.
\end{align*}
Note that we have $ \|\hat{\nabla}\Psi\|_{\mathcal{C}^{k-1,\gamma}_{\frac{1}{2}-\frac{\beta}{2}}( M\times (0,T])}\leq K$. Since $\psi_t$ is a $C^{1,\beta}$ diffeomorphism, $g_t\circ\psi_t$ has the same regularity as $g_t$, and so $ \|\hat{\nabla}(g_t\circ\psi_t)\|_{\mathcal{C}^{k-1,\gamma}_{\frac{1}{2}-\frac{\beta}{2}}( M\times (0,T])}\leq \|\hat{\nabla}(g_t\circ\psi_t)\|_{\mathcal{C}^{k-1,\gamma}_{\frac{1}{2}-\frac{\alpha}{2}}( M\times (0,T])}\leq  K$. By the definition of $\Psi$, $\rho\circ\psi_t$ has at least the regularity of $\Psi$, thus $ \|\hat{\nabla}(\rho\circ\psi_t)\|_{\mathcal{C}^{k-1,\gamma}_{\frac{1}{2}-\frac{\beta}{2}}( M\times (0,T])}\leq K$.
 Moreover, by Lemma \ref{properties of weighted space}, there is
\begin{align*}
	 \|\Psi\|_{\mathcal{C}^{k-1,\gamma}_{0}( M\times (0,T])} &\leq K( \|\Psi\|_{\beta,\frac{\beta}{2}; M\times (0,T])} + \|\hat{\nabla}\Psi\|_{\mathcal{C}^{k-2,\gamma}_{\frac{1}{2}}( M\times (0,T])})\\
	  &\leq K( \|\Psi\|_{\beta,\frac{\beta}{2}; M\times (0,T])} + \|\hat{\nabla}\Psi\|_{\mathcal{C}^{k-1,\gamma}_{\frac{1}{2}-\frac{\beta}{2}}( M\times (0,T])})\\
	  &\leq K.
\end{align*}
Similarly,
\[ \|g_t\circ\psi_t\|_{\mathcal{C}^{k-1,\gamma}_{0}( M\times (0,T])}\leq K\quad\text{and}\quad  \|\rho\circ\psi_t\|_{\mathcal{C}^{k-1,\gamma}_{0}( M\times (0,T])}\leq K.\]
Putting everything together, we obtain
\begin{align*}
	&\|(\rho\circ\psi_t) * (g_t\circ\psi_t) *\Psi * \hat{\nabla}\Psi\|_{\mathcal{C}^{k-1,\gamma}_{\frac{1}{2}-\frac{\beta}{2}}( M\times (0,T])}\\
	&\leq 	K\|(\rho\circ\psi_t) \|_{\mathcal{C}^{k-1,\gamma}_{0}( M\times (0,T])} \|(g_t\circ\psi_t) \|_{\mathcal{C}^{k-1,\gamma}_{0}( M\times (0,T])} \|\Psi \|_{\mathcal{C}^{k-1,\gamma}_{0}( M\times (0,T])} \| \hat{\nabla}\Psi\|_{\mathcal{C}^{k-1,\gamma}_{\frac{1}{2}-\frac{\beta}{2}}( M\times (0,T])}\\
	&\leq K.
\end{align*}
We can similarly derive the bounds for the other terms in the summation of (\ref{pullback metric relation}). Therefore we conclude that
\[ \|\hat{\nabla}\tilde{g}\|_{\mathcal{C}^{k-1,\gamma}_{\frac{1}{2}-\frac{\beta}{2}}( M\times (0,T])} \leq K.\]
This proves that $\tilde{g}\in\mathcal{X}_{k,\gamma}^{(\beta)}(M\times[0,T])$ and the assertion follows.
\end{proof}

\bigskip
\bigskip
\newpage

\section{Short Time Existence and Uniqueness to the Harmonic map heat flow}\

In this subsection, we assume that a solution to the Ricci flow is given. To construct a solution to the Ricci-Deturck flow, we prove the short time existence and the uniqueness to the associated harmonic map heat flow. 

Throughout this subsection, let $\alpha\in (0,1)                                                                                                                                   $ be given such that $g_0\in C^{\alpha}(M)$. Let $\gamma\in (0,\alpha)$ be given and we fix $\beta\in (\gamma, \alpha)$. Moreover, let $g(t)\in\mathcal{X}_{k,\gamma}^{(\beta)}(M\times[0,T])$ be a solution to the Ricci flow on $M\times (0,T]$ and $\psi$ be a $C^{1,\beta}$ diffeomorphism such that
\begin{itemize}
\item $g(0) = \psi^*g_0$;
\item $\|\psi\|_{C^{1,\beta}(M;M)}\leq C$;
\item $	\|g\|_{\mathcal{X}_{k,\gamma}^{(\beta)}(M\times[0,T])} = \|g\|_{\beta,\frac{\beta}{2};M\times [0,T]} + \|\hat{\nabla}g\|_{\mathcal{C}_{\frac{1}{2}-\frac{\beta}{2}}^{k-1,\gamma}} \leq C.$	
\end{itemize}
for some constant $C>0$.

\bigskip 
Associated with $g(t)$, we consider the harmonic map heat flow
 \begin{align}{\label{harmonic map equation}}
    \begin{cases}
    \frac{\partial\phi_t}{\partial t} = \Delta_{g(t),\hat{g}}\phi_t, \quad&\text{on}\quad M\times (0,T]\\
     \varphi_0 = \psi,\quad&\text{on}\quad M.
    \end{cases}
\end{align}
We seek short time existence and uniqueness to the initial value problem (\ref{harmonic map equation}). To do that, we reformulate the problem into an equivalent equation on $TM$ via the exponential map. Since $\psi\in C^{1,\beta}(M)$, we can find a $C^{\infty}$ map $\hat{\psi}:(M,g)\to (M,\hat{g})$ such that $d_{\hat{g}}(\psi(x),\hat{\psi}(x)) < \frac{1}{2}{\rm inj}(M,\hat{g})$. Here $d_{\hat{g}}$ is the Riemannian distance with respect to the metric $\hat{g}$. Thus we can write
\begin{align}
	\psi(x) = \exp_{\hat{\psi}(x)}(U(x))	
\end{align}
	for some $C^{1+\beta}$ vector field $U(x)$. The exponential map is taken with respect to the metric $\hat{g}$. If we assume that $T$ is sufficiently small so that $d_{\hat{g}}(\phi_t(x),\hat{\psi}(x)) < \frac{3}{4}{\rm inj}(M,\hat{g})$, then we can write the harmonic map heat flow $\phi_t(x)$ in the form
	\begin{align}
	\phi_t(x) = 	\exp_{\hat{\psi}(x)}(V(x,t))
	\end{align}
	for some vector field $V(x,t)$. Note that the assumptions on the injectivity radius ensures that $V(x,t)$ is well-defined. Now the idea is to transform the initial value problem (\ref{harmonic map equation}) into an equivalent PDE for the vector field $V(x,t)$ with initial condition $U(x)$. The following lemma gives such an equivalence. 
	
	\bigskip
	
	\begin{lemma}{\label{lem pullback harmonic}}
	If $\phi_t(x)\in\Gamma(M\times [0,T];M)$ is a solution to the harmonic map heat flow (\ref{harmonic map equation}), then the vector field $V(x,t)$ defined by $\phi_t(x) = 	\exp_{\hat{\psi}(x)}(V(x,t))$ is a solution in	$\Gamma(M\times [0,T];\hat{\psi}^*(TM))$ to the initial value problem
	\begin{align}{\label{pullback harmonic map}}
	    \begin{cases}
    \left(\frac{\partial}{\partial t} - tr_g\hat{\nabla}^2\right) V = \mathcal{P}(V, \hat{\nabla}V) \quad&\text{on}\quad M\times (0,T]\\
     V(x,0) = U(x)\quad&\text{on}\quad M,
    \end{cases}	
	\end{align}
	provided that $T$ is sufficiently small, where
	\begin{align}
		\mathcal{P}(V, \hat{\nabla}V)^a := g^{ij}(\Gamma_{\hat{g}} - \Gamma_g)_{ij}^k\ (\hat{\nabla}_kV^a + Z_k^a( V))+ g^{ij}S_{ij}^a( V, \hat{\nabla}V),
	\end{align}
	$Z(V)$ and $S(V, \hat{\nabla}V)$ are sections of $T^*M\otimes\hat{\psi}^*TM$ and $T^*M\otimes T^*M\otimes\hat{\psi}^*TM$ respectively.
Here $Z_k^a(V)$ is a smooth function in $V$, whereas $S_{ij}^a( V, \hat{\nabla}V)$ is a smooth function in $V$ and a polynomial of degree 2 in $\hat{\nabla}V$.
	Moreover, the converse is also true.
	\end{lemma}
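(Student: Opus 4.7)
The plan is to substitute $\phi_t(x)=\exp_{\hat{\psi}(x)}(V(x,t))$ directly into the harmonic map heat flow equation and rewrite every term in the resulting expression through the pullback bundle $\hat{\psi}^*TM$ using the Levi-Civita connection $\hat{\nabla}$. Since $\exp$ is a local diffeomorphism near the zero section, the assignment $V\mapsto \exp_{\hat{\psi}(\cdot)}(V)$ is a bijection between small sections of $\hat{\psi}^*TM$ and maps $M\to M$ sufficiently close to $\hat{\psi}$. Consequently the calculation is reversible, so establishing the forward implication automatically yields the converse provided $T$ is small enough that $V$ stays within the injectivity radius ball around the zero section.

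Working in local coordinates around $\hat{\psi}(x_0)$, I would introduce the smooth map $F(x,v):=\exp_{\hat{\psi}(x)}(v)$ defined on a neighbourhood of the zero section of $\hat{\psi}^*TM$, so that $\phi_t(x)=F(x,V(x,t))$. Using the horizontal/vertical decomposition of $T(TM)$ induced by $\hat{\nabla}$, the chain rule gives expressions of the form
\begin{align*}
 \partial_t\phi_t^a &= E^a_b(x,V)\,\partial_t V^b,\\
 \partial_i\phi_t^a &= E^a_b(x,V)\,\hat{\nabla}_i V^b + H_i^a(x,V),
\end{align*}
where $E^a_b$ and $H_i^a$ are smooth in $(x,V)$, with $E^a_b(x,0)=\delta^a_b$ and $H_i^a(x,0)=\partial_i\hat{\psi}^a$. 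In particular $E$ is invertible for $|V|$ small. Next I would differentiate once more and obtain
\begin{align*}
 \partial_i\partial_j\phi_t^a &= E^a_b(x,V)\,\hat{\nabla}_i\hat{\nabla}_j V^b + T^a_{ij}(x,V,\hat{\nabla}V),
\end{align*}
where $T$ is smooth in $V$ and polynomial of degree $2$ in $\hat{\nabla}V$; the key point is that the principal part carries the factor $E^a_b$ identical to the one appearing in $\partial_t\phi$.

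Substituting these expressions into the harmonic map heat flow $\partial_t\phi_t = g^{ij}\bigl(\partial_i\partial_j\phi_t - (\Gamma_g)_{ij}^k\partial_k\phi_t + \hat{\Gamma}^a_{bc}(\phi_t)\partial_i\phi_t^b\partial_j\phi_t^c\bigr)$ and factoring the common $E$, inversion of $E$ isolates a clean equation for $V$. The leading term becomes $g^{ij}\hat{\nabla}_i\hat{\nabla}_j V^a = \mathrm{tr}_g\hat{\nabla}^2 V^a$: indeed, the contribution of $\hat{\Gamma}(\phi_t)$ to second order cancels against the genuine second partials in the combination dictated by the pullback connection on $\hat{\psi}^*TM$, which is the standard identity relating $\Delta_{g,\hat{g}}\phi$ to covariant Hessians. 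The remaining base Christoffel contribution $-g^{ij}(\Gamma_g)_{ij}^k\partial_k\phi_t$, after substituting the expression for $\partial_k\phi_t$ from above, produces
\[
 -g^{ij}(\Gamma_g)_{ij}^k\bigl(E^a_b(x,V)\hat{\nabla}_kV^b + H_k^a(x,V)\bigr),
\]
while the second-order bookkeeping generates a symmetric term $+g^{ij}(\Gamma_{\hat{g}})_{ij}^k(\hat{\nabla}_k V^a+Z_k^a(V))$ from trading ordinary second partials for covariant ones; combining these yields precisely the $g^{ij}(\Gamma_{\hat{g}}-\Gamma_g)_{ij}^k(\hat{\nabla}_k V^a + Z_k^a(V))$ contribution in the statement, with $Z^a_k(V)$ a smooth function in $V$ encoding the deviation of $H$ from its value at $V=0$. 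All remaining terms are of the form $g^{ij}S_{ij}^a(V,\hat{\nabla}V)$ with $S$ smooth in $V$ and quadratic in $\hat{\nabla}V$, coming from $T$, from the exponential-map Hessian contributions, and from the target Christoffel $\hat{\Gamma}(\phi_t)$ acting on $\partial_i\phi_t\otimes\partial_j\phi_t$ at first order.

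The main obstacle is the bookkeeping in step (d): one must verify that, after multiplying out all Jacobian factors $E^a_b$ and comparing them with those appearing in $\partial_t\phi$, the first-order Christoffel terms organize themselves precisely into the difference $(\Gamma_{\hat{g}}-\Gamma_g)_{ij}^k$ multiplied by a term of the stated form. This is ultimately a manifestation of the diffeomorphism-equivariance of the harmonic map heat flow combined with the fact that $\hat{\nabla}$ is metric-compatible with $\hat{g}$, which ensures that the target-side Christoffel symbols at $\phi_t$ are related to those at $\hat{\psi}(x)$ by smooth functions of $V$. Finally, the converse is obtained by reading the computation in reverse: given a solution $V$ of (5.4) with $V(\cdot,0)=U$ and $|V|$ small (which is guaranteed for short times), the map $\phi_t(x):=\exp_{\hat{\psi}(x)}(V(x,t))$ is well-defined, satisfies $\phi_0=\psi$, and, since every step above is an equivalence, it automatically solves the harmonic map heat flow.
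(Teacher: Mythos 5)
Your proposal takes the same high-level route as the paper (parametrize $\phi_t$ through the exponential map and translate the harmonic-map heat flow into a PDE for $V$), but it executes it by a local-coordinate chain-rule calculation, whereas the paper does a geometric computation: it first identifies $d\phi_t(\partial/\partial x^i) = (d\exp_{\hat\psi(x)})_{V}(\hat\nabla_i V + Z_i(V))$ via a Jacobi-field decomposition along the geodesics $\tau\mapsto\exp_{\hat\psi(x)}(\tau V)$, and then computes the covariant Hessian $\bar\nabla d\phi_t$ using connection $1$-forms $\omega^c_{ab}$ relative to the moving frame $\{(d\exp_{\hat\psi})_V(\partial_{y^a})\}$. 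The advantage of the paper's route is that the factor $(d\exp_{\hat\psi})_V$ is absorbed into the frame once and for all, so the term $-d\phi_t((\Gamma_g)_{ij}^k \partial_{x^k})$ \emph{immediately} contributes $-(\Gamma_g)_{ij}^k(\hat\nabla_kV^c+Z_k^c)$ in that frame, and the matching $+(\Gamma_{\hat g})_{ij}^k(\hat\nabla_kV^c+Z_k^c)$ drops out when $\partial_i$ is converted to $\hat\nabla_i$ acting on the whole pair $(\hat\nabla_jV^b+Z_j^b)$; the difference $(\Gamma_{\hat g}-\Gamma_g)$ then falls out by inspection.

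The gap in your sketch is precisely at that point, and you flag it yourself. In your coordinate formulation, $\partial_k\phi^a = E^a_b\hat\nabla_kV^b + H_k^a$, so the base-Christoffel term $-g^{ij}(\Gamma_g)_{ij}^k\partial_k\phi^a$, after left-multiplying by $E^{-1}$, gives $-g^{ij}(\Gamma_g)_{ij}^k(\hat\nabla_kV^a + (E^{-1})^a_dH_k^d)$; this is fine, with $Z_k^a:=(E^{-1})^a_dH_k^d$. But the $+\Gamma_{\hat g}$ piece you claim to get ``from trading ordinary second partials for covariant ones'' only produces $+g^{ij}(\Gamma_{\hat g})_{ij}^k\hat\nabla_kV^a$, with no accompanying $+g^{ij}(\Gamma_{\hat g})_{ij}^kZ_k^a$. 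To obtain the combined form $(\Gamma_{\hat g}-\Gamma_g)_{ij}^k(\hat\nabla_kV^a+Z_k^a)$ as stated, one must track where the missing $\Gamma_{\hat g}Z$ term arises — in the paper it comes from the $\hat\nabla_iZ_j^c$ contribution, and in your formulation it must be extracted from the $\partial_i H_j^a$ and the $E$-Jacobian derivatives. You assert the answer but do not verify it, and your justification (``diffeomorphism-equivariance plus metric-compatibility'') is a heuristic, not a proof. This matters because the exact factorization is what the subsequent regularity estimates in Proposition \ref{prop linear estimate harmonic} rely on: the $\Gamma_g$-dependence must appear only through the tensorial difference $\Gamma_{\hat g}-\Gamma_g\sim g^{-1}\hat\nabla g$, and its companion factor must be at most degree one in $\hat\nabla V$. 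Carrying out this verification is the technical heart of the lemma, and it is left undone.

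Two smaller points. Your line ``the contribution of $\hat\Gamma(\phi_t)$ to second order cancels against the genuine second partials'' is inaccurate; nothing cancels, rather the target Christoffel terms (together with derivatives of $E$) are what populate the quadratic-in-$\hat\nabla V$ part of $S$. And the converse direction, which you dispatch in one sentence, also requires that the constructed $\phi_t$ has enough regularity for $\Delta_{g,\hat g}\phi_t$ to make sense pointwise for $t>0$; this is implicit in the paper's framework but should at least be noted.
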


\bigskip

\begin{proof}
		Let $\{x^i\}$, $\{y^a\}$ and $\{z^c\}$ be local coordinates around $x$, $\psi(x)$ and $\phi_t(x)$ respectively. We first show that
		\begin{align}{\label{dphi expression}}
		d\phi_t(x)(\frac{\partial}{\partial x^i}) = (d\exp_{\hat{\psi}(x)})_{V(x,t)}	\left(\hat{\nabla}_iV(x,t) + Z_i(V(x,t)) \right) ,
		\end{align}
		where $Z_i = Z_i(x, V(x,t))\in T_{\hat{\psi}(x)}M$ is a vector field depending smoothly on $x$ and $V$.
		
Fix $x$ and $t$, let $\lambda(\tau):= \exp_{\hat{\psi}(x)}(\tau V(x,t)) $ be a geodesic. Let $\gamma(s)$ be a curve such that $\gamma(0) = x$ and $\gamma'(0) = \frac{\partial}{\partial x^i}$. Let $F(s,\tau):= \exp_{\hat{\psi}(\gamma(s))}(\tau V(\gamma(s),t))$ be a variation of $\lambda$ through geodesics. Then 
	\begin{align*}
		J(\tau) := \frac{\partial F}{\partial s}(s,\tau)\Big|_{s=0}
	\end{align*}
	is a Jacobi field with initial conditions $J(0) = d\hat{\psi}(x)(\frac{\partial}{\partial x^i}),\ \hat{\nabla}_{\tau}J(0) = \hat{\nabla}_XV(x,t)$ such that $J(1) = d\phi_t(x)(\frac{\partial}{\partial x^i})$. Let us decompose the Jacobi field $J := J_1 + J_2$ in a way that $J_1(0) = 0,\ \hat{\nabla}_{\tau}J_1(0) =  \hat{\nabla}_iV(x,t)$ and $J_2(0) = d\hat{\psi}(x)(\frac{\partial}{\partial x^i}),\ \hat{\nabla}_{\tau}J_2(0) = 0$. Then we have
	\begin{align*}
		J_1(1)\ = (d\exp_{\hat{\psi}(x)})_{V(x,t)}	\left(\hat{\nabla}_iV(x,t) \right).
	\end{align*}
	
Moreover, the vector $J_2(1)$ depends smoothly on $x$ and $V(x,t)$. To see that, let\\ $\sigma(u) := \exp_{\hat{\psi}(x)}(u\ J_2(0))$ be a geodesic, and let $H(u,\tau):= \exp_{\sigma(u)}\big(\tau\ P_{\sigma(u)}(V(x,t))\big)$ be another geodesic variation of $\lambda(\tau)$ where $P_{\sigma(u)}(V(x,t))$ is the parallel transport of $V(x,t)$ through $\sigma(u)$. Note that the Jacobi field $J_2(\tau)$ arises from the variation $H$ since
	$$ \frac{\partial H}{\partial u}(u,\tau)\Big|_{u=0, \tau=0} = J_2(0)\quad \text{and}\quad  D_{\tau}\frac{\partial H}{\partial u}(u,\tau)\Big|_{u=0, \tau=0} = 0.$$
Then
	$$ \tilde{Z}_i(x, V(x,t)) := J_2(1) = \frac{\partial H}{\partial u}(u, 1)\Big|_{u=0} \in T_{\exp_{\hat{\psi}(x)}(V)}M $$
is a vector field depending smoothly on $x$ and $V(x,t)$. We then define the vector field\\ $Z_i(x, V(x,t))\in T_{\hat{\psi}(x)}M$ by
	\[(d\exp_{\hat{\psi}(x)})_{V(x,t)}(Z_i(x, V(x,t))) = \tilde{Z}_i(x, V(x,t)). \]
	Combining the above results, we obtain the identity (\ref{dphi expression}).

\bigskip
Next, let $\bar{\nabla}$ be the connection on $T^*M\otimes\phi_t^*(TM)$ induced by $\nabla_{g(t)}$ and $\phi_t^*\hat{\nabla}$. It is worth noting that although we are using the connection $\bar{\nabla} = \nabla_{g(t)}\otimes 1 + 1\otimes \phi_t^*\hat{\nabla}$ which applies to sections of the form $dx^i\otimes\phi_t^*\partial_{z^c}$, we will be using the basis $\{(d\exp_{\hat{\psi(x)}})_{V(x,t)}(\partial_{y^a})\}$ on the fibre $T_{\phi_t(x)}M$. Moreover, we denote by $\omega_a^c$ the connection 1-forms for the connection $\phi_t^*\hat{\nabla}$ with respect to the basis $\{(d\exp_{\hat{\psi(x)}})_{V(x,t)}(\partial_{y^a})\}$, thus   
	\[\hat{\nabla}_{(d\exp_{\hat{\psi}})_{V}(\partial_{y^a})}(d\exp_{\hat{\psi}})_{V}(\partial_{y^b}) = \omega_{ab}^c\cdot (d\exp_{\hat{\psi}})_{V}(\partial_{y^c}). \]
	Note that $\omega_a^c$ depend smoothly on $V(x,t)$. 
	 Then (\ref{dphi expression}) implies 
\begin{align*}
	&\bar{\nabla}d\phi_t \left(	\frac{\partial}{\partial x^i}, \frac{\partial}{\partial x^j}\right)\\
	&= (\phi_t^*\hat{\nabla})_{\frac{\partial}{\partial x^i}}\left(d\phi_t \left(	\frac{\partial}{\partial x^j}\right)\right) - d\phi_t\left((\nabla_{g(t)})_{\frac{\partial}{\partial x^i}}\frac{\partial}{\partial x^j}\right) \\
	&= (\phi_t^*\hat{\nabla})_{\frac{\partial}{\partial x^i}}\left((\hat{\nabla}_jV^b + Z_j^b)(d\exp_{\hat{\psi}})_{V}	\left(\frac{\partial}{\partial y^b}\right)\right) - d\phi_t\left((\Gamma_g)_{ij}^k\frac{\partial}{\partial x^k}\right)\\
	&= \left(\frac{\partial}{\partial x^i}\hat{\nabla}_jV^b + \frac{\partial}{\partial x^i}Z_j^b\right)(d\exp_{\hat{\psi}})_{V}	\left(\frac{\partial}{\partial y^b}\right)\\
	&\quad\quad + (\hat{\nabla}_jV^b + Z_j^b)\cdot \hat{\nabla}_{(\hat{\nabla}_iV^a + Z_i^a)(d\exp_{\hat{\psi}})_{V}	\left(\frac{\partial}{\partial y^a}\right)}\left((d\exp_{\hat{\psi}})_{V}	\left(\frac{\partial}{\partial y^b}\right)\right)\\
	&\quad\quad - (\Gamma_g)_{ij}^k (\hat{\nabla}_kV^c + Z_k^c)(d\exp_{\hat{\psi}})_{V}	\left(\frac{\partial}{\partial y^c}\right)\\
	&= \left(\hat{\nabla}_i\hat{\nabla}_jV^c + \hat{\nabla}_iZ_j^c\right)(d\exp_{\hat{\psi}})_{V}	\left(\frac{\partial}{\partial y^c}\right) + (\hat{\nabla}_jV^b + Z_j^b)(\hat{\nabla}_iV^a + Z_i^a)\cdot 	\omega_{ab}^c \left(\frac{\partial}{\partial y^c}\right)\\
	&\quad\quad + (\Gamma_{\hat{g}} - \Gamma_g)_{ij}^k (\hat{\nabla}_kV^c + Z_k^c)(d\exp_{\hat{\psi}})_{V}	\left(\frac{\partial}{\partial y^c}\right)\\
	&= \left( \hat{\nabla}_i\hat{\nabla}_jV^c +(\Gamma_{\hat{g}} - \Gamma_g)_{ij}^k\ (\hat{\nabla}_kV^c + Z_k^c) + \hat{\nabla}_iV^a\ \hat{\nabla}_jV^b \cdot\omega_{ab}^c \right)\ (d\exp_{\hat{\psi}})_{V}	\left(\frac{\partial}{\partial y^c} \right)\\
	&\quad\quad + \left( \hat{\nabla}_iZ_j^c  + (Z_i^a \hat{\nabla}_jV^b + Z_j^b\hat{\nabla}_iV^a + Z_i^aZ_j^b)\cdot\omega_{ab}^c\right)\ (d\exp_{\hat{\psi}})_{V}	\left(\frac{\partial}{\partial y^c} \right).
\end{align*}
This means that
\begin{align}{\label{Hessian phi}}
	&\bar{\nabla}d\phi_t \left(	\frac{\partial}{\partial x^i}, \frac{\partial}{\partial x^j}\right)\\
	& = \left( \hat{\nabla}_j\hat{\nabla}_iV^a +(\Gamma_{\hat{g}} - \Gamma_g)_{ij}^k\ (\hat{\nabla}_kV^a + Z_k^a( V)) + S_{ij}^a(V, \hat{\nabla}V) \right)(d\exp_{\hat{\psi}})_{V}	\left(\frac{\partial}{\partial y^a} \right)\notag.
\end{align}
Here $S_{ij}^a(V, \hat{\nabla}V) :=  \hat{\nabla}_iZ_j^a + (Z_i^b\hat{\nabla}_jV^c + Z_j^c\hat{\nabla}_iV^b + Z_i^bZ_j^c)\cdot\omega_{bc}^a + \hat{\nabla}_iV^b \hat{\nabla}_jV^c\cdot \omega_{bc}^a$ is a smooth function in $V(x,t)$ and a polynomial of degree 2 in $\hat{\nabla}V(x,t)$.
The Laplacian $\Delta_{g(t),\hat{g}}\phi_t$ is thus given by
\begin{align*}
	 &\Delta_{g(t),\hat{g}}\phi_t\\
	=&\ \text{tr}_g\nabla d\phi_t\\
	=& \left( \text{tr}_g\hat{\nabla}^2 V^a\ + g^{ij}(\Gamma_{\hat{g}} - \Gamma_g)_{ij}^k\ (\hat{\nabla}_kV^a + Z_k^a) + g^{ij}S_{ij}^a \right)(d\exp_{\hat{\psi}})_{V}	\left(\frac{\partial}{\partial y^a} \right).
\end{align*}
On the other hand, it is easy to see that
\begin{align*}
	\frac{\partial}{\partial t}\phi_t = 	(d\exp_{\hat{\psi}})_{V} \left(\frac{\partial}{\partial t}V\right) = \left(\frac{\partial}{\partial t}V^a\right)\ (d\exp_{\hat{\psi}})_{V}\left(\frac{\partial}{\partial y^a} \right).
\end{align*}
Therefore, we conclude that
\begin{align}
	&\frac{\partial\phi_t}{\partial t} - \Delta_{g(t),\hat{g}}\phi_t	\\
	\notag = & \left(\frac{\partial}{\partial t}V^a - \text{tr}_g\hat{\nabla}^2 V^a\ - g^{ij}(\Gamma_{\hat{g}} - \Gamma_g)_{ij}^k\ (\hat{\nabla}_kV^a + Z_k^a) - g^{ij}S_{ij}^a \right)(d\exp_{\hat{\psi}})_{V}	\left(\frac{\partial}{\partial y^a} \right).
\end{align}
The assertion then follows since the exponential map $\exp$ is a smooth diffeomorphism with respect to the metric $\hat{g}$.
\end{proof}

\bigskip

Similar to the proof of the short time existence and uniqueness to the Ricci De-Turck flow, the short time existence and uniqueness to (\ref{pullback harmonic map}) can be obtained by applying the Banach fixed point theorem to the following linear system:
	\begin{align}{\label{linear harmonic map}}
	\begin{cases}
    \left(\frac{\partial}{\partial t} - tr_g\hat{\nabla}^2\right) V = \mathcal{P}(W, \hat{\nabla}W) \quad&\text{on}\quad M\times (0,T]\\
     V(x,0) = U(x)\quad&\text{on}\quad M,
    \end{cases}	
	\end{align}
	where
	\begin{align}
		\mathcal{P}(W, \hat{\nabla}W)^a := g^{ij}(\Gamma_{\hat{g}} - \Gamma_g)_{ij}^k\ (\hat{\nabla}_kW^a + Z_k^a( W)) + g^{ij}S_{ij}^a( W, \hat{\nabla}W).
	\end{align}
Moreover, we note that the fact $U\in C^{1,\beta}(M)$ implies that $U\in C^{\delta}(M)$ for any exponent  $\delta\in (0,1)$ which can be arbitrarily close to $1$.

\begin{proposition}{\label{prop linear estimate harmonic}}
	Let $k\geq 2$ and $\delta\in (0,1)$ be given such that $\delta\geq\beta$. Suppose that
\begin{itemize}
\item $W(x,0) = U(x)$;
\item $\|W\|_{\mathcal{X}_{k,\gamma}^{(\delta)}(M\times [0,T])}\leq B$.
\end{itemize}
Then there exists a unique solution $V\in \mathcal{X}_{k+1,\gamma}^{(\delta)}(M\times [0,T];\hat{\psi}^*(TM))$ to the linear system (\ref{linear harmonic map}) and there are positive constants $ K_1 = K_1(\hat{g}, M,  \|g\|_{\mathcal{X}_{k,\gamma}^{(\beta)}})$, $\ K_2 = K_2(\hat{g}, M, \|g\|_{\mathcal{X}_{k,\gamma}^{(\beta)}}, B)$ such that
\begin{align*}
	\|V\|_{\mathcal{X}_{k+1,\gamma}^{(\delta)}(M\times [0,T])} \leq K_1(K_2\ T^{\frac{1}{2}} + \|U\|_{C^{1,\beta}(M)}).
\end{align*}
\end{proposition}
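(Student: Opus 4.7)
My plan is to apply Theorem~\ref{theorem linear equation} directly to the system (\ref{linear harmonic map}), taking the vector bundle $E = \hat{\psi}^{*}(TM)$, the coefficient metric $w = g(t)$, the initial data $\eta_{0} = U$, the forcing term $F = \mathcal{P}(W, \hat{\nabla}W)$, and choosing the Hölder parameter $\alpha$ of Theorem~\ref{theorem linear equation} to equal $\delta$. Uniqueness and the first estimate of Theorem~\ref{theorem linear equation} reduce everything to verifying the parabolicity/regularity hypotheses on $g$ and to bounding $\|\mathcal{P}(W,\hat{\nabla}W)\|_{\mathcal{C}^{k-1,\gamma}_{1-\delta/2}}$ by a positive power of $T$ times constants depending on $B$ and the coefficient data.

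I would first verify the hypotheses. Uniform parabolicity of $g$ follows from $g(0)=\psi^{*}g_{0}$ being uniformly positive definite together with $\|g\|_{\beta,\beta/2}\leq C$, provided $T$ is chosen small enough that $g(t)$ stays within a small neighborhood of $g(0)$. The bound $\|g\|_{\gamma,\gamma/2} + \|\hat{\nabla}g\|_{\mathcal{C}^{k-2,\gamma}_{1/2}}\leq A$ required by Theorem~\ref{theorem linear equation} follows from $\|g\|_{\mathcal{X}_{k,\gamma}^{(\beta)}}\leq C$ together with $\beta>\gamma$ by the embeddings of Lemma~\ref{properties of weighted space}(2). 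For the initial data, the relation $\psi(x)=\exp_{\hat{\psi}(x)}(U(x))$ with $\hat{\psi}$ smooth and $\psi\in C^{1,\beta}$ gives $U\in C^{1,\beta}(M;\hat{\psi}^{*}TM)$, hence $\|U\|_{\delta;M}\leq K\|U\|_{C^{1,\beta}(M)}$ for any $\delta\in(0,1)$.

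Next I would estimate the forcing. Decompose
\[
\mathcal{P}(W,\hat{\nabla}W) = g^{-1}*(\Gamma_{\hat{g}}-\Gamma_{g})*(\hat{\nabla}W+Z(W)) + g^{-1}*S(W,\hat{\nabla}W).
\]
Since $\Gamma_{\hat{g}}-\Gamma_{g}$ has the schematic form $g^{-1}*\hat{\nabla}g$, it lies in $\mathcal{C}^{k-1,\gamma}_{1/2-\beta/2}$ with norm controlled by $C$. Lemma~\ref{properties of weighted space}(4) then puts $(\Gamma_{\hat{g}}-\Gamma_{g})*\hat{\nabla}W$ into $\mathcal{C}^{k-1,\gamma}_{1-(\beta+\delta)/2}$, and Lemma~\ref{properties of weighted space}(2) embeds this into $\mathcal{C}^{k-1,\gamma}_{1-\delta/2}$ with a gain $T^{\beta/2}$. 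For the smooth composition $Z(W)$, a Faà di Bruno-type chain rule expresses $\hat{\nabla}^{j}(Z(W))$ as a polynomial in $\hat{\nabla}^{i}W$ with coefficients smooth in $W$; since $\hat{\nabla}^{i}W\in\mathcal{C}^{k-i,\gamma}_{i/2-\delta/2}$ by Lemma~\ref{properties of weighted space}(1), repeated use of Lemma~\ref{properties of weighted space}(4) gives control of $Z(W)$ in a suitable weighted space with norm depending on $B$. The term $S(W,\hat{\nabla}W)$, smooth in $W$ and polynomial of degree two in $\hat{\nabla}W$, is dominated by the quadratic contribution $\hat{\nabla}W*\hat{\nabla}W\in\mathcal{C}^{k-1,\gamma}_{1-\delta}$ with norm $\leq KB^{2}$, and Lemma~\ref{properties of weighted space}(2) embeds this into $\mathcal{C}^{k-1,\gamma}_{1-\delta/2}$ with gain $T^{\delta/2}$. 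Summing all contributions yields
\[
\|\mathcal{P}(W,\hat{\nabla}W)\|_{\mathcal{C}^{k-1,\gamma}_{1-\delta/2}(M\times(0,T])}\leq K_{2}\,T^{\mu}
\]
for some $\mu>0$ depending on $\beta$ and $\delta$, with $K_{2} = K_{2}(M,\hat{g},\|g\|_{\mathcal{X}_{k,\gamma}^{(\beta)}},B)$. Substituting into the conclusion of Theorem~\ref{theorem linear equation} produces the claimed bound, the stated exponent $T^{1/2}$ being absorbed into the constants via $T\leq 1$.

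The main obstacle will be the bookkeeping for the nonlinear compositions $Z(W)$ and for the Jacobi-field-derived coefficient forms $\omega_{ab}^{c}$ occurring implicitly in $S(W,\hat{\nabla}W)$: at each order of differentiation one must track which factors of $\hat{\nabla}^{i}W$ appear, place each factor in the correct weighted space $\mathcal{C}^{k-i,\gamma}_{i/2-\delta/2}$, multiply via Lemma~\ref{properties of weighted space}(4), and verify that the weight of the resulting product is no larger than $1-\delta/2$ (so that Lemma~\ref{properties of weighted space}(2) supplies a positive power of $T$), while checking that smooth-in-$W$ composition preserves the base weight $C^{\delta,\delta/2}$ inherited from $W$.
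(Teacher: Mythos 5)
Your proposal follows essentially the same route as the paper's own proof: invoke Theorem~\ref{theorem linear equation} for $E = \hat{\psi}^*(TM)$ with coefficient metric $g$, initial data $U$, and forcing $\mathcal{P}(W,\hat{\nabla}W)$; verify the parabolicity and weighted-norm hypotheses on $g$ from $\|g\|_{\mathcal{X}_{k,\gamma}^{(\beta)}}\leq C$; and then bound the forcing term by term using Lemma~\ref{properties of weighted space}. The specific decomposition you use — placing $(\Gamma_{\hat g}-\Gamma_g)*\hat\nabla W$ in $\mathcal{C}^{k-1,\gamma}_{1-(\beta+\delta)/2}$ with gain $T^{\beta/2}$, and the quadratic piece $\hat\nabla W*\hat\nabla W$ in $\mathcal{C}^{k-1,\gamma}_{1-\delta}$ with gain $T^{\delta/2}$ — matches the paper, which bounds $\mathcal{P}$ in $\mathcal{C}^{k-1,\gamma}_{1-\beta/2-\delta/2}$ and then embeds into $\mathcal{C}^{k-1,\gamma}_{1-\delta/2}$.

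One small caution: your remark that the resulting power $T^{\mu}$ (with $\mu=\beta/2$) can be ``absorbed into the constants'' to give $T^{1/2}$ is backwards for $T<1$, since $\beta/2<1/2$ implies $T^{\beta/2}>T^{1/2}$. The paper's own proof in fact only yields $K_1(K_2 T^{\beta/2}+\|U\|_{C^{1,\beta}})$, so the $T^{1/2}$ in the proposition's statement appears to be a typographical discrepancy; it is harmless because all that the subsequent contraction argument needs is some positive power of $T$.
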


\begin{proof}\

We first claim that the assumption $\|W\|_{\mathcal{X}_{k,\gamma}^{(\delta)}(M\times [0,T])}\leq B$ implies
\begin{align}{\label{P estimate}}
	\|\mathcal{P}(W, \hat{\nabla}W)\|_{\mathcal{C}^{k-1,\gamma}_{1-\frac{\beta}{2}-\frac{\delta}{2}}(M\times (0,T])} \leq K(\hat{g}, \|g\|_{\mathcal{X}_{k,\gamma}^{(\beta)}}, B).
\end{align}

To see that, we estimate
	\begin{align*}
	&\left\|g^{ij}(\Gamma_{\hat{g}} - \Gamma_g)_{ij}^k\ (\hat{\nabla}_kW + Z_k)\right\|_{\mathcal{C}^{k-1,\gamma}_{1-\frac{\beta}{2}-\frac{\delta}{2}}(M\times (0,T])} \\
	&\leq K(\hat{g})\ \|g^{-1}\|_{\mathcal{C}^{k-1,\gamma}_{0}(M\times (0,T])}\|\Gamma_{\hat{g}} - \Gamma_g\|_{\mathcal{C}^{k-1,\gamma}_{\frac{1}{2}-\frac{\beta}{2}}(M\times (0,T])}\|\hat{\nabla}W +Z\|_{\mathcal{C}^{k-1,\beta}_{\frac{1}{2}-\frac{\delta}{2}}(M\times (0,T])}\\
	&\leq K(\hat{g}, \|g\|_{\mathcal{X}_{k,\gamma}^{(\beta)}}, B)\ 
	\end{align*}
and
	\begin{align*}
	&\left\| g^{ij}S_{ij}^a(W, \hat{\nabla}W) \right\|_{\mathcal{C}^{k-1,\gamma}_{1-\frac{\beta}{2} - \frac{\delta}{2}}(M\times (0,T])} \\
	&\leq K(\hat{g})\  T^{\frac{\delta}{2}-\frac{\beta}{2}}\ \|g^{-1}\|_{\mathcal{C}^{k-1,\gamma}_{0}(M\times (0,T])}\|\hat{\nabla}W\|^2_{\mathcal{C}^{k-1,\gamma}_{\frac{1}{2}-\frac{\delta}{2}}(M\times (0,T])}\\
	&\leq K(\hat{g}, \|g\|_{\mathcal{X}_{k,\gamma}^{(\beta)}}, B) T^{\frac{\delta}{2}-\frac{\beta}{2}}.
	\end{align*}
Hence the estimate (\ref{P estimate}) is established. By Lemma \ref{properties of weighted space}, this in particular implies that
\begin{align*} 
	\|\mathcal{P}(W, \hat{\nabla}W)\|_{\mathcal{C}^{k-1,\gamma}_{1-\frac{\delta}{2}}(M\times (0,T])} \leq K(\hat{g}, \|g\|_{\mathcal{X}_{k,\gamma}^{(\beta)}}, B)\ T^{\frac{\beta}{2}}.
\end{align*}

 Since the initial condition in (\ref{linear harmonic map}) satisfies $U\in C^{\delta}(M;\hat{\psi}^*(TM))$, then by Theorem \ref{theorem linear equation} there exists a unique solution $V$ to the linear system (\ref{linear harmonic map}) such that $V\in C^{\delta,\frac{\delta}{2}}(M\times [0,T])$ and $\hat{\nabla}V\in \mathcal{C}^{k,\gamma}_{\frac{1}{2}-\frac{\delta}{2}}(M\times (0,T]).$ Moreover, $V$ satisfies the estimate
\begin{align}
	&\|V\|_{\delta,\frac{\delta}{2};M\times [0,T]}\ + \|\hat{\nabla}V\|	_{\mathcal{C}^{k,\gamma}_{\frac{1}{2}-\frac{\delta}{2}}(M\times (0,T])}\\
	&\notag\leq K_1 \left(\|\mathcal{P}(W, \hat{\nabla}W)\|_{\mathcal{C}^{k-1,\gamma}_{1-\frac{\delta}{2}}(M\times (0,T])} + \|U\|_{\delta;M}  \right)\\
	&\leq K_1(K_2\ T^{\frac{\beta}{2}} +\|U\|_{C^{1,\beta}(M)})\notag,
\end{align}
where $K_1 = K_1(\hat{g}, M,  \|g\|_{\mathcal{X}_{k,\gamma}^{(\beta)}})$ and $K_2 = K_2(\hat{g}, M, \|g\|_{\mathcal{X}_{k,\gamma}^{(\beta)}}, B)$. 
\end{proof}

\bigskip

Now, we choose $B > 2K_1\|U\|_{C^{1,\beta}(M)}$ to be a large positive constant. For $k\geq 2$, we define a closed subset $\mathcal{W}$ in $\mathcal{X}_{k,\gamma}^{(\delta)}(M\times [0,T])$ by
\begin{align*}
	\mathcal{W}:= \{ W\in\mathcal{X}_{k,\gamma}^{(\delta)}(M\times [0,T]) |\ \|W\|_{\mathcal{X}_{k,\gamma}^{(\delta)}(M\times [0,T])}\leq B\}.	
\end{align*}
Next, we define an operator $\mathcal{S}:\mathcal{W}\to\mathcal{X}_{k,\gamma}^{(\delta)}(M\times [0,T])$ by
\begin{align*}
	V := \mathcal{S}(W),	
\end{align*}
where $V$ is the unique solution to the system (\ref{linear harmonic map}) in $\mathcal{X}_{k,\gamma}^{(\delta)}(M\times [0,T])$. By Proposition \ref{prop linear estimate harmonic} and our choice of $B$, we can make $\|V\|_{\mathcal{X}_{k,\gamma}^{(\delta)}(M\times [0,T])}\leq B$ provided that $T$ is sufficiently small. Consequently $\mathcal{S}(\mathcal{W})\subset\mathcal{W}$.

\bigskip

\begin{proposition}{\label{prop contraction harmonic}}
	If $T = T(\hat{g}, M,  \|g\|_{\mathcal{X}_{k,\gamma}^{(\beta)}}, B)$ is chosen sufficiently small, the operator $\mathcal{S}$ is a contraction mapping.
\end{proposition}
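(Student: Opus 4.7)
The plan is to follow the template of the Ricci-DeTurck contraction argument (Proposition \ref{prop contraction}) almost verbatim. Given $W_1,W_2\in\mathcal{W}$, set $V_i:=\mathcal{S}(W_i)$ and $V:=V_1-V_2$; then $V$ satisfies the linear system
\begin{align*}
    \left(\frac{\partial}{\partial t}-\mathrm{tr}_g\hat{\nabla}^2\right)V=\widetilde{\mathcal{P}}\quad\text{on } M\times(0,T],\qquad V(\cdot,0)=0,
\end{align*}
where $\widetilde{\mathcal{P}}:=\mathcal{P}(W_1,\hat{\nabla}W_1)-\mathcal{P}(W_2,\hat{\nabla}W_2)$. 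Since the hypotheses on $g$ ensure uniform parabolicity of $\mathrm{tr}_g\hat{\nabla}^2$ and deliver the coefficient regularity needed by Theorem \ref{theorem linear equation}, I would first apply that theorem with vanishing initial data to obtain
\begin{align*}
    \|V\|_{\mathcal{X}_{k,\gamma}^{(\delta)}(M\times[0,T])}\leq K_1\,\|\widetilde{\mathcal{P}}\|_{\mathcal{C}^{k-1,\gamma}_{1-\delta/2}(M\times(0,T])},
\end{align*}
thereby reducing the whole problem to proving a bound of the form $\|\widetilde{\mathcal{P}}\|_{\mathcal{C}^{k-1,\gamma}_{1-\delta/2}}\leq KT^{\varepsilon}\|W_1-W_2\|_{\mathcal{X}_{k,\gamma}^{(\delta)}}$ for some $\varepsilon>0$.

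Next I would split $\widetilde{\mathcal{P}}=\widetilde{\mathcal{P}}_1+\widetilde{\mathcal{P}}_2+\widetilde{\mathcal{P}}_3$ into the piece that is linear in $\hat{\nabla}(W_1-W_2)$, the difference of the smooth terms $Z(W_i)$, and the difference of the degree-$2$ polynomial terms $S(W_i,\hat{\nabla}W_i)$:
\begin{align*}
    \widetilde{\mathcal{P}}_1 &= g^{ij}(\Gamma_{\hat{g}}-\Gamma_g)_{ij}^{k}\,\hat{\nabla}_k(W_1-W_2),\\
    \widetilde{\mathcal{P}}_2 &= g^{ij}(\Gamma_{\hat{g}}-\Gamma_g)_{ij}^{k}\,(Z_k(W_1)-Z_k(W_2)),\\
    \widetilde{\mathcal{P}}_3 &= g^{ij}\bigl(S_{ij}(W_1,\hat{\nabla}W_1)-S_{ij}(W_2,\hat{\nabla}W_2)\bigr).
\end{align*}
For $\widetilde{\mathcal{P}}_1$, the factor $\Gamma_{\hat{g}}-\Gamma_g$ is a contraction of $g^{-1}$ against $\hat{\nabla}g$ and so lies in $\mathcal{C}^{k-1,\gamma}_{1/2-\beta/2}$ with norm controlled by $\|g\|_{\mathcal{X}_{k,\gamma}^{(\beta)}}$; meanwhile $\hat{\nabla}(W_1-W_2)\in\mathcal{C}^{k-1,\gamma}_{1/2-\delta/2}$. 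By Lemma \ref{properties of weighted space}(4) the product lives in $\mathcal{C}^{k-1,\gamma}_{1-(\beta+\delta)/2}$, and Lemma \ref{properties of weighted space}(2) then yields a factor $T^{\beta/2}$ when I relax the weight up to $1-\delta/2$.

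For the two nonlinear pieces I plan to exploit the fact that $W_1(\cdot,0)=W_2(\cdot,0)=U$, so $(W_1-W_2)(\cdot,0)\equiv 0$ and hence $|(W_1-W_2)(x,t)|\leq t^{\delta/2}\,[W_1-W_2]_{\delta,\delta/2}$; this is exactly the mechanism used in the analogous step of Proposition \ref{prop contraction}. Writing
\begin{align*}
    Z_k(W_1)-Z_k(W_2)=(W_1-W_2)*\int_0^1 DZ_k\bigl(\theta W_1+(1-\theta)W_2\bigr)\,d\theta,
\end{align*}
the smoothness of $Z$ together with the uniform bound $\|W_i\|_{\mathcal{X}_{k,\gamma}^{(\delta)}}\leq B$ forces the integrand factor to lie in $\mathcal{C}^{k-1,\gamma}_{0}$ with norm depending only on $\hat{g}$ and $B$; combined with the vanishing of $W_1-W_2$ at $t=0$ this provides the required $T^{\varepsilon}$. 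For $\widetilde{\mathcal{P}}_3$ I would expand $S(W_1,\hat{\nabla}W_1)-S(W_2,\hat{\nabla}W_2)=A_1*(\hat{\nabla}W_1-\hat{\nabla}W_2)+A_2*(W_1-W_2)$ for coefficients $A_1,A_2$ depending smoothly on the $W_i$ and polynomially (degree $\leq 1$) on the $\hat{\nabla}W_i$, and then repeat the same weight-shifting computation.

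Summing the three contributions produces $\|\widetilde{\mathcal{P}}\|_{\mathcal{C}^{k-1,\gamma}_{1-\delta/2}}\leq KT^{\varepsilon}\|W_1-W_2\|_{\mathcal{X}_{k,\gamma}^{(\delta)}}$ for some $\varepsilon>0$ depending on $\beta$ and $\delta$, and choosing $T$ small enough so that $K_1 K T^{\varepsilon}\leq\tfrac{1}{2}$ gives the contraction $\|\mathcal{S}(W_1)-\mathcal{S}(W_2)\|_{\mathcal{X}_{k,\gamma}^{(\delta)}}\leq\tfrac{1}{2}\|W_1-W_2\|_{\mathcal{X}_{k,\gamma}^{(\delta)}}$. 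The main technical obstacle is the careful bookkeeping of the weighted H\"older exponents for $\widetilde{\mathcal{P}}_3$: one must simultaneously track $C^{0}$, $\gamma$-H\"older, and higher-derivative control of both $A_i$ and $(\hat{\nabla}W_1-\hat{\nabla}W_2)$, and verify that the weights rebalance to extract a strictly positive power of $T$ in every summand; but since the structure mirrors the estimate for $\mathcal{Q}(w,\hat{\nabla}w)$ in Proposition \ref{prop contraction}, I expect no genuinely new difficulty.
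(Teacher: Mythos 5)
Your proposal matches the paper's proof: you apply Theorem \ref{theorem linear equation} with zero initial data to reduce to estimating $\widetilde{\mathcal{P}}$, you use the integral (mean-value) representation $Z(W_1)-Z(W_2)=\int_0^1\langle D_{\mathbf q}Z(sW_1+(1-s)W_2),W_1-W_2\rangle\,ds$ and its analogue for $S$ (this is precisely the paper's $Z(s)$, $S(s)$ device), and you extract a positive power of $T$ by shifting weights via Lemma \ref{properties of weighted space}. Two small remarks: the target regularity for $\widetilde{\mathcal{P}}$ should be $\mathcal{C}^{k-2,\gamma}_{1-\delta/2}$ (not $k-1$) to feed Theorem \ref{theorem linear equation} and recover $\hat{\nabla}V\in\mathcal{C}^{k-1,\gamma}_{1/2-\delta/2}$, and the appeal to $(W_1-W_2)(\cdot,0)=0$ is not actually needed here — unlike Proposition \ref{prop contraction} no $\hat{\nabla}^2W$ appears in $\mathcal{P}$, so the weight-relaxation through Lemma \ref{properties of weighted space}(2) already supplies the $T^{\varepsilon}$ factor in every term.
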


\begin{proof}
In the sequel, $K$ will denote a constant depending only on $\hat{g}, M, \|g\|_{\mathcal{X}_{k,\gamma}^{(\beta)}}, B$. \\

Let $W_1, W_2\in\mathcal{W}$ and write $V_i:=\mathcal{S}(W_i)$ for $i=1,2$. Then $V = V_1 - V_2$ solves the system
\begin{align}
    \begin{cases}
    \left(\frac{\partial}{\partial t} - tr_g\hat{\nabla}^2\right)V =  \mathcal{P}(W_1, \hat{\nabla}W_1) - \mathcal{P}(W_2, \hat{\nabla}W_2) &\quad\text{on}\quad M\times (0,T]\\
    V|_{t=0} = 0 &\quad\text{on}\quad M.
    \end{cases}
\end{align}

We recall that
\begin{align*}
	\mathcal{P}(W, \hat{\nabla}W)^a = g^{ij}(\Gamma_{\hat{g}} - \Gamma_g)_{ij}^k\ (\hat{\nabla}_kW^a + Z_k^a( W))+ g^{ij}S_{ij}^a( W, \hat{\nabla}W).	
\end{align*}
Now, we define the tensors $Z(s)\in\Gamma(M; T^*M\otimes\hat{\psi}^*TM)$ and $S(s)\in\Gamma(M; T^*M\otimes T^*M\otimes\hat{\psi}^*TM)$ by
	\[Z(s) := Z(sW_1 + (1-s)W_2))\]
	and
	\[S(s) := S(sW_1 + (1-s)W_2,\ s\hat{\nabla}W_1 + (1-s)\hat{\nabla}W_2).\]
	Note that $(1-s)W_1 + sW_2\in\mathcal{W}$ for all $s\in [0,1]$.
From Lemma \ref{lem pullback harmonic}, we observe that
\begin{itemize}
\item $\nabla_{\bold{q}}Z_k^a(x,\bold{q})$ is a smooth function in $x$ and $\bold{q}$;
\item $\nabla_{\bold{q}}S_{ij}^a(x, \bold{q}, \bold{A})$ is a smooth function in $x$ and $\bold{q}$, and a polynomial of degree two in $\bold{A}$;
\item $\nabla_{\bold{A}}S_{ij}^a(x, \bold{q}, \bold{A})$ is a smooth function in $x$ and $\bold{q}$, and a polynomial of degree one in $\bold{A}$.
\end{itemize}
Let $\{dx^i\otimes\frac{\partial}{\partial y^a}\}$ be the local frame of $T^*M\otimes\hat{\psi}^*TM$ on the chart $U_{ia}$, then from the above observations and Lemma \ref{properties of weighted space}, we have
\begin{align*}
	\|\frac{\partial}{\partial s}Z(s)\|_{\mathcal{C}^{k-2,\gamma}_{0}(M\times (0,T])}
	&= \sum_{i,a}\|\frac{\partial}{\partial s}Z(s)_i^a\|_{\mathcal{C}^{k-2,\gamma}_{0}(U_{ia}\times (0,T])} \\
	&= \sum_{i,a}\|\langle D_{\bold{q}}Z(s)_i^a,\ W_1 - W_2\rangle\|_{\mathcal{C}^{k-2,\gamma}_{0}(U_{ia}\times (0,T])}\\
	&\leq \sum_{i,a}K\sup_{s\in [0,1]}\| D_{\bold{q}}Z(s)_k^a\|_{\mathcal{C}^{k-2,\gamma}_{0}(U_{ia}\times (0,T])} \|W_1-W_2\|_{\mathcal{C}^{k-2,\gamma}_{0}(U_{ia}\times (0,T])}\\
	&\leq K \|W_1-W_2\|_{\mathcal{X}_{k,\gamma}^{(\delta)}(M\times [0,T])}.
\end{align*}
Hence
\begin{align}{\label{Z_1-Z_2}}
	\|Z(W_1) - Z(W_2)\|_{\mathcal{C}^{k-2,\gamma}_{0}(M\times (0,T])} \leq \|\frac{\partial}{\partial s}Z(s)\|_{\mathcal{C}^{k-2,\gamma}_{0}(M\times (0,T])} \leq K  \|W_1-W_2\|_{\mathcal{X}_{k,\gamma}^{(\delta)}(M\times [0,T])}.
\end{align}
Let $\{dx^i\otimes dx^j\otimes\frac{\partial}{\partial y^a}\}$ be the local frame of $T^*M\otimes T^*M\otimes\hat{\psi}^*TM$ on the chart $U_{i j a}$, then we similarly have
\begin{align*}
	&\|\frac{\partial}{\partial s}S(s)\|_{\mathcal{C}^{k-2,\gamma}_{1-\frac{\delta}{2}}(M\times (0,T])}\\
	&= \sum_{i,j,a}\|\frac{\partial}{\partial s}S(s)_{ij}^a\|_{\mathcal{C}^{k-2,\gamma}_{1-\frac{\delta}{2}}(U_{ija}\times (0,T])}\\
	&\leq \sum_{i,j,a}\|\langle D_{\bold{q}}S(s)_{ij}^a,\ W_1 - W_2\rangle\|_{\mathcal{C}^{k-2,\gamma}_{1-\frac{\delta}{2}}(U_{ija}\times (0,T])} + \|\langle D_{\bold{A}}S(s)_{ij}^a,\ \hat{\nabla}W_1 - \hat{\nabla}W_2\rangle\|_{\mathcal{C}^{k-2,\gamma}_{1-\frac{\delta}{2}}(U_{ija}\times (0,T])}\\
	&\leq \sum_{i,j,a} KT^{\frac{\delta}{2}}\sup_{s\in[0,1]}\| D_{\bold{q}}S(s)_{ij}^a\|_{\mathcal{C}^{k-2,\gamma}_{1-\delta}(U_{ija}\times (0,T])}\|W_1 - W_2\|_{\mathcal{C}^{k-2,\gamma}_{0}(U_{ija}\times (0,T])}\\
	&\quad\quad + \sum_{i,j,a}  KT^{\frac{\delta}{2}}\sup_{s\in[0,1]}\| D_{\bold{A}}S(s)_{ij}^a\|_{\mathcal{C}^{k-2,\gamma}_{\frac{1}{2}-\frac{\delta}{2}}(U_{ija}\times (0,T])}\|\hat{\nabla}W_1 - \hat{\nabla}W_2\|_{\mathcal{C}^{k-2,\gamma}_{\frac{1}{2}-\frac{\delta}{2}}(U_{ija}\times (0,T])}\\
	&\leq  KT^{\frac{\delta}{2}} \|W_1 - W_2\|_{\mathcal{X}_{k,\gamma}^{(\delta)}(M\times [0,T])}.
\end{align*}
This gives
\begin{align}{\label{S_1-S_2}}
	\|S(W_1, \hat{\nabla}W_1) - S(W_2, \hat{\nabla}W_2)\|_{\mathcal{C}^{k-2,\gamma}_{1-\frac{\delta}{2}}(M\times (0,T])} &\leq \|\frac{\partial}{\partial s}S(s)\|_{\mathcal{C}^{k-2,\gamma}_{1-\frac{\delta}{2}}(M\times (0,T])}\\
	&\notag\leq K  \|W_1-W_2\|_{\mathcal{X}_{k,\gamma}^{(\delta)}(M\times [0,T])}.
\end{align}
From (\ref{Z_1-Z_2}), (\ref{S_1-S_2}) and Lemma \ref{properties of weighted space} we obtain
\begin{align*}
	&\|\mathcal{P}(W_1, \hat{\nabla}W_1) - \mathcal{P}(W_2, \hat{\nabla}W_2)\|_{\mathcal{C}^{k-2,\gamma}_{1-\frac{\delta}{2}}(M\times (0,T])}\\
	&\leq \|g^{-1}*(\Gamma_{\hat{g}}-\Gamma_g)* \hat{\nabla} (W_1-W_2)\|_{\mathcal{C}^{k-2,\gamma}_{1-\frac{\delta}{2}}(M\times (0,T])} \\
	&\quad + \|g^{-1}*(\Gamma_{\hat{g}}-\Gamma_g)* (Z(W_1) - Z(W_2))\|_{\mathcal{C}^{k-2,\gamma}_{1-\frac{\delta}{2}}(M\times (0,T])}\\
	&\quad  +  \|g^{-1}  * (S( W_1, \hat{\nabla}W_1) - S(W_2, \hat{\nabla}W_2))\|_{\mathcal{C}^{k-2,\gamma}_{1-\frac{\delta}{2}}(M\times (0,T])}\\
	&\leq K\ \Big(T^{\frac{\beta}{2}}\|\Gamma_{\hat{g}}-\Gamma_g\|_{\mathcal{C}^{k-2,\gamma}_{\frac{1}{2}-\frac{\beta}{2}}(M\times (0,T])} \|\hat{\nabla} (W_1-W_2)\|_{\mathcal{C}^{k-2,\gamma}_{\frac{1}{2}-\frac{\delta}{2}}(M\times (0,T])}\\
	&\quad\quad\quad +T^{\frac{1}{2}+\frac{\beta}{2}-\frac{\delta}{2}}\ \|\Gamma_{\hat{g}}-\Gamma_g\|_{\mathcal{C}^{k-2,\gamma}_{\frac{1}{2}-\frac{\beta}{2}}(M\times (0,T])}\|Z( W_1) - Z( W_2)\|_{\mathcal{C}^{k-2,\gamma}_{0}(M\times (0,T])}\\
	&\quad\quad\quad + \|S(W_1, \hat{\nabla}W_1) - S( W_2, \hat{\nabla}W_2) \|_{\mathcal{C}^{k-2,\gamma}_{1-\frac{\delta}{2}}(M\times (0,T])}\Big)\\
	&\leq K\ T^{\frac{\beta}{2}}\ \|W_1 - W_2\|_{\mathcal{X}_{k,\gamma}^{(\delta)}(M\times [0,T])}.
	\end{align*}
	Theorem \ref{theorem linear equation} then implies 
	\begin{align*}
		\|V\|_{\delta,\frac{\delta}{2};M\times [0,T]}\ + \|\hat{\nabla}V\|	_{\mathcal{C}^{k-1,\gamma}_{\frac{1}{2}-\frac{\delta}{2}}(M\times (0,T])} \leq K\ T^{\frac{\beta}{2}}\ \|W_1 - W_2\|_{\mathcal{X}_{k,\gamma}^{(\delta)}(M\times [0,T])}.
	\end{align*}
	Therefore,
	\begin{align}
		\|V_1 - V_2\|_{\mathcal{X}_{k,\gamma}^{(\delta)}(M\times [0,T])} \leq K\ T^{\frac{\beta}{2}}\ \|W_1 - W_2\|_{\mathcal{X}_{k,\gamma}^{(\delta)}(M\times [0,T])}.
	\end{align}
This proves the proposition.
\end{proof}

\bigskip

From the previous proposition and Lemma \ref{lem pullback harmonic}, the harmonic map heat flow (\ref{harmonic map equation}) has a unique solution $\phi_t$ such that $\phi_t\in C^{\delta,\frac{\delta}{2}}(M\times [0,T])$ and $d\phi_t\in \mathcal{C}^{k-1,\gamma}_{\frac{1}{2}-\frac{\delta}{2}}(M\times (0,T])$ provided that $g\in\mathcal{X}_{k,\gamma}^{(\beta)}(M\times[0,T])$ and that $T = T(\hat{g}, M,  \|g\|_{\mathcal{X}_{k,\gamma}^{(\beta)}}, \|\psi\|_{C^{1,\beta}(M)})$ is chosen sufficiently small. Furthermore, as the initial condition satisfies $\psi\in C^{1,\beta}(M)$, we can choose $\delta$ to be arbitrarily close to $1$. It turns out that the regularity of $\phi_t$ can be improved.\\

\begin{theorem}{\label{thm harmonic map}}
Let $\lambda\in (\gamma, \beta)$ be given. If $T = T(\hat{g}, M,  \|g\|_{\mathcal{X}_{k,\gamma}^{(\beta)}}, \|\psi\|_{C^{1,\beta}(M)})$ is chosen sufficiently small, then there exists a unique solution $V(x,t)$ to the initial value problem (\ref{pullback harmonic map}) such that
\begin{itemize}
\item[(1)] $V \in C^{1+\lambda,\frac{1+\lambda}{2}}(M\times [0,T])\ $ and\quad  $\hat{\nabla}^2V \in \mathcal{C}^{k-1,\gamma}_{\frac{1}{2}-\frac{\lambda}{2}}(M\times (0,T])$;	
\item[(2)] 
	$\|V\|_{1+\lambda,\frac{1+\lambda}{2}; M\times [0,T]} + \|\hat{\nabla}^2V\|_{\mathcal{C}^{k-1,\gamma}_{\frac{1}{2}-\frac{\lambda}{2}}( M\times (0,T])} \leq K (\hat{g},  M,  \|g\|_{\mathcal{X}_{k,\gamma}^{(\beta)}}, \|U\|_{C^{1,\beta}(M)}).$
\end{itemize}
\end{theorem}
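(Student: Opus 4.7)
The plan is to upgrade the regularity of the solution $V\in\mathcal{X}_{k,\gamma}^{(\delta)}(M\times[0,T])$ produced by Proposition \ref{prop contraction harmonic} (taking $\delta\in(\beta,1)$ arbitrarily close to $1$, which is admissible because $U\in C^{1,\beta}\subset C^{\delta}$) by differentiating the nonlinear equation (\ref{pullback harmonic map}) once in space and reapplying the linear theory of Theorem \ref{theorem linear equation} to $\hat{\nabla}V$. The structural observation driving the bootstrap is that although only $U\in C^{\delta}$ is used in the existence step, in fact $\hat{\nabla}U\in C^{\beta}(M;T^*M\otimes\hat{\psi}^*TM)$, so $\tilde V:=\hat{\nabla}V$ should satisfy a linear parabolic equation with genuine $C^{\beta}$ initial data, and applying Theorem \ref{theorem linear equation} with $\alpha_{\mathrm{Th}}=\beta$ will then yield exactly the $C^{1+\beta,(1+\beta)/2}$ control of $V$ that embeds into the target $C^{1+\lambda,(1+\lambda)/2}$ for $\lambda<\beta$.

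Concretely, since $V\in\mathcal{X}_{k,\gamma}^{(\delta)}$ with $k\geq 2$, differentiating (\ref{pullback harmonic map}) is legitimate on $M\times(0,T]$, and $\tilde V$ satisfies
\begin{align*}
	\Bigl(\tfrac{\partial}{\partial t} - tr_g\hat{\nabla}^2\Bigr)\tilde V = \mathcal{F},\qquad \tilde V(x,0)=\hat{\nabla}U(x),
\end{align*}
where $\mathcal{F} = \hat{\nabla}\mathcal{P}(V,\hat{\nabla}V) + [\hat{\nabla},\,tr_g\hat{\nabla}^2]V$. Expanding by the product/chain rule, with the curvature identity generating the second commutator, one obtains a finite sum of schematic terms of the form $g^{-1}*g^{-1}*\hat{\nabla}g*\hat{\nabla}V$, $g^{-1}*\hat{\nabla}g*\hat{\nabla}^2V$, $g^{-1}*\hat{R}*\hat{\nabla}V$, and $g^{-1}*\hat{\nabla}V*\hat{\nabla}^2V$, together with smooth-in-$V$ factors multiplying such products (arising from $\hat{\nabla}Z(V)$ and $\hat{\nabla}S(V,\hat{\nabla}V)$ through Lemma \ref{lem pullback harmonic}). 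Using $g\in\mathcal{X}_{k,\gamma}^{(\beta)}$, $V\in\mathcal{X}_{k,\gamma}^{(\delta)}$, the matrix identity $A^{-1}-B^{-1}=B^{-1}(B-A)A^{-1}$, and the multiplication property (4) of Lemma \ref{properties of weighted space}, each such term lies in $\mathcal{C}^{k-2,\gamma}_{1-\beta/2}(M\times(0,T])$: the weights add up correctly because derivatives of $V$ and $g$ carry parabolic weights at least $1/2-\delta/2$ and $1/2-\beta/2$ respectively, and $\delta>\beta$ makes $1-\beta/2$ the binding threshold.

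With the forcing bound in hand, I apply Theorem \ref{theorem linear equation} to $\tilde V$ with $w=g$, $\alpha_{\mathrm{Th}}=\beta>\gamma$, and $k_{\mathrm{Th}}=k-2$. The hypotheses on $w$ are inherited from $g\in\mathcal{X}_{k,\gamma}^{(\beta)}$ via parts (1)--(2) of Lemma \ref{properties of weighted space}, and $\hat{\nabla}U\in C^{\beta}$ is admissible initial data on the bundle $T^*M\otimes\hat{\psi}^*TM$. The theorem produces a unique solution with
\begin{align*}
	\tilde V\in C^{\beta,\beta/2}(M\times[0,T]),\qquad \hat{\nabla}\tilde V\in\mathcal{C}^{k-1,\gamma}_{1/2-\beta/2}(M\times(0,T]),
\end{align*}
together with the quantitative estimate, and uniqueness identifies it with $\hat{\nabla}V$. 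Combining with the standard Hölder embedding $C^{\beta,\beta/2}\hookrightarrow C^{\lambda,\lambda/2}$ on the compact set $M\times[0,T]$, and the weighted embedding $\mathcal{C}^{k-1,\gamma}_{1/2-\beta/2}\hookrightarrow\mathcal{C}^{k-1,\gamma}_{1/2-\lambda/2}$ from Lemma \ref{properties of weighted space}(2) (valid since $\beta>\lambda$), yields $V\in C^{1+\lambda,(1+\lambda)/2}$ and $\hat{\nabla}^2V\in\mathcal{C}^{k-1,\gamma}_{1/2-\lambda/2}$ with the asserted estimate; uniqueness of $V$ itself is inherited from Proposition \ref{prop contraction harmonic}.

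The main obstacle is the meticulous weight bookkeeping in the middle step. Because $Z(V)$ is smooth in $V$ and $S(V,\hat{\nabla}V)$ is smooth in $V$ and polynomial of degree two in $\hat{\nabla}V$, differentiating them produces many cross-products (some involving $\hat{\nabla}^2V$, some involving products $\hat{\nabla}V*\hat{\nabla}V$ with a derivative landing on a smooth coefficient), and every single one must be verified to sit in the target weighted space $\mathcal{C}^{k-2,\gamma}_{1-\beta/2}$ simultaneously. The reason this works out is structural: the equation is quasilinear of the same shape as the Ricci--DeTurck equation already handled in Section 4, so every extra $\hat{\nabla}$ on $V$ or $g$ contributes a weight of at least $1/2-\delta/2$ or $1/2-\beta/2$, and $\delta>\beta$ ensures the total weight of every schematic monomial exceeds the threshold $1-\beta/2$ dictated by Theorem \ref{theorem linear equation} with $\alpha_{\mathrm{Th}}=\beta$. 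Handling $g^{-1}$ terms via the matrix identity, as was done repeatedly in Section 4, then closes the argument.
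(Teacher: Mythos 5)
Your overall strategy coincides with the paper's — differentiate equation (\ref{pullback harmonic map}) once, view $\hat{\nabla}V$ as a solution to a linear parabolic system with $C^{\beta}$ initial data $\hat{\nabla}U$, and invoke Theorem~\ref{theorem linear equation} — but there are two concrete problems in your execution.

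First, you write that $\tilde V := \hat\nabla V$ "satisfies a linear parabolic equation with genuine $C^\beta$ initial data" and then treat the initial condition $\tilde V(\cdot,0)=\hat\nabla U$ as automatic. It isn't. Proposition~\ref{prop contraction harmonic} only gives $V\in\mathcal{X}^{(\delta)}_{k,\gamma}$, and by definition this means $\hat\nabla V\in\mathcal{C}^{k-1,\gamma}_{\frac12-\frac\delta2}(M\times(0,T])$, a \emph{weighted} space that allows $\hat\nabla V$ to blow up at rate $t^{-(\frac12-\frac\delta2)}$ as $t\to0^+$; nothing in the construction forces $\hat\nabla V$ to even remain bounded near $t=0$, let alone converge to $\hat\nabla U$. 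Establishing $\lim_{t\to0^+}\hat\nabla V(x,t)=\hat\nabla U(x)$ is the main technical content of the paper's proof: it requires writing $D_iV^a$ as an integral against the fundamental solution, splitting the fundamental solution into a "frozen-coefficients" piece $\Gamma_0$ and a remainder $\Gamma_1$, and proving each contribution converges (using estimates (11.2)--(11.5) of Ladyzhenskaya--Solonnikov--Ural'tseva). Without this step the identification of $\hat\nabla V$ with the unique solution produced by Theorem~\ref{theorem linear equation} simply does not go through, since uniqueness in that theorem is uniqueness among functions continuous up to $t=0$ attaining the stated initial value.

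Second, your weight bookkeeping for the forcing term is off. The critical monomial is $\hat\nabla g^{-1}*\hat\nabla^2 V$. From $g\in\mathcal{X}_{k,\gamma}^{(\beta)}$ and $V\in\mathcal{X}_{k,\gamma}^{(\delta)}$ one has $\hat\nabla g^{-1}\in\mathcal{C}^{k-2,\gamma}_{\frac12-\frac\beta2}$ and $\hat\nabla^2V\in\mathcal{C}^{k-2,\gamma}_{1-\frac\delta2}$, so by Lemma~\ref{properties of weighted space}(4) the product lies in $\mathcal{C}^{k-2,\gamma}_{\frac32-\frac\beta2-\frac\delta2}$. Since $\delta<1$, this weight is strictly \emph{larger} than $1-\frac\beta2$, so the forcing is not in $\mathcal{C}^{k-2,\gamma}_{1-\frac\beta2}$ as you claim, and Theorem~\ref{theorem linear equation} cannot be applied with $\alpha_{\mathrm{Th}}=\beta$. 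The best admissible choice in a single pass is $\alpha_{\mathrm{Th}}=\delta+\beta-1<\beta$; this is exactly why the paper fixes $\delta=1+\lambda-\beta$ so that the binding weight equals $1-\frac\lambda2$ and Theorem~\ref{theorem linear equation} is applied with $\alpha_{\mathrm{Th}}=\lambda$. Your choice of $\delta$ "arbitrarily close to $1$" does not repair this, because for any fixed $\delta<1$ the gap $\beta-\alpha_{\mathrm{Th}}=1-\delta>0$ persists and the estimate's constants depend on $\delta$. The conclusion $V\in C^{1+\lambda,(1+\lambda)/2}$ with $\lambda<\beta$ is reachable, but $C^{1+\beta,(1+\beta)/2}$ in a single pass is not.
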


\begin{proof}
In the sequel of the proof, $K$ will always denote a constant depending only on $\hat{g}, M,  \|g\|_{\mathcal{X}_{k,\gamma}^{(\beta)}}, \|U\|_{C^{1,\beta}(M)}$.  

Let us take $\delta = 1+\lambda - \beta$ in the definition of $\mathcal{X}_{k,\gamma}^{(\delta)}(M\times [0,T];\hat{\psi}^*(TM))$. Note that $\lambda\in (\gamma,\beta)$ implies $\delta<1$. By Proposition \ref{prop contraction harmonic}, we can find a unique solution $V\in  \mathcal{X}_{k,\gamma}^{(\delta)}(M\times [0,T])$ to the initial value problem (\ref{pullback harmonic map}) if $T$ is chosen sufficiently small. Since $\|\hat{\nabla}U\|_{\lambda;M}\leq K$, we claim that
	\begin{align}{\label{V = U in C^1}}
		\lim_{t\to 0^+}\hat{\nabla}_iV(x,t) = \hat{\nabla}_iU(x).
	\end{align}
For instance, let us consider an arbitrary chart $\varphi: U\to\mathbb{R}^n$, so that on this chart $V = V^a\frac{\partial}{\partial y^a}$ and we abbreviate $V^a(x,t) = V^a(\varphi^{-1}(x), t)$ for $x\in\varphi(U)$. Then $V^a$ solves the equation
	\begin{align*}
	\begin{cases}
		\frac{\partial}{\partial t}V^a - g^{kl}D^2_{kl}V^a =  \tilde{\mathcal{P}}^a(V, \hat{\nabla}V) \quad&\text{on}\quad \varphi(U)\times (0,T]\\
		V^a(x,0) = U^a(x) \quad&\text{on}\quad \varphi(U)
	\end{cases},
	\end{align*}
where $\tilde{\mathcal{P}}^a(V, \hat{\nabla}V) = \mathcal{P}^a(V, \hat{\nabla}V) + (\partial\hat{\Gamma} + \hat{\Gamma}*_g\hat{\Gamma})*_gV + \hat{\Gamma}*_g\hat{\nabla}V$. Since $\|V\|_{\mathcal{X}_{k,\gamma}^{(\delta)}(M\times [0,T])}\leq K$, the estimate (\ref{P estimate}) implies that
\begin{align}{\label{tilde P estimate}}
	\|\tilde{\mathcal{P}}^a(V, \hat{\nabla}V)\|_{\mathcal{C}^{k-1,\gamma}_{\frac{1}{2}-\frac{\lambda}{2}}(U\times (0,T])} \leq K.
\end{align}
Moreover, the uniqueness part of Lemma \ref{lemma second order u} and (\ref{formula of u}) imply that $V^a$ has the form
\begin{align}{\label{DV integral}}
	D_iV^a(x,t)=- \int_0^t\int_{\mathbb{R}^n}D_i\Gamma (x,t;\xi,\tau)\tilde{\mathcal{P}}^a(V, \hat{\nabla}V) (\xi,\tau)d\xi d\tau + \int_{\mathbb{R}^n}D_i\Gamma (x,t;\xi,0)U^a(\xi)d\xi.
\end{align}
Using the estimates of fundamental solution (\ref{fundamental solution estimate}) and (\ref{tilde P estimate}), we have
\begin{align}{\label{tilde P integral estimate}}
&\int_0^t\int_{\mathbb{R}^n}|D_i\Gamma (x,t;\xi,\tau)\tilde{\mathcal{P}}^a (\xi,\tau) | d\xi d\tau\\
&\notag\leq K\int_0^t\int_{\mathbb{R}^n}(t-\tau)^{-\frac{n+1}{2}}\exp\left(-\frac{|x-\xi|^2}{K(t-\tau)}\right)|\tilde{\mathcal{P}}^a (\xi,\tau)|\ d\xi d\tau\\
&\notag \leq K\int_{0}^{t}\int_{\mathbb{R}^n}(t-\tau)^{-\frac{n+1}{2}}\exp\left(-\frac{|x-\xi|^2}{K(t-\tau)}\right)\tau^{-\frac{1}{2}+\frac{\lambda}{2}}\|\tilde{\mathcal{P}}^a\|_{\mathcal{C}^{k-1,\gamma}_{\frac{1}{2}-\frac{\lambda}{2}}(U\times (0,T])}\ d\xi d\tau\\
&\notag \leq K\int_{0}^{t}\int_0^{\infty}(t-\tau)^{-\frac{1}{2}}\rho^{n-1}\exp(-\frac{1}{K}\rho^2)\tau^{-\frac{1}{2}+\frac{\lambda}{2}}\ d\rho d\tau\\
&\notag  \leq Kt^{\frac{\lambda}{2}}.
\end{align}
For the second integral in (\ref{DV integral}), we note that by \cite[(11.13)]{Lad} the fundamental solution $\Gamma(x,t;\xi,0)$ can be written in the form
\[\Gamma(x,t;\xi,0) = \Gamma_0(x-\xi, t; \xi, 0) + \Gamma_1(x,t;\xi,0),\]
where the function $\Gamma_0(x-\xi,t; \xi,0)$ is defined in \cite[(11.2)]{Lad} which is the fundamental solution obtained by freezing the operator $\frac{\partial}{\partial t}-g^{kl}D^2_{kl}$ at the point $(\xi, 0)$. Moreover, $\Gamma_0$ also satisfies the estimates (\ref{fundamental solution estimate}) by \cite[(11.3)]{Lad}. On the other hand, since $g^{kl}\in C^{\beta,\frac{\beta}{2}}(\mathbb{R}^n\times[0,T])$, the minor term $\Gamma_1(x,t;\xi,0)$ satisfies the estimate 
\begin{align}{\label{minor fundamental solution estimate}}
    \Big|D_x\Gamma_1(x,t;\xi, 0)\Big|\leq K t^{-\frac{n+1-\beta}{2}}\exp\left(-\frac{|x-\xi|^2}{Kt}\right)\quad 
\end{align}
by \cite[~P.377]{Lad}. Since $\int_{\mathbb{R}^n}D_z\Gamma_0(z,t;\xi,0)dz = 0$ for any fixed $\xi$ by \cite[(11.5)]{Lad}, we also have $\int_{\mathbb{R}^n}D_x\Gamma_1(x,t;\xi,0)d\xi = 0$. Hence (\ref{minor fundamental solution estimate}) implies
\begin{align}{\label{Gamma_1 estimate}}
 &\left|\int_{\mathbb{R}^n}D_i\Gamma_1 (x,t;\xi,0)U^a(\xi)d\xi\right|	\\
 &\notag= \left|\int_{\mathbb{R}^n}D_i\Gamma_1 (x,t;\xi,0)(U^a(\xi) - U^a(x))d\xi\right|	\\
 &\notag\leq K\int_{\mathbb{R}^n} t^{-\frac{n+1-\beta}{2}}\exp\left(-\frac{|x-\xi|^2}{Kt}\right)\|\hat{\nabla}U\|_{0;M}|x-\xi|\ d\xi\\
 &\notag\leq  K\int_0^{\infty}t^{\frac{\beta}{2}} \rho^{n}\exp(-\frac{1}{K}\rho^2)\ d\rho\\
 &\notag\leq K t^{\frac{\beta}{2}}.
\end{align}
We next write
\begin{align}{\label{Gamma_0 integral}}
	&\int_{\mathbb{R}^n}	D_i\Gamma_0(x-\xi, t; \xi, 0)U^a(\xi) d\xi \\
	&\notag= \int_{\mathbb{R}^n}	D_i\Gamma_0(x-\xi, t; x, 0) U^a(\xi) d\xi\\
	&\notag\quad\quad + \int_{\mathbb{R}^n}	(D_i\Gamma_0(x-\xi, t; \xi, 0) - D_i\Gamma_0(x-\xi, t; x, 0))(U^a(\xi)- U^a(x)) d\xi\\
	&\notag:= I_1 + I_2.
\end{align}
For the second term $I_2$, we apply the estimate \cite[(11.4)]{Lad} to obtain
\begin{align}{\label{second Gamma_0 integral}}
	|I_2|
	& \leq K\int_{\mathbb{R}^n} |x-\xi|^{\beta}t^{-\frac{n+1}{2}}\exp\left(-\frac{|x-\xi|^2}{Kt}\right)|U^a(x) - U^a(\xi)|\ d\xi	\\
	&\notag\leq K\int_{\mathbb{R}^n} |x-\xi|^{1+\beta}t^{-\frac{n+1}{2}}\exp\left(-\frac{|x-\xi|^2}{Kt}\right)\|\hat{\nabla}U\|_{0;M}\ d\xi	\\
	 &\notag\leq  K\int_0^{\infty}t^{\frac{\beta}{2}} \rho^{n}\exp(-\frac{1}{K}\rho^2)\ d\rho\\
	 &\notag\leq K t^{\frac{\beta}{2}}.
\end{align}{\label{Gamma_0 integral 1}}
For the first term $I_1$, by noting that $D_i\Gamma_0(x-\xi, t; x, 0) = -D_{\xi^i}\Gamma_0(x-\xi, t; x, 0)$, we have
\begin{align}
	I_1 = 	\int_{\mathbb{R}^n}	\Gamma_0(x-\xi, t; x, 0)D_iU^a(\xi) d\xi.
\end{align}
Now, by putting (\ref{tilde P integral estimate}) to (\ref{second Gamma_0 integral}) into (\ref{DV integral}), we obtain
\begin{align*}
	\left| D_iV^a(x,t) - \int_{\mathbb{R}^n}\Gamma_0 (x-\xi,t;x,0)D_{i}U^a(\xi)d\xi \right|	\leq Kt^{\frac{\lambda}{2}}.
\end{align*}
Note that $\Gamma_0 (x-\xi,t;x,0)\to\delta(x-\xi)$ as $t\to 0^+$ in the sense of distribution. Hence by taking $t\to 0^+$, we obtain (\ref{V = U in C^1}).

\bigskip 

 Next, we observe that $\hat{\nabla}_iV(x,t)$ solves the system
	\begin{align}{\label{grad V equation}}
	    \begin{cases}
    \left(\frac{\partial}{\partial t} - tr_g\hat{\nabla}^2\right) (\hat{\nabla}_iV) = \tilde{\mathcal{P}}(V, \hat{\nabla}V, \hat{\nabla}^2V) \quad&\text{on}\quad M\times (0,T]\\
     \hat{\nabla}_iV(x,0) = \hat{\nabla}_iU(x)\quad&\text{on}\quad M,
    \end{cases}	
	\end{align}
where 
	$$\tilde{\mathcal{P}}(V, \hat{\nabla}V, \hat{\nabla}^2V) = \hat{\nabla}g^{-1}*\hat{\nabla}^2V + g^{-1}*\hat{R}*\hat{\nabla}V + g^{-1}*\hat{\nabla}\hat{R}*V + \hat{\nabla}_i(\mathcal{P}(V, \hat{\nabla}V))$$
and the initial condition makes sense in view of (\ref{V = U in C^1}). 

\bigskip

Since $\|V\|_{\mathcal{X}_{k,\gamma}^{(\delta)}(M\times [0,T])}\leq K$, the estimate (\ref{P estimate}) implies that
\begin{align}{\label{tilde P estimate 1}}
	\|\hat{\nabla}_i(\mathcal{P}(V, \hat{\nabla}V))\|_{\mathcal{C}^{k-2,\gamma}_{\frac{3}{2}-\frac{\beta}{2}-\frac{\delta}{2}}(M\times (0,T])} \leq K.
\end{align}
It is also easy to see that
\begin{align}{\label{tilde P estimate 2}} 
	\|g^{-1}*\hat{R}*\hat{\nabla}V + g^{-1}*\hat{\nabla}\hat{R}*V\|_{\mathcal{C}^{k-2,\gamma}_{\frac{3}{2}-\frac{\beta}{2}-\frac{\delta}{2}}(M\times (0,T])} \leq K.
\end{align}
Moreover, $\ \|V\|_{\mathcal{X}_{k,\gamma}^{(\delta)}(M\times [0,T])}\leq K$ implies that $\|\hat{\nabla}^2V\|_{\mathcal{C}^{k-2,\gamma}_{1-\frac{\delta}{2}}(M\times (0,T])} \leq K$. This gives   
\begin{align}{\label{tilde P estimate 3}}
	\|	\hat{\nabla}g^{-1}*\hat{\nabla}^2V \|_{\mathcal{C}^{k-2,\gamma}_{\frac{3}{2}-\frac{\beta}{2}-\frac{\delta}{2}}(M\times (0,T])}
	&\leq K\ \|\hat{\nabla} g^{-1}\|_{\mathcal{C}^{k-2,\gamma}_{\frac{1}{2}-\frac{\beta}{2}}(M\times (0,T])}\|\hat{\nabla}^2V\|_{\mathcal{C}^{k-2,\gamma}_{1-\frac{\delta}{2}}(M\times (0,T])} 	\\
	&\notag\leq K'.
\end{align}
As $1-\frac{\lambda}{2} = \frac{3}{2}-\frac{\beta}{2}-\frac{\delta}{2}$ and $\lambda <\beta$, we obtain by putting (\ref{tilde P estimate 1}) to (\ref{tilde P estimate 3}) that
\begin{align*}
	\left\| \tilde{\mathcal{P}}(V, \hat{\nabla}V, \hat{\nabla}^2V)\right\|_{\mathcal{C}^{k-2,\gamma}_{1-\frac{\lambda}{2}}(M\times (0,T])} \leq K.
\end{align*}
Since $\|\hat{\nabla}_iU\|_{\lambda;M}\leq K$, we apply Theorem \ref{theorem linear equation} with $\alpha = \lambda$ to the system (\ref{grad V equation}) to obtain  
\begin{align*}
	\|\hat{\nabla}V\|_{\lambda,\frac{\lambda}{2};M\times [0,T]} + \|\hat{\nabla}^2V\|_{\mathcal{C}^{k-1,\gamma}_{\frac{1}{2}-\frac{\lambda}{2}}(M\times (0,T])} \leq K.	
\end{align*}
From this, the assertion follows.

\end{proof}

\bigskip

Now, associated with the differential $d\phi_t$ of the harmonic map flow $\phi_t$, we define the functions 
	\[\Phi_{\mu}^{ij}: M\times [0,T] \to \mathbb{R}\]
	as in Definition \ref{map Psi}. Then Theorem \ref{thm harmonic map} and Lemma \ref{lem pullback harmonic} imply 
	\begin{align}{\label{Phi estimate}}
	\|\Phi_{\mu}^{ij}\|_{\lambda,\frac{\lambda}{2}; M\times [0,T]} + \|\hat{\nabla}\Phi_{\mu}^{ij}\|_{\mathcal{C}^{k-1,\gamma}_{\frac{1}{2}-\frac{\lambda}{2}}( M\times (0,T])} \leq K (\hat{g}, M,  \|g\|_{\mathcal{X}_{k,\gamma}^{(\beta)}}, \|\psi\|_{C^{1,\beta}(M)})
	\end{align}
for $\lambda\in(\gamma, \beta)$.

\bigskip

Next, we show that a solution to the Ricci flow gives rise to a solution to the Ricci-DeTurck flow via the harmonic map heat flow.

\begin{proposition}{\label{prop pullback Ric DeT}}
Let $\lambda\in(\gamma, \beta)$ be given. Let $\{\phi_t\}_{t\in[0,T]}$ be a one-parameter family of diffeomorphisms which evolves under the harmonic map heat flow (\ref{harmonic map equation}). For each $t\in [0,T]$, we define a metric $\tilde{g}(t)$ by $\tilde{g}(t) := (\phi_t^{-1})^*(g(t))$. Then, $\tilde{g}(t)\in \mathcal{X}_{k,\gamma}^{(\lambda)}(M\times[0,T])$ solves the Ricci-DeTurck system (\ref{Ricci DeT}). Moreover, $\tilde{g}$ satisfies the estimate
 \begin{align}
	\|\tilde{g}\|_{\mathcal{X}_{k,\gamma}^{(\lambda)}(M\times[0,T])} \leq K(\hat{g}, M, \|g\|_{\mathcal{X}_{k,\gamma}^{(\beta)}}, \|\psi\|_{C^{1,\beta}(M)}).
\end{align}
\end{proposition}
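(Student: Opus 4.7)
The plan is to proceed in two stages: first verify that $\tilde g(t)$ satisfies the Ricci-DeTurck equation by a direct computation exploiting the harmonic map heat flow equation for $\phi_t$, and then establish the weighted H\"older bound by means of the pullback formula, combining the regularity $g\in\mathcal{X}_{k,\gamma}^{(\beta)}$ with the regularity of $\phi_t$ encoded in the estimate (\ref{Phi estimate}) for the functions $\Phi_\mu^{ij}$.

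For the PDE verification, I would differentiate the identity $\phi_t^*\tilde g(t)=g(t)$ in time. At each $t>0$ let $X_t$ denote the generator of $\{\phi_t\}$, i.e.\ $X_t(\phi_t(x))=\partial_t\phi_t(x)$. Then
\[
\phi_t^*\bigl(\partial_t\tilde g+\mathcal L_{X_t}\tilde g\bigr)=\partial_t g=-2\,\mathrm{Ric}(g)=-2\,\phi_t^*\mathrm{Ric}(\tilde g),
\]
so that $\partial_t\tilde g=-2\,\mathrm{Ric}(\tilde g)-\mathcal L_{X_t}\tilde g$. It remains to identify $-X_t$ with the DeTurck vector field $W^k=\tilde g^{ij}((\Gamma_{\tilde g})_{ij}^k-\hat\Gamma_{ij}^k)$. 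Since $\partial_t\phi_t=\Delta_{g(t),\hat g}\phi_t$, expanding the tension field in local coordinates on the target and applying the transformation rule for Christoffel symbols under the diffeomorphism $\phi_t$ (which relates $\Gamma_g$ to $\Gamma_{\tilde g}$ and $d\phi_t$), a direct computation gives $X_t=-W$. The initial condition $\tilde g(0)=(\psi^{-1})^*(\psi^*g_0)=g_0$ holds by construction, so $\tilde g$ solves the Ricci-DeTurck system (\ref{Ricci DeT}).

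For the regularity part, I would mimic the argument in the proof of Proposition \ref{prop pullback g}. Define $\tilde\Phi_\mu^{ij}(y,t)$ as in Definition \ref{map Psi} but associated with $d\phi_t^{-1}$ rather than $\psi_t$. The matrix identity $(d\phi_t^{-1})_y=\bigl((d\phi_t)_{\phi_t^{-1}(y)}\bigr)^{-1}$, together with smoothness of inversion on $\mathrm{GL}_n(\mathbb R)$ and the fact that $\phi_t$ is a uniform $C^{1,\lambda}$ diffeomorphism (by Theorem \ref{thm harmonic map}), transfers the bound (\ref{Phi estimate}) on $\Phi_\mu^{ij}$ to
\[
\|\tilde\Phi_\mu^{ij}\|_{\lambda,\frac{\lambda}{2};M\times[0,T]}+\|\hat\nabla\tilde\Phi_\mu^{ij}\|_{\mathcal C^{k-1,\gamma}_{\frac12-\frac{\lambda}{2}}(M\times(0,T])}\leq K.
\]
Writing $\tilde g$ locally in an analog of (\ref{pullback metric relation}), with $\phi_t^{-1}$ in place of $\psi_t$ and $\tilde\Phi$ in place of $\Psi$, and applying the product rules from Lemma \ref{properties of weighted space} together with the hypothesis $g\in\mathcal{X}_{k,\gamma}^{(\beta)}$, yields $\tilde g\in\mathcal{X}_{k,\gamma}^{(\lambda)}(M\times[0,T])$ with the claimed bound. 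The use of the weaker exponent $\lambda<\beta$ is essential here, since composition of $g$ with $\phi_t^{-1}$ incurs a loss in the H\"older exponent that must be absorbed.

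The hard part will be verifying that the weighted parabolic H\"older estimates genuinely transfer from $\phi_t$ to $\phi_t^{-1}$: matrix inversion is smooth on the open set of invertible matrices, and $\phi_t^{-1}$ is a uniform $C^{1,\lambda}$ diffeomorphism by the inverse function theorem, but one must track these nonlinear operations carefully in the scales $\mathcal C^{k,\gamma}_\delta$ to ensure that the singular time weight near $t=0$ behaves correctly. Once that is in hand, the rest reduces to bookkeeping in Lemma \ref{properties of weighted space}, entirely parallel to the proof of Proposition \ref{prop pullback g}.
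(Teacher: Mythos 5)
Your proposal takes essentially the same two-stage structure as the paper (PDE verification followed by weighted regularity bookkeeping), and both stages are sound, but the mechanics differ enough to be worth comparing.

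For the PDE verification, your plan is to expand the tension field in coordinates and use the Christoffel transformation rule under $\phi_t$ to show $X_t=-W$. This does work: substituting the relation $(\Gamma_g)_{ij}^k\partial_k\phi^c = (\Gamma_{\tilde g})^c_{ab}(\phi)\partial_i\phi^a\partial_j\phi^b + \partial_i\partial_j\phi^c$ into the local expression for $\Delta_{g,\hat g}\phi$ makes the second-derivative terms cancel, and then $g^{ij}\partial_i\phi^a\partial_j\phi^b=\tilde g^{ab}(\phi)$ gives exactly $-W\circ\phi_t$. The paper avoids this computation entirely by invoking naturality of the tension field under a target-preserving change of source metric: since $g(t)=\phi_t^*\tilde g(t)$,
\[
\Delta_{g(t),\hat g}\phi_t(p)=\Delta_{\phi_t^*\tilde g(t),\hat g}\phi_t(p)=\Delta_{\tilde g(t),\hat g}\id\big(\phi_t(p)\big)=-W(\phi_t(p)),
\]
so the identity $X_t=-W$ is read off without touching Christoffel symbols. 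Your derivation is more elementary and entirely correct; the paper's is a one-liner worth internalizing, as it is the standard DeTurck/harmonic-map-gauge argument.

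For the regularity estimate, you have put your finger on exactly the point the paper glosses over. The paper states the bound \eqref{Phi estimate} for the functions $\Phi_\mu^{ij}$ built from $d\phi_t$ (not $d\phi_t^{-1}$), and then simply says to "proceed as in Proposition \ref{prop pullback g}" — but that proposition pulls back by $\psi_t$ itself, while here $\tilde g=(\phi_t^{-1})^*g$ requires the Jacobian of the \emph{inverse}. Your plan — defining $\tilde\Phi_\mu^{ij}$ from $d\phi_t^{-1}$, writing $(d\phi_t^{-1})_y=\bigl((d\phi_t)_{\phi_t^{-1}(y)}\bigr)^{-1}$, and invoking smoothness of matrix inversion together with the uniform $C^{1,\lambda}$ bound from Theorem \ref{thm harmonic map} to transfer the weighted estimates — is the right repair. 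One should also record the uniform lower bound on $|\det d\phi_t|$ (coming from $\phi_0=\psi$ being a diffeomorphism plus the uniform $C^1$ control) so that inversion stays in a compact subset of $\mathrm{GL}_n(\mathbb{R})$, and that precomposition with $\phi_t^{-1}$ (a uniform bi-Lipschitz $C^{1,\lambda}$ family) preserves the scales $\mathcal{C}^{k,\gamma}_\delta$ up to constants. Once those two facts are in hand, the rest is indeed the bookkeeping from Lemma \ref{properties of weighted space} and formula \eqref{pullback metric relation}, with $\lambda<\beta$ absorbing the loss under composition, just as you describe.
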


\begin{proof}
Since 
\begin{align*}
    \Delta_{g(t),\hat{g}}\phi_t(p) = \Delta_{(\phi_t)^*\tilde{g}(t),\ \hat{g}} \phi_t(p) = \Delta_{\tilde{g}(t),\ \hat{g}} \id(\phi_t(p)) =  -W(\phi_t(p))
\end{align*}
for all $(p,t)\in M\times(0,T]$, where $W^k =\tilde{g}^{ij}(\Gamma_{ij}^k(\tilde{g})-\Gamma_{ij}^k(\hat{g}))$, we have
	\begin{align*}
	\frac{\partial}{\partial t}(\tilde{\phi_t})^*\tilde{g}(t)\Big|_p &= (\phi_t)^*	\Big(\frac{\partial}{\partial t}\tilde{g}(t)\Big|_p + \frac{d}{ds}\Big|_{s=0}(\varphi_s)^*\tilde{g}(t)\Big|_p \Big)\\
	&= (\phi_t)^*\Big(\frac{\partial}{\partial t}\tilde{g}(t) - \mathcal{L}_W\tilde{g}(t)\Big)\Big|_p.
	\end{align*}
	But since $\frac{\partial}{\partial t}g(t) = -2Ric(g(t))$, by the diffeomorphism invariance of Ricci curvature we obtain
	\begin{align*}
	\frac{\partial}{\partial t}\tilde{g}(t) - \mathcal{L}_W\tilde{g}(t) = -2Ric(\tilde{g}(t)).	
	\end{align*}
	Hence $\tilde{g}$ satisfies the Ricci-DeTurck system with initial condition $\tilde{g}(0) = g_0$. Lastly, since Theorem \ref{thm harmonic map} and Lemma \ref{lem pullback harmonic} imply 
	\[
	\|\Phi_{\mu}^{ij}\|_{\lambda,\frac{\lambda}{2}; M\times [0,T]} + \|\hat{\nabla}\Phi_{\mu}^{ij}\|_{\mathcal{C}^{k-1,\gamma}_{\frac{1}{2}-\frac{\lambda}{2}}( M\times (0,T])} \leq K (\hat{g}, M,  \|g\|_{\mathcal{X}_{k,\gamma}^{(\beta)}}, \|\psi\|_{C^{1,\beta}(M)}),
	\]
 we have $\tilde{g}\in C^{\lambda,\frac{\lambda}{2}}$, and we can then proceed as in Proposition \ref{prop pullback g} to obtain that $\tilde{g}\in \mathcal{X}_{k,\gamma}^{(\lambda)}(M\times [0,T])$ and the desired estimates.
\end{proof}

\newpage
\section{Proof of the Main Theorems}

\bigskip

We first prove  the existence of a canonical solution to Ricci flow on the doubled manifold with $C^{\alpha}$ initial metric. The following result implies Main Theorem 1 and Main Theorem 2.

\begin{theorem}
	Let $\tilde{M}$ be a closed compact smooth manifold and $\tilde{g}_0\in C^{\alpha}(\tilde{M})$ be a Riemannian metric for some $\alpha\in (0,1)$. Let $k\geq 2$, $\gamma\in (0,\alpha)$ and $\beta\in (\gamma,\alpha)$ be given. Then there exists a $C^{1,\beta}$ diffeomorphism $\psi$ and $T = T(\tilde{M}, \hat{g}, \|\tilde{g}_0\|_{\alpha})$, $K = K(\tilde{M}, k, \hat{g},  \|\tilde{g}_0\|_{\alpha})$ such that the following holds:\\
	
	There is a solution $g(t)\in\mathcal{X}_{k,\gamma}^{(\beta)}(\tilde{M}\times[0,T])$ to the Ricci flow
	\begin{align*}
		\frac{\partial}{\partial t}g(t)=-2Ric(g(t))\quad&\text{on}\quad \tilde{M}\times(0,T]
	\end{align*}
	such that $g(0) = \psi^*\tilde{g}_0$ and
	 $$\|g\|_{\mathcal{X}_{k,\gamma}^{(\beta)}(\tilde{M}\times[0,T])}\leq K.$$	

Moreover, the solution is canonical in the sense that if $\bar{g}(t)\in\mathcal{X}_{k,\gamma}^{(\beta)}(\tilde{M}\times[0,T])$ is another solution to the Ricci flow and $\bar{\psi}\in C^{1,\beta}(\tilde{M})$ is a diffeomorphism such that $\bar{g}(0) = \bar{\psi}^*\tilde{g}_0$, then there is a $C^{k+1}$ diffeomorphism $\phi$ such that $\bar{g}(t) = \phi^*g(t)$ which satisfies $\bar{\psi} = \psi\circ\phi$.
\end{theorem}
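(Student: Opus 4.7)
I would first apply Corollary~\ref{thm existence high Ricci DeT} with regularity index $k+3$ to obtain a unique solution $h(t)\in\mathcal{X}_{k+3,\gamma}^{(\alpha)}(\tilde M\times[0,T])$ of the Ricci-DeTurck flow starting from $h(0)=\tilde g_0$, with $T=T(\tilde M,\hat g,\|\tilde g_0\|_\alpha)$ and the corresponding norm bound. Proposition~\ref{prop pullback g} then integrates the DeTurck vector field $W$ backwards from $t=T$ to produce a one-parameter family of diffeomorphisms $\{\psi_t\}$ that extends to a $C^{1,\beta}$ map $\psi:=\psi_0$, and the pullback $g(t):=\psi_t^*h(t)$ lies in $\mathcal{X}_{k,\gamma}^{(\beta)}$ and solves the Ricci flow with $g(0)=\psi^*\tilde g_0$, yielding the quantitative conclusions of the existence half.

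\textbf{Canonicity via two Ricci-DeTurck solutions.} Suppose $(\bar g,\bar\psi)$ is any second pair meeting the hypotheses. I would run the harmonic map heat flow from Theorem~\ref{thm harmonic map} twice: once coupled to $g$ with initial data $\psi$ (producing diffeomorphisms $\phi_t$) and once coupled to $\bar g$ with initial data $\bar\psi$ (producing $\bar\phi_t$). By Proposition~\ref{prop pullback Ric DeT}, both $\tilde g(t):=(\phi_t^{-1})^*g(t)$ and $\tilde{\bar g}(t):=(\bar\phi_t^{-1})^*\bar g(t)$ solve the Ricci-DeTurck system with common initial datum $\tilde g_0$, and each lies in $\mathcal{X}_{k,\gamma}^{(\lambda)}$ for some $\lambda\in(\gamma,\beta)$. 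The uniqueness half of Corollary~\ref{thm existence high Ricci DeT} (after possibly shrinking $T$ and then extending by standard parabolic uniqueness on the smooth portion) forces $\tilde g\equiv\tilde{\bar g}$, so
\[
\bar g(t)=\varphi_t^*g(t),\qquad \varphi_t:=\phi_t^{-1}\circ\bar\phi_t,\qquad \varphi_0=\psi^{-1}\circ\bar\psi.
\]

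\textbf{Time-independence of $\varphi_t$ and regularity.} The final step is to show $\varphi_t\equiv\varphi_0$; then $\phi:=\varphi_0$ does the job. Writing $\bar\phi_t=\phi_t\circ\varphi_t$ and using the naturality $\Delta_{\eta^*h,\hat g}(f\circ\eta)=(\Delta_{h,\hat g}f)\circ\eta$ of the harmonic map Laplacian with $\eta=\varphi_t$, $h=g(t)$, $f=\phi_t$, one obtains $\Delta_{\bar g,\hat g}\bar\phi_t=(\Delta_{g,\hat g}\phi_t)\circ\varphi_t$. Differentiating $\bar\phi_t=\phi_t\circ\varphi_t$ in $t$ and invoking the harmonic map heat flow equations satisfied by both $\phi_t$ and $\bar\phi_t$, the $\Delta$-terms cancel and leave $(d\phi_t|_{\varphi_t})(\partial_t\varphi_t)=0$; since $\phi_t$ is a diffeomorphism for each $t\in(0,T]$, this forces $\partial_t\varphi_t\equiv 0$, hence $\varphi_t=\varphi_0$. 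The $C^{k+1}$ regularity of $\phi=\varphi_0$ is transferred from the fact that, at any fixed $t>0$, both $\phi_t$ and $\bar\phi_t$ are $C^{k+1}$ by Theorem~\ref{thm harmonic map}. The principal obstacle is executing this naturality-and-cancellation step at the precise level of regularity available: for $t>0$ the flows are smooth enough for the chain rule and the naturality identity to hold classically, but one must then pass to $t\to 0^+$ using continuity of $\varphi_t$ in the $C^{1,\beta}$ topology to identify the resulting constant diffeomorphism with its initial value $\psi^{-1}\circ\bar\psi$.
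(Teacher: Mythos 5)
Your proposal is correct and mirrors the paper's overall strategy: produce the canonical solution via Ricci--DeTurck existence followed by pullback through the DeTurck flow, and obtain canonicity by applying the harmonic map heat flow to both solutions, invoking Ricci--DeTurck uniqueness to identify the two resulting DeTurck solutions, and then showing $\varphi_t=\phi_t^{-1}\circ\bar\phi_t$ is $t$-independent. The one place where you diverge from the paper is the final $t$-independence step. You time-differentiate $\bar\phi_t=\phi_t\circ\varphi_t$ directly and use naturality $\Delta_{\varphi_t^*g,\hat g}(\phi_t\circ\varphi_t)=(\Delta_{g,\hat g}\phi_t)\circ\varphi_t$ to cancel the tension fields and extract $d\phi_t(\partial_t\varphi_t)=0$. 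The paper instead uses the \emph{same} naturality identity, but with $\eta=\phi_t$, $f=\id$, to rewrite the harmonic map heat flow for $\phi_t$ as an ODE for $\phi_t\circ\psi^{-1}$ driven by $\Delta_{h(t),\hat g}\id$ (the negative of the DeTurck vector field of $h(t)$); since $h=\bar h$, the maps $\phi_t\circ\psi^{-1}$ and $\bar\phi_t\circ\bar\psi^{-1}$ solve the identical ODE with the identical initial condition $\id$ and are therefore equal. The paper's ODE-uniqueness formulation buys a small technical advantage: it avoids having to justify time-differentiability of the composite $\varphi_t$ and then pass that derivative identity down to $t=0^+$, which in your version is a genuine point of care that you correctly flag as needing the $C^{1,\beta}$-continuity of $\varphi_t$ at $t=0$. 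Both routes are sound.
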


\bigskip

\begin{proof}
By Proposition \ref{prop pullback g}, we can find  $T=T(\tilde{M}, \hat{g}, \|\tilde{g}_0\|_{\alpha})>0$ sufficiently small and $K = K(\tilde{M}, k, \hat{g},  \|\tilde{g}_0\|_{\alpha})$ and a $C^{1,\beta}$ diffeomorphism $\psi$ such that there is a solution $g(t)$ to the Ricci flow satisfying the estimate 
\begin{align*}
	\|g\|_{\mathcal{X}_{k,\gamma}^{(\beta)}}\leq K.
\end{align*}
and $g(0) = \psi^*\tilde{g}_0$. \\

Now, suppose that $\bar{g}(t)\in \mathcal{X}_{k,\gamma}^{(\beta)}(\tilde{M}\times[0,T])$ is another solution to the Ricci flow and $\bar{\psi}\in C^{1+\beta}(\tilde{M})$ is a diffeomorphism such that $\bar{g}(0) = \bar{\psi}^*\tilde{g}_0$. Let $\phi_t$ denote the solution to the harmonic map heat flow starting from $\phi_0 = \psi$ with respect to $g(t)$ and $\hat{g}$. Let $\bar{\phi}_t$ denote the solution to the harmonic map heat flow starting from $\bar{\phi}_0 = \psi$ with respect to $\bar{g}(t)$ and $\hat{g}$. By Proposition \ref{prop contraction harmonic} these solutions exist and are unique. Subsequently Theorem \ref{thm harmonic map} and Proposition \ref{prop pullback Ric DeT} imply that
\begin{align*}
	h(t) := (\phi_t^{-1})^*g(t) \quad\text{and}\quad \bar{h}(t) := (\bar{\phi}_t^{-1})^*\bar{g}(t)
\end{align*}
are both solutions to the Ricci-DeTurck flow (\ref{Ricci DeT}) such that $h, \bar{h}\in\mathcal{X}_{k,\gamma}^{(\lambda)}(\tilde{M}\times[0,T])$ for some $\lambda\in (\gamma, \beta)$ and $h(0) = \bar{h}(0) = \tilde{g}_0$. Since the solution to the Ricci-DeTurck flow in $\mathcal{X}_{k,\gamma}^{(\lambda)}$ are unique by Theorem \ref{thm existence Ricci DeT}, we have $h(t) = \bar{h}(t)$ on $M\times [0,T]$. Now observe that \begin{align*}
	\Delta_{g(t),\hat{g}} \phi_t \Big|_{\psi^{-1}(p)} = 	\Delta_{h(t),\hat{g}}\id \Big|_{\phi_t\circ\psi^{-1}(p)},
\end{align*}
hence $\phi_t\circ\psi^{-1}$ is a solution to the ODE
\begin{align}
\begin{cases}
	&\frac{\partial}{\partial t}\phi_t\circ\psi^{-1}(p) = 	\Delta_{h(t),\hat{g}}\id\big|_{\phi_t\circ\psi^{-1}(p)}\\
	&\varphi_0\circ\psi^{-1} = \id.
\end{cases}
\end{align}
Analogously, $\bar{\varphi}_t\circ\bar{\psi}^{-1}$ satisfies
\begin{align}
\begin{cases}
	&\frac{\partial}{\partial t}\bar{\varphi}_t\circ\bar{\psi}^{-1}(p) = 	\Delta_{\bar{h}(t),\hat{g}}\id\big|_{\bar{\varphi}_t\circ\bar{\psi}^{-1}(p)}\\
	&\bar{\varphi}_0\circ\bar{\psi}^{-1} = \id.
\end{cases}
\end{align}
Since we know that $h(t)=\bar{h}(t)$, it follows that $\phi_t\circ\psi^{-1}$ and $\bar{\phi}_t\circ\bar{\psi}^{-1}$ satisfy the same ODE with the same initial condition. Consequently $\phi_t\circ\psi^{-1} = \bar{\phi}_t\circ\bar{\psi}^{-1}$ on $M\times [0,T]$. In other words, $\phi_t^{-1}\circ\bar{\phi}_t = \psi^{-1}\circ\bar{\psi}$ is constant in $t$. Let us take the desired diffeomorphism $\phi$ to be $\phi := \phi_t^{-1}\circ\bar{\phi}_T$. Then from Theorem \ref{thm harmonic map} we know that $\phi$ is a $C^{k+1}$ diffeomorphism satisfying $\bar{\psi} = \psi\circ\phi$ and 
 \begin{align*}
 	\bar{g}(t) = (\phi_t^{-1}\circ\bar{\phi}_t)^*g(t) = \phi^*g(t).	
 \end{align*}	
\end{proof}

The previous theorem implies Main Theorem 3 and Main Theorem 4 via doubling:
\bigskip

\begin{theorem}[Main Theorem 3]
	Let $(M,g_0)$ be a compact smooth Riemannian manifold with boundary. Let $k\geq 2$, $\beta\in (0,1)$, $\gamma\in (0,\beta)$ and $\epsilon\in (0, 1-\beta)$ be given. Then there exists a $C^{1,\beta}$ diffeomorphism $\psi$ and $T = T(M, \hat{g}, \|g_0\|_{\beta+\varepsilon})$, $K = K(M, k, \hat{g},  \|g_0\|_{\beta+\varepsilon})$ such that the following holds:\\
	
	There is a solution $g(t)\in\mathcal{X}_{k,\gamma}^{(\beta)}(M\times[0,T])$ to the Ricci flow on manifold with boundary
\begin{align}{\label{Ricci flow boundary equation}}
    \begin{cases}
    \frac{\partial}{\partial t}g(t)=-2Ric(g(t))\quad&\text{on}\quad M\times(0,T]\\
    A_{g(t)}=0\quad\quad\quad\quad\quad &\text{on}\quad \partial M\times (0, T]  
    \end{cases}
\end{align}
	such that $g(0) = \psi^*g_0$ and
	 $$\|g\|_{\mathcal{X}_{k,\gamma}^{(\beta)}(M\times[0,T])}\leq K.$$	
For each $t>0$, the metric $g(t)$ extends smoothly to the doubled manifold $\tilde{M}$ of $M$, and the doubled metric lies in $\mathcal{X}_{k,\gamma}^{(\beta)}(\tilde{M}\times[0,T])$. The diffeomorphism $\psi$ also extends to a $C^{1,\beta}$ diffeomorphism on the doubled manifold. 
\end{theorem}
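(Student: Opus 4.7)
The plan is to reduce Main Theorem 3 to the already-established closed-manifold case (Main Theorem 1) via doubling, and then exploit the $\mathbb{Z}_2$-reflection symmetry of the doubled data to recover the desired boundary condition $A_{g(t)} = 0$. Let $\tilde{M} = M_1 \cup_{\partial M} M_2$ be the doubled manifold, and let $\sigma : \tilde{M} \to \tilde{M}$ denote the involution that interchanges the two copies and fixes $\partial M$ pointwise. I extend $g_0$ to a metric $\tilde{g}_0$ on $\tilde{M}$ by reflection through $\partial M$; the resulting metric is Lipschitz, so in particular $\tilde{g}_0 \in C^{\beta+\varepsilon}(\tilde{M})$, and it is $\sigma$-invariant. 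I also choose the smooth background metric $\hat{g}$ to be a product in a collar of $\partial M$, which makes $\hat{g}$ a $\sigma$-invariant Riemannian metric on all of $\tilde{M}$.

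Applying Main Theorem 1 on $(\tilde{M}, \tilde{g}_0)$ with H\"older exponent $\alpha = \beta + \varepsilon$ produces a solution $\tilde{g}(t) \in \mathcal{X}_{k,\gamma}^{(\beta)}(\tilde{M} \times [0,T])$ to the Ricci flow and a $C^{1,\beta}$ diffeomorphism $\tilde{\psi}$ of $\tilde{M}$ with $\tilde{g}(0) = \tilde{\psi}^* \tilde{g}_0$ and the stated quantitative bounds. Recall that the construction in Proposition \ref{prop pullback g} proceeds via (i) solving the Ricci-DeTurck flow to obtain the unique $g_{\mathrm{RD}}(t) \in \mathcal{X}_{k+3,\gamma}^{(\alpha)}$ with initial data $\tilde{g}_0$; (ii) integrating the ODE $\partial_t \tilde{\psi}_t = -W(\tilde{\psi}_t, t)$ backward from $\tilde{\psi}_T = \id$ for the DeTurck vector field $W^k = g_{\mathrm{RD}}^{ij}((\Gamma_{g_{\mathrm{RD}}})^k_{ij} - \hat{\Gamma}^k_{ij})$; and (iii) setting $\tilde{g}(t) = \tilde{\psi}_t^* g_{\mathrm{RD}}(t)$. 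The essential point is that each stage respects the $\sigma$-symmetry: since both $\hat{g}$ and $\tilde{g}_0$ are $\sigma$-invariant, the pullback $\sigma^* g_{\mathrm{RD}}(t)$ is again a Ricci-DeTurck solution in $\mathcal{X}_{k+3,\gamma}^{(\alpha)}$ with initial data $\tilde{g}_0$, so uniqueness from Theorem \ref{thm existence Ricci DeT} forces $\sigma^* g_{\mathrm{RD}}(t) = g_{\mathrm{RD}}(t)$. Then $W$ is a $\sigma$-invariant vector field, so uniqueness of ODE solutions gives $\sigma \circ \tilde{\psi}_t = \tilde{\psi}_t \circ \sigma$, and hence $\tilde{g}(t) = \tilde{\psi}_t^* g_{\mathrm{RD}}(t)$ is $\sigma$-invariant as well. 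Since $\partial M$ is the fixed point set of the isometry $\sigma$ of $(\tilde{M}, \tilde{g}(t))$, it must be totally geodesic, i.e., $A_{g(t)} = 0$ along $\partial M \times (0, T]$ for $g(t) := \tilde{g}(t)|_M$.

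It remains to produce $\psi$ on $M$ proper. By shrinking $T$ if necessary and using the bound $|W|_{0; M \times [t,T]} \leq K\, t^{\alpha/2 - 1/2}$, which is integrable in $t$ down to $0$, I can ensure that $\tilde{\psi}_0$ is $C^0$-close enough to the identity to preserve each half $M_i \subset \tilde{M}$; combined with $\sigma \circ \tilde{\psi}_0 = \tilde{\psi}_0 \circ \sigma$, this forces $\tilde{\psi}_0(\partial M) = \partial M$ and $\tilde{\psi}_0(M) = M$, so $\psi := \tilde{\psi}_0|_M$ is the required $C^{1,\beta}$ diffeomorphism, and it extends to $\tilde{\psi}_0$ on all of $\tilde{M}$ as demanded by the theorem. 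The expected main obstacle is the rigorous propagation of $\sigma$-symmetry through the Ricci-DeTurck step: one must verify that $\sigma^* g_{\mathrm{RD}}$ lies in the same weighted Banach space $\mathcal{X}_{k+3,\gamma}^{(\alpha)}$ so that the uniqueness statement of Theorem \ref{thm existence Ricci DeT} actually applies to compare $g_{\mathrm{RD}}$ and $\sigma^* g_{\mathrm{RD}}$. This is exactly what the prior choice of a $\sigma$-invariant background metric $\hat{g}$ buys, since the norms defining $\mathcal{X}_{k+3,\gamma}^{(\alpha)}$ are constructed from $\hat{g}$ and $\hat{\nabla}$, and $\sigma$ is then a smooth isometry of all of the auxiliary structure used in the existence proof.
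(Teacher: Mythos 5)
Your proposal is correct and follows the same doubling-plus-$\mathbb{Z}_2$-symmetry strategy as the paper's own proof, which likewise extends $g_0$ by reflection to $\tilde g_0$ on the doubled manifold, invokes Proposition~\ref{prop pullback g} (i.e.\ Main Theorem~1) with a $\sigma$-invariant background metric $\hat g$, and then restricts the $\mathbb{Z}_2$-invariant solution to $M$. You in fact supply slightly more detail than the paper at two points --- you spell out how $\sigma$-equivariance propagates stepwise from the uniqueness of the Ricci--DeTurck solution, through the DeTurck vector field $W$, to the ODE for $\tilde\psi_t$, and you give a reason (integrability of $t^{\alpha/2-1/2}$ plus $\sigma$-equivariance) why $\tilde\psi_0$ preserves the half $M\subset\tilde M$, a fact the paper simply asserts when writing $\psi=\tilde\psi|_M:M\to M$.
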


\bigskip

\begin{proof}
Let $\tilde{M}$ be the doubling of $M$, and $\tilde{g}_0$ be the extension of $g_0$ in $\tilde{M}$ via reflection about the boundary $\partial M$. Since $\tilde{g}_0$ is Lipschitz, we can find $\beta+\varepsilon\in (\beta,1)$ such that $\tilde{g}_0\in C^{\beta+\varepsilon}(M)$. Fix a smooth background metric $\hat{g}$ on $M$ such that in a small collar neighborhood of $\partial M$ the metric $\hat{g}$ is isometric to a product $\partial M\times [0,\varepsilon]$.

By Proposition \ref{prop pullback g}, we can find  $T=T(\tilde{M}, \hat{g}, \|\tilde{g}_0\|_{\beta+\varepsilon})>0$ sufficiently small and $K = K(\tilde{M}, k, \hat{g},  \|\tilde{g}_0\|_{\beta+\varepsilon})$ and a $C^{1,\beta}$ diffeomorphism $\tilde{\psi}$ such that there is a solution $\tilde{g}(t)$ to the Ricci flow on $\tilde{M}\times (0,T]$ satisfying the estimate 
\begin{align*}
	\|\tilde{g}\|_{\mathcal{X}_{k,\gamma}^{(\beta)}}\leq K.
\end{align*}
and $\tilde{g}(0) = \tilde{\psi}^*\tilde{g}_0$.

Note that by uniqueness and diffeomorphism invariance the solution to the Ricci-DeTurck flow on the doubled manifold $\tilde{M}$ with initial metric $\tilde{g}_0$ is invariant under the natural natural $\mathbb{Z}_2$ action given by the reflection about $\partial M$ which switches the two halves of $\tilde{M}$. Thus the diffeomorphism $\tilde{\psi}$ we obtained by solving the ODE (\ref{contraction w 4}) is equivariant under this $\mathbb{Z}_2$ action. This implies that the solution $\tilde{g}(t)$ to the Ricci flow is also invariant under the $\mathbb{Z}_2$ action. In particular, it follows from the $\mathbb{Z}_2$ symmetry that $g = \tilde{g}|_{M}\in \mathcal{X}_{k,\gamma}^{(\beta)}(M\times[0,T])$ has totally geodesic boundary on $M$, and thus a solution to (\ref{Ricci flow boundary equation}). Moreover, $\psi = \tilde{\psi}|_{M}:M\to M$ is a $C^{1,\beta}$ diffeomorphism such that $g(0) = \psi^*g_0$. 
\end{proof}

\bigskip

\begin{theorem}[Main Theorem 4]
	Let $(M,g_0)$ be a compact smooth Riemannian manifold with boundary. Suppose that the pairs $(g^1(t), \psi^1)$ and $(g^2(t), \psi^2)$ satisfy the conclusion of Theorem 6.2. Then there exists a $C^{k+1}$ diffeomorphism $\varphi:M\to M$ such that $\varphi$ extends to a $C^{k+1}$ diffeomorphism on the doubled manifold and
 	 $$g^2(t) = \varphi^*(g^1(t)),$$
 	 where $\psi^2 = \psi^1\circ\varphi$. In particular, $((\psi^1)^{-1})^*g^1(t) = ((\psi^2)^{-1})^*g^2(t)$.
\end{theorem}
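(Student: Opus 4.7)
The plan is to reduce Main Theorem 4 to the closed-manifold uniqueness statement (Theorem 6.1) by extending all data to the doubled manifold $\tilde{M}$ and verifying that the resulting diffeomorphism respects the natural $\mathbb{Z}_2$-symmetry. First I would extend the pairs $(g^i(t),\psi^i)$ to pairs $(\tilde{g}^i(t),\tilde{\psi}^i)$ on $\tilde{M}$: Main Theorem 3 already provides extensions $\tilde{g}^i(t)\in\mathcal{X}_{k,\gamma}^{(\beta)}(\tilde{M}\times[0,T])$ and $\tilde{\psi}^i\in C^{1,\beta}(\tilde{M};\tilde{M})$. Letting $\iota:\tilde{M}\to\tilde{M}$ be the natural involution exchanging the two copies of $M$ and fixing $\partial M$, each $\tilde{g}^i(t)$ is $\iota$-invariant and each $\tilde{\psi}^i$ is $\iota$-equivariant, because the Ricci-DeTurck solution on $\tilde{M}$ with $\iota$-invariant initial metric $\tilde{g}_0$ is itself $\iota$-invariant by uniqueness (Theorem \ref{thm existence Ricci DeT}), and the DeTurck ODE (\ref{DeT ODE}) used in Proposition \ref{prop pullback g} is $\iota$-invariant.

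Second, Theorem 6.1 applied to $\tilde{M}$ yields a $C^{k+1}$ diffeomorphism $\tilde{\varphi}:\tilde{M}\to\tilde{M}$ with $\tilde{g}^2(t) = \tilde{\varphi}^*\tilde{g}^1(t)$ and $\tilde{\psi}^2 = \tilde{\psi}^1\circ\tilde{\varphi}$. Recalling the construction from the proof of Theorem 6.1, one has $\tilde{\varphi} = \phi_T^{-1}\circ\bar{\phi}_T$, where $\phi_t$ solves the harmonic map heat flow (\ref{harmonic map equation}) with data $(\tilde{g}^1(t),\hat{g},\tilde{\psi}^1)$ and $\bar{\phi}_t$ with data $(\tilde{g}^2(t),\hat{g},\tilde{\psi}^2)$. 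Since $\iota$ is an isometry of both $\hat{g}$ and $\tilde{g}^i(t)$, naturality of the tension field implies that $\iota\circ\phi_t\circ\iota$ is a solution of (\ref{harmonic map equation}) with the same initial condition $\iota\circ\tilde{\psi}^1\circ\iota = \tilde{\psi}^1$; the uniqueness of solutions to (\ref{pullback harmonic map}) established via Proposition \ref{prop contraction harmonic} then forces $\iota\circ\phi_t\circ\iota = \phi_t$, and similarly $\iota\circ\bar{\phi}_t\circ\iota = \bar{\phi}_t$. Consequently $\iota\circ\tilde{\varphi}\circ\iota = \tilde{\varphi}$.

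Third, because $\tilde{\varphi}$ commutes with $\iota$, it preserves $\partial M = \mathrm{Fix}(\iota)$. Each $\tilde{\psi}^i$ maps the copy $M_1\subset\tilde{M}$ to itself (being the extension of a self-map $\psi^i:M\to M$), and since $\phi_t,\bar{\phi}_t$ depend continuously on $t$ starting from these initial data they also send $M_1$ to $M_1$ for small $t$; hence $\tilde{\varphi}$ preserves $M_1$. The restriction $\varphi := \tilde{\varphi}|_{M}:M\to M$ is therefore a $C^{k+1}$ diffeomorphism extending to the $C^{k+1}$ diffeomorphism $\tilde{\varphi}$ on $\tilde{M}$ and satisfying $g^2(t) = \varphi^*g^1(t)$ and $\psi^2 = \psi^1\circ\varphi$, which together yield $((\psi^1)^{-1})^*g^1(t) = ((\psi^2)^{-1})^*g^2(t)$.

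The main obstacle will be the $\iota$-equivariance argument of the second step. One must verify rigorously both that the extended diffeomorphism $\tilde{\psi}^i$ produced by Proposition \ref{prop pullback g} really is $\iota$-equivariant (tracing the $\iota$-invariance through the Ricci-DeTurck solution, the vector field $W$, and the ODE (\ref{DeT ODE})), and that the reformulation of (\ref{harmonic map equation}) as a PDE on $\hat{\psi}^*TM$ given by Lemma \ref{lem pullback harmonic} is compatible with conjugation by $\iota$, so that the uniqueness conclusion of Proposition \ref{prop contraction harmonic} genuinely applies to the conjugated flow $\iota\circ\phi_t\circ\iota$.
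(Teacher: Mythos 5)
Your proposal is correct and follows the same route as the paper: extend to the double, invoke Theorem 6.1, and use the $\mathbb{Z}_2$-equivariance of the harmonic-map-heat-flow construction to see that the resulting diffeomorphism commutes with the reflection $\iota$ and therefore restricts to $M$. The paper dispatches the entire argument in two sentences, while you usefully unpack the equivariance step — showing $\iota\circ\phi_t\circ\iota$ solves the same initial-value problem and invoking the uniqueness from Proposition \ref{prop contraction harmonic}, then deducing that $\tilde{\varphi}$ preserves $\partial M = \mathrm{Fix}(\iota)$ and the component $M_1$ by connectedness of $[0,T]$. That level of detail is more than the paper provides but is exactly the content the paper's phrase ``the whole construction of the harmonic map heat flow is invariant under the natural $\mathbb{Z}_2$ action'' is shorthand for.
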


\begin{proof}
	By assumptions, the pairs $(g^1(t), \psi^1)$ and $(g^2(t), \psi^2)$ can be extended to the doubled manifold so that Theorem 6.1 can be applied. Since the whole construction of the harmonic map heat flow is invariant under the natural $\mathbb{Z}_2$ action given by reflection, the assertion thus follows from restricting the conclusion of Theorem 6.1 to $M$.
\end{proof}

\bigskip

We can show that various important curvature conditions are preserved along the Ricci flow on manifolds with boundary in our formulation. We first prove a compactness result.
\bigskip
\begin{lemma}{\label{lem precompact}}
Let $\alpha,\beta\in (0,1)$ be given such that $\beta<\alpha$. Moreover, let $\gamma\in (0,\alpha)$ and $\delta\in (0,\beta)$ be given such that $\delta<\gamma$. Then the bounded subsets in the space $\mathcal{C}_{\frac{1}{2}-\frac{\alpha}{2}}^{k,\gamma}(M\times (0,T])$ are precompact in the space  $\mathcal{C}_{\frac{1}{2}-\frac{\beta}{2}}^{k,\delta}(M\times (0,T])$. 
\end{lemma}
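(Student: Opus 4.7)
The plan is to combine a diagonal Arzelà--Ascoli extraction on the compact exhaustion $M\times[1/j,T]$ with an interpolation estimate that makes the weighted norm near $t=0$ automatically small, using the positive gap $\alpha-\beta>0$.

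First, for any fixed $\sigma_0\in(0,T]$, a sequence bounded in $\mathcal{C}^{k,\gamma}_{\frac{1}{2}-\frac{\alpha}{2}}(M\times(0,T])$ is bounded in $C^{k,\gamma,\gamma/2}(M\times[\sigma_0/2,T])$: the weights $\sigma^{\frac{1}{2}-\frac{\alpha}{2}+\frac{i}{2}}$ and $\sigma^{\frac{1}{2}-\frac{\alpha}{2}+\frac{\gamma}{2}+\frac{i}{2}}$ are bounded below on $[\sigma_0/2,T]$, so all derivatives up to order $k$ are uniformly H\"older-$(\gamma,\gamma/2)$ there. Since $\delta<\gamma$, the embedding $C^{k,\gamma,\gamma/2}\hookrightarrow C^{k,\delta,\delta/2}$ is compact on compact parabolic cylinders, and we can pass to a subsequence converging in $C^{k,\delta,\delta/2}(M\times[\sigma_0/2,T])$. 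Applying this successively with $\sigma_0=1/j$ and taking a Cantor diagonal subsequence $\{\eta_{n_j}\}$, we obtain $C^{k,\delta,\delta/2}$-convergence on every $M\times[1/j,T]$ to a pointwise limit $\eta$ on $M\times(0,T]$, which by lower semicontinuity lies in $\mathcal{C}^{k,\gamma}_{\frac{1}{2}-\frac{\alpha}{2}}$.

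Second, writing $\zeta_j:=\eta_{n_j}-\eta$, I would split the weighted $\mathcal{C}^{k,\delta}_{\frac{1}{2}-\frac{\beta}{2}}$ norm at a threshold $\sigma_*\in(0,T]$. For $\sigma\in[\sigma_*,T]$ the $C^{k,\delta,\delta/2}$-convergence on $M\times[\sigma_*/2,T]$ sends this contribution to $0$ as $j\to\infty$. For $\sigma\in(0,\sigma_*]$ I would use the algebraic identity
\begin{align*}
\tfrac{1}{2}-\tfrac{\beta}{2}+\tfrac{i}{2} &= \tfrac{\alpha-\beta}{2}+\bigl(\tfrac{1}{2}-\tfrac{\alpha}{2}+\tfrac{i}{2}\bigr),\\
\tfrac{1}{2}-\tfrac{\beta}{2}+\tfrac{\delta}{2}+\tfrac{i}{2} &= \tfrac{\alpha-\beta}{2}+(1-\tfrac{\delta}{\gamma})\bigl(\tfrac{1}{2}-\tfrac{\alpha}{2}+\tfrac{i}{2}\bigr)+\tfrac{\delta}{\gamma}\bigl(\tfrac{1}{2}-\tfrac{\alpha}{2}+\tfrac{\gamma}{2}+\tfrac{i}{2}\bigr),
\end{align*}
together with the parabolic interpolation inequality
\[ [\hat\nabla^i\zeta_j]_{\delta,\delta/2;M\times[\sigma/2,\sigma]} \leq C\,|\hat\nabla^i\zeta_j|_{0;M\times[\sigma/2,\sigma]}^{1-\delta/\gamma}[\hat\nabla^i\zeta_j]_{\gamma,\gamma/2;M\times[\sigma/2,\sigma]}^{\delta/\gamma}.\]
These together yield the pointwise-in-$\sigma$ bound
\[ \sigma^{\frac{1}{2}-\frac{\beta}{2}+\frac{\delta}{2}+\frac{i}{2}}[\hat\nabla^i\zeta_j]_{\delta,\delta/2;M\times[\sigma/2,\sigma]} \leq C\,\sigma^{\frac{\alpha-\beta}{2}}\,\|\zeta_j\|_{\mathcal{C}^{k,\gamma}_{\frac{1}{2}-\frac{\alpha}{2}}},\]
and similarly $\sigma^{\frac{1}{2}-\frac{\beta}{2}+\frac{i}{2}}|\hat\nabla^i\zeta_j|_{0}\leq \sigma^{\frac{\alpha-\beta}{2}}\|\zeta_j\|_{\mathcal{C}^{k,\gamma}_{\frac{1}{2}-\frac{\alpha}{2}}}$. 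Since $\|\zeta_j\|_{\mathcal{C}^{k,\gamma}_{\frac{1}{2}-\frac{\alpha}{2}}}$ stays uniformly bounded, the small-$\sigma$ contribution is $\leq C\sigma_*^{(\alpha-\beta)/2}$, independently of $j$. Given $\epsilon>0$, first pick $\sigma_*$ to control this, then $j$ large to control the large-$\sigma$ part; this gives the desired convergence.

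The main obstacle is the book-keeping for the H\"older-seminorm piece: one has to see that the interpolation exponent $\delta/\gamma$ is exactly the one that redistributes the $\mathcal{C}^{k,\gamma}_{\frac{1}{2}-\frac{\alpha}{2}}$ weight into the target weight, leaving the clean residual $\sigma^{(\alpha-\beta)/2}$. Once this algebraic match is observed, the interpolation inequality (which holds for parabolic H\"older norms on each cylinder $M\times[\sigma/2,\sigma]$ with constant independent of $\sigma$ by a standard rescaling argument, since the diameter of the cylinder in the parabolic metric is comparable to $\sqrt{\sigma}$ and the estimate is scale-invariant modulo derivative orders) finishes the argument.
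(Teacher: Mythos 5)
Your proof is correct, and it is organized somewhat differently from the paper's. The paper's argument handles the H\"older seminorm piece by a two-regime cutoff in the parabolic distance $d((x,t),(y,s))$ relative to a small parameter $\epsilon$ (inequalities (6.10)--(6.11)), obtaining for each \emph{fixed} $\sigma$ the convergence $\sigma^{\frac{1}{2}-\frac{\beta}{2}+\frac{r}{2}+\frac{\delta}{2}}[\hat\nabla^r\zeta_j]_{\delta,\delta/2;M\times[\sigma/2,\sigma]}\to 0$, and similarly only records the crude embedding inequality (6.8) for the $C^0$ piece; the final leap to $\sup_\sigma(\cdots)\to 0$ (i.e.\ the weighted norm itself) is left implicit. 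Your version makes exactly the needed uniformity argument explicit: you split $\sup_\sigma$ at a threshold $\sigma_*$, use the interpolation inequality $[\cdot]_{\delta,\delta/2}\leq C|\cdot|_0^{1-\delta/\gamma}[\cdot]_{\gamma,\gamma/2}^{\delta/\gamma}$ (whose constant is $\sigma$-independent because the two-parameter exponent match makes it parabolically scale-invariant, or, even more simply, because it follows from $\frac{|u(x,t)-u(y,s)|}{d^\delta}=|u(x,t)-u(y,s)|^{1-\delta/\gamma}\bigl(\frac{|u(x,t)-u(y,s)|}{d^\gamma}\bigr)^{\delta/\gamma}$) to absorb the small-$\sigma$ contribution into a uniform $C\sigma_*^{(\alpha-\beta)/2}$ tail, and then send $j\to\infty$ on the compact remainder. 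The paper's $\epsilon$-cutoff is essentially the elementary proof of that same interpolation inequality, so the underlying mechanism is the same; your presentation via interpolation is cleaner, and your explicit $\sigma_*$-splitting plugs a real gap in the paper's write-up both for the seminorm and for the $C^0$ piece.
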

\begin{proof}
Note that by Lemma \ref{properties of weighted space} we have the inclusion $\mathcal{C}_{\frac{1}{2}-\frac{\alpha}{2}}^{k,\gamma}\subset \mathcal{C}_{\frac{1}{2}-\frac{\beta}{2}}^{k,\delta} $.
Let $\{\eta_j\}$ be a bounded sequence in $\mathcal{C}_{\frac{1}{2}-\frac{\alpha}{2}}^{k,\gamma}(M\times (0,T])$ so that $\|\eta_j\|_{\mathcal{C}_{\frac{1}{2}-\frac{\alpha}{2}}^{k,\gamma}}\leq L$. In particular,
\begin{align*}
   \|\hat{\nabla}^k\eta_j\|_{0;M\times [\sigma/2,\sigma]}\leq \frac{L}{\sigma^{\frac{1}{2}-\frac{\alpha}{2}+\frac{k}{2}}}\quad\text{and}\quad  [\hat{\nabla}^k\eta_j]_{\gamma,\frac{\gamma}{2};M\times [\sigma/2,\sigma]}\leq \frac{L}{\sigma^{\frac{1}{2}-\frac{\alpha}{2}+\frac{k}{2}+\frac{\gamma}{2}}}
\end{align*}
for any $\sigma\in (0,T]$. Thus $\{\hat{\nabla}^k\eta_j\}$ is equicontinuous on $M\times [\sigma/2,\sigma]$ and uniformly bounded. By Arzela-Ascoli it contains a subsequence $\{\tilde{\eta}_j\}$ such that $\{\hat{\nabla}^k\tilde{\eta}_j\}$ is uniformly convergent on compact subsets of $M\times [\sigma/2,\sigma]$. By the same argument we can also proceed to find a subsequence of $\{\tilde{\eta}_j\}$ which is uniformly convergent on bounded subsets of $M\times [\sigma/2,\sigma]$ together with its lower order $x$-derivatives. Moreover, by a standard diagonal argument we can further find a subsequence which is uniformly convergent on bounded subsets of $M\times (0,T]$. We still denote this subsequence by $\{\tilde{\eta}_j\}$ and its limit by $\eta$. Thus $\eta$ satisfies the same bounds as $\eta_j$ given above.  Define $\zeta_j = \tilde{\eta}_j-\eta$. To finish the proof we will show that $\zeta_j\to 0$ in $\mathcal{C}_{\frac{1}{2}-\frac{\beta}{2}}^{k,\delta}(M\times (0,T])$. For any $0\leq r\leq k$, we have
\begin{align}{\label{zeta C^0}}
    \sigma^{\frac{1}{2}-\frac{\beta}{2}+\frac{r}{2}}\|\hat{\nabla}^r\zeta_j\|_{0;M\times [\sigma/2,\sigma]}\leq T^{\frac{\alpha-\beta}{2}}\sigma^{\frac{1}{2}-\frac{\alpha}{2}+\frac{r}{2}}\|\hat{\nabla}^r\zeta_j\|_{0;M\times [\sigma/2,\sigma]}.
\end{align}
Moreover,  
\begin{align}{\label{zeta gamma bound}}
    \sigma^{\frac{1}{2}-\frac{\alpha}{2}+\frac{r}{2} + \frac{\gamma}{2}}[\hat{\nabla}^r\zeta_j]_{\gamma,\frac{\gamma}{2};M\times [\sigma/2,\sigma]}\leq  \|\zeta_j\|_{\mathcal{C}_{\frac{1}{2}-\frac{\alpha}{2}}^{k,\gamma}(M\times (0,T])}\leq 2L
\end{align}
for any $\sigma\in (0,T]$. Now, we choose a local chart $U$ in $M$ and let $x,y\in U$ so that the (0,2)-tensors $\zeta_j$ are represented in local coordinates as $\zeta_j = \sum_{m=1}^N\zeta_{jm}\bold{e}_m$. Let $\epsilon>0$ be an arbitrary small number, we may assume without loss of generality that $\epsilon<\sigma$.\\
 If $d((x,t),(y,s))<\epsilon$, where the distance $d((x,t),(y,s)) = |x-y|+|t-s|^{1/2}$ is taken within the chart, then (\ref{zeta gamma bound}) implies
\begin{align}{\label{zeta Holder 1}}
    &\sigma^{\frac{1}{2} - \frac{\beta}{2}+ \frac{r}{2} + \frac{\delta}{2}}\frac{|\hat{\nabla}^r\zeta_{jm}(x,t)-\hat{\nabla}^r\zeta_{jm}(y,s)|}{d((x,t),(y,s))^{\delta}}\\
    &\notag = \sigma^{\frac{1}{2} - \frac{\alpha}{2}+ \frac{r}{2} + \frac{\gamma}{2}}\frac{|\hat{\nabla}^r\zeta_{jm}(x,t)-\hat{\nabla}^r\zeta_{jm}(y,s)|}{d((x,t),(y,s))^{\gamma}}\cdot d((x,t),(y,s))^{\gamma-\delta} \sigma^{\frac{\alpha}{2} - \frac{\beta}{2}+ \frac{\delta}{2} - \frac{\gamma}{2}} \\
    &\notag \leq 2L T^{\frac{\alpha-\beta}{2}}\epsilon^{\frac{\gamma-\delta}{2}}.
\end{align}
If $d((x,t),(y,s))\geq\epsilon$, then  
\begin{align}{\label{zeta Holder 2}}
     &\sigma^{\frac{1}{2} - \frac{\beta}{2}+ \frac{r}{2} + \frac{\delta}{2}}\frac{|\hat{\nabla}^r\zeta_{jm}(x,t)-\hat{\nabla}^r\zeta_{jm}(y,s)|}{d((x,t),(y,s))^{\delta}} \leq K(T)\|\hat{\nabla}^r\zeta_j\|_{0;M\times[\sigma/2,\sigma]}\epsilon^{-\delta}.\end{align}
Combining (\ref{zeta Holder 1}) and (\ref{zeta Holder 2}), we obtain
\begin{align}{\label{zeta Holder 3}}
   \sigma^{\frac{1}{2} - \frac{\beta}{2}+ \frac{r}{2} + \frac{\delta}{2}}[\hat{\nabla}^r\zeta_j]_{\delta,\frac{\delta}{2};M\times [\sigma/2,\sigma]}\leq  K(T)(2L\epsilon^{\frac{\gamma-\delta}{2}} + \|\hat{\nabla}^r\zeta_j\|_{0;M\times[\sigma/2,\sigma]}\epsilon^{-\delta}).
\end{align}
By taking $j\to\infty$ and then $\epsilon\to 0$ in (\ref{zeta Holder 3}), we obtain
\begin{align}{\label{zeta Holder 4}}
	\lim_{j\to\infty}\sigma^{\frac{1}{2} - \frac{\beta}{2}+ \frac{r}{2} + \frac{\delta}{2}}[\hat{\nabla}^r\zeta_j]_{\delta,\frac{\delta}{2};M\times [\sigma/2,\sigma]} = 0.
\end{align}
 for each $\sigma\in (0,T]$. Therefore, (\ref{zeta C^0}) and (\ref{zeta Holder 4}) conclude that
\begin{align}
    \lim_{j\to\infty}\|\zeta_j\|_{\mathcal{C}_{\frac{1}{2}-\frac{\beta}{2}}^{k,\delta}(M\times (0,T])} = 0
\end{align}
\end{proof}

\bigskip

\begin{theorem}[Main Theorem 5]
Suppose that $g(t)$ is a canonical solution to (\ref{Ricci flow boundary equation}) on $M\times [0,T]$ given by Theorem 6.2. Then the following holds:\\
 If $(M,g_0)$ has a convex boundary, then
\begin{itemize}
    \item[(i)] $(M,g_0)$ has positive curvature operator $\implies$ $(M,g(t))$ has positive curvature operator;
    \item[(ii)] $(M,g_0)$ is PIC1 $\implies$ $(M,g(t))$ is PIC1;
    \item[(iii)] $(M,g_0)$ is PIC2 $\implies$ $(M,g(t))$ is PIC2.
\end{itemize}
If $(M,g_0)$ has a two-convex boundary, then
\begin{itemize}
    \item[(iv)] $(M,g_0)$ is PIC $\implies$ $(M,g(t))$ is PIC.
\end{itemize}
Moreover, if $(M,g_0)$ has a mean-convex boundary, then
\begin{itemize}
    \item[(v)] $(M,g_0)$ has positive scalar curvature $\implies$ $(M,g(t))$ has positive scalar curvature.
\end{itemize}
\end{theorem}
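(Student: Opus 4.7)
The plan is an approximation argument that reduces curvature preservation on the doubled manifold with singular initial data to the classical curvature preservation theorems on closed manifolds with smooth initial data. Specifically, I would invoke the construction in \cite{Chow} to obtain a sequence of smooth Riemannian metrics $g_0^{(j)}$ on $M$ with the following four properties: (a) $g_0^{(j)} \to g_0$ in $C^{\beta+\varepsilon}(M)$; (b) each $g_0^{(j)}$ has totally geodesic boundary, so that its reflection across $\partial M$ extends to a smooth metric $\tilde{g}_0^{(j)}$ on the doubled manifold $\tilde{M}$; (c) $g_0^{(j)}$ satisfies the corresponding curvature condition (PCO, PIC1, PIC2, PIC, or $R>0$) in the interior of $M$; (d) the doubled metric $\tilde{g}_0^{(j)}$ satisfies the same curvature condition on all of $\tilde{M}$, including across $\partial M$ (here the convexity or two-convexity assumption on $\partial M$ for $g_0$ is essential, since the Gauss equation shows that the curvature of the reflected metric across the boundary is controlled by the ambient curvature plus bilinear expressions in the second fundamental form, and these are non-negative precisely when the boundary satisfies the relevant convexity condition).

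With this sequence in hand, the existence theory of Section 4 yields smooth solutions $\tilde{g}^{(j)}(t)$ to the Ricci flow on $\tilde{M} \times [0, T]$ with a uniform time of existence $T = T(M, \hat{g}, \|g_0\|_{\beta+\varepsilon})$ and a uniform estimate
\[
\|\tilde{g}^{(j)}\|_{\mathcal{X}_{k,\gamma}^{(\beta)}(\tilde{M}\times[0,T])} \leq K,
\]
depending only on $M, k, \hat{g}, \|g_0\|_{\beta+\varepsilon}$. Because $\tilde{g}^{(j)}_0$ is smooth and satisfies the curvature condition on the closed manifold $\tilde{M}$, the classical maximum-principle based preservation theorems of Hamilton, Brendle, Brendle--Schoen and Nguyen apply and yield that each $\tilde{g}^{(j)}(t)$ satisfies the same curvature condition on all of $\tilde{M} \times [0, T]$.

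Next I pass to the limit. The uniform bound in $\mathcal{X}_{k,\gamma}^{(\beta)}$ together with Lemma \ref{lem precompact} (choosing $\beta' < \beta$ and $\gamma' < \gamma$) allows extraction of a subsequence converging in $\mathcal{X}_{k,\gamma'}^{(\beta')}(\tilde{M}\times[0,T])$ to a limit $\tilde{g}^{\infty}(t)$. The limit solves the Ricci flow on $\tilde{M}$ with initial metric $\tilde{g}_0$, and by the uniqueness statement in Theorem 6.1 it coincides with the canonical solution up to a $C^{k+1}$ diffeomorphism. Since the curvature conditions (i)--(v) are closed pointwise conditions on the curvature tensor that are invariant under pullback by diffeomorphisms, they pass to the limit and then to the canonical solution on $\tilde{M}$. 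Restricting to $M$ via the $\mathbb{Z}_2$ symmetry yields the conclusion, and the $\mathbb{Z}_2$-invariance of the whole approximation procedure ensures that the limiting canonical solution on $M$ inherits totally geodesic boundary for $t>0$.

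The main obstacle is the verification of items (c) and (d) simultaneously in Step 1, namely constructing smooth approximants with totally geodesic boundary which nonetheless preserve the curvature condition on the doubled manifold. The delicate point is that to make the boundary totally geodesic while keeping $g_0^{(j)}$ close to $g_0$ in $C^{\beta+\varepsilon}$, one must deform the metric in a collar of $\partial M$; this deformation can a priori destroy curvature positivity unless it is chosen compatibly with the convexity of $\partial M$. For statement (iv), involving PIC, the available assumption is only two-convexity of $\partial M$ (rather than full convexity), and one must check that the modification in the collar interacts correctly with the PIC condition --- this is where the additional properties of the approximation scheme from \cite{Chow} (and not merely its convergence statement) are invoked. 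The verification for (i)--(iii) and (v) is analogous but relies on the matching symmetric convexity statement (convexity for positive curvature operator, PIC1, PIC2, and mean-convexity for positive scalar curvature).
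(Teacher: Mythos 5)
Your proposal follows the same overall strategy as the paper: approximate $g_0$ by a sequence of smooth metrics from \cite{Chow} that have totally geodesic boundary and inherit the curvature positivity under the stated convexity hypotheses (the paper invokes Theorem~2 and Corollary~8 of \cite{Chow} for exactly these properties, so your items (a)--(d) are precisely what that reference supplies), run the flow, use classical curvature preservation on the closed double, extract a limit via Lemma~\ref{lem precompact}, identify it by uniqueness, and transfer back to $M$ by diffeomorphism invariance and $\mathbb{Z}_2$ symmetry.

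The one place where you diverge from the paper, and where your argument is slightly underdetermined, is the choice of \emph{which} flow the approximants satisfy and which uniqueness you invoke. You evolve $\tilde g^{(j)}(t)$ by \emph{Ricci flow} and then invoke Theorem~6.1 to identify the limit with the canonical solution ``up to a $C^{k+1}$ diffeomorphism.'' But $\tilde g^{(j)}(0)=(\psi^{(j)})^*\tilde g_0^{(j)}$ for gauge diffeomorphisms $\psi^{(j)}$, so to say the limit has initial data $\tilde g_0$ (or $\bar\psi^*\tilde g_0$ for a limiting $\bar\psi$) you would also need compactness for the diffeomorphisms $\psi^{(j)}$ in $C^{1,\beta}$, which you do not establish. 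The paper sidesteps this by evolving the approximants $\hat g_\lambda$ by the \emph{Ricci--DeTurck} flow, whose solutions $\tilde g_\lambda(t)$ have the genuinely fixed initial data $\hat g_\lambda\to g_0$ and satisfy pointwise uniqueness (Theorem~\ref{thm existence Ricci DeT}), so the limit is identified with the Ricci--DeTurck solution $\tilde g(t)=(\phi_t^{-1})^*g(t)$ directly, with no extra control on gauge maps needed; the curvature conclusion for $g(t)$ then follows from diffeomorphism invariance. This is a cleaner route than yours and is why the paper's proof does not need Theorem~6.1 at this stage. Your discussion of the Gauss equation for (d) is also slightly off emphasis: once the approximants have totally geodesic boundary, the doubled metric is smooth and the curvature condition extends automatically; the role of convexity/two-convexity is entirely inside the collar-modification step of the \cite{Chow} construction, not in the reflection itself.
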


\begin{proof}
By Theorem 2 in \cite{Chow}, there is a family of smooth Riemannian metrics $\{\hat{g}_{\lambda}\}_{\lambda>\lambda^*}$ on $M$ which converges to $g_0$ in $C^{\alpha}$ for any $\alpha\in [0,1)$ and  satisfies:
\begin{itemize}
\item[(i)] $(M, \hat{g}_{\lambda})$ has a totally geodesic boundary.
\item[(ii)] If $(M,g_0)$ has a convex boundary, then
	\begin{itemize}
	\item $(M,g_0)$ has positive curvature operator $\implies$ $(M,\hat{g}_{\lambda})$ has positive curvature operator;
    \item $(M,g_0)$ is PIC1 $\implies$ $(M,\hat{g}_{\lambda})$ is PIC1;
    \item $(M,g_0)$ is PIC2 $\implies$ $(M,\hat{g}_{\lambda})$ is PIC2. 
	\end{itemize}	
\item[(iii)] If $(M,g_0)$ has a two-convex boundary, then
	\begin{itemize}
	\item $(M,g_0)$ is PIC $\implies$ $(M,\hat{g}_{\lambda})$ is PIC.
	\end{itemize}
\item[(iv)] If $(M,g_0)$ has a mean-convex boundary, then
	\begin{itemize}
    \item $(M,g_0)$ has positive scalar curvature $\implies$ $(M,\hat{g}_{\lambda})$ has positive scalar curvature.
	\end{itemize}
\end{itemize}
Note that by Corollary 8 in \cite{Chow} the positivity conditions in the above statement are uniform in all sufficiently large $\lambda$. 
\bigskip

Let $\tilde{M}$ be the doubled manifold of $M$, and fix a background metric $\hat{g}$ such that in a small collar neighborhood of $\partial M$ the metric is isometric to $\partial M\times [0,\varepsilon]$. We extend the metrics $g_0$ to $\tilde{M}$ via reflection about the boundary $\partial M$. Moreover, from the construction in \cite{Chow} in a neighborhood of the boundary $\hat{g}_{\lambda}$ has the form of a product metic, thus the metric $\hat{g}_{\lambda}$ can be extended smoothly to the doubled manifold $\tilde{M}$.  Then $g_0$ is a Lipschitz metric on $\tilde{M}$ and $\hat{g}_{\lambda}$ is a smooth metric on $\tilde{M}$.\\

 Now, we assume that $(M,g_0)$ has convex boundary. If $(M,g_0)$ has positive curvature operator/ PIC1/ PIC2, then $(\tilde{M},\hat{g}_{\lambda})$ has positive curvature operator/ PIC1/ PIC2 for all sufficiently large $\lambda>0$ and these positivity conditions are uniformly bounded below for $\lambda$ large. Let $\tilde{g}_{\lambda}(t)$ be the solution to Ricci-DeTurck flow on $\tilde{M}\times (0,T]$ starting from $\tilde{g}_{\lambda}(0) = \hat{g}_{\lambda}$. Let $\phi_t$ solves the harmonic map heat flow so that $\tilde{g}(t) = (\phi_t^{-1})^*g(t)$ is a solution to Ricci-DeTurck flow on $\tilde{M}\times (0,T]$ starting from $\tilde{g}(0) = g_0$. Proposition \ref{prop pullback Ric DeT} then implies that $\tilde{g}(t)\in\mathcal{X}_{2,\gamma}^{(\alpha)}$ for some exponent $\alpha\in (0,1)$ and $\gamma\in (0,\alpha)$. Next we apply Theorem \ref{thm existence Ricci DeT} to obtain the estimates
 \begin{align*}
 	\|\tilde{g}_{\lambda}(t)\|_{\mathcal{X}_{2,\gamma}^{(\alpha)}(M\times[0,T])} \leq K(M, \hat{g}, \|\hat{g}_{\lambda}\|_{\alpha;M}).
 \end{align*}
Since $\hat{g}_{\lambda}\to g_0$ in $C^{\alpha}(M)$, we can find a uniform constant $K(M, \hat{g}, \|g_0\|_{\alpha;M})$ such that
 \begin{align}{\label{last estimate}}
 	\|\tilde{g}_{\lambda}(t)\|_{\mathcal{X}_{2,\gamma}^{(\alpha)}(M\times[0,T])} \leq K(M, \hat{g}, \|g_0\|_{\alpha;M}).
 \end{align}

Next, we pick some $\beta\in (0,\alpha)$ and $\delta\in (0,\gamma)$ such that $\delta<\beta$. Since bounded subsets in  $C^{\alpha,\frac{\alpha}{2}}(M\times[0,T])$ are precompact in $C^{\beta,\frac{\beta}{2}}(M\times[0,T])$, Lemma \ref{lem precompact} then implies that bounded subsets in the Banach space $\mathcal{X}_{2,\gamma}^{(\alpha)}(M\times[0,T])$ are precompact in $\mathcal{X}_{2,\delta}^{(\beta)}(M\times[0,T])$. Now, by the estimate (\ref{last estimate}) the metrics $\tilde{g}_{\lambda}$ are uniformly bounded in $\mathcal{X}_{2,\gamma}^{(\alpha)}(M\times[0,T])$ for all large $\lambda$. Upon passing to a subsequence we have $\tilde{g}_{\lambda}\to \tilde{g}_{\infty}$ in $\mathcal{X}_{2,\delta}^{(\beta)}(M\times[0,T])$  as $\lambda\to\infty$. By the continuity of coefficients in the Ricci-DeTurck flow, $\tilde{g}_{\infty}(t)$ is also a solution to the Ricci-DeTurck flow with $\tilde{g}_{\infty}(0) = g_0$ such that  
 \begin{align*}
 	\|\tilde{g}_{\infty}(t)\|_{\mathcal{X}_{2,\delta}^{(\beta)}(M\times[0,T])} \leq K(M, \hat{g}, \|g_0\|_{\alpha;M}).
 \end{align*}
Since $\delta<\beta$, Theorem \ref{thm existence Ricci DeT} implies that such a solution is unique, therefore we have $\tilde{g}_{\infty}(t) = \tilde{g}(t)$. On the other hand, since $\hat{g}_{\lambda}$ are smooth metrics, the curvature conditions are preserved under the Ricci flow, thus $\tilde{g}_{\lambda}(t)$ also has positive curvature operator/ PIC1/ PIC2 for all sufficiently large $\lambda$ by diffeomorphism invariance of curvatures. This in particular implies that $\tilde{g}(t) = \tilde{g}_{\infty}(t)$ also has positive curvature operator/ PIC1/ PIC2 for each $t>0$. Therefore, since $g(t) = (\phi_t)^*\tilde{g}(t)$, by diffeomorphism invariance of curvatures these curvature conditions also hold for $g(t)$ for each $t>0$. This proves statements (i) to (iii) in the theorem. Statements (iv) and (v) can be proved similarly. The theorem then follows.

\end{proof}

\end{document}